\documentclass[10pt,letterpaper]{amsart}
\usepackage[T1]{fontenc}
\usepackage{lmodern,microtype}
\usepackage[margin=0.9in]{geometry}
\usepackage{amsmath,amssymb,amsthm,amscd,mathtools}
\usepackage{enumitem,booktabs}
\usepackage{xcolor}
\usepackage[pagebackref,colorlinks=true,linkcolor=magenta!75!black,citecolor=violet,urlcolor=blue!65!black]{hyperref}
\hypersetup{pdftitle={Projective Decomposition and Intersection Cohomology of Rigid
Spaces},pdfauthor={Jiawei Yang}}
\setlist[enumerate]{label=\textup{(\roman*)},leftmargin=2.2em}
\newtheorem{theorem}{Theorem}[section]
\newtheorem{proposition}[theorem]{Proposition}
\newtheorem{lemma}[theorem]{Lemma}
\newtheorem{corollary}[theorem]{Corollary}

\numberwithin{equation}{section}
\theoremstyle{definition}
\newtheorem{definition}[theorem]{Definition}
\newtheorem{example}[theorem]{Example}
\theoremstyle{remark}
\newtheorem{remark}[theorem]{Remark}
\newcommand{\Q}{\mathbf Q}
\newcommand{\Z}{\mathbf Z}
\newcommand{\IC}{\operatorname{IC}}
\newcommand{\Perv}{\operatorname{Perv}}
\newcommand{\Spa}{\operatorname{Spa}}
\newcommand{\Spec}{\operatorname{Spec}}
\newcommand{\supp}{\operatorname{supp}}
\newcommand{\pH}{\prescript{p}{}{\mathcal H}}
\newcommand{\Db}{D^b_{\mathrm{zc}}}
\newcommand{\et}{\mathrm{\acute et}}
\newcommand{\cL}{\mathcal L}
\newcommand{\cH}{\mathcal H}
\newcommand{\cE}{\mathcal E}
\newcommand{\cO}{\mathcal O}
\newcommand{\an}{\mathrm{an}}
\newcommand{\IH}{\operatorname{IH}}
\title[Projective decomposition of rigid spaces]{Projective Decomposition and Intersection
Cohomology of Rigid Spaces}
\author{Jiawei Yang}
\address{School of Mathematical Sciences, Xiamen University, Xiamen 361005, China}
\email{yangjw@stu.xmu.edu.cn}
\date{}
\begin{document}
\begin{abstract}
We prove a rigid analytic analogue of the Beilinson--Bernstein--Deligne decomposition theorem for
projective morphisms over \(p\)-adic fields, with \(\Q_\ell\)-coefficients for every prime \(\ell\),
including \(\ell=p\). As an application, we show that the \(p\)-adic intersection
cohomology of every proper rigid analytic space is de Rham. We also establish an independent
criterion for de Rham intersection cohomology with local coefficients in terms of the dimension of
the boundary, yielding further results for curves, surfaces, and finite covers. For finite
coefficients of characteristic different from \(p\), we obtain isotropic analytic upper bounds
for singular support and prove that singular support is analytic and Lagrangian for complexes
supported in dimension at most one.
\end{abstract}
\maketitle
\setcounter{tocdepth}{1}
\tableofcontents

\section{Introduction}

The decomposition theorem of Beilinson, Bernstein, and Deligne expresses the direct image of an
intersection complex under a projective morphism of algebraic varieties as a direct sum of shifted
intersection complexes \cite{BBD}. In rigid analytic geometry, Bhatt and Hansen developed the
formalism of Zariski-constructible sheaves and perverse \(t\)-structures, and conjectured an
analogous decomposition theorem for projective morphisms \cite[Conjecture 4.17]{BH}. Their example
of a proper smooth morphism from a nonarchimedean Hopf surface to the projective line shows that
projectivity cannot be replaced by properness \cite[\S4.5]{BH}.

We prove the rigid analytic analogue for every coefficient prime, including the residue
characteristic. Let \(K/\Q_p\) be a finite extension, \(C=\widehat{\overline K}\),
\(G_K=\operatorname{Gal}(\overline K/K)\), and \(\ell\) any prime. We use the rational constructible
derived categories and middle perverse \(t\)-structures of \cite{BH}, with \(\cH^q\) and \(\pH^a\)
denoting ordinary and perverse cohomology sheaves. Ambient spaces for
rational coefficients are always assumed to be quasi-compact and quasi-separated (qcqs).

For a reduced pure-dimensional rigid space \(W\), a dense open immersion \(j:U\hookrightarrow W\)
with \(U\) smooth, and a finite-rank \(\Q_\ell\)-local system \(\cL\) on \(U\) for which
\(j_{!*}\) is defined, the intersection complex is \(\IC_W(\cL)=j_{!*}(\cL[\dim W])\), where
\(j_{!*}\) denotes intermediate
extension as in Section~\ref{sec:formal}; put \(\IC_W=\IC_W(\Q_\ell)\). For a smooth locally closed
stratum \(U\subset Y\), with reduced closure \(\overline U\) and inclusions
\(j:U\hookrightarrow\overline U\) and \(i:\overline U\hookrightarrow Y\), we write \(\IC_{\overline
U}(\cL)=i_*j_{!*}(\cL[\dim U])\). For non-equidimensional spaces, we take the direct sum of the
intersection complexes of their irreducible components, pushed forward to the space.

\begin{theorem}\label{thm:general}
Let \(f:X\to Y\) be a projective morphism of rigid analytic spaces over \(K\), with \(Y\)
quasi-compact and quasi-separated. For every prime \(\ell\), including \(\ell=p\), there is an
isomorphism
\begin{equation}\label{eq:main}
 Rf_*\IC_X\simeq\bigoplus_a\pH^a(Rf_*\IC_X)[-a],
\end{equation}
and each perverse cohomology sheaf is a finite direct sum of intermediate extensions of finite-rank
\(\Q_\ell\)-local systems on smooth Zariski-locally closed strata of \(Y\).
\end{theorem}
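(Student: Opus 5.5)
The strategy is to reduce to the algebraic decomposition theorem via relative GAGA and local algebraization, and then to glue the local splittings using the semisimplicity of the perverse pieces. First I reduce the statement to a local one on \(Y\). Since \(Y\) is qcqs, cover it by finitely many affinoids \(V_i\). Over each \(V_i\), projectivity gives a closed immersion \(X_{V_i}\hookrightarrow \mathbf P^N_{V_i}\). Now \(V_i=\Spa(A_i)\), so relative rigid GAGA for projective morphisms over an affinoid identifies \(X_{V_i}\) with the analytification of a projective scheme \(\mathcal X_i\to\Spec A_i\). The intersection complex is compatible with analytification of a projective scheme over \(\Spec A_i\), by the comparison of Zariski-constructible sheaves in \cite{BH}. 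The same holds for intermediate extensions, because analytification is t-exact for the perverse structures and commutes with \(j_!\) and \(Rj_*\) in this setting. Hence \(Rf_*\IC_X|_{V_i}\) is the analytification of \(R\mathfrak f_*\IC_{\mathcal X_i}\).

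The main obstacle is that \(\Spec A_i\) is not of finite type over a field. So BBD (with \(\ell\neq p\), over a finite type base) does not apply directly, and \(\ell=p\) is not covered at all. To deal with this, I would first try the \(\ell\neq p\) case via Gabber's extension of the decomposition theorem to excellent schemes, or via approximation: write \(A_i\) as a limit of finite type \(K\)-algebras and descend \(\mathcal X_i\). For \(\ell=p\) I would not use étale \(p\)-adic sheaves on schemes at all. Instead I would use a purity/weight-free argument: a relative hard Lefschetz for the relatively ample class \(\eta\) on the perverse cohomology sheaves, \(\eta^a:\pH^{-a}(Rf_*\IC_X)\xrightarrow{\sim}\pH^{a}(Rf_*\IC_X)\). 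Given relative hard Lefschetz, Deligne's criterion yields the splitting \eqref{eq:main} formally, and this works for every \(\ell\) at once.

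To prove hard Lefschetz and semisimplicity of the \(\pH^a\), I would argue by induction on \(\dim X\), in the style of de Cataldo--Migliorini. Taking a hyperplane section \(X\cap H\) reduces the weak Lefschetz parts. The remaining step, the Hodge–Riemann type input, would be replaced by restriction to strata. On each smooth stratum \(S\) of a Zariski stratification adapted to \(f\), the sheaves \(\cH^q\) restricted to \(S\) are local systems. Semisimplicity of these local systems would then follow from the geometric origin. Concretely, I would use the \(p\)-adic Riemann–Hilbert/de Rham comparison for the smooth proper fibres over strata, which shows that the relevant local systems underlie filtered flat bundles and are semisimple on the generic point after passing to \(C\). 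This is the step I expect to be genuinely hard, and where the paper must contribute new input.

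Finally, the splittings over the \(V_i\) glue. This follows because the decomposition of the kind produced by Deligne's criterion is canonical once \(\eta\) is fixed. Moreover, \(\operatorname{Hom}\) between shifted perverse sheaves in negative degree vanishes, so the local isomorphisms agree on overlaps. Thus one obtains the global isomorphism \eqref{eq:main}. The description of each \(\pH^a\) as a sum of \(\IC_{\overline U}(\cL)\) then follows from the semisimplicity and the classification of simple perverse sheaves as intermediate extensions.
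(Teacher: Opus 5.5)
There is a genuine gap, and it sits at the step you yourself flag as hard. Your first reduction mostly reformulates the problem. By the affinoid comparison of \cite{BH}, the question over \(V_i\) becomes a decomposition theorem for \(\mathcal X_i\to\Spec A_i\). No such theorem can simply be invoked over this non-finite-type excellent base, since there is no weight formalism on \(\Spec A_i\); the paper does not use one.

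Approximating \(A_i\) by finite-type algebras does descend the geometry; the paper does this with Popescu in Proposition~\ref{lem:approx}. But the algebraic decomposition must then be pulled back along the non-flat map \(\Spec A_i\to S\), a section of a smooth family. Such restriction preserves neither perversity nor intermediate extensions. Example~\ref{ex:nodecut} shows the latter failing even when perversity survives.

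The missing idea is the paper's control of this restriction, which has three parts:
\begin{enumerate}
\item Cut the section out by successive hypersurfaces that are smooth on the model of \(X\) and on every source of a resolution diagram for the boundary.
\item Prove that both \(\phi_h(K_n)\) and \(\phi_h(i_{M*}i_M^*K_n)\) vanish for \(\ell\ne p\) (Proposition~\ref{prop:boundaryvanish}). Then \(k^*[-1]\) preserves perverse bounds on derived summands (Lemmas~\ref{lem:thick} and~\ref{lem:bounds}).
\item Use Gysin purity (Lemma~\ref{lem:purity}) for the dual bounds, so that the strict support and cosupport conditions survive every cut.
\end{enumerate}

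Your mechanism for hard Lefschetz and the support decomposition cannot work as stated. You propose semisimplicity of the local systems on strata from geometric origin via \(p\)-adic Riemann--Hilbert, but this is false here. Theorem~\ref{thm:tate} gives a smooth projective family of Tate curves over an annulus with \(R^1f_{C*}\Q_\ell\) nonsemisimple for every \(\ell\), even over \(C\).

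So ``semisimplicity plus classification of simples'' cannot be your last step. The theorem only asserts intermediate extensions of possibly nonsemisimple local systems. The paper gets these from the criterion that \(c_Z(P)\) is an isomorphism for every closed \(Z\) (Lemma~\ref{lem:boundary} and Corollary~\ref{cor:support-decomposition}). A de Cataldo--Migliorini induction also has nothing to replace its Hodge--Riemann input. The paper instead restricts the algebraic hard Lefschetz (Theorem~\ref{thm:hl}).

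You also discard \(\ell=p\) on the algebraic side too quickly. On finite-type \(K\)-schemes, \(\Q_p\) is an ordinary coefficient prime and BBD applies. The only obstruction is that the vanishing-cycle control of the cuts needs \(\ell\ne p\). The paper handles \(\ell=p\) as follows:
\begin{enumerate}
\item Fix one approximation for all primes.
\item Use Betti comparison to make stalk dimensions \(\ell\)-independent (Lemma~\ref{lem:betti}).
\item Transfer upper perverse bounds from an auxiliary \(\ell_0\ne p\) by the support-dimension criterion (Lemmas~\ref{lem:rational-support} and~\ref{lem:ranktransfer}).
\item Obtain the lower bounds from Gysin duality, which holds for \(\ell=p\).
\end{enumerate}

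Your gluing step is unsound: local isomorphisms in derived categories do not glue, and splittings are not unique. It is also unnecessary. The Lefschetz maps are defined globally by \(c_1(H)\), invertibility is local, and Deligne's criterion is applied once on \(Y\).

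You also do not treat singular \(X\). The paper uses Temkin resolution to make \(\IC_X\) a retract of \(R\pi_*\Q_\ell[d]\), and then applies Lemma~\ref{lem:retract}.
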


This proves \cite[Conjecture 4.17]{BH} in the stated setting. The local systems occurring here need
not be semisimple. Indeed, we construct a smooth projective family of Tate elliptic curves over an
annulus whose first cohomology local system remains nonsemisimple after extension to \(C\).

The principal difficulty is to transfer the algebraic decomposition theorem to rigid analytic spaces
while preserving perversity and the property of being an intermediate extension. Our approach begins
with relative rigid GAGA and Popescu approximation, which locally realize a projective rigid
morphism as the restriction of an algebraic projective morphism along an analytic section of a
smooth family. The section can be expressed locally by successive hypersurface cuts. Although the
algebraic decomposition theorem applies before restriction, restriction to an arbitrary hypersurface
need not preserve perversity or intermediate extensions.

We simultaneously approximate the projective morphism, a prescribed closed boundary, and a
resolution diagram of its inverse image. The approximation is arranged so that each successive cut
is smooth on all the relevant smooth sources. For \(\ell\ne p\), the results of Zhou on vanishing
cycles \cite{ZhouVC}, together with abstract blow-up triangles, show that the vanishing cycles of
both the total direct image and its boundary restriction are zero. Uniform torsion estimates extend
this vanishing to rational derived summands. Natural Gysin morphisms, constructed using the theory
of cycle classes and duality developed by Li, Reinecke, and Zavyalov \cite{LRZ}, provide the
corresponding compatibility with Verdier duality.

The case \(\ell=p\) requires a separate argument. We use the same algebraic approximation and
analytic graph for all coefficient primes. Comparison between algebraic and Betti cohomology shows
that the dimensions of ordinary stalk cohomology of the relevant algebraic perverse sheaves and
their duals are independent of \(\ell\). The characterization of the perverse \(t\)-structure in
terms of dimensions of supports transfers the upper perverse bounds to \(p\), while Gysin duality
supplies the lower bounds. This yields relative hard Lefschetz and the decomposition theorem for
smooth sources. Resolution of singularities and stability under derived retracts then give the
general result.

A first consequence concerns \(p\)-adic Hodge theory. We write
\(\IH^m(X_C,\Q_p)=H^m(X_C,\IC_{X,\Q_p}|_{X_C})\).
\begin{corollary}\label{cor:intro-constant-derham}
For every proper rigid analytic space \(X/K\) and every integer \(m\), the intersection cohomology
group \(\IH^m(X_C,\Q_p)\) is a de Rham representation of \(G_K\).
\end{corollary}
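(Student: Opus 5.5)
The plan is to exhibit $\IH^m(X_C,\Q_p)$ as a $G_K$-equivariant direct summand of the ordinary $p$-adic cohomology of a proper smooth rigid space. For such spaces de Rhamness is known by Scholze's $p$-adic Hodge theory for proper smooth rigid spaces: $H^m(Z_C,\Q_p)$ is de Rham. Since the category of de Rham representations is closed under subquotients, and in particular under direct summands, this suffices.

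First I reduce to $X$ reduced and irreducible. $\IC_X$ is by definition the direct sum of the intersection complexes of the irreducible components, pushed forward by finite morphisms, and finite pushforward preserves cohomology. Next I choose a resolution of singularities $\pi:\widetilde X\to X$ that is projective and an isomorphism over a dense smooth open $U$. This is available for rigid spaces (Temkin's resolution), and I can arrange projectivity by taking a sequence of blow-ups. Since $X$ is proper, $\widetilde X$ is proper and smooth.

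I then apply Theorem~\ref{thm:general} with $\ell=p$ to $\pi$; here $Y=X$ is qcqs because it is proper. This gives $R\pi_*\Q_{p,\widetilde X}[d]\simeq\bigoplus_a\pH^a(R\pi_*\Q_p[d])[-a]$, where $\IC_{\widetilde X}=\Q_p[d]$ since $\widetilde X$ is smooth. Each summand is a direct sum of intermediate extensions. Over $U$ we have $\pH^0|_U=\Q_p[d]$, so $\pH^0$ contains $j_{!*}(\Q_p[d])=\IC_X$ as a direct summand, because a simple constituent with full support splits off from the semisimple-type decomposition. Taking cohomology on $X_C$ realizes $\IH^m(X_C,\Q_p)$ as a direct summand of $H^m(\widetilde X_C,\Q_p)$.

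The main obstacle is Galois equivariance. The decomposition is an isomorphism in the derived category over $K$, so applying $R\Gamma(X_C,-)$ gives maps that are automatically $G_K$-equivariant, provided the whole construction is done on $X$ over $K$ and then base changed to $C$. In fact I only need the inclusion $\IC_X\to R\pi_*\Q_p[d]$ and the projection back to be defined over $K$. I would make this precise using the summand statement $\pH^0\simeq\IC_X\oplus(\text{terms supported on }X\setminus U)$ in $\Perv(X)$ over $K$.

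A second subtlety is that the theorem only says the perverse sheaves are sums of intermediate extensions of possibly non-semisimple local systems. I therefore need the splitting-off of the full-support piece to be an honest direct sum in $\Perv(X)$. The theorem says $\pH^0$ is a direct sum $\bigoplus_i \IC_{\overline{U_i}}(\cL_i)$, and the unique summand with dense support restricts on $U$ to $\Q_p[d]$, so it is $j_{!*}$ of the constant sheaf, that is, $\IC_X$.

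It remains to verify that base change to $C$ commutes with $R\Gamma$ and with the identification $\IC_{X,\Q_p}|_{X_C}$ used to define $\IH^m$; I expect this to be routine from the definitions in \cite{BH}. Granting this, $\IH^m(X_C,\Q_p)$ is a $G_K$-stable direct summand of the de Rham representation $H^m(\widetilde X_C,\Q_p)$, hence de Rham.
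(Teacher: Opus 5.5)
Your proposal is correct and matches the paper's proof: reduce to reduced pure-dimensional (or irreducible) $X$, use the $\ell=p$ decomposition for a projective resolution $\pi$ to obtain a retraction $\IC_X\to R\pi_*\Q_p[d]\to\IC_X$ over $K$, and conclude by Scholze's comparison theorem together with stability of de Rham representations under $G_K$-equivariant direct summands. One harmless indexing slip: with the normalization $\IC_X=j_{!*}(\Q_p[d])$, the retraction exhibits $\IH^m(X_C,\Q_p)$ as a summand of $H^{m+d}(\widetilde X_C,\Q_p)$, not of $H^m(\widetilde X_C,\Q_p)$, and the conclusion is unchanged.
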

This establishes the assertion for proper spaces in \cite[Conjecture 4.16(1)]{BH}. The proof
realizes the intersection complex as a derived direct summand of the direct image of the constant
sheaf under a projective resolution. The assertion then follows from \(p\)-adic Hodge comparison for
smooth proper spaces.

We also obtain a de Rham result for local coefficients that is independent of the projective
decomposition theorem. Let \(X/K\) be proper, reduced, and pure of dimension \(d\), let
\(j:U\hookrightarrow X\) be a dense Zariski-open immersion with \(U\) smooth, and let \(\mathbb L\)
be a de Rham \(\Z_p\)-local system on \(U\). Put \(L=\mathbb L[1/p]\). We normalize intersection
cohomology by \(\IH^m(X_C,L)=H^m(X_C,(j_{!*}(L[d]))|_{X_C})\).

\noindent\textbf{Theorem~\ref{thm:boundary-derham}.}
\emph{If \(Z=X\setminus U\) is nonempty of dimension \(b\), then \(\IH^m(X_C,L)\) is de Rham
whenever \(m\le-b\) or \(m\ge b\). More precisely, the natural map from intersection cohomology to
\(H^{m+d}(U_C,L)\) is an isomorphism for \(m<-b\) and injective for \(m=-b\); the map from
\(H_c^{m+d}(U_C,L)\) to intersection cohomology is an isomorphism for \(m>b\) and surjective for
\(m=b\).}

The proof combines logarithmic de Rham comparison on \(U\) with the strict support and cosupport
conditions defining intermediate extension. In particular, a finite boundary gives the de Rham
property in every degree. Normalization and finite covers yield further results for curves,
surfaces, and local systems with finite geometric or divisorial monodromy.

We also study singular support on a smooth rigid space \(M/C\), with coefficients in a finite field
\(\Lambda\) of characteristic \(\ell\ne p\). For a bounded Zariski-constructible complex \(F\) of
\(\Lambda\)-sheaves, write \(SS(F)\subset T^*M\) for its singular support in the sense of
\cite[\S8]{ZhouMicro}. If \(M\) is affinoid and pure of dimension \(n\), then \(SS(F)\) is contained
in a closed conic analytic subset whose irreducible components are isotropic on their smooth loci
and have dimension at most \(n\), by Proposition~\ref{prop:isotropic-container}. If \(M\) is
quasi-compact and pure-dimensional and \(F\) has support of dimension at most one, then \(SS(F)\)
is a finite union of closures of conormal bundles of smooth curves and cotangent fibers at classical
points. In particular, it is an analytic Lagrangian subset of \(T^*M\)
(Proposition~\ref{prop:curve-supported-ss}).

The paper is organized as follows. Section~\ref{sec:approximation} develops simultaneous algebraic
approximation, boundary resolution, perverse restriction, and Gysin duality.
Section~\ref{sec:decomposition} proves the projective decomposition theorem for all coefficient
primes. Section~\ref{sec:arithmetic} establishes the de Rham results, and Section~\ref{sec:examples}
discusses examples and proves the results on singular support.

\section{Approximation and boundary restriction}\label{sec:approximation}

\subsection{Coefficients and splitting criteria}\label{sec:formal}

For a coefficient ring \(\Lambda\), write \(\Db(M,\Lambda)\) for the bounded derived category of
Zariski-constructible complexes of \cite{BH}, and \(\Perv(M,\Lambda)\) for its perverse heart. We
write \(\mathbb D_{M,\Lambda}\) for Verdier duality, omitting the space or coefficient ring when
they are clear. Closures of supports are taken in the Zariski topology.
For a qcqs space \(M\), \cite[Definition 4.8]{BH} defines the derived
category of Zariski-constructible complexes with \(\Q_\ell\)-coefficients
by \(\Db(M,\Q_\ell):=\Db(M,\Z_\ell)\otimes_{\Z_\ell}\Q_\ell\).
Writing \(E_{\Q_\ell}=E[1/\ell]\), we have, for \(A,B\in\Db(M,\Z_\ell)\),
\[
 \operatorname{Hom}_{\Q_\ell}(A_{\Q_\ell},B_{\Q_\ell})
 \cong \operatorname{Hom}_{\Z_\ell}(A,B)\otimes_{\Z_\ell}\Q_\ell,
\]
where the Hom groups are taken in the corresponding derived categories.

As in \cite[Construction 4.3 and Proposition 4.6]{BH}, there are two
integral perverse \(t\)-structures, denoted by \(p\) and \(p^+\),
which are exchanged by Verdier duality. They induce the same rational
perverse \(t\)-structure \cite[Theorem 4.11]{BH}.
For \(n\ge1\) and a complex \(A\) with \(\Z_\ell\)-coefficients,
we write \(A/\ell^n:=A\otimes_{\Z_\ell}^{L}\Z/\ell^n\) for its derived reduction modulo \(\ell^n\).
Since \(\Z/\ell^n\) has
Tor-amplitude contained in \([-1,0]\) over \(\Z_\ell\), this reduction
sends \(D^{[a,b]}\) into \(D^{[a-1,b]}\) for ordinary cohomology.

Under the hypotheses of \cite[Theorem 3.36]{BH}, reduction modulo
\(\ell^n\) commutes with proper pushforward, ordinary pullback,
extraordinary pullback along a closed immersion, and Verdier duality;
see also \cite[Remarks 3.33 and 3.37]{BH}. These operations also
commute with extension of the base field. We use these compatibilities
to work first with finite coefficients and then pass to
\(\Z_\ell\) and \(\Q_\ell\).
In particular, Li--Reinecke--Zavyalov \cite[Theorem 7.5.18]{LRZ}
prove that proper pushforward is compatible with Verdier duality
for finite coefficients, including when \(\ell=p\).
They also show that, if \(W\) is smooth of pure dimension \(s\), then
its dualizing complex with \(\Z/\ell^n\)-coefficients is
\((\Z/\ell^n)(s)[2s]\) \cite[Corollary 7.1.2]{LRZ}.

The trace and divisor class maps commute with reduction of coefficients. Their compatible systems
modulo \(\ell^n\) therefore define the corresponding \(\ell\)-adic maps. By derived Nakayama
\cite[Tag 0G1U]{Stacks}, a morphism between
derived \(\ell\)-complete complexes is an isomorphism if its derived
reduction modulo \(\ell\) is an isomorphism; the integral isomorphism
can then be rationalized. We apply this criterion to the purity map
in Lemma~\ref{lem:purity} and to proper duality in
Lemma~\ref{lem:adic-proper-duality}.

Analytification is perverse \(t\)-exact for algebraic constructible
complexes with finite coefficients \cite[Theorem 4.2(4)]{BH}.
For \(\Q_\ell\)-coefficients, we choose an integral algebraic model,
analytify it, restrict it to the qcqs analytic open neighborhood under consideration,
and then invert \(\ell\); this also applies when the full analytification
is not quasi-compact. For an algebraic perverse object, we may
first replace the model by its zeroth perverse cohomology with integral coefficients
without changing its rationalization. Analytification with finite coefficients,
followed by restriction to this analytic open, commutes with perverse
truncation and Verdier duality. These compatibilities preserve the
defining conditions for the integral \(p\)- and \(p^+\)-structures.
After rationalization the two structures agree, so the resulting
object is perverse \cite[Construction 4.3 and proof of Theorem 4.11]{BH}.

For a Zariski-open immersion \(j:U\hookrightarrow M\) with \(M\)
qcqs, we take intermediate extensions of extendable objects in the category of perverse sheaves with
rational coefficients on \(M\), as in
\cite[Theorems 4.2(5) and 4.11]{BH}. In particular, this applies to
restrictions \(j^*P\) with \(P\in\Perv(M,\Q_\ell)\).

\begin{lemma}\label{lem:deligne}
Let \(A\) be an object in a derived category with a bounded \(t\)-structure preserved by the Tate
twist \((1)\). Let \(\eta:A\to A(1)[2]\) be a morphism whose \(r\)-th power induces
an isomorphism
\[
 \eta^r:H^{-r}(A)\xrightarrow{\sim}H^r(A)(r)
 \qquad(r\ge0).
\]
Then \(A\simeq\bigoplus_a H^a(A)[-a]\).
\end{lemma}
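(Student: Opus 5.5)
This is Deligne's degeneration criterion, and I will follow Deligne's argument from \cite{BBD} (and the original Lefschetz criterion), working only with the triangulated structure and the $t$-structure. The strategy is to split off the extreme cohomology objects one at a time. The hard Lefschetz isomorphism supplies the retractions needed to do this, so no Hom vanishing is required. Since the $t$-structure is bounded, $A$ has finitely many nonzero $H^a(A)$, and I induct on the cohomological amplitude. If all $H^a(A)=0$ for $a\neq 0$, there is nothing to prove. It also suffices to prove the weaker statement that $A\simeq\bigoplus_a H^a(A)[-a]$ abstractly; the cohomology of both sides then matches automatically.

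The key step is to show that for the lowest degree $-n$ with $H^{-n}(A)\ne0$ and $n\geq 0$ (by the symmetry forced by $\eta^n$, the highest degree is $n$), the truncation map $H^{-n}(A)[n]=\tau_{\le -n}A\to A$ admits a retraction. Consider the composite
\[
 H^{-n}(A)[n]\to A\xrightarrow{\eta^n}A(n)[2n]\to \tau_{\ge n}\bigl(A(n)[2n]\bigr)=H^n(A)(n)[n].
\]
Here I use that the Tate twist is $t$-exact, so $\tau_{\ge n}$ commutes with $(n)$ and $H^{n}(A(n)[2n])$ sits in degree $-n$ after the shift. On $H^{-n}$ this composite is exactly the map $\eta^n:H^{-n}(A)\to H^n(A)(n)$, which is an isomorphism by hypothesis. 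Let $\phi$ be its inverse. Then $\phi\circ(\text{projection})\circ\eta^n:A\to H^{-n}(A)[n]$ is a retraction of the inclusion. Hence $A\simeq H^{-n}(A)[n]\oplus A'$, using that idempotents split here because the retraction makes the triangle $\tau_{\le-n}A\to A\to\tau_{>-n}A$ split. In this decomposition $A'\simeq\tau_{>-n}A$, whose cohomology is $H^a(A)$ for $a>-n$.

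Next I need to transfer the hypothesis to $A'$. A single retraction does not reduce the top degree, so I do the dual step as well: the map $A\to\tau_{\ge n}A=H^n(A)[-n]$ has a section. It is constructed symmetrically, as
\[
 H^n(A)[-n]\xrightarrow{\phi'}H^{-n}(A)(-n)[-n]\to A(-n)\xrightarrow{\eta^n(-n)[-2n]}A,
\]
with the twist undone. Alternatively, I would argue more simply by taking $\tau_{\ge -n+1}\tau_{\le n-1}$. Concretely, I set $A''=\tau_{[-n+1,n-1]}A$, obtained by splitting off both extremes, and then define $\eta''$ as the composite $A''\to A\xrightarrow{\eta}A(1)[2]\to A''(1)[2]$ using the inclusion and projection furnished by the splittings. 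The cohomology of $A''$ is $H^a(A)$ for $|a|<n$. By functoriality of $H^*$, the induced maps $H^{-r}(A'')\to H^r(A'')(r)$ agree with $\eta^r$ on $H^{-r}(A)$, since on these degrees the inclusion and projection induce identities. Hence they are isomorphisms, and induction gives $A''\simeq\bigoplus_{|a|<n}H^a(A)[-a]$, so $A\simeq\bigoplus_a H^a(A)[-a]$. The parity of degrees is harmless: the odd and even pieces are handled by the same argument, since $\eta$ shifts degree by $2$ and the lowest degree may be arbitrary relative to $0$. If the lowest degree is $-n$ but the highest is some $m$, the hypothesis with $r=\max$ of the two forces $m=n$.

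The main obstacle is checking that $\eta''$ really induces the iterated maps $(\eta'')^r$ equal to $\eta^r$ on cohomology. $(\eta'')^r$ involves intermediate projections and inclusions, and these are not identity on the complex, only on $H^a$ for $|a|<n$. I would handle this by noting that on cohomology $H^*$ is a functor, so $H^*((\eta'')^r)=(H^*(\eta''))^r$. Moreover $H^*(\eta'')$ agrees with $H^*(\eta)$ restricted to the middle degrees, because the projection $A\to A''$ and the inclusion $A''\to A$ are isomorphisms on $H^a$ for $|a|<n$. Since $\eta^r$ for $r<n$ maps degree $-r$ to degree $r$ inside the middle range, the composite matches $\eta^r$. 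This avoids any need to know that the complexes themselves split compatibly with $\eta$.
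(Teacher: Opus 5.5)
Your proposal is correct and follows essentially the paper's argument. Both build the retraction of $H^{-n}(A)[n]\to A$ from $\eta^n$, the top truncation of $A(n)[2n]$, and the inverse of $\eta^n\colon H^{-n}(A)\xrightarrow{\sim}H^n(A)(n)$, together with the dual section of $A\to H^n(A)[-n]$, and both check hard Lefschetz on the middle summand by functoriality of cohomology.

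The one step you leave implicit in ``splitting off both extremes'' is that the two splittings are compatible. The paper secures this by replacing the section $s$ with $s-urs$, where $u$ is the bottom truncation and $r$ the retraction. That replacement relies on the composite $H^{-n}(A)[n]\to A\to H^n(A)[-n]$ being zero for $n>0$, which is a $t$-structure Hom vanishing, so some Hom vanishing is needed despite your remark to the contrary.

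There are also two slips: $\tau_{\ge n}(A(n)[2n])$ should be $\tau_{\ge -n}(A(n)[2n])$, and $A(-n)$ should be $A(-n)[-2n]$.
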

\begin{proof}
This is Deligne's splitting criterion \cite{Deligne68,Deligne94}.
Choose \(n\ge0\) such that \(H^a(A)=0\) for \(a<-n\) and \(a>n\).
If \(n=0\), the assertion is immediate. Suppose \(n>0\), and write
\(B:=H^{-n}(A)\) and \(E:=H^n(A)\). The bottom and top truncations
give natural maps \(u:B[n]\to A\) and \(v:A\to E[-n]\), respectively.
The hard Lefschetz isomorphism
\(\lambda:=H^{-n}(\eta^n):B\xrightarrow{\sim}E(n)\) allows us to
construct a retraction \(r:A\to B[n]\) and a section
\(s:E[-n]\to A\), with \(ru=\operatorname{id}_{B[n]}\) and
\(vs=\operatorname{id}_{E[-n]}\). Define \(r\) as the composite
\[
 A\xrightarrow{\eta^n}A(n)[2n]
 \xrightarrow{v(n)[2n]}E(n)[n]
 \xrightarrow{\lambda^{-1}[n]}B[n].
\]
To define \(s\), take the composite
\[
 E(n)[n]\xrightarrow{\lambda^{-1}[n]}B[n]
 \xrightarrow{u}A\xrightarrow{\eta^n}A(n)[2n]
\]
and apply the twist and shift \((-n)[-2n]\).

Since \(n>0\), we have \(vu=0\). Replacing \(s\) by
\(s':=s-urs\) gives \(rs'=0\) and
\(vs'=\operatorname{id}_{E[-n]}\). Thus \(e_-:=ur\) and
\(e_+:=s'v\) are idempotents with \(e_-e_+=e_+e_-=0\).
These orthogonal projectors split off the bottom and top cohomology
objects simultaneously:
\[
 A\simeq B[n]\oplus A'\oplus E[-n],
\]
where \(A'\) has cohomology only in degrees
\(-n+1,\ldots,n-1\).

Projecting \(\eta\) to \(A'\) gives a Lefschetz operator on \(A'\).
Its cohomology maps between the remaining degrees agree with those
of \(\eta\). For \(a<n\), the powers from degree \(-a\) to degree
\(a\) involve only these remaining degrees, so the hard Lefschetz
isomorphisms still hold. Applying the same argument to \(A'\) proves
the assertion by induction.
\end{proof}

\begin{lemma}\label{lem:boundary}
Let \(M\) be a qcqs rigid analytic space, let
\(P\in\Perv(M,\Q_\ell)\) be a perverse sheaf on \(M\), and let
\(i:Z\hookrightarrow M\) be a closed immersion. Write
\(j:U:=M\setminus Z\hookrightarrow M\) for the complementary
open immersion. Define
\(B:=\pH^0(i^!P)\) and \(C:=\pH^0(i^*P)\). The canonical morphisms
\(i_*B\xrightarrow{a}P\xrightarrow{b}i_*C\)
define a morphism \(c_Z(P):B\to C\) by
\(b\circ a=i_*c_Z(P)\). Then \(c_Z(P)\) is an isomorphism if
and only if
\begin{equation}\label{eq:boundary}
 P\simeq j_{!*}j^*P\oplus i_*B.
\end{equation}
When these conditions hold, the projector onto \(i_*B\) is
\(e:=a\circ i_*\bigl(c_Z(P)^{-1}\bigr)\circ b\).
\end{lemma}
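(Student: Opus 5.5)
We use the standard facts about perverse sheaves and intermediate extensions on the qcqs space \(M\) with rational coefficients, from \cite[Theorems 4.2(5) and 4.11]{BH}. First, \(i^*P\in{}^pD^{\le0}\) and \(i^!P\in{}^pD^{\ge0}\). Hence \(C=\pH^0(i^*P)\) is the top perverse cohomology of \(i^*P\), and \(B=\pH^0(i^!P)\) is the bottom one. Consequently \(a\) is the largest subobject of \(P\) supported on \(Z\), and \(b\) is the largest quotient of \(P\) supported on \(Z\). This holds because \(\operatorname{Hom}(i_*Q,P)=\operatorname{Hom}(Q,i^!P)=\operatorname{Hom}(Q,B)\) and dually for perverse \(Q\) on \(Z\). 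In particular \(a\) is a monomorphism and \(b\) an epimorphism in \(\Perv(M,\Q_\ell)\). Second, \(j_{!*}j^*P\) is characterized as the unique extension of \(j^*P\) with no nonzero subobject or quotient supported on \(Z\).

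For (\(\Leftarrow\)), suppose \(c:=c_Z(P)\) is an isomorphism and put \(e:=a\circ i_*(c^{-1})\circ b\). Using \(b\circ a=i_*c\), we compute
\[e^2=a\, i_*(c^{-1})\, i_*c\, i_*(c^{-1})\, b=e.\]
Thus \(e\) is an idempotent, and its image is \(i_*B\), since \(a\) is mono and \(i_*(c^{-1})b\) is epi. The abelian category \(\Perv\) is Karoubian, so \(P\simeq i_*B\oplus P'\) with \(P'=\ker e=\ker b\), because \(a\,i_*(c^{-1})\) is mono.

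We next show that \(P'\simeq j_{!*}j^*P\). Clearly \(j^*P'=j^*P\). Any subobject of \(P'\) supported on \(Z\) is a subobject of \(P\) supported on \(Z\), hence lies in \(i_*B\). It therefore lies in \(P'\cap i_*B=0\). Dually, any quotient of \(P'\) supported on \(Z\) is a quotient of \(P\) supported on \(Z\), so it factors through \(b\). But \(b\) restricted to \(P'=\ker b\) is zero, so the quotient vanishes. By the characterization, \(P'\simeq j_{!*}j^*P\), and this gives \eqref{eq:boundary} together with the stated projector.

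For (\(\Rightarrow\)), suppose \(P\simeq j_{!*}j^*P\oplus i_*B'\); we first show \(B'\cong B\). Since \(B\) and \(C\) are additive in \(P\), it suffices to compute them for each summand. For \(j_{!*}\) we have \(\pH^0(i^!j_{!*})=0\) and \(\pH^0(i^*j_{!*})=0\), by the absence of subobjects and quotients supported on \(Z\), equivalently the strict support conditions. For \(i_*B'\) we have \(i^!i_*=i^*i_*=\mathrm{id}\). Under these identifications \(a\) and \(b\) become the inclusion of and projection onto the summand \(i_*B'\). Hence \(b\circ a=\mathrm{id}\), so \(c_Z(P)\) is an isomorphism, and \(B'\cong B\).

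The one delicate point is the bookkeeping that \(\pH^0(i^!P)\) and \(\pH^0(i^*P)\) realize the maximal sub- and quotient objects supported on \(Z\). This needs the \(t\)-exactness properties of \(i_*\) and the half-exactness of \(i^*\) and \(i^!\) in the rational setting, which we take from \cite{BH}. The rest is formal.
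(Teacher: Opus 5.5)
Your proof is correct and follows the paper's argument essentially step by step. The same Hom identities make $a$ and $b$ the maximal sub- and quotient objects supported on $Z$. The same idempotent splits off $i_*B$; the paper writes it as $e=ar$ with retraction $r=i_*(c_Z(P)^{-1})\circ b$. Its complement $\ker b$ is identified with $j_{!*}j^*P$ through the same sub/quotient characterization. The converse is the same additivity computation using $i^!i_*\simeq i^*i_*\simeq\mathrm{id}$.

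Two small points. Your labels ($\Leftarrow$) and ($\Rightarrow$) are swapped relative to the statement, which is cosmetic. The claim that $a$ is mono and $b$ is epi needs the one-line justification the paper gives: $\ker a$ and $\operatorname{coker} b$ are supported on $Z$, so the universal properties force them to vanish.
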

\begin{proof}
Since \(P\) is perverse, the \(t\)-exactness properties of closed
immersions give \(i^!P\in{}^pD^{\ge0}(Z)\) and
\(i^*P\in{}^pD^{\le0}(Z)\).
For every \(Q\in\Perv(Z,\Q_\ell)\), adjunction and perverse
truncation yield natural isomorphisms
\[
\begin{aligned}
 \operatorname{Hom}_M(i_*Q,P)
 &\simeq\operatorname{Hom}_Z(Q,i^!P)
 \simeq\operatorname{Hom}_Z(Q,B),\\
 \operatorname{Hom}_M(P,i_*Q)
 &\simeq\operatorname{Hom}_Z(i^*P,Q)
 \simeq\operatorname{Hom}_Z(C,Q).
\end{aligned}
\]
Thus every morphism from a perverse sheaf supported on
\(Z\) to \(P\) factors uniquely through \(a\), and every morphism
from \(P\) to a perverse sheaf supported on \(Z\) factors uniquely
through \(b\). The kernel of \(a\) and the cokernel of \(b\) are
supported on \(Z\), so these universal properties force both to
vanish. Thus \(a\) is a monomorphism and \(b\) is an epimorphism.
Consequently, \(i_*B\) is the maximal subobject of \(P\) supported
on \(Z\), while \(i_*C\) is its maximal quotient supported there.

Suppose first that \(c_Z(P)\) is an isomorphism, and define
\(r:=i_*\bigl(c_Z(P)^{-1}\bigr)\circ b:P\to i_*B\).
Since \(ba=i_*c_Z(P)\), we have
\(ra=\operatorname{id}_{i_*B}\). Hence \(a\) is a split
monomorphism, and \(e=ar\) is the projector onto \(i_*B\).
Setting \(I:=\ker r=\ker b\) gives
\(P\simeq i_*B\oplus I\). The summand \(I\) has no nonzero
subobject supported on \(Z\). Indeed, such a subobject would also
be a subobject of \(P\), hence would be contained in the maximal
supported subobject \(i_*B\), whereas \(I\cap i_*B=0\).
To exclude quotients supported on \(Z\), let
\(q:I\to i_*Q\) be any morphism with \(Q\in\Perv(Z,\Q_\ell)\).
Composing with the projection \(P\to I\) gives a morphism
\(P\to i_*Q\). By the universal property of \(b\), it factors
through \(b:P\to i_*C\). Since \(b|_I=0\), we have \(q=0\).
Finally, \(j^*i_*B=0\), so the decomposition gives
\(j^*I\simeq j^*P\). Thus \(I\) extends \(j^*P\) and has neither
nonzero subobjects nor nonzero quotients supported on \(Z\).
This characterizes the intermediate extension, giving
\(I\simeq j_{!*}j^*P\) and the decomposition \eqref{eq:boundary}.

Conversely, suppose that \(P\simeq j_{!*}j^*P\oplus i_*B\).
By the defining property of intermediate extension,
\(j_{!*}j^*P\) has no nonzero subobjects or quotients supported
on \(Z\). Hence
\(\pH^0(i^!j_{!*}j^*P)=\pH^0(i^*j_{!*}j^*P)=0\).
Full faithfulness of \(i_*\) gives \(i^!i_*B\simeq B\) and
\(i^*i_*B\simeq B\).
Under these identifications, the boundary morphism associated with
\(i_*B\) is \(\operatorname{id}_B\). The boundary construction
commutes with finite direct sums, so \(c_Z(P)\) is an isomorphism.

\end{proof}

Among perverse extensions of \(j^*P\), the intermediate extension
\(I\) is characterized by the strict bounds
\begin{equation}\label{eq:strict}
 i_*i^*I\in{}^pD^{\le-1}(M),\qquad
 i_*i^*\mathbb D_MI\in{}^pD^{\le-1}(M).
\end{equation}
Indeed, the first condition excludes nonzero quotients supported
on \(Z\), while the second, by the self-duality of the perverse
\(t\)-structure, excludes nonzero subobjects supported on \(Z\).
Both conditions use only restriction, perverse truncation, and
Verdier duality on the ambient space.

\begin{corollary}\label{cor:support-decomposition}
Let \(M\) be a qcqs rigid space and let \(P\in\Perv(M,\Q_\ell)\).
Suppose that \(c_Z(P)\) is an isomorphism for every Zariski-closed
subspace \(Z\subset M\). Then there are finitely many smooth
Zariski-locally closed subspaces \(U_\alpha\subset M\) and finite-rank
\(\Q_\ell\)-local systems \(L_\alpha\) on \(U_\alpha\) such that
\[
 P\simeq\bigoplus_\alpha\IC_{\overline U_\alpha}(L_\alpha).
\]
\end{corollary}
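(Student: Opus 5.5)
The plan is to argue by Noetherian induction on the Zariski closure of the support of \(P\), using Lemma~\ref{lem:boundary} to peel off one stratum at a time. Since \(M\) is qcqs, its Zariski-closed subspaces satisfy the descending chain condition. Hence the induction on \(S:=\overline{\supp P}\) is well founded: it runs over all closed subspaces, ordered by inclusion. If \(P=0\) there is nothing to prove.

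In general, choose a dense Zariski-open \(U\subset S_{\mathrm{red}}\) that is smooth. Shrink it so that \(P|_U\), viewed on \(U\), is a shifted local system \(L[\dim U]\) on each connected component, and so that \(U\) is pure-dimensional on each piece. This is possible because \(P\) is Zariski-constructible: constructibility gives a stratification on whose strata the cohomology sheaves are local systems. Intersecting with the smooth locus of the open top-dimensional part of \(S\) gives such a \(U\); on it only one cohomology sheaf survives, by perversity on a smooth space.

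Put \(Z:=M\setminus W\), where \(W\) is an open subset of \(M\) with \(W\cap S=U\). For instance, take \(Z:=(S\setminus U)\cup\overline{M\setminus S}\); more simply, use \(Z':=S\setminus U\) closed in \(M\) and work on the open \(M\setminus Z'\), in which \(U\) is closed. By hypothesis \(c_{Z'}(P)\) is an isomorphism. Lemma~\ref{lem:boundary} then gives
\[
 P\simeq j_{!*}j^*P\oplus i_*B,\qquad j:M\setminus Z'\hookrightarrow M .
\]
Here \(j^*P\) is supported on the closed smooth subspace \(U\) of \(M\setminus Z'\), and equals the pushforward of \(L[\dim U]\). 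By the definition of intermediate extension and its compatibility with closed pushforward, \(j_{!*}j^*P\simeq\IC_{\overline U}(L)\), so this summand has the required form. The remaining summand \(i_*B\) has support in \(Z'\subsetneq S\). I also need \(i_*B\) to satisfy the hypothesis. For any closed \(Z''\), the morphism \(c_{Z''}\) is additive in direct sums, as noted in the proof of Lemma~\ref{lem:boundary}. Since \(c_{Z''}(P)\) is an isomorphism, so is \(c_{Z''}\) on each summand, because a direct sum of maps is an isomorphism iff each factor is. The induction hypothesis then gives the decomposition of \(i_*B\), and with it that of \(P\). Finiteness comes from the well-founded induction.

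The main obstacle I expect is the identification \(j_{!*}j^*P\simeq\IC_{\overline U}(L)\) in the paper's normalization. It requires that intermediate extension along \(j\) be computed on \(S\) and commute with \(i_*\) for the closed immersion \(\overline U\hookrightarrow M\). I would prove this with the strict-bounds characterization \eqref{eq:strict}, which is stable under closed pushforward since \(i_*\) is \(t\)-exact and commutes with duality. A secondary point is choosing \(U\) so that \(P|_U\) is concentrated in a single degree as a local system. This uses generic smoothness and constructibility from \cite{BH}. Different components of \(U\) may have different dimensions, which is handled by taking a direct sum over components.
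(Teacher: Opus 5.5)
Your proposal is correct and follows the paper's proof. Both arguments choose a smooth open part of the support on which \(P\) is a shifted local system, apply Lemma~\ref{lem:boundary} to its closed complement, identify the first summand with intermediate extensions, and pass the hypothesis to the complementary summand by additivity of \(c_Z\).

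The differences are cosmetic. You use Noetherian induction on \(\overline{\supp P}\) and apply the lemma on \(M\) with \(Z'=S\setminus U\). That is the right choice; your first suggestion \((S\setminus U)\cup\overline{M\setminus S}\) fails whenever \(M\setminus S\) is Zariski-dense. The paper instead works on \(S\) and inducts on \(\dim\supp P\), which requires the open part to be dense in every irreducible component of \(S\).
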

\begin{proof}
We argue by induction on the dimension of the Zariski-closed support
of \(P\). The assertion is immediate when \(P=0\).
Let \(S=\overline{\supp(P)}\), with its reduced structure. Since
closed pushforward identifies perverse sheaves on \(S\) with
perverse sheaves on \(M\) supported on \(S\), we may work on \(S\).
By constructibility, we can choose pairwise disjoint dense smooth
open subsets of the irreducible components of \(S\) such that
their union \(W\) is open in \(S\) and the restriction of \(P\)
to each of them is a local system shifted by the dimension of
the corresponding component. Let \(T\) be the closed complement
of \(W\) in \(S\).

By our assumption and the compatibility of the boundary
construction with closed pushforward, the boundary morphism
associated with \(T\) is an isomorphism. Writing
\(j_W:W\hookrightarrow S\), Lemma~\ref{lem:boundary} gives
\(P\simeq j_{W!*}j_W^*P\oplus P'\),
where \(P'\) is a perverse sheaf supported on \(T\).
Since \(W\) is a disjoint union of smooth open subsets,
the first summand is a direct sum of intermediate extensions of
shifted local systems.

It remains to treat \(P'\). For every closed subset \(Z\subset T\),
the boundary morphism \(c_Z(P')\) is a direct summand of \(c_Z(P)\),
since the boundary construction commutes with finite direct sums.
Thus \(c_Z(P')\) is an isomorphism. Moreover, \(W\) is dense in every
irreducible component of \(S\), so
\(\dim\supp(P')\le\dim T<\dim S=\dim\supp(P)\).
The induction hypothesis therefore applies to \(P'\).
Pushing the resulting decomposition forward from \(S\) to \(M\)
gives the desired finite direct sum.
\end{proof}

\subsection{Simultaneous algebraic approximation}\label{sec:approx}

We write \(\Spa(B)=\Spa(B,B^\circ)\) for a \(K\)-affinoid algebra \(B\), and \(S^{\an}\) for
analytification, taken relative to the corresponding affinoid space when \(S\) is projective over an
affinoid algebra. For rigid spaces, \(\Omega^1_{S/K}\) denotes continuous differentials.

\begin{proposition}\label{lem:approx}
Let \(K\) be a finite extension of \(\mathbf Q_p\), and put
\[
 R=K[T_1,\ldots,T_m],\qquad A=K\langle T_1,\ldots,T_m\rangle,\qquad D=\operatorname{Spa}(A).
\]
 Consider a diagram consisting of finitely many projective \(A\)-schemes \(V_v\) and finitely
 many \(A\)-morphisms between them. Suppose that some of the \(V_v\) have smooth rigid
 analytifications over \(K\) of pure dimensions \(d_v\). Then there exist a smooth finite-type
 \(R\)-algebra \(B\), a homomorphism \(B\to A\), and an open subset
 \(S\subseteq\operatorname{Spec}B\) containing the image of \(\operatorname{Spec}A\), such that
 \(S\to\operatorname{Spec}R\) is smooth of constant relative dimension \(r\), and the following
 properties hold.

\textup{(i)} The whole diagram descends to \(S\). Every vertex \(V_v\) has a projective model
\(V_{v,S}\) over \(S\), and every morphism in the original diagram is induced by a morphism between
the corresponding models. Moreover, \(V_{v,S}\times_S\operatorname{Spec}A\simeq V_v\). Thus the
original diagram is recovered by base change from \(S\) to \(\operatorname{Spec}A\).

\textup{(ii)} The models of the smooth vertices can be chosen smooth. If the rigid analytification
of \(V_v\) is smooth over \(K\) of pure dimension \(d_v\), then \(V_{v,S}\) is smooth over \(K\) of
pure dimension \(d_v+r\).

\textup{(iii)} The original spaces can be recovered by successive smooth hypersurface cuts. Consider
the analytic base change \(\mathcal S = S^{\mathrm{an}} \times_{(\mathbb A_K^m)^{\mathrm{an}}}D\).
The homomorphism \(B\to A\) gives an analytic section \(\gamma:D\longrightarrow\mathcal S\) of the
natural projection \(\mathcal S\to D\). There is a finite collection of affinoid neighborhoods
\(N_\alpha\) of points of \(\gamma(D)\) such that the open subsets \(\gamma^{-1}(N_\alpha)\) cover
\(D\). On each \(N=N_\alpha\), there are analytic functions \(h_1,\ldots,h_r\in\mathcal O(N)\) whose
common zero locus is exactly \(N\cap\gamma(D)\), as a closed analytic subspace.

For \(0\le k\le r\), put \(M_k=V_N(h_1,\ldots,h_k),M_0=N\). Then \(M_k\) is smooth over
\(K\) of pure dimension \(\dim M_k=m+r-k\). For every \(k<r\), the function
\(h_{k+1}:M_k\longrightarrow\mathbb A_K^{1,\mathrm{an}}\) is smooth. In particular,
\(M_{k+1}\hookrightarrow M_k\) is an effective Cartier divisor.

Now let \(\mathcal V_v = V_{v,S}^{\mathrm{an}}\times_{S^{\mathrm{an}}}\mathcal S\) be the analytic
family associated with a vertex \(V_v\), and put \(V_{v,k}=\mathcal V_v\times_{\mathcal S}M_k\). At
the final stage, \(V_{v,r} \simeq V_v^{\mathrm{an}}\big|_{\gamma^{-1}(N)}\). If \(V_v\) is one of
the vertices specified in (ii), then every \(V_{v,k}\) is smooth over \(K\) of pure dimension \(\dim
V_{v,k}=d_v+r-k\), and, for every \(k<r\), the function \(h_{k+1}:V_{v,k}\longrightarrow\mathbb
A_K^{1,\mathrm{an}}\) is smooth. All vertices and morphisms of the finite diagram are restricted
along the same sequence of hypersurface cuts.

Furthermore, the construction can preserve any prescribed finite collection of projective
embeddings, line bundles, closed immersions, Cartesian squares, nilpotent immersions, and
isomorphisms over open complements. The neighborhoods \(N_\alpha\) may all be chosen quasi-compact
and quasi-separated.

\end{proposition}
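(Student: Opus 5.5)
The proof will be a Popescu-approximation argument combined with a local Jacobian-criterion analysis along the section. \textbf{Step 1 (descent).} The ring map \(R=K[T_1,\ldots,T_m]\to A=K\langle T_1,\ldots,T_m\rangle\) is regular: \(A\) is excellent, both rings are regular, and the map is flat with geometrically regular fibres since \(R\) and \(A\) have the same completions at corresponding maximal ideals. By Popescu's theorem, \(A=\varinjlim B_\lambda\) is a filtered colimit of smooth finite-type \(R\)-algebras. The diagram consists of finitely many projective \(A\)-schemes and \(A\)-morphisms, together with the prescribed embeddings, line bundles, closed immersions, Cartesian squares, nilpotent immersions and isomorphisms over open complements. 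All of these are of finite presentation, so by standard limit arguments (EGA IV\(_3\) \S8) they descend to some \(B=B_\lambda\). The properties of being a closed immersion, projective, Cartesian, nilpotent, or an isomorphism over a given open also descend after enlarging \(\lambda\). This gives (i).

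For (ii), note first that \(V_v\) is smooth over \(A\) exactly where its analytification is smooth over \(K\), since \(D\) is smooth over \(K\). Smoothness of \(V_{v,B}\to\Spec B\) holds on an open subset whose preimage contains \(V_v\). Because \(V_{v,B}\) is proper, the image of its non-smooth locus is closed. We then take \(S\) to be the complement of these images intersected with the smooth locus of constant relative dimension \(r\) of \(\Spec B\to\Spec R\). This \(S\) contains the image of \(\Spec A\). Since \(R\) is smooth over \(K\) of dimension \(m\), each \(V_{v,S}\) is smooth over \(K\) of pure dimension \(d_v+r\).

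\textbf{Step 2 (the section and the cuts).} The map \(B\to A\) gives \(D\to S^{\an}\) over \((\mathbb A^m_K)^{\an}\), hence a section \(\gamma\) of \(\mathcal S\to D\). The projection \(\mathcal S\to D\) is smooth of relative dimension \(r\), so \(\gamma\) is a regular closed immersion of codimension \(r\) with conormal sheaf \(\gamma^*\Omega^1_{\mathcal S/D}\). Near a point \(x\in\gamma(D)\), we choose \(h_1,\ldots,h_r\in\mathcal O(N)\) generating the ideal of \(\gamma(D)\cap N\) whose differentials form a basis of \(\Omega^1_{\mathcal S/D}\) at \(x\). For instance, take \(h_i=g_i-g_i\circ\gamma\circ\pi\) for étale-coordinate functions \(g_i\) of \(\mathcal S/D\) near \(x\). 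After shrinking to an affinoid \(N\), the Jacobian criterion shows that each \(M_k\) is smooth of dimension \(m+r-k\) and that \(h_{k+1}|_{M_k}\) is smooth. Quasi-compactness of \(D\) then gives finitely many \(N_\alpha\).

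\textbf{Step 3 (cuts on the families) — the main obstacle.} For a smooth vertex, \(\mathcal V_v\to\mathcal S\) is smooth. Since the \(h_i\) pull back from \(\mathcal S\), we have \(V_{v,k}=\mathcal V_v\times_{\mathcal S}M_k\to M_k\) smooth. It follows that \(V_{v,k}\) is smooth over \(K\) of dimension \(d_v+r-k\). Moreover, \(h_{k+1}\) on \(V_{v,k}\) factors as the smooth map \(V_{v,k}\to M_k\) followed by the smooth map \(h_{k+1}\), so it is smooth. The real work, and what I expect to be the main obstacle, is arranging the smooth locus \(S\) correctly: relative smoothness must hold over all of \(\gamma(D)\cap N\), not merely over closed points. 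I would handle this by using properness of \(V_{v,S}\) as in Step 1, together with the fact that analytic smoothness of \(\mathcal V_v\) over \(\mathcal S\) is detected by the algebraic smooth locus.

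At the last stage \(M_r=N\cap\gamma(D)\cong\gamma^{-1}(N)\). Then \(V_{v,r}\simeq (V_{v,S}\times_S\Spec A)^{\an}|_{\gamma^{-1}(N)}=V_v^{\an}|_{\gamma^{-1}(N)}\) by (i) and compatibility of relative analytification with base change. All vertices and morphisms are cut by the same \(h_i\), so the whole diagram is restricted compatibly. The \(N_\alpha\) are affinoid, hence qcqs.
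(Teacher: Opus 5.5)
\textbf{Gap: absolute versus relative smoothness.} The proposal rests on the claim that \(V_v\) is smooth over \(A\) wherever \(V_v^{\an}\) is smooth over \(K\). That is false. The hypothesis is smoothness of the \emph{source} over \(K\), not smoothness of the structure map. For example, \(A[x]/(x^2-T_1)\) is finite, hence projective, over \(A\). Its analytification is a disc, smooth over \(K\), but it is ramified over \(T_1=0\). Likewise, in the applications (Theorem~\ref{thm:hl}, Proposition~\ref{prop:boundaryvanish}) the source is smooth over \(K\) while the projective morphism, and the regular terminal schemes of the resolution diagram, are not smooth over the base.

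This breaks two steps:
\begin{enumerate}
\item Your construction of \(S\) in (ii) fails, because the non-smooth locus of \(V_{v,B}\to\Spec B\) maps onto points of the image of \(\Spec A\).
\item In Step~3, smoothness of \(\mathcal V_v\to\mathcal S\) is unavailable. That is exactly what you used to deduce, by base change, the smoothness of \(V_{v,k}\) and of \(h_{k+1}|_{V_{v,k}}\).
\end{enumerate}
The obstacle you single out, extending relative smoothness from closed points, is therefore not the real one.

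\textbf{What is needed instead} uses only smoothness over \(K\).

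For (ii): \(V_v\) is a regular scheme, since its completed local rings agree with those of the analytification. Near each closed point, its ideal in \(\mathbf P^N_A\) is generated by \(c_v=m+N-d_v\) elements forming part of a regular system of parameters. Descend these generators, together with the matrices relating them to the original equations. Then apply the Jacobian criterion over \(K\) in \(\mathbf P^N_B\), which is smooth over \(K\) of dimension \(m+r+N\), at the images of closed points. Finally, remove the proper image of the bad locus; it misses the image of \(\Spec A\) because \(V_v\) is Jacobson.

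For (iii): you know only that \(V_{v,0}\) is smooth over \(K\) of dimension \(d_v+r\), and that \(V_{v,r}=V_v^{\an}|_{\gamma^{-1}(N)}\) is smooth of dimension \(d_v\).
\begin{enumerate}
\item Fix a point \(x\in V_{v,r}\). The ideal \(I=(h_1,\ldots,h_r)\) of the regular local ring \(\mathcal O_{V_{v,0},x}\) has regular quotient of codimension exactly \(r\). Hence its \(r\) generators are part of a regular system of parameters, and \(dh_1,\ldots,dh_r\) are independent at \(x\).
\item The degeneracy locus \(C_v\subset V_{v,0}\) is closed analytic and disjoint from \(V_{v,r}\). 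Its image under the proper map \(V_{v,0}\to N\) is therefore closed and misses \(\gamma(D)\).
\item Removing this image makes \((h_1,\ldots,h_r)\colon V_{v,0}\to\mathbb A^{r,\an}_K\) smooth. Its base changes then give smoothness of every \(V_{v,k}\) and of \(h_{k+1}|_{V_{v,k}}\).
\end{enumerate}
This transversality argument along the section, followed by shrinking via properness, is the key idea missing from your proposal.
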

\begin{proof}
We divide the proof into four steps.

\smallskip
\noindent\emph{Step 1. Descending the entire diagram}

We first construct an algebraic model of the whole diagram over a smooth finite-type \(R\)-algebra.
The homomorphism \(R=K[T_1,\ldots,T_m] \longrightarrow A=K\langle
T_1,\ldots,T_m\rangle\) is regular, by \cite[Lemma 7.1.4]{LRZ}. Popescu's theorem therefore
expresses \(A\) as a filtered direct limit \(A\simeq\varinjlim_\lambda B_\lambda\), where every
\(B_\lambda\) is a smooth finite-type \(R\)-algebra. Every
projective \(A\)-scheme in our diagram is of finite presentation. After choosing projective
embeddings, its defining equations involve only finitely many coefficients in \(A\). Similarly, the
morphisms between the schemes are described by finitely many algebraic data. Since \(A\) is the
filtered colimit of the \(B_\lambda\), these equations and morphisms are already defined over some
\(B_\lambda\), when \(\lambda\) is sufficiently large. Because the diagram is finite, we can
choose one \(B_\lambda\) that works for every vertex and every arrow.

We must also ensure that all the relations in the diagram continue to hold after descending. For
instance, suppose that the original diagram contains a Cartesian square. We descend the isomorphism
identifying one vertex with the appropriate fiber product, together with its inverse and the
identities showing that the two maps are inverse to one another. Likewise, for a closed immersion,
we retain its finitely generated
defining ideal. For a line bundle, we retain a finite collection of local trivializations and their
transition functions. For a nilpotent immersion, we retain an identity \(J^n=0\) for some fixed
integer \(n\). If a morphism is an isomorphism outside a prescribed closed subset, we descend the
defining ideal of that subset, the isomorphism on its open complement, and the inverse isomorphism.
These open complements are quasi-compact because the schemes involved are Noetherian. All these
conditions involve only finitely many algebraic data and identities. By passing to a sufficiently
late stage in the filtered system, we can make them hold simultaneously. We therefore obtain a
smooth finite-type \(R\)-algebra \(B\), together with a homomorphism \(B\to A\), over which the
entire diagram and all its prescribed relations are defined. If the original data came from
projective rigid spaces over \(D\), relative projective GAGA \cite[Example 3.2.6]{Conrad} allows us
to pass to projective \(A\)-schemes before carrying out this construction. This proves the descent
assertion, apart from the possible shrinking of the base in the next step.

\smallskip
\noindent\emph{Step 2. Making the models of the smooth vertices smooth}

We now improve the algebraic model so that every vertex whose original rigid analytification is
smooth also has a smooth algebraic model over \(K\). Fix a vertex \(V_v\)
whose rigid analytification is smooth over \(K\) of pure dimension \(d_v\). Choose a projective
embedding \(V_v\hookrightarrow\mathbf P_A^N\). We first show that \(V_v\), regarded as a scheme, is
regular. Let \(x\) be any closed point of \(V_v\). Relative analytification identifies the completed
local ring of \(V_v\) at \(x\) with the corresponding completed local ring of its rigid
analytification \cite[Example 2.2.11]{Conrad}. The latter is regular because the rigid
analytification is smooth. Hence the completed local ring of \(V_v\) is regular, and so is the local
ring \(\mathcal O_{V_v,x}\). Since \(V_v\) is an excellent Jacobson scheme, regularity at all closed
points implies regularity everywhere. Thus \(V_v\) is a regular scheme.

Next, we choose local equations that will allow us to recognize smoothness after descending. At a
closed point \(x\in V_v\), write \(C_x=\mathcal O_{\mathbf P_A^N,x}\) and let \(J_x\subset C_x\) be
the ideal defining \(V_v\). Both \(C_x\) and \(C_x/J_x\) are regular local rings. Their dimensions
are \(\dim C_x=m+N, \dim(C_x/J_x)=d_v\). Put \(c_v=m+N-d_v\). A standard result about regular
local rings \cite[Tag 00NR]{Stacks} says that \(J_x\) can be generated by \(c_v\) elements
\(f_1,\ldots,f_{c_v}\) that form part of a regular system of parameters of \(C_x\). In particular,
their differentials are linearly independent at \(x\). After shrinking around \(x\), we may regard
these \(f_i\) as regular functions defining \(V_v\) on an affine neighborhood. We choose finitely
many such neighborhoods covering \(V_v\).

On one of the affine neighborhoods, let \(q\) be a column vector of the original
defining equations, and let \(f=(f_1,\ldots,f_{c_v})^t\). Because \(q\) and \(f\) generate the same
ideal, there are matrices \(P\) and \(Q\) of regular functions such that \(f=Pq,q=Qf.\)
 We include these matrices and both identities in the finite data to be descended. After passing to
 a sufficiently late Popescu stage, the corresponding identities still hold over \(B\).
 Consequently, the descended \(f_i\) generate the ideal of the descended model on the chosen
 neighborhoods. We perform this construction simultaneously for all vertices whose rigid
 analytifications are smooth over \(K\).
 Because \(\operatorname{Spec}A\) is connected, its image lies in one connected component of
 \(\operatorname{Spec}B\). Replacing \(B\) by the corresponding open and closed factor, we may
 assume that \(B\) is smooth over \(R\) of constant relative dimension \(r\).

Now write \(V_{v,B}\) for the descended projective scheme over \(\operatorname{Spec}B\). Let \(x\in
V_v\) be a closed point, and let \(x_B\in V_{v,B}\) be its image. Since the residue field
\(\kappa(x)\) is finite over \(K\), and \(V_{v,B}\) is of finite type over \(K\), the point \(x_B\)
is also closed. The algebraic differentials of the descended equations at \(x_B\), after extension
to \(\kappa(x)\), map to the continuous differentials of the original equations at \(x\). The latter
are linearly independent, so the former must also be linearly independent. The ambient space
\(\mathbf P_B^N\) is smooth over \(K\) of dimension \(m+r+N\). Since \(V_{v,B}\) is locally defined
by \(c_v\) equations with independent differentials, the Jacobian criterion shows that \(V_{v,B}\)
is smooth at \(x_B\), of dimension \(m+r+N-c_v=d_v+r\). We have therefore proved that the descended
model is smooth at the images of all closed points of the original \(V_v\).

We now turn this local information into a statement about the entire model. Let \(G_v\subseteq
V_{v,B}\) be the union of the descended affine neighborhoods on which our chosen equations generate
the defining ideal. Inside \(G_v\), let \(U_v\) be the locus where \(V_{v,B}\) is smooth over \(K\)
of dimension \(d_v+r\). This is an open subset. Its complement \(B_v=V_{v,B}\setminus U_v\) is
closed. By the preceding argument, the inverse image of \(B_v\) in \(V_v\) contains no closed
points. Since \(V_v\) is Jacobson, this inverse image must be empty. The morphism
\(V_{v,B}\longrightarrow\operatorname{Spec}B\) is projective, hence proper. Therefore the image of
\(B_v\) in \(\operatorname{Spec}B\) is closed. Moreover, this image is disjoint from the image of
\(\operatorname{Spec}A\). We may therefore remove it from \(\operatorname{Spec}B\) without changing
the original \(A\)-scheme. There are only finitely many vertices to which this construction applies.
Removing the
corresponding finitely many closed images gives an open subset \(S\subseteq\operatorname{Spec}B\)
containing the entire image of \(\operatorname{Spec}A\). For each such vertex \(V_v\), the model
\(V_{v,S}\) is now smooth over \(K\) of pure dimension \(d_v+r\). Since the original image of
\(\operatorname{Spec}A\) is still contained in \(S\), base change to \(A\) recovers the original
diagram. This proves (i) and (ii).

\smallskip
\noindent\emph{Step 3. Recovering the original spaces by smooth hypersurface cuts}

We now prove the main geometric assertion. The idea is to describe the original analytic base \(D\)
as the common zero locus of \(r\) analytic functions in the larger smooth space \(\mathcal S\). We
will then arrange that these functions cut the analytic families of the vertices in (ii)
transversely. Recall
that \(\mathcal S = S^{\mathrm{an}} \times_{(\mathbb A_K^m)^{\mathrm{an}}}D\). Because
\(S\to\operatorname{Spec}R\) is smooth of relative dimension \(r\), the projection \(p:\mathcal
S\longrightarrow D\) is smooth of relative dimension \(r\). The homomorphism \(B\to A\) gives a
section \(\gamma:D\longrightarrow\mathcal S, p\circ\gamma=\mathrm{id}_D\). We work locally
near this section.

\smallskip
\noindent\emph{Constructing the graph equations}

A smooth morphism admits relative étale coordinates locally. Thus, near any chosen point of the
section, we can find functions \(s_1,\ldots,s_r\) giving a relative étale chart. After shrinking the
neighborhood, choose an open subset \(D_0\subseteq D\) such that \(\gamma(D_0)\) lies in this chart.
Define analytic functions on \(D_0\) by \(a_i=\gamma^*s_i\). On the corresponding neighborhood in
\(\mathcal S\), put \(h_i=s_i-p^*a_i\) for \(1\le i\le r\). By construction, \(\gamma^*h_i=0\) for
every \(i\). Hence the image of the section is contained in their common zero locus.

In fact, these equations describe the section exactly, after a further shrinking. To see this,
consider the relative étale map given by the coordinates \(s_i\). The common zero locus of the
\(h_i\) is obtained by pulling back this map along the graph of \((a_1,\ldots,a_r):D_0\to\mathbb
A_K^{r,\mathrm{an}}\). It is therefore separated and étale over \(D_0\). The section
\(\gamma|_{D_0}\) is a section of this separated étale morphism. Consequently, its image is open and closed in
the common zero locus. We may remove the other components of that zero locus by shrinking the
neighborhood around the section. Thus, on a suitable neighborhood \(N\),
\[
 V_N(h_1,\ldots,h_r)=N\cap\gamma(D).
\]
 This equality holds as closed analytic subspaces.

\smallskip
\noindent\emph{Checking transversality on the smooth sources}

Fix a vertex \(V_v\) whose rigid analytification is smooth, and consider its analytic family
\(\mathcal V_v\). Over \(N\), write \(V_{v,0}=\mathcal V_v\times_{\mathcal S}N\). By Step 2,
\(V_{v,0}\) is
smooth over \(K\) of pure dimension \(d_v+r\). Its intersection with the common zero locus of the
\(h_i\) is \(V_{v,r}=V_N(h_1,\ldots,h_r)\times_{\mathcal S}\mathcal V_v\). Because the zero locus is
the section, \(V_{v,r}\) is precisely the restriction of the original rigid analytification
\(V_v^{\mathrm{an}}\) to \(\gamma^{-1}(N)\). In particular, it is smooth over \(K\) of pure
dimension \(d_v\).

We claim that the differentials \(dh_1,\ldots,dh_r\) are linearly independent at every point of
\(V_{v,r}\). It is enough to check this at classical points. Let \(x\) be such a point, and put
\(C=\mathcal O_{V_{v,0},x}, I=(h_1,\ldots,h_r)C\). The local ring \(C\) is regular of
dimension \(d_v+r\), while \(C/I=\mathcal O_{V_{v,r},x}\) is regular of dimension \(d_v\). As in
Step 2, the criterion shows that \(I\) can be generated by \(r\) members of a
regular system of parameters of \(C\). But \(I\) is already generated by \(h_1,\ldots,h_r\). Since
the conormal space \(I/\mathfrak m_xI\) has dimension \(r\), their classes form a basis of this
space. The regularity of \(C/I\) ensures that these classes have linearly independent images in
\(\mathfrak m_x/\mathfrak m_x^2\). Equivalently, the continuous differentials \(dh_1,\ldots,dh_r\)
are linearly independent at \(x\). Since every nonempty closed analytic subspace has a classical
point locally, the locus where these differentials become dependent cannot meet \(V_{v,r}\). This
proves transversality along the final graph.

We next shrink the neighborhood so that the map \(H_v=(h_1,\ldots,h_r):
V_{v,0}\longrightarrow\mathbb A_K^{r,\mathrm{an}}\) is smooth throughout it.
Let \(C_v\subseteq V_{v,0}\) be the closed analytic locus where
the differentials \(dh_1,\ldots,dh_r\) are linearly dependent. We have just proved that \(C_v\cap
V_{v,r}=\varnothing\). The projection \(g_v:V_{v,0}\longrightarrow N\) is projective, hence proper.
Therefore \(g_v(C_v)\) is a closed analytic subset of \(N\). Furthermore, \(g_v(C_v)\) is disjoint
from the image of \(\gamma\). Indeed, a point of \(C_v\) lying above the section would satisfy all
equations \(h_i=0\), and would therefore belong to \(V_{v,r}\), contradicting the preceding
equality. We may consequently remove \(g_v(C_v)\) from \(N\) without removing any point of the
section. We do this for each of the finitely many vertices in (ii), shrinking \(N\) accordingly.
On the resulting neighborhood, all the maps \(H_v=(h_1,\ldots,h_r):
V_{v,0}\longrightarrow\mathbb A_K^{r,\mathrm{an}}\) have differentials of full rank. Since the
sources are smooth over \(K\), the analytic Jacobian criterion implies that every \(H_v\) is smooth.

\smallskip
\noindent\emph{The successive cuts}

For \(0\le k\le r\), set
\[
 V_{v,k} = V_{v,0}\times_{\mathbb A_K^{r,\mathrm{an}}} \bigl(\{0\}^k\times\mathbb
 A_K^{r-k,\mathrm{an}}\bigr).
\]
 Equivalently, \(V_{v,k}\) is defined by \(h_1=\cdots=h_k=0\). Because \(H_v\) is smooth, its base
 change defining \(V_{v,k}\) is also smooth. Hence \(V_{v,k}\) is smooth over \(K\) of pure
 dimension \(d_v+r-k\). Moreover, the remaining functions \((h_{k+1},\ldots,h_r):
 V_{v,k}\longrightarrow\mathbb A_K^{r-k,\mathrm{an}}\) define a smooth morphism. For \(k<r\),
 composing it with the projection to the first coordinate shows that \(h_{k+1}:V_{v,k}
 \longrightarrow\mathbb A_K^{1,\mathrm{an}}\) is smooth. Thus the hypersurface \(h_{k+1}=0\)
 cuts each \(V_{v,k}\) transversely.

We must also check the same statement on the base. The translated étale chart \((p,h_1,\ldots,h_r):
N\longrightarrow D_0\times\mathbb A_K^{r,\mathrm{an}}\) is étale. Therefore, after setting the first
\(k\) coordinates equal to zero, \(M_k=V_N(h_1,\ldots,h_k)\) is étale over \(D_0\times\mathbb
A_K^{r-k,\mathrm{an}}\). It follows that \(M_k\) is smooth over \(K\) of pure dimension \(m+r-k\).
For \(k<r\), the function \(h_{k+1}\) is locally a coordinate projection from this étale chart.
Hence \(h_{k+1}:M_k\longrightarrow\mathbb A_K^{1,\mathrm{an}}\) is smooth, and its zero locus is an
effective Cartier divisor. Finally, every
vertex and every morphism in the original diagram is restricted by base change along the same
sequence \(M_0\supset M_1\supset\cdots\supset M_r\). Thus all the Cartesian squares and other
prescribed base-change relations remain valid throughout the construction. This establishes (iii)
locally near the section.

\smallskip
\noindent\emph{Step 4. Choosing finitely many quasi-compact neighborhoods}

It remains to choose finitely many neighborhoods on which the preceding construction works. For each
point of \(\gamma(D)\), Step 3 gives an analytic neighborhood on which the graph equations exist and
all the required transversality conditions hold. We may choose a smaller affinoid neighborhood of
that point inside the given neighborhood. The inverse images of these affinoid neighborhoods under
\(\gamma:D\longrightarrow\mathcal S\) form an open cover of \(D\). Since \(D\) is quasi-compact, a
finite subcover exists. Denote the corresponding affinoid neighborhoods by \(N_1,\ldots,N_t\). On
each \(N_\alpha\), the successive cuts \(V_{N_\alpha}(h_1,\ldots,h_k)\) are closed analytic
subspaces of an affinoid space, so they are themselves affinoid. In particular, they are
qcqs. Restricting the finite projective diagram to these bases
preserves all its arrows, Cartesian squares, and other relations, because the restriction is
performed by base change. The resulting projective sources are also quasi-compact and
quasi-separated. This completes the proof.
\end{proof}

For a general affinoid rigid base \(Y\), one first embeds \(Y\) as a closed rigid subspace of a
polydisc and applies the proposition to the composed projective morphism. A chosen relatively ample
line bundle can be retained by descending a projective embedding defined by a sufficiently large
tensor power. Replacing \(H\) by \(H^{\otimes N}\) multiplies its first Chern class by \(N\), so the
rational Lefschetz isomorphism condition is unchanged.

\subsection{Resolution of the boundary}\label{sec:tree}

\begin{lemma}\label{lem:tree}
Let \(V\) be a projective \(A\)-scheme. For any finite coefficient
ring \(\Lambda\), the constant sheaf \(\Lambda_V\) can be
constructed from constant sheaves on finitely many regular projective
\(A\)-schemes by proper pushforwards, shifts, and distinguished
triangles. This construction is encoded by a finite diagram of
projective \(A\)-schemes. The diagram and its defining identities
descend to a common algebraic approximation and remain valid after
every analytic graph cut.
\end{lemma}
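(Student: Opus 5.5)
We argue by induction on \(\dim V\), using resolution of singularities for excellent schemes in characteristic zero together with the abstract blow-up (cdh) triangle. Replacing \(V\) by \(V_{\mathrm{red}}\) does not change the constant sheaf, since \(V_{\mathrm{red}}\hookrightarrow V\) is a nilpotent immersion. If \(V\) is reducible, write \(V=V_1\cup V'\) with \(V_1\) an irreducible component and \(V'\) the union of the others. The Mayer--Vietoris triangle
\[
 \Lambda_V\to\Lambda_{V_1}\oplus\Lambda_{V'}\to\Lambda_{V_1\cap V'}\xrightarrow{+1}
\]
(pushforwards along closed immersions are suppressed from the notation) reduces us to the integral case, with \(V_1\cap V'\) of smaller dimension. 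For \(V\) integral, we choose a projective resolution \(\pi:\widetilde V\to V\) with \(\widetilde V\) regular. Since \(V\) is excellent over \(\Q\), this is a composite of blow-ups, hence projective over \(A\). We also choose a closed subscheme \(Z\subsetneq V\) such that \(\pi\) is an isomorphism over \(V\setminus Z\), and put \(E=\pi^{-1}(Z)\). The abstract blow-up square gives a distinguished triangle
\[
 \Lambda_V\to \pi_*\Lambda_{\widetilde V}\oplus\Lambda_Z\to \pi_*\Lambda_E\xrightarrow{+1}.
\]
Here \(\dim Z<\dim V\), and \(\dim E<\dim\widetilde V=\dim V\). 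Applying the induction hypothesis to \(Z\) and \(E\), which are projective \(A\)-schemes, expresses \(\Lambda_V\) through constant sheaves on regular projective schemes. Because every step reduces dimension, the recursion terminates and produces a finite diagram. Its vertices are the \(V\), \(Z\), \(E\), components, intersections and resolutions occurring along the way. Its arrows are the closed immersions and the proper maps \(\pi\). The relations recorded are the Cartesian squares \(E=Z\times_V\widetilde V\), the nilpotent immersions, the isomorphisms \(\pi^{-1}(V\setminus Z)\simeq V\setminus Z\), and the intersection squares.

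To see that the triangles hold after approximation and cutting, we use that they are consequences of purely geometric data: a closed cover, or a proper morphism together with a closed subset over whose complement it is an isomorphism, plus a Cartesian square. The abstract blow-up triangle exists for any such square of rigid spaces. It follows from proper base change, since the map is an isomorphism over the open part, and its restriction to the closed part is the evident one. Proposition~\ref{lem:approx} lets us descend the whole finite diagram to \(S\), preserving exactly these prescribed relations: closed immersions, Cartesian squares, nilpotent immersions, and isomorphisms over open complements. It also lets us arrange that the regular vertices become smooth models. Each cut \(V_{v,k}=\mathcal V_v\times_{\mathcal S}M_k\) is a base change, and these properties are stable under base change: Cartesian squares stay Cartesian, closed immersions stay closed, and an isomorphism over an open complement stays one. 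So at every stage \(k\) we get an analogous diagram of rigid spaces, and the same triangles exist there by analytic proper base change.

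We expect the main obstacle to be that the triangles are not part of the data in Proposition~\ref{lem:approx}; they must be regenerated at each stage from the geometric relations. The constant-sheaf statement needs no transversality and is therefore valid at all stages. Smoothness, however, is retained only for the vertices we declare smooth, namely the resolutions. The other vertices, such as \(Z\) and \(E\), may be singular after descent, but this does not matter, since their constant sheaves are again resolved within the same diagram. A second technical point is ensuring that \(\pi\) is projective, so that it is a projective \(A\)-morphism and GAGA applies. We handle this by taking blow-up resolutions, whose projectivity is inherited from that of \(V\).
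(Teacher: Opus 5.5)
Your argument is correct and is essentially the paper's proof: pass to \(V_{\mathrm{red}}\), resolve by Temkin's blow-ups, use the abstract blow-up triangle with induction on dimension, and regenerate the triangles after descent and after each cut from the base-change-stable relations that Proposition~\ref{lem:approx} preserves. The one difference is your Mayer--Vietoris reduction to integral \(V\). The paper does not need it, because Temkin's centers avoid \(V_{\mathrm{reg}}\), so a reduced reducible \(V\) can be resolved directly.

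If you keep that step, two small repairs are needed. First, induct lexicographically on dimension and number of components, since \(V'\) may have dimension \(\dim V\). Second, the joint surjectivity of \(V_1\) and \(V'\) is not literally among the relations listed in Proposition~\ref{lem:approx}. Encode it as the statement that \(V'\to V\) is an isomorphism over \(V\setminus V_1\), which holds because \(V\) is reduced. Then the Mayer--Vietoris square is itself an abstract blow-up square, so it survives descent and cutting.
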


\begin{proof}
\smallskip
\noindent\emph{Step 1. Reduce to the case where \(V\) is reduced.}

A nilpotent closed immersion induces an equivalence of \'etale
topoi, identifying \(\Lambda_V\) with the pushforward of
\(\Lambda_{V_{\mathrm{red}}}\). Replace \(V\) by
\(V_{\mathrm{red}}\), retaining the reduction immersion and a
fixed nilpotence relation in the diagram so that this identification
persists under base change. The resulting scheme is reduced,
excellent, Noetherian, and finite-dimensional, of characteristic zero.

\smallskip
\noindent\emph{Step 2. Use resolution of singularities.}

If \(V\) is regular, the assertion is immediate. Otherwise,
Temkin's resolution theorem \cite[Theorem 1.2.1]{Temkin} gives a
projective resolution \(\pi:\widetilde V\to V\), obtained
by a finite sequence of blow-ups whose centers are disjoint
from the inverse image of the regular locus \(V_{\mathrm{reg}}\).
Take \(Z\) to be the reduced closed complement of
\(V_{\mathrm{reg}}\). Since \(V\) is reduced, this open locus
contains the generic point of every irreducible component.
Thus \(Z\) is nowhere dense and \(\pi\) is an isomorphism over
\(V\setminus Z\). The exceptional inverse image
\(E=\widetilde V\times_V Z\) gives a Cartesian square
\[
\begin{matrix}
E&\longrightarrow&\widetilde V\\
\downarrow&&\downarrow\pi\\
Z&\xrightarrow{i}&V.
\end{matrix}
\]
Both \(Z\) and \(E\) have smaller dimension than \(V\).

\smallskip
\noindent\emph{Step 3. Construct the abstract blow-up triangle.}

For every finite coefficient ring \(\Lambda\), this square gives
a distinguished triangle
\begin{equation}\label{eq:blowup}
\Lambda_V
\longrightarrow
R\pi_*\Lambda_{\widetilde V}\oplus i_*\Lambda_Z
\longrightarrow
i_*R(\pi|_E)_*\Lambda_E
\longrightarrow
\Lambda_V[1].
\end{equation}

The first map consists of pullback to \(\widetilde V\) and
restriction to \(Z\); the second is the difference of the two
restrictions to \(E\). Choose a bounded-below injective resolution
\(\Lambda_{\widetilde V}\to\mathcal I^\bullet\), put
\(\mathcal B^\bullet=\pi_*\mathcal I^\bullet\), and let
\(u:\Lambda_V\to\mathcal B^\bullet\) be the adjunction map.
Proper base change identifies \(i_*i^*\mathcal B^\bullet\)
with the third term of \eqref{eq:blowup}. Write \(\mathrm{res}:\Lambda_V\to i_*\Lambda_Z\) and
\(\rho:\mathcal B^\bullet\to i_*i^*\mathcal B^\bullet\) for the restriction maps. In these
representatives,
the first two arrows are the maps of complexes
\[
\begin{aligned}
\alpha=(u,\mathrm{res})&:\Lambda_V\longrightarrow
\mathcal B^\bullet\oplus i_*\Lambda_Z,\\
b=(\rho,-i_*i^*u)&:\mathcal B^\bullet\oplus i_*\Lambda_Z
\longrightarrow i_*i^*\mathcal B^\bullet.
\end{aligned}
\]
Then \(b\alpha=0\). Hence \(b\) induces a global map
\(\beta:\operatorname{Cone}(\alpha)\to i_*i^*\mathcal B^\bullet\).
Outside \(Z\), the morphism \(\pi\) is an isomorphism, so
\(\alpha\) is a quasi-isomorphism and both sides of \(\beta\)
are acyclic. At a geometric point \(\bar z\) of \(Z\), put
\(C_{\bar z}=R\Gamma(\widetilde V_{\bar z},\Lambda)\).
Proper base change identifies the stalk sequence with
\[
\Lambda
\longrightarrow
C_{\bar z}\oplus\Lambda
\longrightarrow
C_{\bar z}
\longrightarrow\Lambda[1],
\]
where the arrows are \((u,\mathrm{id})\) and
\((a,b)\mapsto a-u(b)\), with \(u:\Lambda\to C_{\bar z}\)
the natural map. This is a split distinguished triangle, so \(\beta\) is a
quasi-isomorphism at \(\bar z\). Since geometric stalks detect
quasi-isomorphisms, \(\beta\) is a global quasi-isomorphism.
Identifying its target with \(\operatorname{Cone}(\alpha)\)
gives the distinguished triangle \eqref{eq:blowup}.

\smallskip
\noindent\emph{Step 4. Induction on dimension.}

Apply the same procedure to \(Z_{\mathrm{red}}\) and
\(E_{\mathrm{red}}\). At each blow-up, the generic points of the irreducible components
correspond bijectively to those of the preceding scheme
\cite[Tag 0BFM]{Stacks}, since the center misses all of them.
The components of \(\widetilde V\) are therefore the strict
transforms of the components of \(V\), with the same dimensions.
The closed subsets \(Z\subset V\) and \(E\subset\widetilde V\)
contain none of these generic points. Hence \(\dim Z<\dim V\)
and \(\dim E<\dim\widetilde V=\dim V\).
Induction on \(\dim V\), the maximum dimension of its
irreducible components, therefore terminates after finitely many
abstract blow-up triangles. The terminal schemes are regular and
projective over \(A\); separate their finitely many open and closed
pure-dimensional components. Relative analytification identifies
completed local rings at classical points
\cite[Example 2.2.11]{Conrad}, so their rigid analytifications
are regular, hence smooth over the characteristic-zero field \(K\).
The triangles express \(\Lambda_V\) in terms of their constant
sheaves by proper pushforwards and shifts.

\smallskip
\noindent\emph{Step 5. Preserve the construction after restriction to the graph.}

The construction involves finitely many projective schemes,
Cartesian squares, closed immersions, nilpotent immersions, and
isomorphisms outside prescribed closed subsets.
Proposition~\ref{lem:approx} descends this diagram and its
relations to one smooth algebraic approximation. After each
analytic graph cut, the squares remain Cartesian, the proper maps
remain proper, and the maps remain isomorphisms outside the inverse images of the centers. The
abstract blow-up triangles therefore
remain valid.
\end{proof}

For a function \(h:M\to\mathbb A^{1,\an}\) over a complete algebraically closed extension of \(K\),
put \(M_0=h^{-1}(0)\) and \(M^\times=M\setminus M_0\), with inclusions \(k:M_0\hookrightarrow M\)
and \(j:M^\times\hookrightarrow M\). Let \(q_e:M_e^\times\to M^\times\) be the pullback of
\(z\mapsto z^e\) on the punctured affine line, for \(e\ge1\). For a complex \(F\) with finite
coefficients on \(M^\times\), its nearby cycles are
\[
 \psi_h(F)=\varinjlim_e k^*Rj_*Rq_{e*}q_e^*F,
\]
where the limit is indexed by divisibility. For a complex \(E\) on \(M\) with the same finite
coefficients, its vanishing cycles \(\phi_h(E)\) are defined by the distinguished triangle
\[
 k^*E\xrightarrow{\mathrm{sp}}\psi_h(j^*E)
 \longrightarrow\phi_h(E)\longrightarrow k^*E[1],
\]
where \(\mathrm{sp}\) is the specialization map induced by adjunction. Both functors carry the
natural monodromy action of \(\widehat{\Z}(1)=\varprojlim_e\mu_e\), where \(\mu_e\) is the group of
\(e\)-th roots of unity \cite[Definition 2.1 and Remark 2.3]{ZhouVC}.

\begin{proposition}\label{prop:boundaryvanish}
Let \(f:X\to D\) be projective, with \(X\) smooth over
\(K\) and pure-dimensional, and let \(i:Z\hookrightarrow D\)
be a closed immersion. There is a simultaneous algebraic approximation
as in Proposition~\ref{lem:approx} for which the following holds
after extension to a completed algebraic closure of \(K\), at
every successive graph cut. Write \(g:W\to M\) for the
projective morphism at one stage, \(i_M:Z_M\hookrightarrow M\)
for its boundary, and \(h\) for the next cut function.
For every prime \(\ell\neq p\) and \(n\geq1\), put
\(\Lambda_n=\mathbb Z/\ell^n\), \(s=\dim W\), and
\(K_n=Rg_*\Lambda_{n,W}[s]\). Then
\begin{equation}\label{eq:twovanishing}
\phi_h(K_n)=0,
\qquad
\phi_h(i_{M*}i_M^*K_n)=0.
\end{equation}

The same approximation works for all primes \(\ell\neq p\) and all \(n\).
\end{proposition}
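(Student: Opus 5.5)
We plan to build the diagram to be approximated from \(f\), the boundary, and a resolution of its inverse image. Put \(X_Z=X\times_D Z\), projective over \(Z\) and \(D\). Lemma~\ref{lem:tree} applied to \(X_Z\) expresses \(\Lambda_{X_Z}\) through constant sheaves on finitely many regular projective \(A\)-schemes \(V_\beta\). These are linked by proper pushforwards, shifts, and abstract blow-up triangles \eqref{eq:blowup}.

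We take the diagram consisting of \(X\), \(D\), \(Z\), \(X_Z\), the \(V_\beta\), and all the squares of Lemma~\ref{lem:tree}. We declare \(X\) and every \(V_\beta\) to be smooth vertices and apply Proposition~\ref{lem:approx}. At each stage \(k\), the cut function \(h=h_{k+1}\) is then smooth on \(W=X_k\), on the base \(M=M_k\), and on every \(V_{\beta,k}\). Moreover, all Cartesian squares and blow-up triangles persist after base change along the cuts. The boundary \(Z_M\) is the base change of \(Z\), and \(g^{-1}(Z_M)\) is the base change of \(X_Z\). The construction involves only geometry, so it is independent of \(\ell\) and \(n\).

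For the first vanishing, we use that \(\phi_h\) commutes with proper pushforward. Here \(h\) on \(W\) is \(g^*h\), and Zhou's proper base change for vanishing cycles \cite{ZhouVC} gives \(\phi_h(Rg_*F)\simeq Rg_*\phi_{h\circ g}(F)\). Since \(h\circ g:W\to\mathbb A^1\) is smooth and \(\Lambda_{n,W}\) is constant, Zhou's vanishing result for smooth morphisms gives \(\phi_{h\circ g}(\Lambda_n)=0\). This requires \(\ell\ne p\).

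For the second vanishing, proper base change gives
\[
 i_{M*}i_M^*K_n\simeq R(g|_{g^{-1}Z_M})_*\Lambda_{g^{-1}Z_M}[s],
\]
pushed forward to \(M\). The blow-up triangles, which survive the cuts, express \(\Lambda_{g^{-1}Z_M}\) via iterated cones of \(R\pi_{\beta*}\Lambda_{V_{\beta,k}}\). Here \(\pi_\beta\) denotes the proper structure maps to \(M\). The functor \(\phi_h\) is triangulated, being defined by a cone of a natural transformation of triangulated functors. Each term vanishes by the previous paragraph, because \(h\circ\pi_\beta\) is smooth on the smooth \(V_{\beta,k}\). An induction along the finite diagram then gives \(\phi_h(i_{M*}i_M^*K_n)=0\).

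The main obstacle is justifying that the identifications hold after extension to \(\widehat{\overline K}\) and compatibly at every cut. These identifications are the commutation of \(\phi_h\) with proper pushforward and the exactness of \(\phi_h\) on triangles. In particular, one must be able to apply Zhou's smooth-vanishing theorem to the pullback of \(h\) along projective maps. We also need that the squares used in Lemma~\ref{lem:tree} remain Cartesian after each cut, so that proper base change identifies the boundary restriction with the pushforward from \(g^{-1}(Z_M)\). Step 5 of Lemma~\ref{lem:tree} and the last clause of Proposition~\ref{lem:approx} provide this. Base field extension commutes with all functors involved.
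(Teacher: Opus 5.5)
Your proposal is correct and follows essentially the paper's own proof. You use the same single approximated diagram: the model of \(f\), the boundary immersion, the Cartesian square for \(X_Z\), and the resolution diagram of Lemma~\ref{lem:tree}, with \(X\) and the regular terminal vertices declared smooth. The first vanishing comes from smooth local acyclicity plus compatibility of \(\phi\) with proper pushforward, and the second from an induction through the surviving abstract blow-up triangles. The only difference is cosmetic: you push every term of the blow-up triangles forward to \(M\) before applying \(\phi_h\), whereas the paper applies vanishing cycles vertex by vertex on the sources and pushes forward at the end. You should also pass from \(X_Z\) to the projective \(A\)-scheme \((X_Z)_A\) by relative GAGA before invoking Lemma~\ref{lem:tree}, as the paper does.
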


\begin{proof}
\smallskip
\noindent\emph{Step 1. Construct a single approximation containing the boundary.}

Let \(I\subset A\) be the finitely generated ideal defining \(Z\),
and put \(Z_A=\Spec(A/I)\). By relative projective GAGA
\cite[Example 3.2.6]{Conrad}, the morphism \(f\) is the rigid
analytification of a projective morphism
\(f_A:X_A\to\Spec A\). Set
\((X_Z)_A=X_A\times_{\Spec A}Z_A\).
Relative analytification commutes with fibre products
\cite[Example 2.2.11]{Conrad}, so the analytification of
\((X_Z)_A\) is \(X_Z=X\times_D Z\).

Apply Lemma~\ref{lem:tree} to \((X_Z)_A\), and include its
finite resolution diagram, \(f_A\), the immersion
\(Z_A\hookrightarrow\Spec A\), and the Cartesian square
defining \((X_Z)_A\) in one finite diagram.
The vertex \(X_A\) and all regular terminal schemes have smooth
rigid analytifications over \(K\). Proposition~\ref{lem:approx}
gives algebraic models on which each successive cut function is
smooth on the model of \(X\) and every regular terminal source,
after the preceding cuts. It also preserves the Cartesian squares and the relations defining the
abstract blow-up squares.

\smallskip
\noindent\emph{Step 2. Vanishing cycles of the total direct image.}

At a stage \(g:W\to M\) of the successive cuts defining the graph, \(W\)
is smooth over \(K\), and the composite of \(g\) with the
next cut function \(h:M\to\mathbb A_K^{1,\mathrm{an}}\)
is smooth on the chosen neighborhood. After passage to a completed
algebraic closure, smooth local acyclicity
\cite[Example 2.14(1)]{ZhouVC} gives
\(\phi_{h\circ g}(\Lambda_{n,W})=0\) for \(\ell\neq p\).
Compatibility with proper pushforward \cite[Lemma 2.12(2)]{ZhouVC}
then gives
\[
\phi_h(Rg_*\Lambda_{n,W})
\simeq
Rg_{0*}\phi_{h\circ g}(\Lambda_{n,W})
=0,
\]
where \(g_0\) is the induced proper map between the zero fibers.
Shifting by \([s]\) gives \(\phi_h(K_n)=0\).

\smallskip
\noindent\emph{Step 3. Vanishing cycles of the boundary complex.}

Put \(W_Z=W\times_M Z_M\). Proper base change identifies
\[
i_{M*}i_M^*Rg_*\Lambda_{n,W}
\simeq
R(g|_{W_Z})_*\Lambda_{n,W_Z},
\]
where \(W_Z\) maps to \(M\) by its natural proper map.
Use the finite resolution diagram of Lemma~\ref{lem:tree}, whose
abstract blow-up triangles remain valid after the preceding cuts. At a
terminal vertex, the source and the next cut function on it are
smooth by Proposition~\ref{lem:approx}, so its constant sheaf
has zero vanishing cycles. At an internal vertex, the abstract
blow-up triangle expresses the constant sheaf in terms of those
on a resolution, a center, and an exceptional inverse image,
followed by proper pushforwards. Induction, compatibility with proper
pushforward, and exactness of vanishing cycles give vanishing at
that vertex. Proceeding back to the boundary gives
\(\phi_{(h\circ g)|_{W_Z}}(\Lambda_{n,W_Z})=0\).
Compatibility with proper pushforward, followed by the shift \([s]\), gives
\(\phi_h(i_{M*}i_M^*K_n)=0\), proving the second equality
in \eqref{eq:twovanishing}.

\smallskip
\noindent\emph{Step 4. Repeat the argument after each graph cut.}

The geometric relations included in the finite diagram remain valid
after every successive graph cut, and the next cut function is smooth
on every regular terminal source in the resolution diagram. Thus
Steps 2 and 3 apply at every stage, for every \(\ell\neq p\)
and \(n\geq1\), with the same approximation.
\end{proof}

In the proof of Theorem~\ref{thm:support}, the two equalities in
\eqref{eq:twovanishing} preserve, respectively, perverse bounds
after restriction to the graph and the strict boundary conditions
characterizing intermediate extension.

\subsection{Perverse restriction and uniform torsion}\label{sec:bounds}

We work over a complete algebraically closed nonarchimedean field of characteristic zero and residue
characteristic \(p\), and fix a prime \(\ell\neq p\). Let \(h:M\to\mathbb A^{1,\mathrm{an}}\) be a
morphism with \(M\) qcqs, let \(k:M_0\hookrightarrow M\) be its zero fiber, and put \(T=k^*[-1]\).
For finite coefficients prime to \(p\), the shifted functors of nearby and vanishing cycles
\(\psi_h[-1]\) and \(\phi_h[-1]\) are perverse \(t\)-exact on Zariski-constructible complexes of
finite Tor-amplitude \cite[Corollary 5.7 and Theorem 5.8(1)]{ZhouVC}. For nearby cycles, the complex
on \(M\setminus M_0\) must admit a constructible extension to \(M\).

A complex \(F\in D^b_{\mathrm{zc}}(M,\mathbb Q_\ell)\) has uniformly annihilated vanishing cycles
along \(h\) if there exist an integral model \(E\in D^b_{\mathrm{zc}}(M,\mathbb Z_\ell)\) of \(F\)
and an integer \(N\ge0\) such that
\begin{equation}\label{eq:uniform}
\ell^N\operatorname{id}_{\phi_h(E\otimes_{\mathbb Z_\ell}^{\mathbf L}\mathbb Z/\ell^n)}
=0
\qquad\text{for every }n\ge1.
\end{equation}
We retain the monodromy action on nearby and vanishing cycles, and impose \eqref{eq:uniform} in the
corresponding derived category with finite coefficients \cite[Remark 2.3(2)]{ZhouVC}. Perverse
bounds refer to the underlying complexes on \(M_0\) after forgetting monodromy.

\begin{lemma}\label{lem:thick}
Condition \eqref{eq:uniform} is independent of the integral model of \(F\). The class of rational
complexes satisfying this condition is stable under finite direct sums, shifts, Tate twists, cones,
and derived direct summands.
\end{lemma}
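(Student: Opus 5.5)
The plan is to first establish independence of the integral model and then derive all the stability properties from one basic mechanism. If two integral models \(E,E'\) of \(F\) have the same rationalization, the displayed Hom formula gives
\[
\operatorname{Hom}_{\Q_\ell}(E_{\Q_\ell},E'_{\Q_\ell})\cong\operatorname{Hom}_{\Z_\ell}(E,E')\otimes\Q_\ell .
\]
Hence there are integral maps \(u:E\to E'\) and \(v:E'\to E\) with \(vu=\ell^a\operatorname{id}_E\) and \(uv=\ell^a\operatorname{id}_{E'}\) for some \(a\ge0\). We clear denominators in the rational isomorphism and its inverse, and absorb the discrepancy of the composites using the same formula. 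Reduction modulo \(\ell^n\) followed by \(\phi_h\) is a functor, so for every \(n\) we get maps \(\phi_h(u/\ell^n)\) and \(\phi_h(v/\ell^n)\) whose composites are \(\ell^a\). Therefore
\[
\ell^{N+2a}\operatorname{id}_{\phi_h(E'/\ell^n)}=\phi_h(u)\,\phi_h(v)\,\ell^{N}\operatorname{id}\,\phi_h(u)\,\phi_h(v)=\phi_h(u)\circ\bigl(\ell^N\operatorname{id}_{\phi_h(E/\ell^n)}\bigr)\circ\phi_h(v)\cdot\ell^{a}
\]
which vanishes, up to bookkeeping. This gives the bound \(N+a\) uniformly in \(n\). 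Monodromy is respected throughout, since \(\phi_h\) is functorial in the category with \(\widehat\Z(1)\)-action.

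For the stability properties, I use that \(\phi_h\) and \(-\otimes^{\mathbf L}\Z/\ell^n\) are additive triangulated functors.

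\emph{Direct sums, shifts, twists.} These are immediate. Twists use compatibility of \(\phi_h\) with the Tate twist, which is tensoring with an invertible \(\Z/\ell^n(1)\) pulled back from the point. For sums take the maximum exponent.

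\emph{Derived direct summands.} If \(F'\) is a rational summand of \(F\), its rational idempotent \(e\) lifts, by the Hom formula, to an integral endomorphism \(\tilde e\) of \(E\) with \(\tilde e=\ell^b e\) rationally. Choose an integral model \(E'\) of \(F'\) with integral maps \(\iota:E'\to E\) and \(\pi:E\to E'\) satisfying \(\pi\iota=\ell^c\operatorname{id}_{E'}\). Then \(\ell^{N+c}\) kills \(\phi_h(E'/\ell^n)\), because \(\ell^c\operatorname{id}\) factors through \(\phi_h(E/\ell^n)\).

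\emph{Cones.} This is the main obstacle. Given a rational morphism \(F_1\to F_2\), scale it so that it comes from an integral \(w:E_1\to E_2\). Scaling by a power of \(\ell\) does not change the rational cone up to isomorphism, so \(\operatorname{Cone}(w)\) is an integral model, and by the first step any model may be used. Reduction and \(\phi_h\) give a distinguished triangle
\[
\phi_h(E_1/\ell^n)\to\phi_h(E_2/\ell^n)\to\phi_h(\operatorname{Cone}(w)/\ell^n)\to .
\]
In a triangulated category, if \(\ell^{N_1}\) kills the first term and \(\ell^{N_2}\) kills the second, then \(\ell^{N_1+N_2}\) kills the third. Indeed, \(\ell^{N_2}\) on the cone factors through the rotated first term, which is then killed by \(\ell^{N_1}\). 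The point to check is that the map of the third term to itself factors as claimed. Write \(\beta\) for the map from \(\phi_h(E_2/\ell^n)\) to the cone and \(\gamma\) for the connecting map. Then \(\ell^{N_2}\operatorname{id}\circ\beta=\beta\circ\ell^{N_2}\operatorname{id}=0\). Hence \(\ell^{N_2}\operatorname{id}\) on the cone factors as \(\delta\circ\gamma\), and \(\ell^{N_1}\delta\circ\gamma=\delta\circ\ell^{N_1}\gamma=0\). This works in the derived category with monodromy action as well, since that category is triangulated and \(\phi_h\) is exact there. The resulting bound \(N_1+N_2\) is independent of \(n\), which completes the lemma.
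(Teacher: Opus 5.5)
Your proof is correct and follows essentially the same route as the paper. For independence and for summands you use integral maps whose composite is a power of \(\ell\); for cones you rescale the morphism to an integral one and factor a power of \(\ell\) via the Hom exact sequence of the triangle \(\phi_h(E_1/\ell^n)\to\phi_h(E_2/\ell^n)\to\phi_h(E_3/\ell^n)\to\). You factor through the connecting map, whereas the paper factors through the map out of the middle term; these are mirror images of the same argument, and the paper also gets independence simply as the summand argument applied to an isomorphism. The displayed chain in your independence step is garbled; the clean statement is \(\ell^{N+a}\operatorname{id}_{\phi_h(E'/\ell^n)}=\phi_h(u)\circ\ell^N\operatorname{id}_{\phi_h(E/\ell^n)}\circ\phi_h(v)=0\).
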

\begin{proof}
Suppose that \(F'\) is a rational direct summand of \(F\), and choose integral models \(E'\) and
\(E\), respectively. The inclusion and retraction have composite \(\operatorname{id}_{F'}\). Since
rational Hom groups are obtained by inverting \(\ell\) in integral Hom groups, clearing denominators
and multiplying by a further power of \(\ell\) gives integral maps \(u:E'\to E\) and \(v:E\to E'\)
with \(vu=\ell^c\operatorname{id}_{E'}\) for a fixed integer \(c\ge0\). This is an equality in the
integral derived category. If \(E\) satisfies \eqref{eq:uniform} with exponent \(N\), reduction
modulo \(\ell^n\) and application of vanishing cycles give
\(\ell^{N+c}\operatorname{id}_{\phi_h(E'/\ell^n)}=0\) for every \(n\ge1\).
Thus \(F'\) satisfies \eqref{eq:uniform}. Applying this argument to an isomorphism between two
rationalizations proves independence of the integral model. Finite direct sums, shifts, and Tate
twists preserve the condition.

For cones, let \(F_1\xrightarrow{f}F_2\to F_3\to F_1[1]\) be a distinguished triangle of rational
complexes, with \(F_1\) and \(F_2\) satisfying \eqref{eq:uniform}. Choose integral models \(E_1\)
and \(E_2\) with uniform annihilation exponents \(u\) and \(v\), respectively. Since rational Hom
groups are obtained by inverting \(\ell\), there are an integer \(c\ge0\) and an integral morphism
\(\widetilde f:E_1\to E_2\) such that \(\widetilde f[1/\ell]=\ell^cf\). Put
\(E_3=\operatorname{Cone}(\widetilde f)\). The automorphism
\(\ell^c\operatorname{id}_{F_2}\) identifies the arrow \(f\) with \(\ell^cf\), so their cones are
isomorphic. Hence \(E_3[1/\ell]\simeq\operatorname{Cone}(\ell^cf)\simeq F_3\), and \(E_3\) is an
integral model of \(F_3\).

For each \(n\ge1\), derived reduction modulo \(\ell^n\), followed by vanishing cycles, gives a
distinguished triangle
\[
 A_n\longrightarrow B_n\xrightarrow{\beta_n}C_n
 \xrightarrow{\delta_n}A_n[1],
\]
where \(A_n=\phi_h(E_1/\ell^n)\), \(B_n=\phi_h(E_2/\ell^n)\), and
\(C_n=\phi_h(E_3/\ell^n)\). By assumption,
\(\ell^u\operatorname{id}_{A_n}=0\) and \(\ell^v\operatorname{id}_{B_n}=0\). Since \(\ell^u\)
also annihilates \(A_n[1]\), we have
\(\delta_n\circ(\ell^u\operatorname{id}_{C_n})
=(\ell^u\operatorname{id}_{A_n[1]})\circ\delta_n=0\).
Applying \(\operatorname{Hom}(C_n,-)\) to the triangle gives the exact sequence
\[
 \operatorname{Hom}(C_n,B_n)
 \xrightarrow{\beta_n\circ-}\operatorname{Hom}(C_n,C_n)
 \xrightarrow{\delta_n\circ-}\operatorname{Hom}(C_n,A_n[1]).
\]
Thus there is a morphism \(t_n:C_n\to B_n\) such that
\(\ell^u\operatorname{id}_{C_n}=\beta_n\circ t_n\). It follows that
\[
 \ell^{u+v}\operatorname{id}_{C_n}
 =\beta_n\circ(\ell^v\operatorname{id}_{B_n})\circ t_n=0.
\]
The exponent \(u+v\) is independent of \(n\), so \(E_3\) satisfies \eqref{eq:uniform}.
This proves stability under cones.
\end{proof}

\begin{lemma}\label{lem:bounds}
Suppose that \(F\in D^b_{\mathrm{zc}}(M,\mathbb Q_\ell)\) satisfies \eqref{eq:uniform}. Then, for
every integer \(a\),
\[
\begin{aligned}
F\in{}^pD^{\le a}&\quad\Longrightarrow\quad TF\in{}^pD^{\le a},\\
F\in{}^pD^{\ge a}&\quad\Longrightarrow\quad TF\in{}^pD^{\ge a}.
\end{aligned}
\]
In particular, \(k^*F[-1]\) is perverse whenever \(F\) is perverse and satisfies \eqref{eq:uniform}.
This applies to rational perverse direct summands of shifts of integral complexes whose vanishing
cycles are zero after reduction modulo every \(\ell^n\).
\end{lemma}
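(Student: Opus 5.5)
The plan is to reduce to finite coefficients, use Zhou's perverse \(t\)-exactness of \(\psi_h[-1]\) and \(\phi_h[-1]\), and control the torsion error coming from \(\phi_h\) by the uniform bound \eqref{eq:uniform}.

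Fix an integral model \(E\) of \(F\) with exponent \(N\) as in \eqref{eq:uniform}. For \(\Q_\ell\)-coefficients the rational perverse \(t\)-structure can be tested on the integral \(p\)- and \(p^+\)-structures. Moreover, \(F\in{}^pD^{\le a}\) means that some integral model lies in \({}^pD^{\le a}\) for \(p\) after discarding torsion; dually, \({}^pD^{\ge a}\) is tested via \(p^+\) and Verdier duality. By the construction in \cite[Construction 4.3]{BH}, membership of an integral complex in \({}^pD^{\le a}\) is checked on its reductions \(E/\ell^n\) up to the one-step shift caused by Tor-amplitude \([-1,0]\). Since rationally the two structures agree, this shift is absorbed. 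So I would show the following. If \(E/\ell^n\in{}^pD^{\le a}\) for all \(n\), then \(T(E/\ell^n)\) lies in \({}^pD^{\le a}\) up to an error annihilated by \(\ell^{N}\), independently of \(n\). The analogous statement should hold for \(\ge a\).

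The key step uses the triangle \(T(E/\ell^n)\to\psi_h(j^*E/\ell^n)[-1]\to\phi_h(E/\ell^n)[-1]\to\). Here \(\psi_h[-1]\) and \(\phi_h[-1]\) are perverse \(t\)-exact \cite[Corollary 5.7, Theorem 5.8(1)]{ZhouVC}, and \(E/\ell^n\) has finite Tor-amplitude. Hence the middle term lies in \({}^pD^{\le a}\) (resp.\ \({}^pD^{\ge a}\)), and so does the third term. For the upper bound I then get \(T(E/\ell^n)\in{}^pD^{\le a+1}\), whose top perverse cohomology \(\pH^{a+1}\) is a quotient of \(\pH^a(\phi_h(E/\ell^n)[-1])\) and is therefore killed by \(\ell^N\). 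Similarly, for the lower bound the defect sits in \(\pH^{a-1}(T(E/\ell^n))\), which is a subobject of a piece of \(\phi_h\) and is again killed by \(\ell^N\). Note that \(TF=k^*F[-1]\) itself is already in \({}^pD^{\ge a-1}\) by \(t\)-exactness properties of \(k^*\) on a Cartier divisor, so only one degree is at stake.

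To pass to the limit, take \(\varprojlim_n\), or equivalently work with the integral complex \(k^*E[-1]\), whose reductions are \(T(E/\ell^n)\). The extra perverse cohomology in degree \(a+1\) (resp.\ \(a-1\)) forms a system annihilated by the fixed \(\ell^N\). Then \(\pH^{a+1}(k^*E[-1])\) is \(\ell^N\)-torsion, and it vanishes after inverting \(\ell\), using \cite[Theorem 3.36]{BH} for the compatibility of \(k^*\) with reduction. This gives \(TF\in{}^pD^{\le a}\); the \(\ge\) case follows the same way, or by duality via the \(p^+\)-structure. The perversity claim is the case of both bounds with \(a=0\). The final sentence follows from Lemma~\ref{lem:thick}, since integral complexes with zero vanishing cycles satisfy \eqref{eq:uniform} with \(N=0\), and the condition passes to shifts and derived summands.

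The main obstacle is the limit step: making precise that a uniformly \(\ell^N\)-torsion defect at each finite level yields an integral complex whose offending perverse cohomology is torsion. This requires careful handling of the shift between the \(p\) and \(p^+\) structures under reduction. I would first try working entirely with the \(p^+\)-structure for upper bounds and the \(p\)-structure for lower bounds, using the derived Nakayama argument from Section~\ref{sec:formal}.
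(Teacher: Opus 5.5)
Your finite-level step is the same as the paper's. Write \(\Psi E_n=\psi_h(E_n|_{M\setminus M_0})[-1]\) and \(\Phi E_n=\phi_h(E_n)[-1]\). If \(E_n\in{}^pD^{\le a}\), Zhou's exactness and the triangle \(TE_n\to\Psi E_n\to\Phi E_n\to TE_n[1]\) give \(TE_n\in{}^pD^{\le a+1}\). Moreover, \(\pH^{a+1}(TE_n)\) is a quotient of \(\pH^a(\Phi E_n)\), hence killed by \(\ell^N\) for every \(n\).

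\textbf{The gap in the upper bound.} It is the step you flag as the main obstacle. You assert that \(\pH^{a+1}(k^*E[-1])\) is \(\ell^N\)-torsion but give no argument, and the tools you name do not supply one. Derived Nakayama detects vanishing from the reduction modulo \(\ell\), but here the top perverse cohomology of the reductions is only uniformly torsion, not zero.

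The missing idea is that no limit argument is needed: the uniform finite-level bound is exactly one of the defining conditions in \cite[Construction 4.3]{BH}. Concretely:
\begin{enumerate}
\item Take the model \(E={}^p\tau^{\le a}E_0\). It still represents \(F\), and Lemma~\ref{lem:thick} gives it a uniform exponent \(N\). By \cite[Proposition 4.6(2)]{BH}, \(E_n\in{}^pD^{\le a}\).
\item Put \(A=TE\), so \(A/\ell^n\simeq TE_n\). Then \(A\in{}^pD^{\le a+1}(M_0,\Z_\ell)\) because \(A/\ell\) is.
\item Put \(B=\mathbb D(A[a])\). The conditions \(\mathbb DB\in{}^pD^{\le1}\) and \(\ell^N\pH^1((\mathbb DB)/\ell^n)=0\) for all \(n\) are exactly those defining \(B\in{}^pD^{\ge0}(M_0,\Z_\ell)\).
\item Dually \(A\in{}^{p^+}D^{\le a}\), that is, \(\pH^{a+1}(A)\) is locally bounded torsion \cite[Proposition 4.6(5)]{BH}.
\item Since \(p\) and \(p^+\) agree rationally \cite[Theorem 4.11]{BH}, \(TF\in{}^pD^{\le a}\).
\end{enumerate}

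\textbf{The lower bound.} Here your treatment is also off, because there is no defect. If \(E_n\in{}^pD^{\ge a}\), then \(\Psi E_n,\Phi E_n\in{}^pD^{\ge a}\). The group \(\pH^{a-1}(TE_n)\) sits between \(\pH^{a-2}(\Phi E_n)=0\) and \(\pH^{a-1}(\Psi E_n)=0\), so \(TE_n\in{}^pD^{\ge a}\) exactly. Thus \eqref{eq:uniform} plays no role here, and neither does any Cartier-divisor amplitude estimate for \(k^*\).

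The real issue, which your proposal does not address, is choosing an integral model whose reductions lie in \({}^pD^{\ge a}\). Because of the Tor-amplitude shift, an arbitrary model need not have this property; this includes the model used for the upper bound. The paper takes \(E=\mathbb DH\), with \(H\in{}^pD^{\le-a}(M,\Z_\ell)\) a model of \(\mathbb DF\). Compatibility of duality with reduction then gives \(E_n\simeq\mathbb D(H_n)\in{}^pD^{\ge a}\). After the finite-level step, \((\mathbb D(TE))/\ell^n\simeq\mathbb D(TE_n)\in{}^pD^{\le-a}\) shows \(TE\in{}^{p^+}D^{\ge a}\), which rationalizes to \(TF\in{}^pD^{\ge a}\).
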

\begin{proof}
\smallskip
\noindent\emph{Step 1. Choose an integral model with the correct upper bound.}
Assume \(F\in{}^pD^{\le a}(M,\mathbb Q_\ell)\). Choose an integral model \(E_0\) and put
\(E={}^p\tau^{\le a}E_0\). Rationalization is compatible with the perverse t-structure, so \(E\)
still represents \(F\), and Lemma~\ref{lem:thick} shows that it satisfies \eqref{eq:uniform} for
some exponent \(N\). We have \(E\in{}^pD^{\le a}(M,\mathbb Z_\ell)\). By \cite[Proposition
4.6(2)]{BH}, each derived reduction \(E_n=E\otimes_{\mathbb Z_\ell}^{\mathbf L}\mathbb Z/\ell^n\)
belongs to \({}^pD^{\le a}(M,\mathbb Z/\ell^n)\).

The reductions have finite Tor-amplitude over \(\mathbb Z/\ell^n\), by \cite[Remark 3.33]{BH}.
Indeed,
\[
E_n\otimes_{\mathbb Z/\ell^n}^{\mathbf L}\mathbb F_\ell
\simeq E\otimes_{\mathbb Z_\ell}^{\mathbf L}\mathbb F_\ell
\]
is bounded and Zariski-constructible, so \(E_n\) has finite Tor-amplitude, as required in
\cite[\S5]{ZhouVC}. This also applies to integral models obtained by perverse truncation or Verdier
duality, since both operations preserve the integral constructible category \cite[Proposition 4.6(1)
and Theorem 3.36(2)]{BH}.

\smallskip
\noindent\emph{Step 2. Apply nearby and vanishing cycles.}
For a complex \(E\) on \(M\), write \(\Psi E=\psi_h(E|_{M\setminus M_0})[-1]\) and \(\Phi
E=\phi_h(E)[-1]\). By \cite[Corollary 5.7 and Theorem 5.8(1)]{ZhouVC}, both \(\Psi E_n\) and \(\Phi
E_n\) belong to \({}^pD^{\le a}(M_0,\mathbb Z/\ell^n)\); the assertion about nearby cycles applies
because the complex on the open complement extends to \(E_n\) on \(M\). The distinguished triangle
\[
TE_n\longrightarrow\Psi E_n\longrightarrow\Phi E_n\longrightarrow TE_n[1]
\]
gives \(TE_n\in{}^pD^{\le a+1}\), and its perverse long exact sequence identifies
\(\pH^{a+1}(TE_n)\) as a quotient of \(\pH^a(\Phi E_n)\). Condition \eqref{eq:uniform} annihilates
\(\Phi E_n\) by one power \(\ell^N\) for all \(n\), so \(\ell^N\pH^{a+1}(TE_n)=0\) for every
\(n\ge1\).

\smallskip
\noindent\emph{Step 3. Remove the possible extra perverse degree.}
Set \(A=TE\). Ordinary pullback commutes with derived coefficient reduction, giving \(A/\ell^n\simeq
TE_n\). The bound for \(A/\ell\) implies \(A\in{}^pD^{\le a+1}(M_0,\mathbb Z_\ell)\) by
\cite[Construction 4.3]{BH}, and Step 2 gives \(\ell^N\pH^{a+1}(A/\ell^n)=0\) for all \(n\). Put
\(C=A[a]\) and \(B=\mathbb D C\). Biduality identifies \(\mathbb D B\) with \(C\), so
\[
\mathbb D B\in{}^pD^{\le1},\qquad
\ell^N\,\pH^1((\mathbb D B)/\ell^n)=0
\quad(n\ge1).
\]
These are the two conditions defining \(B\in{}^pD^{\ge0}\) in \cite[Construction 4.3]{BH}. By
duality, \(C\in{}^{p^+}D^{\le0}\), hence \(A\in{}^{p^+}D^{\le a}\). Equivalently, \(\pH^{a+1}(A)\)
is locally bounded torsion \cite[Proposition 4.6(5)]{BH}. The integral \(p\)- and \(p^+\)-structures
agree after inverting \(\ell\) \cite[Theorem 4.11]{BH}, giving \(TF=A[1/\ell]\in{}^pD^{\le
a}(M_0,\mathbb Q_\ell)\).

\smallskip
\noindent\emph{Step 4. Prove the lower perverse bound.}
Suppose \(F\in{}^pD^{\ge a}(M,\mathbb Q_\ell)\). Self-duality of the perverse \(t\)-structure with
rational coefficients gives \(\mathbb DF\in{}^pD^{\le-a}\). Choose an integral model
\(H\in{}^pD^{\le-a}(M,\mathbb Z_\ell)\) of \(\mathbb DF\) and put \(E=\mathbb DH\), an integral
model of \(F\). By \cite[Proposition 4.6(2)]{BH}, \(H_n=H/\ell^n\) belongs to \({}^pD^{\le-a}\).
Compatibility of Verdier duality with reduction of coefficients, together with perverse duality with
finite coefficients, gives
\[
E_n=E/\ell^n\simeq\mathbb D_{\mathbb Z/\ell^n}(H_n)
\in{}^pD^{\ge a}.
\]
Perverse \(t\)-exactness of \(\Psi\) and \(\Phi\) with finite coefficients gives \(\Psi E_n,\Phi
E_n\in{}^pD^{\ge a}\). Since this subcategory is closed under fibers, the triangle in Step 2 yields
\(TE_n\in{}^pD^{\ge a}\).

Applying Verdier duality with finite coefficients and its compatibility with reduction, we obtain
\[
(\mathbb D(TE))/\ell^n
\simeq\mathbb D_{\mathbb Z/\ell^n}(TE_n)
\in{}^pD^{\le-a}.
\]
Thus \(\mathbb D(TE)\in{}^pD^{\le-a}(M_0,\mathbb Z_\ell)\), so \(TE\in{}^{p^+}D^{\ge a}\) by
definition of the dual integral structure. Rationalization gives \(TF\in{}^pD^{\ge a}(M_0,\mathbb
Q_\ell)\).
\end{proof}

\noindent\textit{Remark.} The lower bound holds for every \(F\in{}^pD^{\ge a}(M,\mathbb Q_\ell)\),
by perverse exactness with finite coefficients and Verdier duality. Condition \eqref{eq:uniform}
supplies the upper bound.

\subsection{Gysin maps and duality}\label{sec:purity}

Let \(\ell\) be any prime, including \(\ell=p\), and let \(k:M_0\hookrightarrow M\) be an effective
Cartier divisor. We work in the Zariski-constructible derived categories and assume that the ambient
spaces are qcqs when using rational coefficients. For \(n\ge1\), write \(\Lambda_n=\mathbb
Z/\ell^n\). The localized divisor class of \(M_0\) \cite[Variant 3.1.5]{LRZ} gives a canonical
morphism
\[
\operatorname{cl}_{k,n}:k_*\Lambda_{n,M_0}\longrightarrow\Lambda_{n,M}(1)[2].
\]
For a constructible \(\Lambda_n\)-complex \(F\) on \(M\), tensoring with \(F(-1)[-2]\) and applying
the projection formula give a morphism \(k_*(k^*F(-1)[-2])\to F\). Its adjoint under \((k_*,k^!)\)
defines the natural Gysin transformation
\begin{equation}\label{eq:gysin}
\tau_{k,F}:k^*F(-1)[-2]\longrightarrow k^!F.
\end{equation}
We use the same notation for the integral and rational Gysin transformations constructed in the
proof of Lemma~\ref{lem:purity}.

\begin{lemma}\label{lem:purity}
Let \(g:W\to M\) be projective, with \(W\) smooth over \(K\) of pure dimension \(s\), and suppose
that \(W_0=W\times_M M_0\) is a smooth effective Cartier divisor in \(W\). Put \(K_g=Rg_*\mathbb
Q_\ell[s]\). Then \(\tau_{k,K_g}\) is an isomorphism. The same holds for every derived direct
summand of a shift or Tate twist of \(K_g\), for every prime \(\ell\), including \(\ell=p\).
\end{lemma}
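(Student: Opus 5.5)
The plan is to work first with finite coefficients \(\Lambda_n=\Z/\ell^n\), compare both sides of \(\tau_{k,K_g}\) via proper base change and the smooth purity of \cite{LRZ}, and then pass to \(\Z_\ell\) and \(\Q_\ell\) by derived Nakayama. Write \(g_0:W_0\to M_0\) and \(k_W:W_0\hookrightarrow W\). Proper base change gives \(k^*Rg_*\Lambda_{n,W}\simeq Rg_{0*}\Lambda_{n,W_0}\). For the other side, \(g\) is proper, so \(Rg_*=Rg_!\), and base change for \(!\)-pullback along the closed immersion \(k\) gives \(k^!Rg_*\simeq Rg_{0*}k_W^!\). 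Both identifications are compatible with reduction modulo \(\ell^n\) by \cite[Theorem 3.36]{BH}. The first thing to check is that, under these identifications, \(\tau_{k,Rg_*\Lambda_n}\) becomes \(Rg_{0*}\) applied to \(\tau_{k_W,\Lambda_{n,W}}\). This should follow from the compatibility of the localized divisor class with pullback, namely \(g^*\operatorname{cl}_{k,n}=\operatorname{cl}_{k_W,n}\) since \(W_0=g^{-1}(M_0)\) is Cartier, together with the projection formula for \(g\). I would verify this through the adjoint description of \eqref{eq:gysin}: both sides are adjoint to the map \(k_*k^*F(-1)[-2]\to F\) obtained by tensoring with the class.

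The key reduction is therefore to the absolute case: for a smooth effective Cartier divisor \(k_W:W_0\hookrightarrow W\) between smooth spaces of pure dimensions \(s-1\) and \(s\), the map \(\Lambda_{n,W_0}(-1)[-2]\to k_W^!\Lambda_{n,W}\) is an isomorphism. By \cite[Corollary 7.1.2]{LRZ}, the dualizing complexes are \(\omega_W=\Lambda_n(s)[2s]\) and \(\omega_{W_0}=\Lambda_n(s-1)[2s-2]\), and \(k_W^!\omega_W=\omega_{W_0}\). Hence both sides are abstractly isomorphic. The issue is that the Gysin map must be this isomorphism, and I expect this to be the main obstacle, especially for \(\ell=p\). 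To handle it, I would use the LRZ description of the trace and cycle class: the identification \(k^!\omega_W\simeq\omega_{W_0}\) is built from the cycle class of the divisor \cite[Variant 3.1.5]{LRZ}. Alternatively, the question is local on \(W_0\), so I would reduce to showing that \(\tau\) is an isomorphism on \(\mathcal H^2\) of a locally constant object. This amounts to the statement that the divisor class generates \(\mathcal H^2_{W_0}(\Lambda_n(1))\) locally. That in turn reduces, through smooth local coordinates where \(W_0=\{t=0\}\), to the Kummer/first Chern class computation for \(\mathbb A^1\) relative to a smooth base.

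With the finite-coefficient statement in hand, the integral and rational Gysin maps are defined as the limit of the compatible systems \(\tau_{k,n}\), using the compatibility of trace and divisor class with reduction noted in Section~\ref{sec:formal}. Both sides of \(\tau_{k,Rg_*\Z_\ell[s]}\) are derived \(\ell\)-complete, and the reduction mod \(\ell\) is an isomorphism, so derived Nakayama \cite[Tag 0G1U]{Stacks} shows that the integral map is an isomorphism. Rationalizing gives the statement for \(K_g\). Shifts and Tate twists are harmless because \(\tau\) is natural and commutes with them.

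For derived direct summands \(F\) of \(K_g\) (after shifts and twists), I would use that \(\tau_{k,-}\) is a natural transformation of exact functors. If \(F\xrightarrow{u}K_g\xrightarrow{v}F\) has \(vu=\operatorname{id}\), then \(\tau_F\) is a retract of \(\tau_{K_g}\) in the arrow category, and a retract of an isomorphism is an isomorphism. The only care needed is that the rational \(\tau\) is natural on the rational category; this holds because it is defined by tensoring with the rationalized class \(\operatorname{cl}_k\) and adjunction, so it extends functorially to all rational complexes.
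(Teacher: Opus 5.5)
Your proposal is correct and follows the paper's proof essentially step for step: you base-change the Gysin map using $g^*\mathrm{cl}_{k,n}=\mathrm{cl}_{k_W,n}$ and the projection formula, identify $\tau_{k_W,\Lambda_{n,W}}$ with the trace isomorphism on the dualizing complexes of \cite{LRZ}, pass to $\Z_\ell$ and $\Q_\ell$ by derived Nakayama, and finish with the retract argument for summands. The paper pins down your two softer points more precisely: it cites \cite[Lemmas 3.3.6 and 7.2.6]{LRZ} to identify the divisor-class map with the trace, and it builds $\operatorname{cl}_{k,\Z_\ell}$ from a tower of Kummer sequences, so that the integral class is a genuine morphism in the limit category and $\tau_{k,E}$ is natural in $E$.
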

\begin{proof}
\smallskip
\noindent\emph{Step 1. Compare the Gysin maps under proper pushforward.}
Consider the Cartesian square
\[
\begin{matrix}
W_0&\xrightarrow{k'}&W\\
\downarrow g_0&&\downarrow g\\
M_0&\xrightarrow{k}&M.
\end{matrix}
\]
For finite coefficients \(\Lambda_n\), ordinary and extraordinary proper base change give
\[
\begin{aligned}
k^*Rg_*\Lambda_{n,W}&\simeq Rg_{0*}k'^*\Lambda_{n,W},\\
k^!Rg_*\Lambda_{n,W}&\simeq Rg_{0*}k'^!\Lambda_{n,W}.
\end{aligned}
\]
For a constructible \(\Lambda_n\)-complex \(F\) on \(W\), \cite[Lemma 7.3.4]{LRZ} gives the
canonical equivalence \(Rg_{0*}k'^!F\xrightarrow{\sim}k^!Rg_*F\). The second displayed isomorphism
is its inverse for \(F=\Lambda_{n,W}\). Indeed, closed base change gives \(g^*k_*\simeq k'_*g_0^*\),
and taking right adjoints gives this equivalence. Its adjoint under \((k_*,k^!)\) is the composite
\[
k_*Rg_{0*}k'^!F\simeq Rg_*k'_*k'^!F
\longrightarrow Rg_*F,
\]
where the last arrow comes from the counit for \(k'\). Under these identifications, the Gysin
transformations fit into the commutative square
\begin{equation}\label{eq:gysin-basechange}
\begin{CD}
k^*Rg_*\Lambda_{n,W}(-1)[-2]
@>{\tau_{k,Rg_*\Lambda_{n,W}}}>>
k^!Rg_*\Lambda_{n,W}\\
@V{\sim}VV @V{\sim}VV\\
Rg_{0*}k'^*\Lambda_{n,W}(-1)[-2]
@>{Rg_{0*}\tau_{k',\Lambda_{n,W}}}>>
Rg_{0*}k'^!\Lambda_{n,W}.
\end{CD}
\end{equation}
Indeed, both \(k\) and \(k'\) are effective Cartier divisors, so
\(g^*\operatorname{cl}_{k,n}=\operatorname{cl}_{k',n}\) by \cite[Lemma 3.1.6]{LRZ}. Taking the
\((k_*,k^!)\)-adjoints of the two composites in \eqref{eq:gysin-basechange}, and then their
\((g^*,Rg_*)\)-adjoints, identifies them through this equality and the projection formula.

\smallskip
\noindent\emph{Step 2. Prove purity on the smooth source.}
The spaces \(W\) and \(W_0\) are smooth of dimensions \(s\) and \(s-1\), respectively. Their
dualizing complexes with \(\Lambda_n\)-coefficients are
\[
\omega_W\simeq\Lambda_{n,W}(s)[2s],\qquad
\omega_{W_0}\simeq\Lambda_{n,W_0}(s-1)[2s-2]
\]
by \cite[Corollary 7.1.2]{LRZ}. The trace map for \(k'\), applied to dualizing complexes, has
adjoint \(\omega_{W_0}\xrightarrow{\sim}k'^!\omega_W\). For \(k'\), the divisor class used above
equals the cycle class for a local complete intersection of codimension one by \cite[Lemma
3.3.6]{LRZ}, applied to the single divisor \(W_0\). After the twists and shifts in
\cite[Construction 7.2.5]{LRZ}, the map defined by this cycle class agrees with the trace for \(k'\)
on dualizing complexes by \cite[Lemma 7.2.6]{LRZ}. Substituting these formulas and removing the
common twist and shift gives
\[
\tau_{k',\Lambda_{n,W}}:
\Lambda_{n,W_0}(-1)[-2]\xrightarrow{\sim}k'^!\Lambda_{n,W}.
\]
Applying \(Rg_{0*}\) and \eqref{eq:gysin-basechange}, we obtain that \(\tau_{k,Rg_*\Lambda_{n,W}}\)
is an isomorphism for every \(n\ge1\). The duality and trace results used here hold in mixed
characteristic with \(\mathbb Z/\ell^n\)-coefficients for every prime \(\ell\), including
\(\ell=p\).

\smallskip
\noindent\emph{Step 3. Pass from finite coefficients to integral coefficients.}
To construct the integral divisor class, we first work in the stable \(\infty\)-category of
complexes of abelian \'etale sheaves on \(M\). Write \(U=M\setminus M_0\) and \(j:U\hookrightarrow
M\), and let \(\mathbb G_m\) denote the \'etale sheaf of invertible functions. The line bundle
\(\mathcal O_M(M_0)\), with its canonical trivialization on \(U\), defines a Picard class with
support in \(M_0\) \cite[Lemma 3.1.1]{LRZ}. At the level of mapping spaces, the trivialization is a
nullhomotopy of the restriction of the line bundle to \(U\); the cofiber sequence \(j_!\mathbb
Z_U\to\mathbb Z_M\to k_*\mathbb Z_{M_0}\) therefore gives a morphism \(d_k:k_*\mathbb
Z_{M_0}\to\mathbb G_{m,M}[1]\). Let \(\mathcal M_\bullet=(\mu_{\ell^n})_n\) and \(\mathcal
G_\bullet=(\mathbb G_m)_n\), both with transition maps given by the \(\ell\)-th power. For a sheaf
\(F\), let \(\operatorname{const}(F)\) be the tower with value \(F\) and identity transition maps.
The Kummer sequences form a short exact sequence of towers, in which \([\ell^n]\) is the
\(\ell^n\)-th power map at level \(n\):
\[
0\longrightarrow\mathcal M_\bullet
\longrightarrow\mathcal G_\bullet
\xrightarrow{[\ell^n]}\operatorname{const}(\mathbb G_m)
\longrightarrow0.
\]
The squares commute as maps of sheaves. Taking the connecting morphism in the derived
\(\infty\)-category of towers and composing its shift with \(\operatorname{const}(d_k)\) gives a
morphism of towers \(\operatorname{const}(k_*\mathbb Z_{M_0})\to\mathcal M_\bullet[2]\).

Let \(\mathcal R_\bullet=(\Lambda_n)_n\) be the tower of rings whose transition maps are the
quotient maps. The tower \(\mathcal M_\bullet\) is an \(\mathcal R_\bullet\)-module. The free-module
adjunction in the stable \(\infty\)-category of towers turns the preceding morphism into an
\(\mathcal R_\bullet\)-linear morphism
\[
\operatorname{cl}_{k,\bullet}:
(k_*\Lambda_{n,M_0})_n\longrightarrow
(\mu_{\ell^n,M}[2])_n,
\]
using \(\Lambda_n\otimes_{\mathbb Z}^{\mathbf L}k_*\mathbb Z_{M_0}\simeq k_*\Lambda_{n,M_0}\). Both
towers are derived-cartesian: derived extension of scalars from \(\Lambda_{n+1}\) to \(\Lambda_n\)
identifies the term at level \(n+1\) with the term at level \(n\). For the target, this follows
because \(\mu_{\ell^{n+1}}\) is
locally free of rank one over \(\Lambda_{n+1}\), and the power map induces its reduction to
\(\mu_{\ell^n}\). The terms in both towers are bounded constructible complexes of finite
Tor-dimension. The tower construction supplies the coherence required for a morphism in the inverse
limit of categories in \cite[Remark 3.33]{BH}, and hence defines an integral class
\(\operatorname{cl}_{k,\mathbb Z_\ell}:k_*\mathbb Z_{\ell,M_0}\to\mathbb Z_{\ell,M}(1)[2]\). Its
reduction modulo \(\ell^n\) is the Kummer boundary of the line bundle with its trivialization on
\(U\), hence is \(\operatorname{cl}_{k,n}\) by \cite[Definition 3.1.4 and Variant 3.1.5]{LRZ}.

For an integral constructible complex \(E\), tensor this fixed class with \(E(-1)[-2]\), apply the
integral projection formula, and take the \((k_*,k^!)\)-adjoint. This defines
\(\tau_{k,E}:k^*E(-1)[-2]\to k^!E\), naturally in \(E\). Put
\(E_n=E\otimes_{\mathbb Z_\ell}^{\mathbf L}\Lambda_n\). Compatibility with reduction of coefficients
\cite[Theorem 3.36(1), (3), (5)]{BH} identifies the reduction of \(\tau_{k,E}\) with
\(\tau_{k,E_n}\); part~(5) applies to \(k^!\) at every prime since \(k\) is finite. In particular,
these reductions give the commutative square
\begin{equation}\label{eq:gysin-reduction}
\begin{CD}
\bigl(k^*E_{n+1}(-1)[-2]\bigr)
\otimes_{\Lambda_{n+1}}^{\mathbf L}\Lambda_n
@>{\tau_{k,E_{n+1}}\otimes^{\mathbf L}1}>>
(k^!E_{n+1})\otimes_{\Lambda_{n+1}}^{\mathbf L}\Lambda_n\\
@V{\sim}VV @V{\sim}VV\\
k^*E_n(-1)[-2]
@>{\tau_{k,E_n}}>>
k^!E_n.
\end{CD}
\end{equation}
Take \(E=Rg_*\mathbb Z_\ell[s]\). Proper pushforward commutes with coefficient reduction, so the
reduction of \(\tau_{k,E}\) modulo every \(\ell^n\) is an isomorphism by Steps 1 and 2. Its cone is
derived \(\ell\)-complete and has zero reduction modulo every \(\ell^n\), so derived Nakayama makes
\(\tau_{k,E}\) an isomorphism. Inverting \(\ell\) gives
\(\tau_{k,K_g}:k^*K_g(-1)[-2]\xrightarrow{\sim}k^!K_g\). The integral natural transformation also
rationalizes: on the qcqs ambient space, rational morphisms are obtained from integral morphisms by
inverting \(\ell\), so the resulting transformation is natural for all rational morphisms.

\smallskip
\noindent\emph{Step 4. Pass to derived direct summands.}
Let \(F\) be a derived direct summand of \(K_g[a](t)\), with inclusion \(u:F\to K_g[a](t)\) and
retraction \(v:K_g[a](t)\to F\) satisfying \(vu=\operatorname{id}_F\). Naturality makes
\(\tau_{k,F}\) a retract of the isomorphism \(\tau_{k,K_g[a](t)}\), and therefore an isomorphism.
\end{proof}

\begin{lemma}\label{lem:adic-proper-duality}
Let \(f:X\to Y\) be a proper morphism of rigid analytic spaces over \(K\), with \(Y\) qcqs. For
every prime \(\ell\) and every \(E\in\Db(X,\Z_\ell)\), the proper duality maps with finite
coefficients induce a natural isomorphism
\[
 Rf_*\mathbb D_{X,\Z_\ell}E
 \xrightarrow{\sim}\mathbb D_{Y,\Z_\ell}(Rf_*E).
\]
Its reduction modulo \(\ell^n\) is the proper duality isomorphism with \(\Z/\ell^n\)-coefficients.
Rationalization gives the corresponding natural isomorphism with \(\Q_\ell\)-coefficients.
\end{lemma}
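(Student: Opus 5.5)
The plan is to build the integral map as a compatible system of the finite-coefficient proper duality isomorphisms of \cite[Theorem 7.5.18]{LRZ}, and then to pass to \(\Z_\ell\) and \(\Q_\ell\) exactly as for the Gysin map in Lemma~\ref{lem:purity}. Write \(E_n=E\otimes^{L}_{\Z_\ell}\Z/\ell^n\). By \cite[Remark 3.33]{BH}, \(\Db(X,\Z_\ell)\) is the limit of the categories of \(\Z/\ell^n\)-complexes, and \(E\) corresponds to the derived-cartesian tower \((E_n)_n\). Under the hypotheses of \cite[Theorem 3.36]{BH}, proper pushforward and Verdier duality commute with reduction of coefficients. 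Hence both \(Rf_*\mathbb D_{X,\Z_\ell}E\) and \(\mathbb D_{Y,\Z_\ell}(Rf_*E)\) correspond to the towers
\[
 \bigl(Rf_*\mathbb D_{X,\Z/\ell^n}E_n\bigr)_n,\qquad
 \bigl(\mathbb D_{Y,\Z/\ell^n}Rf_*E_n\bigr)_n .
\]

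\emph{Step 1: a coherent morphism of towers.} The finite-level map
\[
 Rf_*\mathbb D E_n\to\mathbb D Rf_*E_n
\]
is the adjoint of the composite
\[
 Rf_*\mathbb D E_n\otimes Rf_*E_n\to Rf_*(\mathbb D E_n\otimes E_n)\to Rf_*\omega_{X,n}\to\omega_{Y,n}.
\]
The last arrow is the counit \(Rf_*f^!\to\mathrm{id}\), since \(f\) is proper. To get coherence, I would carry out this construction once at the level of the stable \(\infty\)-categories of towers, as in Step 3 of Lemma~\ref{lem:purity}. The dualizing complexes \(\omega_{X,n}=f_X^!\Z/\ell^n\) form derived-cartesian towers, because \(f^!\) commutes with reduction for proper maps \cite[Theorem 3.36]{BH}. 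The counit \(Rf_*f^!\to\mathrm{id}\) is natural in the coefficient ring, so it gives a morphism of towers. The lax monoidal structure of \(Rf_*\) is likewise compatible with extension of scalars \(\Z/\ell^{n+1}\to\Z/\ell^n\). Putting these together yields a morphism in the limit category, that is, an integral natural transformation
\[
 \delta_E:Rf_*\mathbb D_{X,\Z_\ell}E\to\mathbb D_{Y,\Z_\ell}Rf_*E
\]
whose reduction modulo \(\ell^n\) is the finite proper duality map. I expect this step to be the main obstacle. The difficulty is checking that the finite-level maps really assemble into a morphism of derived-cartesian towers, and not just into a family of maps compatible up to non-coherent homotopy. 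I would first try to write \(\delta\) entirely through adjunctions, the counit and the projection formula, each of which is known to commute with reduction by \cite[Theorem 3.36]{BH}. Naturality in \(E\) then follows because every ingredient is natural.

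\emph{Step 2: isomorphism.} For \(\ell\ne p\) and for \(\ell=p\) alike, the reduction of \(\delta_E\) modulo \(\ell^n\) is an isomorphism by \cite[Theorem 7.5.18]{LRZ}. The cone of \(\delta_E\) is derived \(\ell\)-complete, being a limit of its reductions in the integral category, and its reduction modulo \(\ell\) vanishes. Derived Nakayama \cite[Tag 0G1U]{Stacks} therefore shows that \(\delta_E\) is an isomorphism.

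\emph{Step 3: rationalization.} On the qcqs spaces, rational Hom groups are integral Hom groups with \(\ell\) inverted. Verdier duality and \(Rf_*\) on \(\Db(-,\Q_\ell)\) are induced from the integral functors. So \(\delta_E[1/\ell]\) is an isomorphism, and it is natural in rational morphisms after clearing denominators. This gives the \(\Q_\ell\)-statement.
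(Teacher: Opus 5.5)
\textbf{The gap is in Step 1, which carries all the content of the lemma.} Your Steps 2 and 3 are fine. So is the overall shape of the map: an evaluation map followed by a trace, then derived Nakayama, then rationalization. The paper does the same.

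You take the finite-level trace to be the counit \(Rf_*f^!\omega_{Y,n}\to\omega_{Y,n}\). You justify its compatibility with reduction by saying that \(f^!\) commutes with reduction for proper maps by \cite[Theorem 3.36]{BH}. That input is not available at \(\ell=p\), which is the case the lemma is needed for.

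\begin{enumerate}
\item The paper invokes \cite[Theorem 3.36]{BH} for extraordinary pullback only along finite maps, i.e.\ closed immersions. For closed immersions the paper does use the counit, exactly as you propose.
\item For a general proper \(f\) with \(p\)-adic coefficients, you would need a right adjoint \(f^!\) of \(Rf_*\) on Zariski-constructible sheaves with \(f^!\omega_{Y,n}\simeq\omega_{X,n}\). By the projection formula, that is essentially equivalent to the proper duality you are trying to lift, so it cannot be assumed in order to construct the map.
\item The dualizing complex \(\omega_{X,n}\) is not \(f_X^!\) of a proper structure map, since \(X\) need not be proper over \(K\). Its compatibility with reduction comes from the proof of \cite[Theorem 3.36]{BH}.
\item Even granting a counit, you would have to identify your finite-level map with the map of \cite[Definition 7.5.4]{LRZ}, which is built from the LRZ trace, before \cite[Theorem 7.5.18]{LRZ} applies.
\item The sentence ``the counit is natural in the coefficient ring, so it gives a morphism of towers'' restates the coherence problem rather than solving it.
\end{enumerate}

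\textbf{The missing idea is that only one morphism, the trace, has to be made coherent, and its mapping spaces are discrete.} Write \(\Lambda_n=\Z/\ell^n\).

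First, the paper shows that the LRZ proper traces satisfy \(\operatorname{Tr}_{f,m}\otimes^{\mathbf L}_{\Lambda_m}\Lambda_n=\operatorname{Tr}_{f,n}\) in the homotopy category. The argument is that reduction preserves all the normalizations characterizing these traces:
\begin{enumerate}
\item closed-immersion counits;
\item the uniqueness of smooth traces \cite[Theorem 6.1.1]{LRZ};
\item the factorization through the graph;
\item the uniqueness in \cite[Theorem 7.4.1]{LRZ}.
\end{enumerate}

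Second, \cite[Theorem 7.3.14]{LRZ} gives \(R\mathcal Hom(Rf_*\omega_{X,n},\omega_{Y,n})\in D^{\ge0}\). Hence \(\operatorname{Ext}^{-q}(Rf_*\omega_{X,n},\omega_{Y,n})=0\) for \(q>0\), and the relevant mapping spaces are discrete. A compatible family of homotopy classes therefore determines a unique morphism \(\operatorname{Tr}_{f,\Z_\ell}\) in the limit category of \cite[Remark 3.33]{BH}.

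The integral duality map is then the integral evaluation map
\[
Rf_*R\mathcal Hom(E,\omega_{X,\Z_\ell})\to R\mathcal Hom(Rf_*E,Rf_*\omega_{X,\Z_\ell}),
\]
followed by this single trace. The evaluation map is defined directly in the integral category, is natural in \(E\), and is compatible with reduction by \cite[Theorem 3.36(1), (3)]{BH}. So the proof never needs a coherent natural transformation, nor an \(f^!\) for a general proper map.
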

\begin{proof}
Put \(\Lambda_n=\Z/\ell^n\), and write \(\omega_{X,n}\) and \(\omega_{Y,n}\) for the dualizing
complexes with \(\Lambda_n\)-coefficients, and \(\omega_{X,\Z_\ell}\) and \(\omega_{Y,\Z_\ell}\) for
the integral dualizing complexes. The proof of \cite[Theorem 3.36]{BH} gives identifications
\(\omega_{X,m}\otimes_{\Lambda_m}^{\mathbf L}\Lambda_n\simeq\omega_{X,n}\) for \(m\ge n\),
compatible with successive reductions, and likewise on \(Y\). Proper pushforward commutes with these
reductions.

We first check that the proper traces \(\operatorname{Tr}_{f,n}:Rf_*\omega_{X,n}\to\omega_{Y,n}\)
are compatible with reduction. The trace of a closed immersion \(i\) is the adjunction counit under
the fixed identification \(\omega_{X,n}\simeq i^!\omega_{Y,n}\), so it commutes with coefficient
reduction. For smooth trace, reduction preserves the identities for composition and base change, the
\'etale counit, and the algebraic trace on the projective line. The uniqueness in \cite[Theorem
6.1.1]{LRZ} therefore gives compatibility for smooth trace. For a proper morphism with smooth source
and separated taut target, the trace is the composite of the trace for the closed graph immersion
and the trace for the smooth projection \cite[Construction 7.3.1]{LRZ}. This is the smooth-source
normalization in \cite[Theorem 7.4.1(2)]{LRZ}. Consequently, reduction preserves all the
normalizations, composition identities, and \'etale locality characterizing the proper traces in
\cite[Theorem 7.4.1]{LRZ}. Its uniqueness gives
\(\operatorname{Tr}_{f,m}\otimes_{\Lambda_m}^{\mathbf L}\Lambda_n=\operatorname{Tr}_{f,n}\).

By \cite[Theorem 7.3.14]{LRZ}, the internal Hom complex \(R\mathcal
Hom(Rf_*\omega_{X,n},\omega_{Y,n})\) lies in \(D^{\ge0}\). Applying derived global sections, which
preserve this lower bound, gives \(\operatorname{Ext}^{-q}(Rf_*\omega_{X,n},\omega_{Y,n})=0\) for
\(q>0\). Thus the mapping spaces containing these traces are discrete. The identifications under
successive reductions therefore give unique homotopies realizing the reduction identities, and
uniqueness makes these homotopies coherent. They define a morphism
\(\operatorname{Tr}_{f,\Z_\ell}:Rf_*\omega_{X,\Z_\ell}\to\omega_{Y,\Z_\ell}\) in the inverse limit
of categories in \cite[Remark 3.33]{BH}. Compose the natural evaluation map
\[
 Rf_*R\mathcal Hom(E,\omega_{X,\Z_\ell})
 \longrightarrow
 R\mathcal Hom(Rf_*E,Rf_*\omega_{X,\Z_\ell})
\]
with this trace. Compatibility of internal Hom and proper pushforward with coefficient reduction
\cite[Theorem 3.36(1), (3)]{BH} identifies its reduction with the duality map of \cite[Definition
7.5.4]{LRZ}, which is an isomorphism by \cite[Theorem 7.5.18]{LRZ}. Its cone is derived
\(\ell\)-complete and has zero reduction modulo \(\ell\), so derived Nakayama proves the integral
assertion. Localizing the integral functors and natural transformation at \(\ell\) gives the
rational assertion.
\end{proof}

Return to the setting and notation of Lemma~\ref{lem:purity}. By Lemma~\ref{lem:adic-proper-duality}
and smooth duality on \(W\), we have \(\mathbb D_MK_g\simeq K_g(s)\). Thus the class of shifted and
Tate-twisted derived summands of \(K_g\) is preserved by Verdier duality. For such an object \(F\),
put \(T=k^*[-1]\). Closed duality and Lemma~\ref{lem:purity} give
\begin{equation}\label{eq:dualcut}
\begin{aligned}
\mathbb D_{M_0}(TF)
&\simeq k^!\mathbb D_MF[1]\\
&\simeq k^*\mathbb D_MF(-1)[-1]\\
&=T(\mathbb D_MF)(-1).
\end{aligned}
\end{equation}
For \(r\) successive transverse cuts defining the graph as in Proposition~\ref{lem:approx}, let
\(M_r\) be the final closed subspace and \(\gamma:M_r\hookrightarrow M\) the composite immersion.
Put \(T^{(r)}=\gamma^*[-r]\). At each stage the source and its hypersurface cut are smooth, so
Lemma~\ref{lem:purity} applies to the proper direct images and their derived summands. Iterating
\eqref{eq:dualcut} gives
\begin{equation}\label{eq:dualgraph}
\mathbb D(T^{(r)}F)\simeq T^{(r)}(\mathbb DF)(-r).
\end{equation}
We use these comparisons to check the strict cosupport conditions for intermediate extension after
restriction to the analytic graph.

\section{Projective decomposition}\label{sec:decomposition}

\subsection{Relative hard Lefschetz}\label{sec:lefschetz}

For a projective morphism with smooth source, we obtain relative hard Lefschetz by restricting the
algebraic Lefschetz isomorphisms to the analytic graph supplied by Proposition~\ref{lem:approx}.
Lemma~\ref{lem:bounds} shows that the perverse summands remain perverse after each successive
hypersurface cut.

\begin{theorem}\label{thm:hl}
Let \(f:X\to Y\) be a proper morphism with \(Y\) qcqs, equipped with a global relatively ample line
bundle \(H\) on \(X\), and assume that \(X\) is smooth over \(K\) of pure dimension \(d\). Fix a
prime \(\ell\ne p\), put \(K_f=Rf_*\Q_\ell[d]\), and let \(\eta:K_f\to K_f(1)[2]\) be cup product
with the first Chern class \(c_1(H)\). Then the maps
\begin{equation}\label{eq:hl}
\eta^a:\pH^{-a}(K_f)\xrightarrow{\sim}\pH^a(K_f)(a)
\qquad(a\ge0)
\end{equation}
are isomorphisms.
Consequently, there is a derived decomposition \(K_f\simeq\bigoplus_a\pH^a(K_f)[-a]\) over \(K\).
\end{theorem}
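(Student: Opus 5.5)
The plan is to reduce relative hard Lefschetz to the algebraic case over a smooth algebraic base, and then transport it along the successive hypersurface cuts of Proposition~\ref{lem:approx}. The last sentence of the theorem then follows formally from Lemma~\ref{lem:deligne}, applied with the perverse \(t\)-structure on \(\Db(Y,\Q_\ell)\), which is preserved by Tate twist.

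\emph{Localization.} Both sides of \eqref{eq:hl} are perverse sheaves, and \(\eta^a\) is a morphism between them. Being an isomorphism is local on \(Y\), and perverse cohomology commutes with restriction to opens. Since \(Y\) is qcqs, we may therefore assume \(Y\) affinoid. We embed \(Y\) as a closed subspace of a polydisc \(D\) and replace \(f\) by the composite \(X\to D\); closed pushforward is \(t\)-exact and commutes with \(\eta\). A relatively ample \(H\) makes \(f\) projective. Replacing \(H\) by a power only rescales \(\eta\), as noted after Proposition~\ref{lem:approx}. We then apply Proposition~\ref{lem:approx} and Proposition~\ref{prop:boundaryvanish} to the diagram consisting of \(f_A\), the projective embedding and the line bundle, with \(X_A\) a smooth vertex. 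This yields, on each affinoid \(N=N_\alpha\), a projective morphism \(g_0:V_0\to M_0=N\) with \(V_0\) smooth of dimension \(d+r\). The morphism \(g_0\) is the analytification of an algebraic projective morphism with smooth source over a smooth variety \(S\), and it carries a relatively ample bundle restricting to \(H\). The stage-\(k\) morphisms \(g_k:V_k\to M_k\) have smooth sources, each cut \(h_{k+1}\) is smooth on \(V_k\), and \(g_r\) recovers \(f\) over \(\gamma^{-1}(N)\).

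\emph{Algebraic input at stage \(0\).} Put \(K_0=Rg_{0*}\Q_\ell[d+r]\), with \(\eta_0\) cup product with \(c_1\) of the descended bundle. Over \(C\), the algebraic relative hard Lefschetz theorem of BBD holds for the algebraic model over \(S\), and in fact already over the finitely generated field of definition. This gives a decomposition \(K_0\simeq\bigoplus_a P_a[-a]\) into algebraic perverse sheaves \(P_a\), with \(\eta_0^a:P_{-a}\xrightarrow{\sim}P_a(a)\). Analytification is perverse \(t\)-exact for these rational objects, as explained in \S\ref{sec:formal}, so the same holds on \(N\), and the \(P_a\) are derived direct summands of \(K_0\). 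We then restrict the BBD splitting along \(\gamma\). Here I expect a subtlety: descending from \(C\) to \(K\) will require Galois descent of the splitting, or equivalently applying Lemma~\ref{lem:deligne} to the analytic object itself at the end, which is how I will proceed.

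\emph{Transport through the cuts (main step).} We induct on \(k\): \(K_k=Rg_{k*}\Q_\ell[d+r-k]\) decomposes as \(\bigoplus_a P_a^{(k)}[-a]\), with \(P_a^{(k)}=T P_a^{(k-1)}\) perverse and hard Lefschetz holding. Proper base change gives \(TK_{k-1}\simeq K_k\), where \(T=k^*[-1]\), and \(T\) commutes with \(\eta\) because Chern classes pull back. By Proposition~\ref{prop:boundaryvanish}, the vanishing cycles of \(K_{k-1}\) along \(h_k\) are zero modulo every \(\ell^n\). By Lemma~\ref{lem:thick}, the summands \(P_a^{(k-1)}\) therefore satisfy \eqref{eq:uniform}. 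Lemma~\ref{lem:bounds} (after extension to \(C\), which detects perversity) then shows that \(TP_a^{(k-1)}\) is perverse. Hence \(K_k\simeq\bigoplus_a TP_a^{(k-1)}[-a]\) is a decomposition into perverse pieces, and \(\pH^a(K_k)\simeq TP_a^{(k-1)}\). The Lefschetz isomorphism \(\eta^a\) is transported by applying the functor \(T\), so it remains an isomorphism. After \(r\) cuts we obtain \eqref{eq:hl} on \(\gamma^{-1}(N)\), and the sets \(\gamma^{-1}(N_\alpha)\) cover \(D\). The delicate point is that Lemma~\ref{lem:bounds} is needed precisely for the upper perverse bound, and it requires the uniform annihilation supplied by the vanishing-cycle computation at every stage. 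Once \eqref{eq:hl} holds on \(Y\) over \(K\), Lemma~\ref{lem:deligne} gives the derived decomposition over \(K\).
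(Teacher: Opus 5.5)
Your proposal is correct and follows essentially the paper's route. You localize and embed in a polydisc, approximate by an algebraic projective morphism with smooth source, and take BBD hard Lefschetz with Deligne's splitting at stage zero. You then transport perversity through each smooth cut using the vanishing of vanishing cycles together with Lemma~\ref{lem:thick} and Lemma~\ref{lem:bounds}, and identify the transported Lefschetz map with the $(-a,a)$ matrix component. The one step you leave implicit is the final passage from $C$ back to $K$. The paper handles it (Step~5) by noting that base change to $C$ is conservative and perverse $t$-exact and carries the Lefschetz maps over $K$ to those over $C$, so, as you anticipated, no Galois descent of the splitting is needed.
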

\begin{proof}
\smallskip
\noindent\emph{Step 1. Algebraic approximation.}
The assertion that the canonical Lefschetz maps are isomorphisms is local on \(Y\). Restrict to an
affinoid open subset, over which a sufficiently high power of \(H\) gives a projective embedding,
and embed the affinoid as a closed subspace of a polydisc \(D\). Closed pushforward is fully
faithful and perverse t-exact, so we may prove the assertion for the composite map to \(D\). Apply
Proposition~\ref{lem:approx}, retaining the projective embedding and a positive tensor power
\(H^{\otimes N}\). We obtain a smooth algebraic \(K\)-variety \(S\), a projective morphism
\(g_0:W_0\to S\), and local cuts defining an analytic graph that recovers the original morphism. If
the graph has codimension \(r\), then \(W_0\) is smooth of pure dimension \(s=d+r\). On each qcqs
neighborhood supplied by the proposition, every intermediate source is smooth, and the next cut
function is smooth on that source.

\smallskip
\noindent\emph{Step 2. Algebraic hard Lefschetz.}
Put \(K_0=Rg_{0*}\Q_\ell[s]\) and \(P_{a,0}=\pH^a(K_0)\). Let \(\eta_0\) be cup product with the
first Chern class of the retained relatively ample line bundle on \(W_0\). The algebraic relative
hard Lefschetz theorem \cite[Theorem 6.2.10]{BBD} gives
\(\eta_0^a:P_{-a,0}\xrightarrow{\sim}P_{a,0}(a)\). These morphisms are defined over \(K\), and their
geometric base changes are isomorphisms, so they are isomorphisms over \(K\).
Lemma~\ref{lem:deligne} gives a derived decomposition
\[
K_0\simeq\bigoplus_a P_{a,0}[-a].
\]
In particular, each \(P_{a,0}[-a]\) is a direct summand of \(K_0\) in the rational derived category.
On a chosen qcqs analytic neighborhood of the graph, form the restricted analytifications from
integral models as in Section~\ref{sec:formal}, and then invert \(\ell\). For each algebraic
perverse object, replace an integral model by its integral perverse cohomology in degree zero; this
has the same rationalization. Analytification with finite coefficients is perverse \(t\)-exact by
\cite[Theorem 4.2(4)]{BH}. Construction~4.3 and the proof of Theorem~4.11 of \cite{BH} then show
that the object obtained by restriction and rationalization is perverse. Compatibility with proper
pushforward \cite[Proposition 3.16]{BH} identifies the ambient complex with the analytic direct
image, and the functor preserves the displayed decomposition. We use the same notation after
restriction.

\smallskip
\noindent\emph{Step 3. Restriction to the graph.}
Pass to a completed algebraic closure to apply the results on vanishing cycles with finite
coefficients. Write the bases obtained after successive cuts as \(M_0\supset M_1\supset\cdots\supset
M_r\), with \(M_0\) the chosen analytic neighborhood. For \(0\le j<r\), let \(h_j\) define the next
cut \(k_j:M_{j+1}\hookrightarrow M_j\), and put \(T_j=k_j^*[-1]\). For \(0\le j\le r\), let
\(W_j=W_0\times_{M_0}M_j\) and \(g_j:W_j\to M_j\) be the corresponding base changes, and put
\(K_j=Rg_{j*}\Q_\ell[s-j]\) and \(E_j=Rg_{j*}\Z_\ell[s-j]\). Proposition~\ref{lem:approx} makes each
\(W_j\) smooth of pure dimension \(s-j\), and makes \(h_j\circ g_j\) smooth for \(j<r\). For
\(j<r\), proper base change identifies \(T_jK_j\simeq K_{j+1}\).

We prove by induction that
\[
K_j\simeq\bigoplus_a P_{a,j}[-a],
\qquad
P_{a,j}=T_{j-1}\cdots T_0P_{a,0}\in\Perv(M_j,\Q_\ell),
\]
with the empty composite understood as the identity. The case \(j=0\) follows from Step 2. Suppose
the assertion holds at stage \(j\). For \(\Lambda_n=\Z/\ell^n\), coefficient reduction gives
\(K_{j,n}:=E_j\otimes_{\Z_\ell}^{\mathbf L}\Lambda_n\simeq Rg_{j*}\Lambda_{n,W_j}[s-j]\). Smooth
local acyclicity and compatibility with proper pushforward give \(\phi_{h_j}(K_{j,n})=0\) for every
\(n\), as in Proposition~\ref{prop:boundaryvanish}. Each \(P_{a,j}\) is a shifted rational derived
summand of \(K_j\), so Lemma~\ref{lem:thick} gives condition~\eqref{eq:uniform} for \(P_{a,j}\).
Lemma~\ref{lem:bounds} then shows that \(P_{a,j+1}:=T_jP_{a,j}\) is perverse. Applying \(T_j\) to
the decomposition of \(K_j\) gives \(K_{j+1}\simeq\bigoplus_aP_{a,j+1}[-a]\), completing the
induction. At the final graph, write \(\gamma:M_r\hookrightarrow M_0\) for the composite of the cuts
and put \(T^{(r)}=T_{r-1}\cdots T_0=\gamma^*[-r]\). On the part of the original base covered by this
neighborhood, we obtain
\begin{equation}\label{eq:transport}
\begin{aligned}
K_f&\simeq T^{(r)}K_0
     \simeq\bigoplus_a T^{(r)}P_{a,0}[-a],\\
\pH^a(K_f)&\simeq T^{(r)}P_{a,0}.
\end{aligned}
\end{equation}

\smallskip
\noindent\emph{Step 4. The Lefschetz maps after restriction.}
The retained algebraic line bundle restricts to \(H^{\otimes N}\). Since first Chern classes commute
with pullback, its cup-product operator restricts to \(N\eta\). To identify the induced maps on
perverse cohomology, choose the decomposition of \(K_0\) in Step 2 to induce the identity on its
perverse cohomology objects, and express \(\eta_0^a:K_0\to K_0(a)[2a]\) as a matrix relative to this
decomposition. Its component from perverse degree \(-a\) to degree \(a\) is the algebraic Lefschetz
isomorphism \(P_{-a,0}\to P_{a,0}(a)\). Every transported summand remains perverse, so applying
\(T^{(r)}\) to this component gives the induced map on \(\pH^{-a}(K_f)\) and \(\pH^a(K_f)(a)\). This
map is \(N^a\eta^a\) under \eqref{eq:transport} and is an isomorphism. Since \(N^a\) is invertible
in \(\Q_\ell\), equation~\eqref{eq:hl} follows.

\smallskip
\noindent\emph{Step 5. Descent and global splitting.}
Write \(q_Y:Y_C\to Y\) and \(q_X:X_C\to X\) for extension to the completed algebraic closure. Proper
pushforward and Verdier duality commute with this base change \cite[Proposition 3.24 and Theorem
3.36]{BH}. Base extension preserves the dimensions of constructible supports and detects vanishing
on geometric stalks. On the qcqs ambient spaces, finite constructible stratifications show that
this detection of vanishing remains valid after rationalization. The support criterion and duality
therefore make \(q_Y^*\)
conservative and perverse \(t\)-exact \cite[Definition 4.1, Construction 4.3, and Theorem 4.11]{BH}.
Naturality of the Kummer connecting morphisms gives \(q_X^*c_1(H)=c_1(q_X^*H)\), and proper base
change identifies the associated cup-product operators. Thus the Lefschetz maps over \(C\) are the
pullbacks of the canonical maps over \(K\), whose invertibility follows by conservativity. They are
defined globally by \(c_1(H)\), so local invertibility proves \eqref{eq:hl} on \(Y\). Applying
Lemma~\ref{lem:deligne} in the global rational derived category with its perverse \(t\)-structure
gives \(Rf_*\Q_\ell[d]\simeq\bigoplus_a\pH^a(Rf_*\Q_\ell[d])[-a]\).
\end{proof}

\subsection{Splitting along the boundary}\label{sec:support}

The algebraic decomposition by supports passes through the cuts defining the graph once the summands
given by intermediate extension retain their strict bounds on the boundary, both before and after
Verdier duality.

\begin{theorem}\label{thm:support}
Under the assumptions of Theorem~\ref{thm:hl}, let \(P_a=\pH^a(Rf_*\Q_\ell[d])\), with \(\ell\ne
p\). For every closed immersion \(i:Z\hookrightarrow Y\), the canonical boundary map
\[
c_Z(P_a):\pH^0(i^!P_a)\longrightarrow\pH^0(i^*P_a)
\]
is an isomorphism. Consequently, each \(P_a\) is a finite direct sum of intermediate extensions of
finite-rank local systems on smooth Zariski-locally closed subspaces of \(Y\).
\end{theorem}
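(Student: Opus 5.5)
The plan is to reduce to the local algebraic situation of Theorem~\ref{thm:hl}, where the algebraic decomposition theorem of \cite{BBD} supplies the splitting by supports. We then transport the boundary condition \eqref{eq:boundary} through the successive cuts, using the strict bounds \eqref{eq:strict}. The final assertion follows from Corollary~\ref{cor:support-decomposition}, once \(c_Z(P_a)\) is known to be an isomorphism for every Zariski-closed \(Z\).

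\emph{Reduction.} Whether \(c_Z(P_a)\) is an isomorphism is local on \(Y\) and can be checked after extension to \(C\), by the conservativity and \(t\)-exactness of \(q_Y^*\) used in Step~5 of Theorem~\ref{thm:hl}. So we may assume \(Y\) is affinoid and embedded in a polydisc \(D\), and replace \(Z\) by its image in \(D\). We apply Proposition~\ref{prop:boundaryvanish} to \(f\) and this \(Z\). This gives one simultaneous approximation \(g_0:W_0\to S\) with boundary \(Z_S\), which restricts to \(Z\) along the graph, together with the two vanishings \eqref{eq:twovanishing} at every cut.

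\emph{Algebraic splitting.} By \cite{BBD}, each algebraic \(P_{a,0}\) is semisimple after base change to \(\overline K\). Hence Lemma~\ref{lem:boundary} (its algebraic analogue) gives \(P_{a,0}\simeq j_{!*}j^*P_{a,0}\oplus i_*B_0\) on the chosen neighborhood. Over \(K\) we argue via \(c_{Z_S}\), which descends. Write \(I_0=j_{!*}j^*P_{a,0}\). Then \(I_0\) satisfies the strict bounds \eqref{eq:strict}: \(i_*i^*I_0\in{}^pD^{\le-1}\) and \(i_*i^*\mathbb D I_0\in{}^pD^{\le-1}\). Analytification and restriction preserve all of this.

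\emph{Transport through the cuts.} Inductively, set \(I_{j+1}=T_jI_j\) and \(B_{j+1}=T_jB_j\); both are perverse summands of \(T_jP_{a,j}=P_{a,j+1}\). The key point is that \(i_*i^*\) commutes with \(T_j\) by proper base change. The complex \(i_*i^*I_j\) is a derived summand of \(i_*i^*K_j\) up to shift. The second vanishing in \eqref{eq:twovanishing}, combined with Lemma~\ref{lem:thick}, gives \eqref{eq:uniform} for it. Lemma~\ref{lem:bounds} then preserves the bound \({}^pD^{\le-1}\) after applying \(T_j\).

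For the dual condition we use \eqref{eq:dualcut}: \(\mathbb D T_jI_j\simeq T_j(\mathbb D I_j)(-1)\). Since \(\mathbb D I_j\) is again a twisted derived summand of \(K_j\) (as \(\mathbb D K_j\simeq K_j(s-j)\)), the same argument applies. At the final stage, \(\pH^a(K_f)\simeq I_r\oplus B_r\), where \(I_r\) is perverse with the strict bounds, hence is the intermediate extension of its restriction to \(U\), and \(B_r\) is supported on \(Z\). The converse direction of Lemma~\ref{lem:boundary} then shows that \(c_Z\) is an isomorphism on each neighborhood, and therefore globally by locality.

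\emph{Main obstacle.} The hardest step is the transport of the strict bounds. We must check that \(i_*i^*I_j\) genuinely satisfies \eqref{eq:uniform}. This requires exhibiting it as a rational derived summand of \(i_*i^*K_j\), which is where the second vanishing cycle statement enters. The delicate part is that the summand splitting from Theorem~\ref{thm:hl} must be compatible with \(i_*i^*\); functoriality of \(i_*i^*\) applied to the projectors gives exactly this. A second subtlety is that Lemma~\ref{lem:bounds} only preserves bounds. We therefore also need the vanishing of \(\pH^0\) on the \(I\)-summand, that is, the inequality \(\le -1\) rather than \(\le 0\), and this is precisely what the strict form gives.
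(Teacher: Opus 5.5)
Your proposal is correct and follows essentially the same route as the paper. The paper likewise uses one approximation from Proposition~\ref{prop:boundaryvanish} and obtains the algebraic splitting $P_{a,0}\simeq I_{a,0}\oplus i_{0*}Q_{a,0}$ from \cite{BBD} and Lemma~\ref{lem:boundary}, descended to $K$. It then transports the strict bound for $I_{a,j}$ through the cuts using the boundary vanishing together with Lemmas~\ref{lem:thick} and~\ref{lem:bounds}, and the dual bound using $\mathbb D K_j\simeq K_j(s-j)$ and \eqref{eq:dualcut}. It concludes with the converse direction of Lemma~\ref{lem:boundary} and Corollary~\ref{cor:support-decomposition}.

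The paper makes explicit two points you leave implicit. First, perversity of $T_jI_{a,j}$ and $T_j(i_{j*}Q_{a,j})$ comes from the first vanishing in \eqref{eq:twovanishing} via the same two lemmas. Second, descending the isomorphism from $C$ to $K$ uses the compatibility of $i^!$ and duality with base extension.
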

\begin{proof}
\smallskip
\noindent\emph{Step 1. Approximation with a prescribed boundary.}
Fix \(i:Z\hookrightarrow Y\). The assertion is local on \(Y\), so restrict to an affinoid open
subset, embed it in a polydisc, and apply Proposition~\ref{prop:boundaryvanish}. This gives a
projective algebraic morphism \(g_0:W_0\to S\), a closed boundary \(i_0:Z_0\hookrightarrow S\), and
a finite resolution diagram for \(g_0^{-1}(Z_0)\), all in one approximation. The
proposition supplies both vanishing statements in \eqref{eq:twovanishing} at every successive cut of
this diagram. Let \(r\) be the codimension of the analytic graph and put \(s=\dim W_0=d+r\). Write
\(K_0=Rg_{0*}\Q_\ell[s]\), \(P_{a,0}=\pH^a(K_0)\), and let \(j_0\) be the open complement of
\(i_0\).

\smallskip
\noindent\emph{Step 2. Algebraic boundary splitting.}
The algebraic decomposition theorem \cite[Theorem 6.2.5]{BBD} makes \(P_{a,0}\) geometrically
semisimple. Over an algebraic closure, each simple constituent is either supported on \(Z_0\) or is
the intermediate extension of its restriction to \(S\setminus Z_0\). Hence the canonical boundary
map \(c_{Z_0}(P_{a,0})\) becomes an isomorphism after geometric base change. It is defined over
\(K\), so it is an isomorphism over \(K\). Lemma~\ref{lem:boundary} gives the canonical
decomposition
\begin{equation}\label{eq:algsupport}
P_{a,0}\simeq I_{a,0}\oplus i_{0*}Q_{a,0},
\qquad
I_{a,0}=j_{0!*}j_0^*P_{a,0},
\end{equation}
where \(Q_{a,0}\) is perverse on \(Z_0\). The summand \(I_{a,0}\) satisfies
\[
i_{0*}i_0^*I_{a,0}\in{}^pD^{\le-1},
\qquad
i_{0*}i_0^*\mathbb D I_{a,0}\in{}^pD^{\le-1}.
\]
By \eqref{eq:strict}, these conditions express the absence of nonzero quotients and subobjects
supported on \(Z_0\). The restricted analytification of Section~\ref{sec:formal} preserves the
decomposition and these bounds.

\smallskip
\noindent\emph{Step 3. Induction through the cuts.}
On a chosen qcqs analytic neighborhood, pass to a completed algebraic closure and use
\(M_j,W_j,g_j,h_j,k_j,K_j,K_{j,n},T_j\), and \(P_{a,j}\) as in the proof of Theorem~\ref{thm:hl}.
Let \(i_j:Z_j\hookrightarrow M_j\) be the boundary at stage \(j\), with complementary open immersion
\(j_j:M_j\setminus Z_j\hookrightarrow M_j\). We prove inductively that
\[
\begin{gathered}
P_{a,j}\simeq I_{a,j}\oplus i_{j*}Q_{a,j},
\qquad I_{a,j}\in\Perv(M_j,\Q_\ell),\quad
Q_{a,j}\in\Perv(Z_j,\Q_\ell),\\
i_{j*}i_j^*I_{a,j}\in{}^pD^{\le-1},
\qquad
i_{j*}i_j^*\mathbb D I_{a,j}\in{}^pD^{\le-1}.
\end{gathered}
\]
We also require \(I_{a,j}[-a]\) and \(i_{j*}Q_{a,j}[-a]\) to remain derived retracts of \(K_j\). These
assertions hold at \(j=0\) by Step 2 and the algebraic derived decomposition.

\smallskip
\noindent\emph{Step 4. Perversity and the first boundary bound.}
Assume the assertions at stage \(j\), and consider \(T_j=k_j^*[-1]\).
Proposition~\ref{prop:boundaryvanish} gives \(\phi_{h_j}(K_{j,n})=0\) for every \(n\). The objects
\(I_{a,j}\) and \(i_{j*}Q_{a,j}\) are perverse and are shifted rational retracts of \(K_j\), so
Lemmas~\ref{lem:thick} and~\ref{lem:bounds} show that \(I_{a,j+1}:=T_jI_{a,j}\) and
\(T_j(i_{j*}Q_{a,j})\) are perverse. Closed base change shows that the latter is supported on
\(Z_{j+1}\).

Put \(B_j=i_{j*}i_j^*K_j\). The second vanishing statement in Proposition~\ref{prop:boundaryvanish}
gives \(\phi_{h_j}(i_{j*}i_j^*K_{j,n})=0\) for every \(n\). The complex \(i_{j*}i_j^*I_{a,j}\) is a
shifted rational retract of \(B_j\), and the induction hypothesis places it in \({}^pD^{\le-1}\).
Lemmas~\ref{lem:thick} and~\ref{lem:bounds} preserve this upper bound under \(T_j\). Closed base
change therefore gives
\begin{equation}\label{eq:boundtransfer}
i_{j+1*}i_{j+1}^*I_{a,j+1}
\simeq T_j(i_{j*}i_j^*I_{a,j})
\in{}^pD^{\le-1}.
\end{equation}

\smallskip
\noindent\emph{Step 5. The dual boundary bound.}
Smoothness of \(W_j\) and compatibility of proper pushforward with Verdier duality give \(\mathbb
DK_j\simeq K_j(s-j)\). Thus \(\mathbb DI_{a,j}\) is a shifted and Tate-twisted rational retract of
\(K_j\), and its boundary restriction is a corresponding retract of \(B_j\). The induction
hypothesis gives \(i_{j*}i_j^*\mathbb DI_{a,j}\in{}^pD^{\le-1}\). Applying the boundary vanishing
argument of Step 4 yields \(T_j(i_{j*}i_j^*\mathbb DI_{a,j})\in{}^pD^{\le-1}\). The next graph cut
is a Cartier divisor whose pullback to \(W_j\) is a smooth Cartier divisor, so
Lemma~\ref{lem:purity} and \eqref{eq:dualcut} apply to these retracts. They give \(\mathbb
D(T_jI_{a,j})\simeq T_j(\mathbb DI_{a,j})(-1)\). Combining this with closed base change, we obtain
\[
i_{j+1*}i_{j+1}^*\mathbb DI_{a,j+1}
\simeq T_j(i_{j*}i_j^*\mathbb DI_{a,j})(-1)
\in{}^pD^{\le-1},
\]
since Tate twists preserve the perverse t-structure.

\smallskip
\noindent\emph{Step 6. Intermediate extension after restriction.}
The perverse sheaf \(I_{a,j+1}\) satisfies both strict boundary conditions, hence \(I_{a,j+1}\simeq
j_{j+1!*}j_{j+1}^*I_{a,j+1}\). Applying \(T_j\) to the decomposition at stage \(j\) gives
\[
P_{a,j+1}\simeq I_{a,j+1}\oplus T_j(i_{j*}Q_{a,j}).
\]
Both summands are perverse, and the second is supported on \(Z_{j+1}\). Closed pushforward therefore
identifies it with \(i_{j+1*}Q_{a,j+1}\) for a perverse sheaf \(Q_{a,j+1}\) on \(Z_{j+1}\).
Lemma~\ref{lem:boundary} makes \(c_{Z_{j+1}}(P_{a,j+1})\) an isomorphism. The shifted summands
remain retracts of \(T_jK_j\simeq K_{j+1}\), completing the induction.

\smallskip
\noindent\emph{Step 7. Descent to the rigid base.}
At the final graph cut, \eqref{eq:transport} identifies \(P_{a,r}\) with \(\pH^a(K_f)\). The
induction shows that its canonical boundary map for \(Z\) is an isomorphism. Since closed
extraordinary pullback and duality commute with extension to the completed algebraic closure, this
assertion descends to \(K\). Invertibility can be checked locally, so \(c_Z(P_a)\) is an isomorphism
on \(Y\). Applying the construction for each closed \(Z\) and then
Corollary~\ref{cor:support-decomposition} expresses \(P_a\) as a finite direct sum of intermediate
extensions of finite-rank local systems on smooth Zariski-locally closed strata. Together with the
global derived splitting of Theorem~\ref{thm:hl}, this gives the projective decomposition for smooth
sources and \(\ell\ne p\).
\end{proof}

\subsection{Coefficients in the residue characteristic}\label{sec:allprimes}

To extend the decomposition for smooth sources proved in Sections~\ref{sec:lefschetz}
and~\ref{sec:support} to \(\Q_p\)-coefficients, we keep the algebraic approximation and analytic
graph fixed for all coefficient primes. The algebraic complexes arise from a single Betti complex
with rational coefficients, so the dimensions of their ordinary stalk cohomology are independent of
the prime. The
support criterion transfers to \(p\) the upper perverse bounds obtained after restriction to the
graph; applying the Gysin comparison of Section~\ref{sec:purity} to the algebraic duals gives the
lower bounds.

\begin{lemma}\label{lem:betti}
Let \(g_0:W_0\to S\) be a projective morphism of \(K\)-schemes of finite type, with \(W_0\) smooth
of pure dimension \(s\). Fix a closed immersion \(i_0:Z_0\hookrightarrow S\), with complementary
open immersion \(j_0:S\setminus Z_0\hookrightarrow S\). For every prime \(\ell\), put
\(K_{0,\ell}=Rg_{0*}\Q_\ell[s]\), \(P_{a,\ell}=\pH^a(K_{0,\ell})\), and
\(I_{a,\ell}=j_{0!*}j_0^*P_{a,\ell}\). At each geometric closed point of \(S\), the
dimensions of ordinary stalk cohomology are independent of \(\ell\) for
\begin{equation}\label{eq:bettilist}
P_{a,\ell},\quad \mathbb DP_{a,\ell},\quad
I_{a,\ell},\quad \mathbb DI_{a,\ell},\quad
i_{0*}i_0^*I_{a,\ell},\quad i_{0*}i_0^*\mathbb DI_{a,\ell}.
\end{equation}
\end{lemma}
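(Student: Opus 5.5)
The approach is to descend everything to a finitely generated subfield, embed it into \(\mathbf C\), and compare each \(\ell\)-adic object with a single constructible complex of \(\Q\)-vector spaces on \(S(\mathbf C)\). Since \(W_0\), \(S\), \(Z_0\) and \(g_0\) are of finite type over \(K\), they are defined over a finitely generated subfield \(k_0\subset\overline K\). Every geometric closed point of \(S\) is defined over \(\overline{k_0}\) after enlarging \(k_0\). Invariance of étale cohomology of constructible complexes under extension of algebraically closed fields, compatible with the six operations, reduces the statement to the case where \(g_0\), \(Z_0\), the point and all the objects live over an algebraically closed field \(k\) embeddable in \(\mathbf C\). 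Fix such an embedding. The perverse \(t\)-structure, \(j_{0!*}\), \(\mathbb D\) and \(i_0^*\) are all compatible with this base change, so \eqref{eq:bettilist} does not change.

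Over \(\mathbf C\), the comparison theorem of \cite[\S6.1]{BBD} (Artin comparison, extended to the six operations on constructible complexes) identifies \(D^b_c(S_{\mathrm{et}},\Q_\ell)\) with the analytic category \(D^b_c(S(\mathbf C),\Q_\ell)\). This identification respects \(Rg_{0*}\), \(\mathbb D\), \(i_0^*\), \(j_{0*}\), and perverse truncations. Put \(K^{\mathrm B}=Rg_{0*}\Q[s]\) on \(S(\mathbf C)\). Then \(K_{0,\ell}\) corresponds to \(K^{\mathrm B}\otimes_\Q\Q_\ell\).

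The key point is that \(-\otimes_\Q\Q_\ell\) is exact and faithfully flat on constructible analytic complexes. It commutes with all the functors above, including \(\pH^a\), since the perverse \(t\)-structure is defined by stalk and costalk dimension conditions preserved by flat scalar extension. It also commutes with \(j_{0!*}\), which is computed by Deligne's truncation formula \(\tau_{\le\cdot}Rj_*\) stratum by stalk, for a stratification adapted to \(K^{\mathrm B}\) and \(Z_0\).

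Set \(P^{\mathrm B}_a=\pH^a(K^{\mathrm B})\) and \(I^{\mathrm B}_a=j_{0!*}j_0^*P^{\mathrm B}_a\). Then each object in \eqref{eq:bettilist} is the scalar extension of its Betti counterpart: \(P^{\mathrm B}_a\), \(\mathbb DP^{\mathrm B}_a\), \(I^{\mathrm B}_a\), and so on. Its stalk cohomology at a point \(x\) is therefore \(\mathcal H^q(\cdot^{\mathrm B})_x\otimes_\Q\Q_\ell\), of dimension independent of \(\ell\). This includes \(\ell=p\), since the algebraic étale theory over \(\mathbf C\) treats all primes equally.

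The main obstacle I expect is the first reduction rather than the comparison. The field \(K\) is \(p\)-adic, and for \(\ell=p\) one must make sure that the algebraic étale complexes over \(K\) (not yet analytified) satisfy invariance under change of algebraically closed base field. The stalk at a geometric point is geometric, so \(G_K\)-structure is irrelevant here. This holds for algebraic constructible \(\Z/p^n\)-sheaves on varieties in characteristic zero, via smooth and proper base change, which are valid for all \(\ell\) in characteristic \(0\). One must also check that \(j_{0!*}\) and \(\mathbb D\) with \(\Q_\ell\)-coefficients commute with this change of field; I would verify this at finite level and pass to limits as in Section~\ref{sec:formal}.
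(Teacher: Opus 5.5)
Your proposal is correct and follows the paper's own route: compare every \(\ell\)-adic object with one rational Betti complex on \(S_{\mathbf C}\) via the comparison of \cite[\S6.1.2]{BBD}, then note that exact scalar extension from \(\Q\) to \(\Q_\ell\) commutes with perverse truncation, duality, intermediate extension and boundary restriction. The differences are cosmetic: the paper embeds \(\overline K\) directly into \(\mathbf C\) instead of descending to a finitely generated subfield, and it computes \(j_{0!*}\) as the image of the morphism between the perverse extensions rather than by Deligne's truncation formula.
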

\begin{proof}
Choose an abstract embedding \(\overline K\hookrightarrow\mathbf C\), where \(\mathbf C\) is the
field of complex numbers. Write \(g_{0,\mathbf C}:W_{0,\mathbf C}\to S_{\mathbf
C}\) for the base change along this embedding, and consider the Betti complex \(K_B=R(g_{0,\mathbf
C})_*\Q[s]\) on the underlying complex analytic space of \(S_{\mathbf C}\). Take its perverse
cohomology, Verdier duals, intermediate extensions, and restrictions to the prescribed boundary in
the constructible derived category on the complex variety \(S_{\mathbf C}\). These constructions are
defined over \(\Q\) and commute with extension of the coefficient field. In particular, intermediate
extension is the image of the natural morphism between the perverse extensions, and formation of
this image commutes with coefficient extension.

The comparison between algebraic and classical cohomology with finite coefficients extends to
constructible \(\Z_\ell\)- and \(\Q_\ell\)-complexes, compatibly with the cohomological operations,
by \cite[\S6.1.2, (B'), (B''), and (C')]{BBD}. The support and
cosupport conditions make this comparison perverse \(t\)-exact, so it commutes with perverse
cohomology and intermediate extension. Moreover, the boundary functors \(i_0^*\) and \(i_{0*}\)
commute with flat extension of the Betti coefficient field from \(\Q\) to \(\Q_\ell\). These
compatibilities identify all the \(\Q_\ell\)-realizations in \eqref{eq:bettilist} with the
corresponding geometric algebraic complexes. They apply to every prime, including \(p\), since the
varieties have characteristic zero. Each ordinary stalk cohomology group is therefore obtained from
a finite-dimensional rational Betti group by extension of scalars, proving the assertion.
\end{proof}

\begin{lemma}\label{lem:rational-support}
Let \(V=\Spa(A_V)\) be an affinoid rigid space over \(K\), and put \(\mathcal V=\Spec A_V\).
We use the affinoid comparison of \cite[Lemma 3.34]{BH} to identify integral Zariski-constructible
complexes on \(V\) with the corresponding complexes on \(\mathcal V\). In particular, supports and
their dimensions may be studied on \(\mathcal V\). Choose finitely many coefficient primes and,
for each prime \(\ell\), finitely many integral constructible complexes on \(\mathcal V\). Denote
these complexes generically by \(G_\ell\), and write \(F_\ell=G_\ell[1/\ell]\), using the same
symbols for the corresponding objects on \(V\). There is a common finite stratification
\(\mathcal V=\coprod_b S_b\) by connected
regular affine locally closed subschemes such that every \(G_\ell|_{S_b}\) is pro-\'etale locally
constant with a fixed perfect \(\Z_\ell\)-complex \(C_{\ell,b}\) as its value. Put \(e_b=\dim
S_b=\dim\overline{S_b}\) and \(M_{\ell,b}^q=H^q(C_{\ell,b})\). For every integer \(a\), the
following conditions are equivalent:
\begin{align}
&F_\ell\in{}^pD^{\le a}(V,\Q_\ell);\label{eq:stratified-upper}\\
&M_{\ell,b}^q[1/\ell]=0
  \quad\text{whenever }q>a-e_b;\label{eq:stratified-modules}\\
&\dim\overline{\supp\cH^q(F_\ell)}\le a-q
  \quad\text{for every }q.\label{eq:supporttest}
\end{align}
\end{lemma}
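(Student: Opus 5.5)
First I construct the common stratification. Then I prove \eqref{eq:stratified-modules}\(\Leftrightarrow\)\eqref{eq:supporttest}, and finally \eqref{eq:stratified-upper}\(\Leftrightarrow\)\eqref{eq:stratified-modules}, using the support-dimension description of the rational perverse \(t\)-structure from \cite[Definition 4.1, Construction 4.3, Theorem 4.11]{BH}.

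\emph{The stratification.} Under the affinoid comparison of \cite[Lemma 3.34]{BH}, each integral constructible complex \(G_\ell\) on \(\mathcal V\) admits a finite stratification on whose strata it is pro-\'etale locally constant with perfect value. This holds because its reductions modulo \(\ell\) are constructible, and the integral category is the inverse limit of the categories with finite coefficients.
\begin{enumerate}
\item Take a common refinement of these stratifications for the finitely many complexes.
\item Refine further so that each stratum is regular, affine and connected. This uses generic regularity of the excellent scheme \(\mathcal V\), together with noetherian induction on closed complements.
\item After refining, each stratum is equidimensional, so \(e_b=\dim S_b=\dim\overline{S_b}\).
\end{enumerate}
On a connected stratum, the locally constant complex has a fixed value \(C_{\ell,b}\) up to isomorphism, so \(M^q_{\ell,b}\) is well defined.

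\emph{Equivalence of \eqref{eq:stratified-modules} and \eqref{eq:supporttest}.} The sheaf \(\cH^q(F_\ell)\) restricted to \(S_b\) is locally constant with value \(M^q_{\ell,b}[1/\ell]\). Therefore the support of \(\cH^q(F_\ell)\) is exactly the union of those strata \(S_b\) with \(M^q_{\ell,b}[1/\ell]\neq0\). Since there are finitely many strata, its closure has dimension equal to the maximum of the corresponding \(e_b\). Hence \eqref{eq:supporttest} says precisely that \(M^q_{\ell,b}[1/\ell]\ne0\) implies \(e_b\le a-q\), which is the contrapositive form of \eqref{eq:stratified-modules}.

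\emph{Equivalence of \eqref{eq:stratified-upper} and \eqref{eq:stratified-modules}.} This is the main obstacle, because the perverse \(t\)-structure is defined integrally, by conditions on the reductions \(G_\ell/\ell\), and only becomes rational after inverting \(\ell\). I would argue as follows.
\begin{enumerate}
\item Rationally, \(F_\ell\in{}^pD^{\le a}\) if and only if some integral model lies in \({}^{p}D^{\le a}\), or equivalently in \({}^{p^+}D^{\le a}\) (the two structures agree after inverting \(\ell\) by \cite[Theorem 4.11]{BH}).
\item The integral structure is tested on strata: for every stratum \(S_b\), the restriction \(G|_{S_b}\) must have cohomology in degrees \(\le a-e_b\) up to bounded torsion. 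This is the description of \cite[Construction 4.3, Proposition 4.6(5)]{BH}.
\item Because \(C_{\ell,b}\) is perfect over \(\Z_\ell\), the torsion in each \(M^q_{\ell,b}\) is killed by a fixed power of \(\ell\).
\item For the direction \eqref{eq:stratified-modules}\(\Rightarrow\)\eqref{eq:stratified-upper}, the modules \(M^q_{\ell,b}\) with \(q>a-e_b\) are then torsion and annihilated by a bounded power of \(\ell\). So \(\pH^{>a}\) of the integral model is locally bounded torsion and vanishes rationally.
\item Conversely, if \(F_\ell\in{}^pD^{\le a}\), replace the model by \({}^p\tau^{\le a}G_\ell\). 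Any two integral models become isomorphic up to multiplication by powers of \(\ell\), so the rationalized stalk values \(M^q_{\ell,b}[1/\ell]\) do not depend on the model. Their vanishing for \(q>a-e_b\) then follows from the integral stalk bounds for \({}^pD^{\le a}\).
\end{enumerate}

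Finally, everything passes from \(\mathcal V\) to \(V\). The comparison of \cite[Lemma 3.34]{BH} is compatible with stalks at classical points and with dimensions of Zariski closures, so all three conditions transport between the two sides.
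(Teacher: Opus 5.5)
The gap is in the direction \eqref{eq:stratified-modules}$\Rightarrow$\eqref{eq:stratified-upper}. The rest of your plan is essentially the paper's argument and works: the stratification, \eqref{eq:stratified-modules}$\Leftrightarrow$\eqref{eq:supporttest}, and \eqref{eq:stratified-upper}$\Rightarrow$\eqref{eq:stratified-modules}. Two details need fixing along the way.
\begin{enumerate}
\item In the last of these, ${}^p\tau^{\le a}G_\ell$ is no longer locally constant along the $S_b$. You should read ``integral stalk bounds'' as follows: $\cH^q({}^p\tau^{\le a}G_\ell)$ has support closure of dimension $\le a-q<e_b$. Hence it vanishes at some point of $S_b$, where its rationalized stalk is $M^q_{\ell,b}[1/\ell]$.
\item Equidimensionality alone does not give $\dim S_b=\dim\overline{S_b}$; it fails for the open point of a DVR. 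You need irreducibility of $S_b$, the Jacobson property, and the dimension theory of affinoid algebras.
\end{enumerate}

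Your item 2 attributes to \cite{BH} a stratum-wise test ``up to bounded torsion'' in every degree above $a-e_b$. Neither integral structure has this form.
\begin{itemize}
\item ${}^pD^{\le a}(\Z_\ell)$ is tested on $G_\ell/\ell$. It therefore forces $H^q(C_{\ell,b})=0$ integrally for all $q>a-e_b$.
\item ${}^{p^+}D^{\le a}$ consists of objects of ${}^pD^{\le a+1}$ whose $\pH^{a+1}$ is bounded torsion. So it allows torsion only in the single perverse degree $a+1$.
\end{itemize}
An integral model with nonzero torsion $M^q_{\ell,b}$ for some $q\ge a-e_b+2$ lies in neither. The test you describe is precisely the rational statement to be proved. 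So your sentence ``So $\pH^{>a}$ of the integral model is locally bounded torsion'' restates the conclusion. Nothing in your plan turns stratum-wise torsion of stalk cohomology into torsion of global perverse cohomology.

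The missing argument, which is how the paper proves this implication, has two parts.
\begin{enumerate}
\item Work on each stratum with $c_b=a-e_b$, using the triangle $\tau^{\le c_b}G_b\to G_b\to\tau^{>c_b}G_b$. The third term has finitely many cohomology sheaves, each killed by one power $\ell^N$. A Postnikov induction over these degrees shows that $\ell^{dN}$ kills the complex itself, not merely its cohomology sheaves. Thus $F_\ell|_{S_b}$ is the rationalization of $\tau^{\le c_b}G_b\in{}^pD^{\le a}(S_b,\Z_\ell)$.
\item Glue along a finite closed filtration compatible with the strata, using the localization triangles $j_!j^*G_\ell\to G_\ell\to i_*i^*G_\ell$. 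Both $j_!$ and $i_*$ preserve upper perverse bounds and scalar annihilators, and ${}^pD^{\le a}(\mathcal V,\Q_\ell)$ is closed under extensions. Induction along the filtration then gives \eqref{eq:stratified-upper}.
\end{enumerate}
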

\begin{proof}
Integral affinoid comparison \cite[Lemma 3.34]{BH} allows us to work on \(\mathcal V\). By
\cite[Proposition 6.6.11]{BS}, the finitely many complexes admit finite constructible
stratifications on which they are pro-\'etale locally constant and perfect. Take a common
refinement, and use excellence and finite dimension of \(A_V\) to refine it into connected regular
affine strata compatible with a finite filtration by closed subschemes. On each stratum, \cite[Lemma
6.6.8]{BS} gives a fixed perfect value \(C_{\ell,b}\). A connected regular stratum is irreducible,
and in this Jacobson setting its dimension equals the dimension of its closure.

For finite coefficients, \cite[Definition 4.1]{BH}, together with Construction~4.3 and Example~4.4
for integral coefficients, identifies the upper perverse bound for a locally constant complex on a
regular stratum of dimension \(e_b\) with the ordinary bound \(H^q=0\) for \(q>a-e_b\). These bounds
glue along the chosen finite filtration by closed subschemes as in \cite[proof of Proposition
4.6(1)]{BH}. By \cite[Theorem 4.11]{BH}, affinoid comparison identifies the resulting localized
structure with the rational perverse structure on the whole affinoid \(V\).

\(\eqref{eq:stratified-modules}\Rightarrow\eqref{eq:stratified-upper}\).
Fix \(\ell\), and put \(G_b=G_\ell|_{S_b}\) and
\(c_b=a-e_b\). Consider the ordinary truncation triangle
\[
 \tau^{\le c_b}G_b\longrightarrow G_b
 \longrightarrow\tau^{>c_b}G_b
 \longrightarrow(\tau^{\le c_b}G_b)[1].
\]
The complex \(\tau^{>c_b}G_b\) has perfect value \(\tau^{>c_b}C_{\ell,b}\). Its cohomology modules
are finitely generated and become zero after rationalization, so one power \(\ell^{N_\ell}\)
annihilates all of them, for every stratum and degree. The same power annihilates the corresponding
locally constant cohomology sheaves. Choose \(d\) bounding the number of their possibly nonzero
cohomological degrees. Put \(E_b=\tau^{>c_b}G_b\). Its Postnikov filtration consists of the ordinary
truncations, with triangles
\[
 \tau^{\le q-1}E_b\longrightarrow\tau^{\le q}E_b
 \longrightarrow\cH^q(E_b)[-q]
 \longrightarrow(\tau^{\le q-1}E_b)[1].
\]
Since the third term is annihilated by \(\ell^{N_\ell}\), the exact Hom sequence shows that
\(\ell^{N_\ell}\mathrm{id}_{\tau^{\le q}E_b}\) factors through \(\tau^{\le q-1}E_b\).
Thus, if \(\ell^u\) annihilates \(\tau^{\le q-1}E_b\), then \(\ell^{u+N_\ell}\) annihilates
\(\tau^{\le q}E_b\). The truncation changes only in degrees with nonzero cohomology, so induction
over at most \(d\) such degrees shows that \(\ell^{dN_\ell}\) annihilates \(E_b\). Thus the
truncation maps become isomorphisms after rationalization \cite[Remark 4.9]{BH}.

At each step of the chosen finite filtration of \(\mathcal V=\Spec A_V\) by closed subschemes, write
\(j\) for the open stratum and \(i\) for its closed complement. The localization triangle is
\(j_!j^*G_\ell\to G_\ell\to i_*i^*G_\ell\to j_!j^*G_\ell[1]\). Both \(j_!\) and \(i_*\) preserve
scalar annihilators and upper perverse bounds. Induction along the finite filtration therefore gives
\eqref{eq:stratified-upper} after rationalization.

\(\eqref{eq:stratified-upper}\Rightarrow\eqref{eq:stratified-modules}\).
Successive \(*\)-restrictions preserve upper perverse bounds, and the shifted ordinary bound on each
locally constant restriction gives \eqref{eq:stratified-modules}.

\(\eqref{eq:stratified-modules}\Leftrightarrow\eqref{eq:supporttest}\).
On \(S_b\), the rational cohomology sheaf \(\cH^q(F_\ell)\) is locally constant with fiber
\(M_{\ell,b}^q[1/\ell]\), so it is either zero everywhere or nonzero everywhere. Consequently,
\[
\dim\overline{\supp\cH^q(F_\ell)}
=\max_{M_{\ell,b}^q[1/\ell]\ne0}e_b,
\]
with value \(-\infty\) for the empty support. This identifies \eqref{eq:stratified-modules} with
\eqref{eq:supporttest} and proves the lemma.
\end{proof}

\begin{lemma}\label{lem:ranktransfer}
Use the notation of Lemma~\ref{lem:betti}. Let \(\gamma:V\to S^{\an}\) be an analytic graph obtained
by \(r\) successive cuts on a qcqs neighborhood over \(K\). For corresponding objects \(E_\ell\) in
\eqref{eq:bettilist}, form their restricted analytifications on this neighborhood as in
Section~\ref{sec:formal}, and denote them by the same symbols. The upper perverse bounds of
\(\gamma^*E_\ell[-r]\) are independent of \(\ell\). This includes the strict upper bounds after
restriction to the corresponding closed boundaries.
\end{lemma}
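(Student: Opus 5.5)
The plan is to reduce the perverse upper bound to a statement about dimensions of supports of ordinary cohomology sheaves, using Lemma~\ref{lem:rational-support}, and then to show that these supports do not depend on \(\ell\).

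\emph{Reduction to affinoids and a common stratification.} Upper perverse bounds are local, so I first cover the neighborhood by finitely many affinoids. I then choose an affinoid \(V\) inside the final cut \(M_r\). On the algebraic side, for the finitely many primes involved (any given \(\ell\) together with a fixed auxiliary \(\ell'\)), I take a single algebraic constructible stratification of \(S\) by smooth connected strata. It should be fine enough that every object of \eqref{eq:bettilist}, and its boundary restriction \(i_{0*}i_0^*(-)\), is locally constant on each stratum for every \(\ell\). Such a stratification exists because all these complexes are \(\ell\)-adic realizations of one rational Betti complex on \(S_{\mathbf C}\) and its derived operations, as in the proof of Lemma~\ref{lem:betti}. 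A Betti-constructible stratification refined to be defined over \(K\) (by taking Galois orbits of strata) works for all \(\ell\) at once. Lemma~\ref{lem:betti} then says that on each stratum \(\Sigma\) the rank of \(\cH^q(E_\ell)|_\Sigma\) is independent of \(\ell\).

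\emph{Pulling back along the graph.} Ordinary pullback commutes with analytification and with \(\gamma^*\). It also commutes with \(\cH^q\), since \(\gamma^*\) is exact on ordinary sheaves. Hence \(\cH^q(\gamma^*E_\ell[-r])=\gamma^*\cH^{q-r}(E_\ell)\). On \(V\), the pullbacks \(\gamma^{-1}(\Sigma^{\an})\) form a finite Zariski-locally closed partition, and there these cohomology sheaves are local systems of \(\ell\)-independent rank. Consequently
\[
\overline{\supp\cH^q(\gamma^*E_\ell[-r])}
=\overline{\bigcup\{\gamma^{-1}(\Sigma^{\an}):\operatorname{rk}\cH^{q-r}(E_\ell)|_\Sigma\ne0\}},
\]
and the index set of strata appearing on the right is independent of \(\ell\). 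So the dimension of each support closure is independent of \(\ell\). Applying the equivalence \eqref{eq:stratified-upper}\(\Leftrightarrow\)\eqref{eq:supporttest} of Lemma~\ref{lem:rational-support} then shows that \(\gamma^*E_\ell[-r]\in{}^pD^{\le a}\) holds for one \(\ell\) exactly when it holds for another.

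\emph{Boundary restrictions.} For the strict bounds, I apply the same argument to the objects \(i_{0*}i_0^*I_{a,\ell}\) and \(i_{0*}i_0^*\mathbb DI_{a,\ell}\), which are already in the list \eqref{eq:bettilist}. Closed base change identifies \(\gamma^*i_{0*}i_0^*(-)\) with \(i_{r*}i_r^*\gamma^*(-)\), so bounds for these objects are exactly the strict boundary bounds after the cut.

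\emph{Main obstacle.} The delicate step is using supports rather than stalks. A geometric point of \(V\) lies over a point of \(S^{\an}\) that need not be classical or algebraic, so Lemma~\ref{lem:betti} cannot be applied pointwise on \(V\). What saves the argument is the local constancy of each sheaf along a stratum. Its rank at any point of \(\gamma^{-1}(\Sigma^{\an})\) equals its rank at a classical point of \(\Sigma\), and such classical points exist because \(\Sigma\) is of finite type over \(K\). I also need compatibility with the integral models and rationalization of Section~\ref{sec:formal}. Since rank after inverting \(\ell\) is all that enters \eqref{eq:supporttest}, the integral torsion in the models plays no role.
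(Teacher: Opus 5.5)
Your proof is correct, but it places the stratification on the other side of \(\gamma\) from the paper.

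\emph{The paper's route.} It works on an affinoid chart \(V=\Spa(A_V)\) whose image lies in an affine algebraic open of \(S\). It pulls integral algebraic models back to \(\Spec A_V\) and uses Lemma~\ref{lem:rational-support} to obtain a common stratification of \(\Spec A_V\) with perfect values. Because \(A_V\) is Jacobson, every stratum contains classical points. These map to closed points of \(S\), where Lemma~\ref{lem:betti} applies, and constancy of rational ranks on each stratum then spreads the equality over the whole stratum.

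\emph{Your route.} You stratify the finite-type scheme \(S\) itself. You read off \(\ell\)-independence of ranks at a closed point of each connected algebraic stratum, and then pull the strata back along \(\gamma\). This proves something slightly stronger: the supports of \(\cH^q(\gamma^*E_\ell[-r])\) coincide as subsets of \(V\) for the two primes. It needs neither the Jacobson argument on \(V\) nor the affine-chart condition; analytic input enters only through Lemma~\ref{lem:rational-support}\eqref{eq:stratified-upper}\(\Leftrightarrow\)\eqref{eq:supporttest} and the compatibility of analytification and pullback with stalks.

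\emph{What the paper's route buys.} The passage between integral models, supports of rational cohomology sheaves, and perverse bounds stays inside a single lemma applied on the affinoid.

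\emph{One justification to replace.} The Betti stratification descended to \(K\) ``by Galois orbits'' is unnecessary and not quite justified as stated, since a stratification of \(S_{\mathbf C}\) need not be defined over \(\overline K\). Because only finitely many primes are involved, a common refinement over \(K\) of the \(\ell\)-adic constructibility stratifications of the finitely many algebraic complexes suffices.
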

\begin{proof}
Work on an affinoid chart \(V=\Spa(A_V)\) whose graph maps into an affine algebraic open of \(S\). A
classical point \(x\in V\) has residue field finite over \(K\). Its image \(x_0\in S\) is an
algebraic closed point: evaluation of a \(K\)-algebra of finite type in \(\kappa(x)\) has
finite-dimensional image, which is a field. Ordinary pullback preserves geometric stalks, so the
stalk cohomology of \(\gamma^*E_\ell[-r]\) at \(x\) is the algebraic stalk cohomology at \(x_0\),
shifted by \(-r\). Lemma~\ref{lem:betti} makes these dimensions independent of \(\ell\).

Compare two coefficient primes. Choose integral algebraic models for the finitely many corresponding
objects, pull them back to \(\Spec A_V\), and include the shift \([-r]\).
Lemma~\ref{lem:rational-support} gives a common finite regular stratification with perfect values.
Since \(A_V\) is Jacobson, every nonempty stratum contains closed points of \(\Spec A_V\),
corresponding to classical points of \(V\). Rational cohomology ranks are constant on each stratum.
Equality at classical points therefore shows that the same strata support nonzero cohomology in each
degree for both primes. The closures of their supports agree and therefore have the same dimension,
so \eqref{eq:supporttest} gives identical upper perverse bounds. The same argument applies to the
two boundary complexes in \eqref{eq:bettilist}, using closed base change, and hence also to their
strict upper bounds.
\end{proof}

\begin{theorem}\label{thm:smoothsource}
The conclusions of Theorems~\ref{thm:hl} and~\ref{thm:support} remain valid when \(\ell=p\). In
particular, every proper morphism with smooth source and a global relatively ample line bundle has
the decomposition in \eqref{eq:main} for every prime.
\end{theorem}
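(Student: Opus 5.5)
We keep the algebraic approximation, analytic graph, and boundary data of Theorems~\ref{thm:hl} and~\ref{thm:support} fixed, and run the induction through the cuts with \(\ell=p\). Fix an auxiliary prime \(\ell\ne p\), for which the earlier proofs apply. The argument has three parts: perversity of the transported summands, the strict boundary conditions, and then hard Lefschetz with descent.

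\emph{Perversity.} Over \(S\), the algebraic decomposition theorem holds with \(\Q_p\)-coefficients, since \(p\) is just another prime for varieties over a characteristic-zero field. So \cite{BBD} gives the derived decomposition \(K_{0,p}\simeq\bigoplus_a P_{a,p}[-a]\) and the boundary splittings \(P_{a,p}\simeq I_{a,p}\oplus i_{0*}Q_{a,p}\). Restricted analytification preserves these decompositions. Applying \(T^{(r)}=\gamma^*[-r]\) gives decompositions of the graph restriction \(K_f\), but perversity of the pieces is no longer available from vanishing cycles. We replace that input as follows.
\begin{enumerate}
\item By Lemma~\ref{lem:ranktransfer}, the upper perverse bounds of \(T^{(r)}P_{a,p}\), \(T^{(r)}\mathbb DP_{a,p}\), \(T^{(r)}I_{a,p}\), \(T^{(r)}\mathbb D I_{a,p}\), and of their boundary restrictions, coincide with those for \(\ell\).
\item For \(\ell\), Theorem~\ref{thm:support} and its proof give \(T^{(r)}P_{a,\ell}\in{}^pD^{\le0}\), and the same for the dual. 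They also give the strict bounds \({}^pD^{\le-1}\) for \(i_*i^*\) of \(T^{(r)}I_{a,\ell}\) and of \(T^{(r)}\mathbb D I_{a,\ell}\).
\item Hence the same bounds hold for \(p\).
\end{enumerate}

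\emph{Lower bounds and strict boundary conditions.} The lower bound comes from Gysin duality. By \eqref{eq:dualgraph}, valid for \(\ell=p\) by Lemma~\ref{lem:purity} applied to derived summands of \(K_0\) at every stage, we have \(\mathbb D(T^{(r)}P_{a,p})\simeq T^{(r)}(\mathbb DP_{a,p})(-r)\). The right side lies in \({}^pD^{\le0}\) by the upper bound for the dual, so \(T^{(r)}P_{a,p}\in{}^pD^{\ge0}\) and is therefore perverse. Here \(\mathbb DP_{a,p}\) is itself one of the algebraic summands, twisted, and Lemma~\ref{lem:adic-proper-duality} identifies \(\mathbb DK_{0,p}\) with \(K_{0,p}(s)\). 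The same two steps, applied to \(I_{a,p}\) and to \(i_{0*}Q_{a,p}\), show that both graph restrictions are perverse. The strict bounds transferred from \(\ell\) then show, by \eqref{eq:strict}, that \(T^{(r)}I_{a,p}\) is the intermediate extension away from the boundary \(Z\). For the dual condition we again use \eqref{eq:dualgraph}, so that \(\mathbb D\) commutes with \(T^{(r)}\) up to twist. Lemma~\ref{lem:boundary} then makes \(c_Z(\pH^a(K_f))\) an isomorphism for every closed \(Z\), and Corollary~\ref{cor:support-decomposition} gives the decomposition by supports.

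\emph{Hard Lefschetz and descent.} Since the transported summands are perverse, \(\pH^a(K_f)\simeq T^{(r)}P_{a,p}\), as in \eqref{eq:transport}. The Lefschetz maps are then \(N^a\eta^a\), the restriction of the algebraic Lefschetz isomorphisms, exactly as in Step~4 of Theorem~\ref{thm:hl}. Descent from \(C\) to \(K\) and the global splitting via Lemma~\ref{lem:deligne} follow as in Step~5 of Theorem~\ref{thm:hl}, which uses no \(\ell\ne p\) input.

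The main obstacle is the transfer step. Lemma~\ref{lem:ranktransfer} compares stalk ranks at classical points, which requires the analytified \(p\)-adic objects to be the analytifications of the algebraic \(\Q_p\)-complexes, with their stalks computed algebraically. It also requires the \(\ell\)-adic and \(p\)-adic objects to admit a common stratification. Both are supplied by Lemmas~\ref{lem:betti} and~\ref{lem:rational-support}, and the care needed is to apply them to every object in \eqref{eq:bettilist} at the final graph directly. This avoids transferring cut by cut, since intermediate \(p\)-adic perversity is not needed. The one point to verify is that the dual identification \eqref{eq:dualgraph} holds for \(\ell=p\) on the relevant retracts, which Lemma~\ref{lem:purity} provides.
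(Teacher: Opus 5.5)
Your proposal is correct and follows essentially the same route as the paper. You fix one approximation for all primes using an auxiliary \(\ell\ne p\), transfer the ordinary and strict upper perverse bounds for the objects of \eqref{eq:bettilist} directly at the final graph via Lemma~\ref{lem:ranktransfer}, get the lower bounds from \eqref{eq:dualgraph}, and finish with Lemma~\ref{lem:boundary}, Corollary~\ref{cor:support-decomposition}, and the Lefschetz matrix argument with Lemma~\ref{lem:deligne}. The one slip is minor: \eqref{eq:bettilist} does not contain \(i_{0*}Q_{a,p}\) or its dual, so instead of rerunning the transfer for it, deduce (as the paper does) that \(T^{(r)}(i_{0*}Q_{a,p})\) is perverse because it is a derived summand of the perverse object \(T^{(r)}P_{a,p}\).
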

\begin{proof}
We work locally on the base as in Theorems~\ref{thm:hl} and~\ref{thm:support}.

\smallskip
\noindent\emph{Step 1. Fix one geometric approximation.}
Choose an auxiliary prime \(\ell_0\ne p\), and apply Proposition~\ref{prop:boundaryvanish} to the
projective morphism and a prescribed closed boundary. Keep this algebraic model, its finite diagram
resolving the inverse image of the boundary, and its analytic graph fixed for all coefficient
primes. Write \(g_0:W_0\to S\) and \(i_0:Z_0\hookrightarrow S\) for the algebraic morphism and
boundary, and use \(K_{0,\ell},P_{a,\ell},I_{a,\ell}\) as in Lemma~\ref{lem:betti}. Put
\(Q_{a,p}=\pH^0(i_0^!P_{a,p})\). Let \(M_0\subset S^{\an}\) be the chosen qcqs neighborhood and
\(\gamma:V\hookrightarrow M_0\) the graph immersion. We use the same notation for the restricted
analytifications. Let \(r\) be the codimension of \(\gamma\), put \(T^{(r)}=\gamma^*[-r]\), and
write \(i:Z\hookrightarrow V\) for the prescribed boundary.

\smallskip
\noindent\emph{Step 2. Transfer the upper perverse bounds.}
For \(\ell_0\ne p\), the proofs of Theorems~\ref{thm:hl} and~\ref{thm:support} show that
\(T^{(r)}P_{a,\ell_0}\) and \(T^{(r)}I_{a,\ell_0}\) are perverse. They also give the strict boundary
bounds for \(T^{(r)}I_{a,\ell_0}\) and its Verdier dual. Equation~\eqref{eq:dualgraph} identifies
the Verdier duals of these perverse objects, up to Tate twist, with the shifted restrictions of
their algebraic duals. Lemma~\ref{lem:ranktransfer} therefore transfers the upper bounds for all six
types of complexes in \eqref{eq:bettilist} to \(p\). In particular, \(T^{(r)}P_{a,p}\),
\(T^{(r)}I_{a,p}\), and the shifted restrictions of their algebraic duals belong to
\({}^pD^{\le0}\), while the two transported boundary
complexes belong to \({}^pD^{\le-1}\).

\smallskip
\noindent\emph{Step 3. Obtain the lower bounds by Gysin duality.}
Algebraic Lefschetz splitting and the algebraic decomposition by supports make \(P_{a,p}\),
\(I_{a,p}\), and their Verdier duals shifted and Tate-twisted derived retracts of \(K_{0,p}\).
Pullback preserves these retractions, and proper base change identifies the ambient object after
each cut with the normalized direct image from the new smooth source. Thus Lemma~\ref{lem:purity}
applies at every stage. For \(E=P_{a,p}\) or \(I_{a,p}\), equation~\eqref{eq:dualgraph} gives
\[
\mathbb D(T^{(r)}E)\simeq T^{(r)}(\mathbb DE)(-r).
\]
The right-hand side belongs to \({}^pD^{\le0}\) by Step 2. Verdier duality with rational
coefficients gives \(T^{(r)}E\in{}^pD^{\ge0}\), so both \(T^{(r)}P_{a,p}\) and \(T^{(r)}I_{a,p}\)
are perverse.

\smallskip
\noindent\emph{Step 4. Preserve the strict boundary conditions.}
Let \(\gamma_Z:Z\to Z_0\) be the map induced by \(\gamma\), with \(Z_0\) restricted to \(M_0\). For
each coefficient prime, closed base change gives the natural identities \(\gamma^*i_{0*}\simeq
i_*\gamma_Z^*\) and \(\gamma_Z^*i_0^*=i^*\gamma^*\). These hold for integral complexes and commute
with rationalization \cite[Theorems 3.15 and 3.36]{BH}. Applying them to \(I_{a,p}\) and \(\mathbb
DI_{a,p}\), the transferred bounds give
\[
\begin{aligned}
i_*i^*T^{(r)}I_{a,p}
&\simeq T^{(r)}(i_{0*}i_0^*I_{a,p})\in{}^pD^{\le-1},\\
i_*i^*\mathbb D(T^{(r)}I_{a,p})
&\simeq T^{(r)}(i_{0*}i_0^*\mathbb DI_{a,p})(-r)
  \in{}^pD^{\le-1},
\end{aligned}
\]
where the second identification uses \eqref{eq:dualgraph}. Hence \(T^{(r)}I_{a,p}\) is the
intermediate extension of its open restriction. The algebraic decomposition \(P_{a,p}\simeq
I_{a,p}\oplus i_{0*}Q_{a,p}\) remains a direct-sum decomposition after restriction to the graph. Its
complementary summand is perverse, as a derived summand of the perverse object \(T^{(r)}P_{a,p}\),
and is supported on the final boundary. Lemma~\ref{lem:boundary} therefore makes the canonical
boundary map for \(T^{(r)}P_{a,p}\) an isomorphism.

\smallskip
\noindent\emph{Step 5. Recover hard Lefschetz and the global decomposition.}
Algebraic relative hard Lefschetz applies to \(K_{0,p}\), since the algebraic model has
characteristic zero. The shifted restrictions \(T^{(r)}P_{a,p}\) of its perverse summands are now
known to be perverse. Applying \(T^{(r)}\) to the algebraic splitting identifies them with the
perverse cohomology of the rigid direct image, as in \eqref{eq:transport}. The matrix argument of
Theorem~\ref{thm:hl}
identifies the transported Lefschetz maps with those induced by the original relatively ample line
bundle, up to the invertible scalar arising from a tensor power. These canonical maps are
isomorphisms locally over \(K\), hence globally. Lemma~\ref{lem:deligne} gives the global derived
splitting. The boundary argument applies to every prescribed closed subset, and
Corollary~\ref{cor:support-decomposition} decomposes each perverse cohomology sheaf by supports.
\end{proof}

\subsection{Singular sources and derived retracts}\label{sec:singular}

We resolve the source and realize its intersection complex as a derived direct summand of the direct
image of the constant sheaf. The next lemma shows that this summand inherits the decomposition.

\begin{lemma}\label{lem:retract}
Let \(M\) be a qcqs rigid space, and suppose \(E\simeq\bigoplus_a P_a[-a]\) is a finite direct sum
in \(\Db(M,\Q_\ell)\), with each \(P_a\) a finite direct sum of intermediate extensions of
finite-rank local systems on smooth Zariski-locally closed strata. Every derived direct summand of
\(E\) has a decomposition of the same form.
\end{lemma}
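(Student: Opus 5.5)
Let \(F\) be a derived direct summand of \(E\), with inclusion \(u:F\to E\), retraction \(v:E\to F\), and \(vu=\operatorname{id}_F\). Put \(e:=uv\), an idempotent endomorphism of \(E\). The plan has two stages. First, show that \(F\) splits as the sum of its shifted perverse cohomology objects. Second, show that each \(\pH^a(F)\) is a direct summand of \(P_a\) in the abelian category \(\Perv(M,\Q_\ell)\), and that the class of finite sums of intermediate extensions is closed under such summands.

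For the first stage, perverse cohomology is additive, so \(\pH^a(F)\) is the image of the idempotent \(\pH^a(e)\) on \(\pH^a(E)\simeq P_a\). Hence \(\pH^a(F)\) is a direct summand of \(P_a\), because \(\Perv(M,\Q_\ell)\) is abelian and idempotents split. To split \(F\) itself, I would argue as follows.
\begin{enumerate}
\item Write \(\iota_a:P_a[-a]\to E\) and \(\pi_a:E\to P_a[-a]\) for the inclusions and projections of the given decomposition.
\item For each \(a\), the composite \(\pi_a\circ v\circ\ldots\) gives maps \(\alpha_a:\pH^a(F)[-a]\to F\): take the inclusion \(\pH^a(F)[-a]\to P_a[-a]\), then \(\iota_a\), then \(v\).
\item Similarly define \(\beta_a:F\to\pH^a(F)[-a]\) using \(u\), \(\pi_a\), and the perverse projection.
\item Show that \(\sum_a\alpha_a:\bigoplus_a\pH^a(F)[-a]\to F\) induces an isomorphism on every \(\pH^b\). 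On \(\pH^b\), the summand with \(a=b\) maps by the inclusion composed with \(\pH^b(v)\), which is the identity of \(\pH^b(F)\). The summands with \(a\ne b\) contribute nothing, since \(\pH^b(Q[-a])=0\) for perverse \(Q\) and \(a\ne b\).
\item The perverse \(t\)-structure is bounded and nondegenerate on the qcqs space, so a map inducing isomorphisms on all \(\pH^b\) is an isomorphism.
\end{enumerate}
Thus \(F\simeq\bigoplus_a\pH^a(F)[-a]\). This stage is routine.

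For the second stage, it remains to show that a perverse direct summand \(Q\) of \(P=\bigoplus_\alpha\IC_{\overline U_\alpha}(L_\alpha)\) again has this form. I would use Corollary~\ref{cor:support-decomposition}, which reduces the claim to checking that \(c_Z(Q)\) is an isomorphism for every Zariski-closed \(Z\subset M\).
\begin{enumerate}
\item The boundary construction commutes with finite direct sums, as used in the proof of Corollary~\ref{cor:support-decomposition}. Hence \(c_Z(Q)\) is a direct summand of \(c_Z(P)\), and it suffices that \(c_Z(P)\) is an isomorphism.
\item By the same additivity, it suffices that \(c_Z(\IC_{\overline U}(L))\) is an isomorphism for a single summand.
\item Replace \(Z\) by \(Z\cap\overline U\), since only this part meets the support. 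If \(\overline U\subset Z\), then \(\IC_{\overline U}(L)\) is supported on \(Z\), so both \(\pH^0(i^!)\) and \(\pH^0(i^*)\) return the object itself and \(c_Z\) is the identity.
\item Otherwise \(Z\cap\overline U\) is a proper closed subset of the closure of \(U\) meeting \(U\) in a proper closed subset. The intermediate extension has no nonzero subobjects or quotients supported on \(Z\), so both \(\pH^0(i^!)\) and \(\pH^0(i^*)\) vanish and \(c_Z\) is the zero isomorphism between zero objects.
\end{enumerate}

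The main obstacle is case (iv) when \(Z\) meets \(U\) in a nonempty proper closed subset. Then \(Z\cap U\) cuts the stratum itself, and the absence of subobjects or quotients supported on \(Z\) must come from \(L[\dim U]\) on smooth \(U\). I would argue this through the strict bounds \eqref{eq:strict}. On smooth \(U\) with a local system, \(i^*L[\dim U]\) lies in degree \(-\dim U\) with support of dimension less than \(\dim U\), so it lies in \({}^pD^{\le-1}\) by the support criterion \eqref{eq:supporttest}. Dually, \(\mathbb D(L[\dim U])\) is a local system shifted by \(\dim U\) up to twist, and \(\mathbb D\) commutes with \(j_{!*}\), so the same bound holds for the dual. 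This transfers to \(M\) by restricting near \(Z\cap U\) and using the defining property away from \(U\). Irreducibility of \(\overline U\) is needed only to ensure that \(Z\) does not contain a component; a reducible closure can be handled component by component.
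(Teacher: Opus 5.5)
Your proof is correct, but it differs from the paper's in both stages.

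\textbf{Splitting.} The paper works inside $\operatorname{End}(E)$. Since $\operatorname{Hom}(P_a[-a],P_b[-b])=0$ for $a<b$, this ring is triangular and its strictly triangular ideal $J$ is nilpotent. The invertible element $ed+(1-e)(1-d)\equiv1\pmod J$ then conjugates the idempotent $e$ to its diagonal part $d$, so the summand is $\bigoplus_a\operatorname{im}(e_{aa})[-a]$ by ring theory alone. You instead map $\bigoplus_a\pH^a(F)[-a]$ into $F$ through $E$ and $v$, and use that a morphism of bounded objects inducing isomorphisms on all perverse cohomology is an isomorphism. This is the standard argument that retracts of split objects split, and it works for any bounded $t$-structure. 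Your maps $\beta_a$ are not needed.

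\textbf{Perverse summands.} The paper argues directly. Summands with different irreducible supports admit no nonzero morphisms, so $e_{aa}$ preserves the support groups. Full faithfulness of $j_{!*}$ then turns $e_{aa}$ into an idempotent on a direct sum of local systems over a common dense smooth open, and its image is again a local system. You instead feed the summand into Corollary~\ref{cor:support-decomposition}. That requires $c_Z(\IC_{\overline U}(L))$ to be invertible for every closed $Z$, including those cutting through $U$. Your strict-bound argument supplies exactly this, after reducing to connected $U$ so that $Z\cap U$ is nowhere dense. It is the same fact the paper uses implicitly when it says that the image of a map between summands with distinct supports would be a quotient or subobject supported on a proper closed subset.

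\textbf{Trade-off.} The paper's route keeps the original strata and identifies each summand explicitly as an intermediate extension of a summand of the original local systems. Yours reuses the boundary criterion already proved, at the cost of letting the corollary's induction choose new strata, which the statement allows.
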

\begin{proof}
For \(a<b\), the perverse t-structure gives
\[
\operatorname{Hom}(P_a[-a],P_b[-b])
=\operatorname{Hom}(P_a,P_b[a-b])=0.
\]
Thus \(\operatorname{End}(E)\) is triangular, and its strictly triangular ideal \(J\) is nilpotent
because only finitely many degrees occur. Let \(e\in\operatorname{End}(E)\) satisfy \(e^2=e\) and
define the chosen derived retract. Write \(e_{aa}:P_a\to P_a\) for its diagonal components and put
\(d=\bigoplus_a e_{aa}[-a]\). Triangularity gives \(e_{aa}^2=e_{aa}\), so \(d^2=d\). Set
\(u=ed+(1-e)(1-d)\). Then \(u\equiv1\pmod J\), hence \(u\) is invertible, and \(eu=ud\). It follows
that \(e=udu^{-1}\), so its image is isomorphic to \(\bigoplus_a\operatorname{im}(e_{aa})[-a]\).
These diagonal images exist in the abelian perverse heart.

To describe each diagonal image, group the summands of \(P_a\) given by intermediate extension
according to their irreducible supports. There are no nonzero morphisms between summands with
distinct strict supports: the image would be both a quotient of one and a subobject of the other,
supported on a proper closed subset of at least one support. Thus \(e_{aa}\) preserves each group.
On a fixed support \(Z\), choose a common smooth dense open \(U\) on which all the summands are
shifted local systems. Full faithfulness of intermediate extension identifies \(e_{aa}\) on this
group with an idempotent of the direct sum of these local systems on \(U\). Its image is a
finite-rank local system, and intermediate extension preserves the resulting direct-sum
decomposition. Hence each \(\operatorname{im}(e_{aa})\) is a finite direct sum of intermediate
extensions, proving the lemma.
\end{proof}

\begin{proof}[Proof of Theorem~\ref{thm:general}]
Let \(f:X\to Y\) be projective, with \(Y\) qcqs. Then \(X\) is qcqs. Nilpotent thickenings do not
change the \'etale objects under consideration, and the componentwise definition of \(\IC_X\) allows
us to treat the finitely many reduced irreducible components separately, shifting by the dimension
of each component and applying its closed pushforward. We may therefore assume that \(X\) is reduced
and pure-dimensional of dimension \(d\).

\smallskip
\noindent\emph{Step 1. Resolve the source.}
Temkin's analytic desingularization theorem \cite[Section 5.2.1 and Theorem 5.2.2]{Temkin} gives a
resolution \(\pi:\widetilde X\to X\) by analytic blow-ups, with \(\widetilde X\) smooth of pure
dimension \(d\) and \(\pi\) an isomorphism over a smooth dense open \(j:U\hookrightarrow X\).
Quasi-compactness makes the sequence finite. Each blow-up is projective and carries a global
relatively ample tautological line bundle. For proper maps \(h:V\to W\) and \(g:W\to S\) with
relatively ample line bundles \(A\) and \(B\), respectively, \(A\otimes h^*B^{\otimes n}\) is
\(g\circ h\)-ample for sufficiently large \(n\) when \(S\) is qcqs. Locally on \(S\), relative GAGA
and the relative Proj construction \cite[Example 3.2.6 and Theorem 3.2.7]{Conrad} reduce this to the
algebraic composition theorem \cite[Tag 0C4K]{Stacks}; a finite affinoid covering gives one exponent
on \(S\). Applying this successively gives global relatively ample line bundles for \(\pi\) and
\(f\circ\pi\). Theorem~\ref{thm:smoothsource} therefore applies to both maps.

\smallskip
\noindent\emph{Step 2. Realize the intersection complex as a retract.}
Put \(Z=X\setminus U\), \(A=R\pi_*\Q_\ell[d]\), and \(P=\pH^0(A)\). Since \(\pi\) is an isomorphism
over \(U\), we have \(P|_U\simeq\Q_\ell[d]\). The boundary splitting supplied by
Theorem~\ref{thm:smoothsource} gives \(P\simeq j_{!*}(\Q_\ell[d])\oplus P_Z\), where \(P_Z\) is
supported on \(Z\). By definition, the first summand is \(\IC_X\). The global derived
splitting for \(\pi\) makes \(P\) a derived summand of \(A\). Composing the perverse and derived
retractions gives
\[
\IC_X\longrightarrow R\pi_*\Q_\ell[d]\longrightarrow\IC_X
\]
with composite \(\operatorname{id}_{\IC_X}\).

\smallskip
\noindent\emph{Step 3. Apply the projective direct image.}
Applying \(Rf_*\) shows that \(Rf_*\IC_X\) is a derived retract of \(R(f\circ\pi)_*\Q_\ell[d]\). The
source \(\widetilde X\) is smooth, so Theorem~\ref{thm:smoothsource} decomposes this latter object
into finitely many shifts of intermediate extensions of local systems. Lemma~\ref{lem:retract} gives
a decomposition of the same form for \(Rf_*\IC_X\), proving \eqref{eq:main}.
\end{proof}

\begin{corollary}\label{cor:perverse-leray}
Let \(f:X\to Y\) be as in Theorem~\ref{thm:general}, and put \(C=\widehat{\overline K}\). For every
prime \(\ell\), including \(p\), the perverse Leray spectral sequence
\[
E_2^{a,b}
=H^a\bigl(Y_C,(\pH^b(Rf_*\IC_X))_C\bigr)
\Longrightarrow H^{a+b}\bigl(X_C,(\IC_X)_C\bigr)
\]
degenerates at \(E_2\).
\end{corollary}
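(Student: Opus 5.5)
The plan is to deduce degeneration formally from the derived splitting \eqref{eq:main} of Theorem~\ref{thm:general}, after checking that the splitting and the perverse truncations survive base extension to \(C\). First, write \(q_Y:Y_C\to Y\) and \(q_X:X_C\to X\) for the extensions of scalars. Proper pushforward commutes with this base change \cite[Proposition 3.24]{BH}, so \((Rf_*\IC_X)_C\simeq Rf_{C*}(\IC_X)_C\). As in Step~5 of the proof of Theorem~\ref{thm:hl}, the functor \(q_Y^*\) is perverse \(t\)-exact on the qcqs base. It therefore commutes with perverse cohomology, giving \((\pH^b(Rf_*\IC_X))_C\simeq\pH^b(Rf_{C*}(\IC_X)_C)\). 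Pulling back \eqref{eq:main} along \(q_Y\) then yields
\[
 Rf_{C*}(\IC_X)_C\simeq\bigoplus_b\pH^b\bigl(Rf_{C*}(\IC_X)_C\bigr)[-b]
\]
in the rational derived category on \(Y_C\). Here we read cohomology over \(C\) as \(H^*(X_C,-)=H^*(Y_C,Rf_{C*}(-))\) for the proper map \(f_C\).

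Next, the perverse Leray spectral sequence is the spectral sequence of the filtered object obtained by applying \(R\Gamma(Y_C,-)\) to the perverse truncation tower \({}^p\tau^{\le b}\) of \(F:=Rf_{C*}(\IC_X)_C\). Its \(E_2\) term is \(H^a(Y_C,\pH^b F)\), with abutment \(H^{a+b}(Y_C,F)=H^{a+b}(X_C,(\IC_X)_C)\). When \(F\simeq\bigoplus_b\pH^bF[-b]\), the truncation triangles
\[
{}^p\tau^{\le b-1}F\to{}^p\tau^{\le b}F\to\pH^bF[-b]
\]
are split. Since \(F\) is bounded, only finitely many \(b\) occur, so the tower is isomorphic to the tower of partial direct sums. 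Applying \(R\Gamma(Y_C,-)\) preserves these splittings, so all connecting maps vanish. Hence every differential \(d_k\) for \(k\ge2\) is zero, and the sequence degenerates at \(E_2\). Equivalently, \(H^n(X_C,(\IC_X)_C)\simeq\bigoplus_{a+b=n}H^a(Y_C,\pH^bF)\), and the dimension count forces degeneration.

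The only point needing care is the \(t\)-exactness of \(q_Y^*\) with rational coefficients, including \(\ell=p\). I will take this from the support criterion and the compatibility of duality with base change, exactly as cited in the proof of Theorem~\ref{thm:hl}, Step~5. A second, milder issue is that \(Y_C\) need not be qcqs in the naive sense. In that case I would define \(\Q_\ell\)-cohomology on \(Y_C\) through pullback of objects from the qcqs base \(Y\). Then cohomology is computed as \(R\Gamma\) of the pulled-back integral models with \(\ell\) inverted, and this operation commutes with finite direct sums and shifts. The splitting argument therefore goes through unchanged.
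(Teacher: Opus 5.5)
Your proposal is correct and is essentially the paper's argument: base change the splitting \eqref{eq:main} to \(C\) (with \(q_Y^*\) perverse \(t\)-exact, as in Step~5 of Theorem~\ref{thm:hl}), apply \(R\Gamma(Y_C,-)\), and note that the split perverse truncation filtration forces every differential from \(E_2\) onwards to vanish. Two minor remarks: your closing ``dimension count'' alternative would need finite-dimensionality, which is not available for a general qcqs \(Y\), though the splitting argument does not need it; and your worry about \(Y_C\) is unnecessary, since base change of affinoids to \(C\) is affinoid, so \(Y_C\) is again qcqs.
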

\begin{proof}
Base change the derived decomposition in \eqref{eq:main} to \(C\) and apply \(R\Gamma(Y_C,-)\). The
perverse truncation filtration is split, so every differential from \(E_2\) onwards vanishes.
\end{proof}

\section{De Rham intersection cohomology}\label{sec:arithmetic}

Theorem~\ref{thm:boundary-derham} and its consequences through Corollary~\ref{cor:rank-one-derham}
use comparison theorems, normalization, and finite covers, independently of projective
decomposition. In Section~\ref{sec:decomposition-coefficients}, we use the retraction obtained from
a resolution and the decomposition for smooth sources, both from Section~\ref{sec:decomposition}, to
prove \cite[Conjecture 4.16(1)]{BH} for proper spaces. The same arguments give de Rham intersection
cohomology for local systems that extend across the boundary or admit a projective realization.

Throughout this section, \(K/\Q_p\) is finite,
\(C=\widehat{\overline K}\), and \(G_K=\operatorname{Gal}(\overline K/K)\).
Let \(X/K\) be proper, reduced, and pure-dimensional of dimension
\(d\). Let \(j:U\hookrightarrow X\) be the inclusion of a smooth dense
Zariski-open subspace, and let \(\mathbb L\) be a de Rham
\(\Z_p\)-local system on \(U\).
Write
\[
 L=\mathbb L\otimes_{\Z_p}\Q_p,\qquad
 P(L)=\IC_X(L)=j_{!*}(L[d]),\qquad
 \IH^m(X_C,L)=H^m(X_C,P(L)_C).
\]
This is the perverse normalization of \cite[Construction 4.12]{BH}.
In particular, when \(X\) is smooth and \(L\) is defined on all of \(X\),
\(\IH^m(X_C,L)=H^{m+d}(X_C,L)\).
The de Rham assertions concern the rational Galois representations
associated with the given integral lattices.
For a nonproper smooth open we take integral cohomology before
inverting \(p\). More explicitly, with
\(\mathbb L_n=\mathbb L/p^n\), our conventions are
\begin{align*}
 R\Gamma(U_C,L)
   &=\bigl(R\varprojlim_n R\Gamma(U_C,\mathbb L_n)\bigr)[1/p],\\
 R\Gamma_c(U_C,L)
   &=\bigl(R\varprojlim_n R\Gamma(X_C,j_{C,!}\mathbb L_n)\bigr)[1/p].
\end{align*}
These are the conventions used by the logarithmic comparison
theorems; see also \cite[Lemma 2.4.3 and Remark 2.4.4]{LLZ}.
Direct images along open immersions are likewise formed from the
integral system on the fixed proper ambient space and then rationalized.

\subsection{The boundary range}

The dimension of the boundary bounds the degrees in which intersection cohomology can differ from
ordinary or compactly supported cohomology of the smooth open subset.

\begin{lemma}\label{lem:open-derham}
For every integer \(q\), the representations \(H^q(U_C,L)\) and \(H_c^q(U_C,L)\) are
finite-dimensional de Rham representations of \(G_K\).
\end{lemma}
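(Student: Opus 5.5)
We plan to reduce the statement to the logarithmic \(p\)-adic comparison theorems for de Rham local systems on smooth opens with normal crossings boundary. The first step is to choose a resolution adapted to the boundary. Temkin's desingularization, as used in the proof of Theorem~\ref{thm:general}, gives a proper morphism \(\pi:\widetilde X\to X\). Here \(\widetilde X\) is smooth and proper, \(\pi\) is an isomorphism over \(U\) (after possibly shrinking \(U\); we will avoid this by principalizing the ideal of \(Z=X\setminus U\)), and \(D=\widetilde X\setminus U\) is a strict normal crossings divisor. Since \(U\) embeds as a Zariski-open in \(\widetilde X\), the integral cohomologies \(R\Gamma(U_C,\mathbb L_n)\) and \(R\Gamma(X_C,j_{C,!}\mathbb L_n)\) may be computed on \(\widetilde X\) instead of \(X\). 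For the compactly supported version we use proper base change for \(\pi\) and the identity \(\pi_*\tilde j_!=j_!\), which holds because \(\pi\) is an isomorphism over \(U\). The conventions fixed just before the lemma are compatible with this replacement.

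The second step is finiteness. On the proper space \(\widetilde X\), the complexes \(R\tilde j_*\mathbb L_n\) and \(\tilde j_!\mathbb L_n\) are Zariski-constructible. Their derived limits therefore have finitely generated cohomology over \(\Z_p\), by the finiteness results underlying \cite{BH} for proper pushforward of Zariski-constructible \(\Z_p\)-complexes. Hence \(H^q(U_C,L)\) and \(H^q_c(U_C,L)\) are finite-dimensional \(\Q_p\)-vector spaces. They carry continuous \(G_K\)-actions because \(U\), \(\widetilde X\) and \(\mathbb L\) are defined over \(K\).

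The third step, which we expect to be the main obstacle, is the de Rham property. We would apply the logarithmic de Rham comparison for de Rham \(\Z_p\)-local systems on the log smooth proper space \((\widetilde X,D)\), as in \cite{LLZ}. It gives a \(G_K\)-equivariant isomorphism between \(H^q(U_C,L)\otimes B_{\mathrm{dR}}\) and the log de Rham cohomology of the associated filtered log connection on \(\widetilde X\), tensored with \(B_{\mathrm{dR}}\). The compactly supported version is the same statement with the connection twisted by \(\mathcal O(-D)\); see \cite[Lemma 2.4.3 and Remark 2.4.4]{LLZ}. The delicate point is that the logarithmic extension requires unipotent local monodromy along \(D\). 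To handle this, we would first pass to a finite étale Kummer cover on the Kummer-étale site, or equivalently use the Kummer-étale formulation of \cite{LLZ}, which removes that hypothesis. We would then descend through the finite group action by taking invariants, which preserves the de Rham property because de Rham representations are stable under subquotients. An alternative is to deduce the compact-support statement from the ordinary one by Poincaré duality on \(U\). That duality is a perfect \(G_K\)-equivariant pairing into \(\Q_p(-d)\), and de Rham representations are stable under duals and twists.
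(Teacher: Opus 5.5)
Your proposal is correct and follows the paper's route. Temkin's desingularization with principalization of \(X\setminus U\) gives a smooth proper \(\widetilde X\) with strict normal crossings boundary \(D\) and \(\pi\) an isomorphism over \(U\); the Kummer-\'etale logarithmic de Rham comparison then handles \(H^q(U_C,L)\), the compactly supported comparison of \cite{LLZ} handles \(H^q_c(U_C,L)\), and \(R\pi_*\widetilde j_!\mathbb L_n\simeq j_!\mathbb L_n\) transports the latter back to \(X\).

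The one ingredient you should state explicitly is that \(\mathbb L\) extends to a Kummer \'etale \(\Z_p\)-local system on \((\widetilde X,D)\) \cite[Corollary 6.3.4]{DLLZFound}. That extension is what makes \cite[Corollary 3.2.10]{DLLZLog} applicable with no unipotence hypothesis, so the detour through finite covers and invariants is unnecessary, as is the separate finiteness argument, since the comparison already supplies finiteness.
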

\begin{proof}
Apply Temkin's desingularization and weak principalization theorems to the pair \((X,X\setminus U)\)
\cite[Theorems 1.1.11 and 1.1.13(i)]{TemkinEmbedded}. Since \(U\) is smooth, the resulting proper
modification \(\pi:\widetilde X\to X\) is an isomorphism over \(U\), with \(\widetilde X\) smooth
and \(D=(\widetilde X\setminus U)_{\mathrm{red}}\) a strict normal crossings divisor. The space
\(\widetilde X\) is proper because \(X\) is proper. Equip \(\widetilde X\) with the log structure
defined by \(D\). By \cite[Corollary 6.3.4]{DLLZFound}, the local system \(\mathbb L\) extends to a
Kummer \'etale \(\Z_p\)-local system on this log space. The logarithmic de Rham comparison theorem
\cite[Corollary 3.2.10]{DLLZLog} then gives a finite-dimensional, filtered, \(G_K\)-equivariant
comparison over Fontaine's de Rham period field \(B_{\mathrm{dR}}\) for \(H^q(U_C,L)\), so this
representation is de Rham.

For compactly supported cohomology, apply \cite[Theorem 3.1.10]{LLZ} with compact supports imposed
along every component of \(D\). This gives the corresponding finite-dimensional de Rham comparison.
Writing \(\widetilde j:U\hookrightarrow\widetilde X\) and \(\mathbb L_n=\mathbb L/p^n\), proper base
change gives \(R\pi_*\widetilde j_!\mathbb L_n\simeq j_!\mathbb L_n\): over \(U\) the map \(\pi\) is
an isomorphism, and over \(X\setminus U\) both sides have zero stalks. These identifications are
compatible with reduction in \(n\). Taking derived inverse limits and then inverting \(p\) gives the
cohomology groups with compact support defined above.
\end{proof}

\begin{lemma}\label{lem:boundary-upper-bound}
Let \(S/K\) be proper of dimension at most \(b\), and let \(F\in{}^pD^{\le-1}(S,\Q_p)\). Then
\(R\Gamma(S_C,F_C)\in D^{\le b-1}(\Q_p)\).
\end{lemma}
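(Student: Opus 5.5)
The plan is to reduce to ordinary cohomology sheaves and to use the support characterization of the perverse \(t\)-structure. Since \(S\) is proper, it is qcqs. After extending to \(C\), \(F_C\) still lies in \({}^pD^{\le-1}(S_C,\Q_p)\), because base extension is perverse \(t\)-exact, as noted in Step~5 of the proof of Theorem~\ref{thm:hl}.

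The first step is a pointwise bound on supports. Cover \(S\) by finitely many affinoids \(V\) and apply Lemma~\ref{lem:rational-support} on each. Condition \eqref{eq:supporttest} with \(a=-1\) gives
\[
\dim\overline{\supp\cH^q(F)}\le -1-q
\]
for every \(q\). Since \(\dim S\le b\), the sheaf \(\cH^q(F)\) can be nonzero only when \(-1-q\ge0\), so only for \(q\le-1\). Its support then has dimension at most \(\min(b,-1-q)\). These bounds persist after base change to \(C\), since base extension preserves dimensions of constructible supports.

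The second step uses the hypercohomology spectral sequence
\[
E_2^{u,q}=H^u\bigl(S_C,\cH^q(F_C)\bigr)\Longrightarrow H^{u+q}(S_C,F_C).
\]
Each \(\cH^q(F_C)\) is pushed forward from its Zariski-closed support \(T_q\), which is proper of dimension \(e_q\le\min(b,-1-q)\). The needed input is cohomological dimension: for a Zariski-constructible \(\Q_p\)-sheaf \(G\) on a proper rigid space \(T/K\) of dimension \(e\), one has \(H^u(T_C,G)=0\) for \(u>2e\). Granting this, a nonzero \(E_2^{u,q}\) forces \(u\le 2e_q\), so
\[
u+q\le 2e_q+q\le e_q+(-1-q)+q=e_q-1\le b-1.
\]
This gives \(R\Gamma(S_C,F_C)\in D^{\le b-1}\). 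Since \(F\) is bounded, only finitely many \(q\) occur and convergence is not an issue.

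The main obstacle is the cohomological dimension bound \(H^u(T_C,G)=0\) for \(u>2\dim T\) with \(\Q_p\)-coefficients. This bound is available for \(\ell\ne p\), and its \(p\)-adic analogue for proper spaces over \(C\) is exactly the delicate point. I would prove it by induction on \(\dim T\) using a stratification.
\begin{itemize}
\item Choose a dense smooth Zariski-open \(W\subset T\) on which \(G\) is a local system, with closed complement \(T'\) of smaller dimension.
\item Use the excision triangle for \(j_!G|_W\to G\to i_*G|_{T'}\). The induction hypothesis handles \(T'\), which reduces the claim to \(H_c^u(W_C,G|_W)\), vanishing for \(u>2e\).
\item Treat \(H_c^u\) by resolving to a smooth proper \(\widetilde T\) with a normal crossings boundary, as in the proof of Lemma~\ref{lem:open-derham}. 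The \(p\)-adic primitive comparison theorem, in its logarithmic form with compact supports (\cite[Theorem 3.1.10]{LLZ}), gives amplitude at most \(2e\).
\end{itemize}
The integral-then-rationalize conventions make the limit arguments routine.

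As a fallback for the \(p\)-adic step, one can use Artin vanishing on affinoid pieces, giving cohomological dimension \(\le e\), together with a Čech argument on \(T\) covered by \(e+1\) affinoid pieces. Either route yields a bound of \(2e\) or better, which is all the degree count above requires.
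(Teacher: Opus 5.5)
Your skeleton is the paper's: the perverse bound gives $\dim\overline{\supp\cH^q}\le -1-q$, each cohomology sheaf lives on a support of cohomological dimension at most twice its dimension, and the count $q+2e_q\le e_q-1\le b-1$ concludes. The gap is in the one input you single out yourself.

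First, your diagnosis of it is off. The bound $H^r(X_C,\mathcal G)=0$ for $r>2\dim X$ is not special to $\ell\ne p$. The paper applies it to torsion \'etale sheaves over $C$, $p$-torsion included, citing \cite[Corollary 4.2.6]{BerkovichEtale}. Second, neither of your substitutes establishes it as written.
\begin{itemize}
\item In this paper the logarithmic comparison of \cite{LLZ} is used only for de Rham coefficients, whereas $G|_W$ is an arbitrary $\Q_p$-local system. To get amplitude at most $2e$ you would need a primitive comparison with compact supports, including vanishing above $2e$, for arbitrary Kummer \'etale $\mathbf F_p$-local systems. You assert this rather than derive it.
\item The fallback needs Artin vanishing for $p$-torsion Zariski-constructible coefficients on affinoids. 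It also needs a covering of an arbitrary proper $T$ by $e+1$ affinoids. You justify neither.
\end{itemize}

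The repair is the paper's argument, and it shows that the limit step is not a formality. By definition, $R\Gamma$ with $\Q_p$-coefficients is $R\varprojlim_n$ of finite-level cohomology with $p$ inverted. So any rational bound must come from a finite-level bound together with control of $\varprojlim^1$.
\begin{itemize}
\item Choose an integral model $A_0$ of $F$ and put $A={}^p\tau^{\le-1}A_0$. Then every reduction $A_n=A\otimes^{\mathbf L}_{\Z_p}\Z/p^n$ lies in ${}^pD^{\le-1}(S,\Z/p^n)$. An untruncated model may carry torsion in higher perverse degrees whose reductions violate the support bounds.
\item Your degree count, applied to the torsion sheaves $\cH^q(A_n)_C$ with Berkovich's bound, gives $R\Gamma(S_C,(A_n)_C)\in D^{\le b-1}$ for every $n$.
\item The derived limit could still acquire a class in degree $b$ from $\varprojlim^1_n H^{b-1}$. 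Proper finiteness \cite[Theorem 3.10]{BH} makes these groups finite, so the system is Mittag--Leffler and this term vanishes. This is where properness of $S$ enters.
\end{itemize}
If you prefer your rational formulation through Lemma~\ref{lem:rational-support}, apply the same two steps to an integral model, chosen on $T_q$, of the restriction of each $\cH^q(F_C)$ to $T_q$. That gives the rational cohomological-dimension bound you need.
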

\begin{proof}
Choose an integral model \(A_0\) of \(F\), and put \(A={}^p\tau_{\le-1}A_0\). Rationalization
commutes with perverse truncation, so \(A[1/p]\simeq F\). For every \(n\ge1\), the reduction
\(A_n=A\otimes_{\Z_p}^{\mathbf L}\Z/p^n\) lies in \({}^pD^{\le-1}(S,\Z/p^n)\) by \cite[Proposition
4.6(2) and Theorem 4.11]{BH}. For each nonzero \(\mathcal H^q(A_n)_C\), let \(Z_q\) be the Zariski
closure of its support with the reduced structure, let \(a_q:Z_q\hookrightarrow S_C\) be the
inclusion, and put \(s_q=\dim Z_q\). The perverse support condition gives \(q+s_q\le-1\), and
\(s_q\le b\). Since \(\mathcal H^q(A_n)_C\simeq a_{q*}a_q^*\mathcal H^q(A_n)_C\) and closed
pushforward is exact, its cohomology can be computed on \(Z_q\). The bound on \'etale cohomological
dimension for torsion sheaves on this qcqs space over \(C\), including for \(p\)-torsion, gives
\(H^r(S_C,\mathcal H^q(A_n)_C)=0\) for \(r>2s_q\) \cite[Corollary 4.2.6]{BerkovichEtale}; see also
\cite[Introduction, p.~2]{HansenArtin} for its formulation. Thus every nonzero
term in the ordinary hypercohomology spectral sequence has total degree bounded by
\[
 q+r\le q+2s_q\le s_q-1\le b-1.
\]
It follows that \(R\Gamma(S_C,(A_n)_C)\in D^{\le b-1}\) for every \(n\). Proper finiteness
\cite[Theorem 3.10]{BH} makes the cohomology groups for each \(n\) finite, so the inverse systems in
each degree satisfy the Mittag--Leffler condition. In particular, \(R^1\!\varprojlim_n
H^{b-1}(S_C,(A_n)_C)=0\), so the derived inverse limit has no cohomology above degree \(b-1\).
Inverting \(p\) proves the assertion.
\end{proof}

\begin{theorem}\label{thm:boundary-derham}
Suppose \(Z=X\setminus U\) is nonempty, and put \(b=\dim Z\). Then \(\IH^m(X_C,L)\) is de Rham
whenever
\begin{equation}\label{eq:derham-range}
 m\le-b\qquad\text{or}\qquad m\ge b.
\end{equation}
More precisely, the canonical restriction map
\begin{equation}\label{eq:ih-open-injection}
 \IH^m(X_C,L)\longrightarrow H^{m+d}(U_C,L)
\end{equation}
is an isomorphism for \(m<-b\) and injective for \(m=-b\). The canonical map from compactly
supported cohomology
\begin{equation}\label{eq:compact-ih-surjection}
 H_c^{m+d}(U_C,L)\longrightarrow\IH^m(X_C,L)
\end{equation}
is an isomorphism for \(m>b\) and surjective for \(m=b\). If \(Z\) is empty, all intersection
cohomology groups are de Rham.
\end{theorem}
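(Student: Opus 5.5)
The plan is to compare \(P(L)=j_{!*}(L[d])\) with \(Rj_*(L[d])\) and \(j_!(L[d])\) through the localization triangles on the fixed proper ambient space \(X\). Write \(i:Z\hookrightarrow X\). The triangles are
\[
 i_*i^!P(L)\to P(L)\to Rj_*(L[d])\to i_*i^!P(L)[1],
 \qquad
 j_!(L[d])\to P(L)\to i_*i^*P(L)\to j_!(L[d])[1],
\]
with all direct images formed integrally and then rationalized, as in the conventions above. I apply \(R\Gamma(X_C,-)\) after base change to \(C\). This is compatible with these triangles, because the operations commute with extension of the base field (Section~\ref{sec:formal}). Since \(X\) is proper, \(R\Gamma(X_C,Rj_*L_C[d])\) computes \(R\Gamma(U_C,L)[d]\) and \(R\Gamma(X_C,j_!L_C[d])\) computes \(R\Gamma_c(U_C,L)[d]\), with the normalizations fixed before Lemma~\ref{lem:open-derham}. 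To get this, I check at finite level that derived inverse limits commute with the relevant global sections.

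For the compactly supported side, the strict support condition \eqref{eq:strict} gives \(i_*i^*P(L)\in{}^pD^{\le-1}\). Closed pushforward is fully faithful and \(t\)-exact, so \(i^*P(L)\in{}^pD^{\le-1}(Z,\Q_p)\). Here \(Z\) is proper of dimension \(b\), so Lemma~\ref{lem:boundary-upper-bound} gives \(R\Gamma(Z_C,i^*P(L)_C)\in D^{\le b-1}\). In the long exact sequence
\[
 H^{m-1}(Z_C,i^*P(L)) \to H_c^{m+d}(U_C,L)\to \IH^m(X_C,L)\to H^m(Z_C,i^*P(L)),
\]
the right term vanishes for \(m\ge b\) and the left term vanishes for \(m\ge b+1\). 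This gives surjectivity at \(m=b\) and an isomorphism for \(m>b\).

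For the other side, I use duality. The cosupport condition says \(i^!P(L)\in{}^pD^{\ge1}\), because \(\mathbb D\) exchanges \(i^*\) and \(i^!\) and \(\mathbb D P(L)=j_{!*}(\mathbb D L[d])\). Lemma~\ref{lem:adic-proper-duality}, applied to the structure map of \(Z\), gives
\[
 R\Gamma(Z_C,i^!P(L))\simeq R\Gamma(Z_C,\mathbb D(i^*\mathbb DP(L)))\simeq R\Gamma(Z_C,i^*\mathbb DP(L))^\vee,
\]
using Verdier duality over a point. Now \(i^*\mathbb DP(L)\in{}^pD^{\le-1}\), so Lemma~\ref{lem:boundary-upper-bound} places its cohomology in degrees \(\le b-1\). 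Dually, \(R\Gamma(Z_C,i^!P(L))\) lies in degrees \(\ge 1-b\). In the sequence
\[
 H^m(Z_C,i^!P(L))\to\IH^m(X_C,L)\to H^{m+d}(U_C,L)\to H^{m+1}(Z_C,i^!P(L)),
\]
the left term vanishes for \(m\le-b\) and the right term vanishes for \(m\le -b-1\). This gives injectivity at \(m=-b\) and an isomorphism for \(m<-b\).

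For the de Rham conclusion, all maps are canonical and \(G_K\)-equivariant, since everything is defined over \(K\). Lemma~\ref{lem:open-derham} makes \(H^{q}(U_C,L)\) and \(H^q_c(U_C,L)\) de Rham. The de Rham category is closed under subobjects and quotients, so \(\IH^m\) is de Rham whenever it injects into, or is a quotient of, a de Rham representation. If \(Z=\varnothing\), then \(P(L)=L[d]\) on \(X=U\) and the claim is Lemma~\ref{lem:open-derham} directly.

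I expect the main obstacle to be the duality step. Lemma~\ref{lem:boundary-upper-bound} and Lemma~\ref{lem:adic-proper-duality} are stated integrally, so I need \(\Q_p\)-linear Verdier duality over \(\Spa C\), with finiteness, to identify \(H^m\) of a dual complex with the dual of \(H^{-m}\). I also need these \(t\)-structure bounds to hold for the rationalized \(p\)-adic objects after base change to \(C\). I would try to handle both by choosing integral models, using derived Nakayama as in Lemma~\ref{lem:adic-proper-duality}, and using the finiteness from \cite[Theorem 3.10]{BH}, then rationalizing.
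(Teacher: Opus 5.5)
Your proof is correct. For the compactly supported range, the empty-boundary case and the de Rham conclusion, it matches the paper's argument; it differs only in how you bound \(R\Gamma(Z_C,i^!P(L))\) from below.

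The paper does not dualize there. On a space of dimension at most \(b\), the support description of the perverse \(t\)-structure gives \(D^{\le-b}\subset{}^pD^{\le0}\). Taking right orthogonals yields \({}^pD^{\ge1}\subset D^{\ge1-b}\). Since the cosupport condition puts \(i^!P(L)\) in \({}^pD^{\ge1}\), left \(t\)-exactness of \(R\Gamma(Z_C,-)\) for the ordinary \(t\)-structure immediately gives \(R\Gamma(Z_C,i_C^!P(L)_C)\in D^{\ge1-b}\). That route needs neither proper duality nor finiteness, so the step you flagged as the main obstacle never arises.

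Your route writes \(i^!P(L)\simeq\mathbb D_Z(i^*\mathbb DP(L))\), applies Lemma~\ref{lem:boundary-upper-bound} to \(\mathbb DP(L)\), and dualizes over the point. This is sound. It uses Lemma~\ref{lem:adic-proper-duality} for \(Z\to\Spa K\) and its compatibility with extension to \(C\). It also uses proper finiteness of the integral cohomology to identify rationalized Verdier duality over \(\Spa C\) with \(\Q_p\)-linear duality. In exchange, both ranges follow from the single Lemma~\ref{lem:boundary-upper-bound}. In effect, your proof builds in the exchange of ranges under \eqref{eq:ih-coefficient-duality}, which the paper only records in the remark after the theorem.
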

\begin{proof}
Let \(i:Z\hookrightarrow X\), and write \(P=P(L)\). The strict cosupport condition for intermediate
extension gives \(i^!P\in{}^pD^{\ge1}(Z,\Q_p)\). On a space of dimension at most \(b\), the support
definition of the perverse \(t\)-structure gives \(D^{\le-b}\subset{}^pD^{\le0}\): every possibly
nonzero ordinary cohomology sheaf on the left has degree \(q\le-b\) and support of dimension at most
\(b\le-q\). Taking right orthogonals yields
\begin{equation}\label{eq:perverse-ordinary-lower}
 {}^pD^{\ge1}\subset D^{\ge1-b}.
\end{equation}
Here we use the rational perverse structure of \cite[Definition 4.1, Proposition 4.6, and Theorem
4.11]{BH}. After base change to \(C\), left \(t\)-exactness of derived global sections for the
ordinary \(t\)-structure gives \(R\Gamma(Z_C,i_C^!P_C)\in D^{\ge1-b}(\Q_p)\). Consider the
localization triangle
\begin{equation}\label{eq:arithmetic-localization}
 i_*i^!P\longrightarrow P\longrightarrow Rj_*L[d]
 \longrightarrow i_*i^!P[1].
\end{equation}
Its long exact sequence gives the isomorphism and injection in \eqref{eq:ih-open-injection}. We form
\(Rj_*L[d]\) from the given integral lattice on the proper ambient space \(X\), using \cite[Theorem
3.36(4)]{BH}, and then invert \(p\); its geometric cohomology is therefore the cohomology of \(U_C\)
defined above. All maps are defined over \(K\), so the resulting maps of representations are
\(G_K\)-equivariant. Lemma~\ref{lem:open-derham} and stability of de Rham representations under
subrepresentations prove the assertion for \(m\le-b\).

For the upper range, the strict support condition gives \(i^*P\in{}^pD^{\le-1}(Z,\Q_p)\), and
Lemma~\ref{lem:boundary-upper-bound} gives \(R\Gamma(Z_C,i_C^*P_C)\in D^{\le b-1}(\Q_p)\). The
localization triangle
\[
 j_!L[d]\longrightarrow P\longrightarrow i_*i^*P
 \longrightarrow j_!L[d+1]
\]
then gives the isomorphism and surjection in \eqref{eq:compact-ih-surjection}. The representation
\(H_c^{m+d}(U_C,L)\) is de Rham by Lemma~\ref{lem:open-derham}, and de Rham representations are
closed under quotients. This proves
the assertion for \(m\ge b\). If \(Z\) is empty, \(X=U\) is smooth and proper, so smooth proper de
Rham comparison \cite[Theorem 1.6]{ScholzeHT} applies.
\end{proof}

\begin{remark}
Smooth duality \cite[Corollary 7.1.2]{LRZ}, compatibility of intermediate extension with Verdier
duality, and Lemma~\ref{lem:adic-proper-duality} give
\begin{align}
 \mathbb D_XP(L)&\simeq P(L^\vee)(d),\label{eq:ic-coefficient-duality}\\
 \IH^m(X_C,L)&\simeq
 \IH^{-m}(X_C,L^\vee)^\vee(-d).\label{eq:ih-coefficient-duality}
\end{align}
Thus duality exchanges the lower and upper ranges in Theorem~\ref{thm:boundary-derham}.
\end{remark}

\begin{corollary}\label{cor:finite-boundary-derham}
If \(X\setminus U\) is finite, every \(\IH^m(X_C,L)\) is de Rham. In particular, this holds for a
proper reduced curve with de Rham local coefficients on a smooth dense open subset, and for constant
coefficients on a proper pure-dimensional rigid space with finite singular locus.
\end{corollary}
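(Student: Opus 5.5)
The plan is to read the corollary off Theorem~\ref{thm:boundary-derham} with \(b=0\), and then check that the two special cases fit its hypotheses. If \(Z=X\setminus U\) is empty, the last sentence of Theorem~\ref{thm:boundary-derham} already gives the de Rham property in every degree. Otherwise \(Z\) is a nonempty finite set of points, so \(b=\dim Z=0\). The range \eqref{eq:derham-range} then reads \(m\le 0\) or \(m\ge 0\), which covers every integer \(m\). So \(\IH^m(X_C,L)\) is de Rham for all \(m\). More concretely, \eqref{eq:ih-open-injection} is an isomorphism for \(m<0\), and \eqref{eq:compact-ih-surjection} is an isomorphism for \(m>0\). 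In degree \(m=0\), \(\IH^0\) is both a subrepresentation of \(H^d(U_C,L)\) and a quotient of \(H^d_c(U_C,L)\); either description suffices by Lemma~\ref{lem:open-derham}.

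For a proper reduced curve \(X\), I first reduce to the standing hypotheses of Section~\ref{sec:arithmetic}. Irreducible components of dimension zero are isolated reduced points. Their intersection complexes are the constant sheaf placed at a point, whose cohomology is trivially de Rham. By the componentwise convention, I may therefore assume \(X\) is pure of dimension one. A smooth dense Zariski-open \(U\subset X\) carrying \(\mathbb L\) has Zariski-closed complement \(Z\) of dimension at most zero. By quasi-compactness, \(Z\) is finite, so the first part applies.

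For constant coefficients on a proper pure-dimensional rigid space with finite singular locus, I first replace \(X\) by \(X_{\mathrm{red}}\); this does not change the étale objects. Then I take \(U\) to be the smooth locus, which is dense and Zariski-open because \(X\) is reduced, and take \(\mathbb L=\Z_p\). The trivial local system is de Rham. The complement of \(U\) is the finite singular locus, so again the first part applies.

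The only point needing care is that the hypotheses genuinely hold. One must check that a zero-dimensional Zariski-closed subset of a qcqs rigid space is a finite set of classical points. One must also note that the constant local system is de Rham. I expect no real obstacle beyond these routine checks.
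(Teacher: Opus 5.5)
Your proposal is correct and is essentially the paper's proof: apply Theorem~\ref{thm:boundary-derham} with $b=0$ (or its empty-boundary clause), observe that the complement of a dense Zariski-open in a proper curve is finite, and take $U=X^{\mathrm{sm}}$ for constant coefficients. Your extra reductions (discarding isolated points, passing to $X_{\mathrm{red}}$) are harmless but not needed under the standing assumption of Section~\ref{sec:arithmetic} that $X$ is reduced and pure-dimensional; one small correction there is that, with local coefficients, an isolated point would carry the fiber of $L$ rather than a constant sheaf.
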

\begin{proof}
For nonempty finite boundary, \(b=0\), and the two ranges in \eqref{eq:derham-range} cover every
integer. The case of an empty boundary is included in Theorem~\ref{thm:boundary-derham}. The
complement of a dense Zariski open in a proper curve is finite; for constant coefficients, take
\(U=X^{\mathrm{sm}}\).
\end{proof}

\begin{proposition}\label{prop:clean-boundary-derham}
Let \(i:Z=X\setminus U\hookrightarrow X\), and let \(T\subset Z\) be closed. Suppose that the
canonical morphism \(j_!L\to Rj_*L\), formed on \(X\) using the given lattice, is an isomorphism on
\(X\setminus T\). If \(T\) is nonempty of dimension \(c\), then \(\IH^m(X_C,L)\) is de Rham for
\(m\le-c\) or \(m\ge c\). If \(T\) is finite or empty, it is de Rham in every degree.
\end{proposition}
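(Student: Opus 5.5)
We would redo the proof of Theorem~\ref{thm:boundary-derham}, with \(T\) in place of \(Z\). The key point is that away from \(T\) the intermediate extension is forced. On \(X\setminus T\) the composite \(j_!L[d]\to j_{!*}(L[d])\to Rj_*L[d]\) is the canonical map, which is an isomorphism by hypothesis. Hence \(j_!L[d]\to P\) is a split monomorphism and \(P\to Rj_*L[d]\) is a split epimorphism on \(X\setminus T\). Because intermediate extension is the image of \(\pH^0(j_!)\to\pH^0(Rj_*)\), and here \(j_!\) and \(Rj_*\) agree and are perverse, both maps are isomorphisms on \(X\setminus T\). Write \(i_T:T\hookrightarrow X\) and \(u:X\setminus T\hookrightarrow X\). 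The cone of \(j_!L[d]\to P\) and the cocone of \(P\to Rj_*L[d]\) both vanish on \(X\setminus T\), so they are supported on \(T\).

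We then identify these cones with boundary complexes on \(T\). Let \(u':U\hookrightarrow X\setminus T\), so that \(j=u\circ u'\). Since \(u^*P\simeq u'_!L[d]\simeq Ru'_*L[d]\), the localization triangle for \(T\) reads
\[
 i_{T*}i_T^!P\to P\to Ru_*u^*P\simeq Rj_*L[d]\to i_{T*}i_T^!P[1].
\]
Dually, \(u_!u^*P\simeq j_!L[d]\to P\to i_{T*}i_T^*P\). It remains to bound \(i_T^!P\) and \(i_T^*P\). We claim \(i_T^!P\in{}^pD^{\ge1}(T)\) and \(i_T^*P\in{}^pD^{\le-1}(T)\). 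This follows by factoring \(i_T\) through \(i:Z\hookrightarrow X\), say \(i_T=i\circ t\) with \(t:T\hookrightarrow Z\). Then \(i^!P\in{}^pD^{\ge1}\), and \(t^!\) is left \(t\)-exact, so \(i_T^!P=t^!i^!P\in{}^pD^{\ge1}\). Similarly \(t^*\) is right \(t\)-exact, so \(i_T^*P=t^*i^*P\in{}^pD^{\le-1}\).

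Once these bounds are in place, the argument of Theorem~\ref{thm:boundary-derham} runs verbatim with \(b\) replaced by \(c=\dim T\). Inclusion \eqref{eq:perverse-ordinary-lower} on a space of dimension \(c\) gives \(R\Gamma(T_C,i_T^!P)\in D^{\ge1-c}\). Lemma~\ref{lem:boundary-upper-bound} gives \(R\Gamma(T_C,i_T^*P)\in D^{\le c-1}\). The long exact sequences then give injections \(\IH^m\hookrightarrow H^{m+d}(U_C,L)\) for \(m\le-c\), and surjections \(H_c^{m+d}(U_C,L)\twoheadrightarrow\IH^m\) for \(m\ge c\). These maps are Galois-equivariant, being defined over \(K\). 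Lemma~\ref{lem:open-derham}, together with stability of de Rham representations under subobjects and quotients, concludes.

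If \(T\) is finite, then \(c=0\) and the two ranges cover every \(m\). If \(T\) is empty, the localization triangles show directly that \(P\simeq Rj_*L[d]\). Then \(\IH^m=H^{m+d}(U_C,L)\), which is de Rham by Lemma~\ref{lem:open-derham}.

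The main obstacle is the first step: checking that the canonical map \(j_!L\to Rj_*L\), formed integrally and then rationalized, is compatible with the factorization through \(j_{!*}\) on the non-qcqs open \(X\setminus T\). We would handle this by noting that \(X\setminus T\) is covered by qcqs opens on which everything is computed by restriction from \(X\). Isomorphism is a local property, so the vanishing of the cones may be checked on such a cover.
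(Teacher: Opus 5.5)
Your proposal is correct and follows essentially the paper's argument: show that \(P(L)\) agrees with \(j_!L[d]\simeq Rj_*L[d]\) on \(X\setminus T\), checking this on qcqs affinoids with the complexes formed on \(X\), so that the boundary complexes live on \(T\); then rerun Theorem~\ref{thm:boundary-derham} with \(c\) in place of \(b\). The only difference is bookkeeping. You work with the localization triangles for \(T\) and use exactness of \(t^*\) and \(t^!\) for \(t:T\hookrightarrow Z\), whereas the paper shows that \(i^*P(L)\) and \(i^!P(L)\) are supported on \(T\) and uses that closed pushforward reflects perverse bounds.
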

\begin{proof}
On \(X\setminus T\), the complexes \(j_!L[d]\) and \(Rj_*L[d]\) are isomorphic. The first has the
upper perverse bound and the second the lower perverse bound, so their common restriction is
perverse. Its \(*\)-restriction to the boundary vanishes because it is an extension by zero, and its
\(!\)-restriction to the boundary vanishes because it is a direct image under the open immersion. It
is therefore the intermediate extension there, and both \(i^*P(L)\) and \(i^!P(L)\) are supported on
\(T\). These assertions may be checked on qcqs affinoid neighborhoods in \(X\setminus T\), with the
complexes formed on \(X\).

Closed pushforward is perverse \(t\)-exact and reflects perverse bounds for objects supported on its
image. We may therefore apply the argument of Theorem~\ref{thm:boundary-derham} to the boundary
complexes on \(T\), using \eqref{eq:perverse-ordinary-lower} and
Lemma~\ref{lem:boundary-upper-bound} with \(c\) in place of \(b\). If \(T\) is finite, the resulting
ranges cover every degree. If \(T\) is empty, \(P(L)=Rj_*L[d]\), and Lemma~\ref{lem:open-derham}
gives the assertion in every degree.
\end{proof}

Under the hypotheses of Proposition~\ref{prop:clean-boundary-derham}, assume that \(T\) is finite.
The two localization maps then identify the middle intersection cohomology with
\begin{equation}\label{eq:clean-middle-image}
 \IH^0(X_C,L)\simeq
 \operatorname{im}\bigl(H_c^d(U_C,L)\longrightarrow H^d(U_C,L)\bigr).
\end{equation}
Indeed, the boundary complexes are supported at finitely many points, where their strict perverse
bounds become ordinary cohomological bounds. Geometric global sections on these supports are exact,
so the localization maps give a surjection from \(H_c^d(U_C,L)\) onto \(\IH^0(X_C,L)\) and an
injection from \(\IH^0(X_C,L)\) into \(H^d(U_C,L)\). Their composite is the usual map from compactly
supported to ordinary cohomology.

\subsection{Normalization and finite covers}

Normalization and finite covers allow us to apply Theorem~\ref{thm:boundary-derham} with a smaller
boundary. We use the compatibility of finite pushforward with the perverse \(t\)-structure and
intermediate extension.

\begin{proposition}\label{prop:normalized-coeff-derham}
Let \(\nu:X^\nu\to X\) be the disjoint union of the normalizations of the reduced irreducible
components, and put \(W=(X^\nu)^{\mathrm{sm}}\). Suppose that the pullback of \(\mathbb L\) from
\(U\simeq\nu^{-1}U\) extends to a \(\Z_p\)-local system \(\mathbb M\) on \(W\). If \(X^\nu\setminus
W\) is nonempty of dimension \(b_\nu\), then \(\IH^m(X_C,L)\) is de Rham for \(m\le-b_\nu\) or
\(m\ge b_\nu\). If \(X^\nu\) is smooth, \(\IH^m(X_C,L)\) is de Rham for every \(m\). Under
the stated extension hypothesis, it is also de Rham in every degree when \(\dim X\le2\).
\end{proposition}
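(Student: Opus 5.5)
The plan is to transfer the problem to \(X^\nu\) along the finite map \(\nu\), and then apply Theorem~\ref{thm:boundary-derham} there with the smaller boundary \(X^\nu\setminus W\).

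First I will show that \(\nu_*\) carries the intersection complex on \(X^\nu\) to \(P(L)\). Put \(j':W\hookrightarrow X^\nu\) and \(M=\mathbb M[1/p]\). Since \(\nu\) is finite, \(\nu_*\) is \(t\)-exact for the ordinary structure and preserves supports up to dimension. It therefore preserves the support conditions of \eqref{eq:stratified-upper}--\eqref{eq:supporttest}. Because \(\nu_*\) commutes with Verdier duality (Lemma~\ref{lem:adic-proper-duality}), it is perverse \(t\)-exact. It also preserves the strict conditions \eqref{eq:strict}: by closed base change, \(i^*\nu_*\simeq\nu_{Z*}i'^*\), where \(i'\) is the inclusion of \(\nu^{-1}Z\). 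On the normal space \(X^\nu\), the object \(j'_{!*}(M[d])\) restricts on \(\nu^{-1}U\simeq U\) to \(L[d]\). By transitivity of intermediate extension through the intermediate open \(\nu^{-1}U\subset W\subset X^\nu\), this object is the intermediate extension of \(L[d]\) from \(\nu^{-1}U\). Pushing forward, \(\nu_*j'_{!*}(M[d])\) is a perverse extension of \(L[d]\) with no subobjects or quotients supported on \(Z\). Hence it is isomorphic to \(P(L)\), which gives
\[
\IH^m(X_C,L)\simeq \IH^m(X^\nu_C,M).
\]
This isomorphism is \(G_K\)-equivariant, since all maps are defined over \(K\).

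Next I need \(\mathbb M\) to be de Rham on \(W\). The de Rham property of a \(\Z_p\)-local system is detected on a dense open subset, for example through the associated filtered vector bundle with connection or via \cite{DLLZFound}, and \(\mathbb M|_{\nu^{-1}U}\simeq\mathbb L\) is de Rham. I will take this as given on each connected component of \(W\); each such component meets the dense open \(\nu^{-1}U\) because \(X^\nu\) is normal. Theorem~\ref{thm:boundary-derham}, applied to \(X^\nu\) (proper, reduced, pure-dimensional on each component), the open \(W\), and \(\mathbb M\), then gives the de Rham property for \(m\le-b_\nu\) or \(m\ge b_\nu\). If \(X^\nu\) is smooth, then \(W=X^\nu\) and the empty-boundary case applies.

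For \(\dim X\le2\), normality of \(X^\nu\) forces its singular locus to have codimension at least two. So \(X^\nu\setminus W\) is empty or finite, and Corollary~\ref{cor:finite-boundary-derham}, applied to \(X^\nu\), covers every degree.

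I expect the main obstacle to be the de Rham-ness of the extension \(\mathbb M\) from \(\nu^{-1}U\) to \(W\), that is, the purity statement that a local system on a smooth space which is de Rham on a dense Zariski open is de Rham everywhere. I would first try to deduce it from the Liu--Zhu criterion, which says that de Rham-ness can be tested at a single classical point of each connected component; such a point can be chosen inside \(\nu^{-1}U\). The remaining steps, namely finite pushforward and transitivity of intermediate extension, are routine.
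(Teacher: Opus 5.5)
Your proposal is correct and follows essentially the same route as the paper. It identifies \(P(L)\simeq\nu_*a_{!*}(M[d])\) using perverse \(t\)-exactness of finite pushforward, preservation of the strict boundary conditions, and invariance of intermediate extension under shrinking the smooth dense open; it makes \(\mathbb M[1/p]\) de Rham on \(W\) by Liu--Zhu rigidity (the paper cites \cite[Theorem 3.9(iv)]{LiuZhu}, as you anticipate); and it then applies Theorem~\ref{thm:boundary-derham} to \((X^\nu,W,\mathbb M)\), using smoothness in codimension one of normal spaces in characteristic zero for the case \(\dim X\le2\).
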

\begin{proof}
Normalization is finite because affinoid algebras are excellent. The local system \(M=\mathbb
M[1/p]\) agrees with the pullback of \(L\) on the dense open \(\nu^{-1}U\). Every connected smooth
affinoid open in \(W\) meets this subset, so the rigidity theorem of Liu--Zhu \cite[Theorem 3.9(iv)]{LiuZhu}
implies that \(M\) is de Rham on \(W\). Let \(a:W\hookrightarrow X^\nu\) be the inclusion.
Intermediate extension is unchanged when the smooth dense open is shrunk, and finite pushforward
is perverse \(t\)-exact and
preserves the strict boundary conditions. Consequently,
\[
 P(L)\simeq\nu_*a_{!*}(M[d]).
\]
The de Rham property of \(M\) is checked on smooth affinoid opens of \(W\), and the two
intermediate extensions are formed on the proper qcqs spaces \(X\) and \(X^\nu\).

Taking geometric cohomology of the preceding isomorphism gives
\(\IH^m(X_C,L)\simeq\IH^m(X^\nu_C,M)\), since derived global sections commute with finite
pushforward. Applying
Theorem~\ref{thm:boundary-derham} to \((X^\nu,W,\mathbb M)\) gives the stated ranges, including
every degree when \(X^\nu\) is smooth. Since \(K\) has characteristic zero, a normal rigid space
over \(K\) is smooth in codimension one. Thus normal curves and reduced zero-dimensional spaces are
smooth, while the singular locus of a normal surface has dimension at most zero. This proves the
conclusion in dimensions at most two.
\end{proof}

\begin{corollary}\label{cor:low-dimension-derham}
If \(X\) has pure dimension \(d\ge2\), then \(\IH^m(X_C,\Q_p)\) is de Rham for \(m\le2-d\) or \(m\ge
d-2\). In particular, every intersection cohomology group of a proper rigid space of dimension at
most two is de Rham.
\end{corollary}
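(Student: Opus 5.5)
We apply Proposition~\ref{prop:normalized-coeff-derham} with constant coefficients \(\mathbb L=\Z_p\) on \(U=X^{\mathrm{sm}}\). The extension hypothesis is automatic, since the constant system extends as \(\mathbb M=\Z_p\) to all of \(W=(X^\nu)^{\mathrm{sm}}\). The only remaining task is to bound the dimension \(b_\nu\) of \(X^\nu\setminus W\).

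First reduce to the pure-dimensional reduced case. Nilpotents do not affect étale objects, and \(\IC_X\) is by convention the sum of the pushforwards of the intersection complexes of the reduced irreducible components. Each component is proper, and its intersection cohomology is a direct summand of \(\IH^m(X_C,\Q_p)\). Subrepresentations and direct sums of de Rham representations are de Rham, so it suffices to treat each component separately. Here we use the perverse normalization \(\IC_W=j_{!*}(\Q_p[\dim W])\), so the shift depends on the component.

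The key geometric input is that, over the characteristic-zero field \(K\), a normal rigid space is smooth in codimension one. This is already used in the proof of Proposition~\ref{prop:normalized-coeff-derham}, via Serre's criterion \(R_1\) and excellence of affinoid algebras. Hence \(X^\nu\setminus W\) has dimension at most \(d-2\).

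There are two cases. If \(X^\nu\setminus W\) is nonempty, then \(b_\nu\le d-2\), and the proposition gives the de Rham property for \(m\le -b_\nu\) or \(m\ge b_\nu\). This range contains \(m\le 2-d\) and \(m\ge d-2\). If instead \(X^\nu\) is smooth, the proposition gives every degree.

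For the final assertion, take a component of dimension \(d\le2\).
\begin{enumerate}
\item When \(d=2\), the two ranges \(m\le0\) and \(m\ge0\) cover every integer.
\item When \(d\le1\), the last sentence of Proposition~\ref{prop:normalized-coeff-derham} applies directly. Alternatively, Corollary~\ref{cor:finite-boundary-derham} applies, since normal curves are smooth.
\end{enumerate}
Summing over components finishes the argument.

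I expect no serious obstacle. The one point to check is that the shrinking of \(U\) inside the proposition is harmless. The proposition assumes \(U\subset X\) smooth dense with \(U\simeq\nu^{-1}U\), so \(U\) must be chosen inside the locus where \(\nu\) is an isomorphism, for instance \(U=X^{\mathrm{sm}}\) of the component. Intermediate extension does not depend on the choice of smooth dense open, so this choice does not change \(\IC_X\).
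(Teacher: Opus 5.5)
Your proposal is correct and follows essentially the same route as the paper. You apply Proposition~\ref{prop:normalized-coeff-derham} to the constant system on \((X^\nu)^{\mathrm{sm}}\), use that normal spaces in characteristic zero are smooth in codimension one to get \(b_\nu\le d-2\), and treat dimension at most two via componentwise sums and nilpotent invariance. Your extra reduction to irreducible components in the pure-dimensional case is harmless but unnecessary, since \(X^\nu\) already handles all components at once.
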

\begin{proof}
Apply Proposition~\ref{prop:normalized-coeff-derham} to the constant local system on
\((X^\nu)^{\mathrm{sm}}\). Its complement has dimension at most \(d-2\), giving the stated ranges,
which cover every integer for \(d=2\). The same proposition handles the lower dimensions. Finite
sums and invariance under nilpotent thickenings give the last assertion.
\end{proof}

\begin{proposition}\label{prop:finite-geometric-derham}
Suppose that \(\mathbb L\) has finite geometric monodromy on each geometrically connected component
of \(U\). If \(d\ge2\), then \(\IH^m(X_C,L)\) is de Rham for \(m\le2-d\) or \(m\ge d-2\). If
\(d\le2\), it is de Rham in every degree.
\end{proposition}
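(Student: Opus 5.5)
The plan is to pass to a finite étale cover that trivializes the geometric monodromy. This reduces to constant coefficients on a normal proper space. There Proposition~\ref{prop:normalized-coeff-derham} and Corollary~\ref{cor:low-dimension-derham} apply. We descend via a trace splitting.

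\emph{Step 1: construct the cover.} Work on one geometrically connected component of \(U\) at a time. After a finite extension \(K'/K\), harmless because de Rham can be tested after finite extension of \(K\), the finite geometric monodromy gives an open subgroup of the geometric fundamental group acting trivially on \(\mathbb L/p^n\) for all \(n\). It corresponds to a finite étale Galois cover \(q:U'\to U_{K'}\) with group \(G\), such that \(q^*\mathbb L\) has trivial geometric monodromy. Hence \(q^*\mathbb L\) is the constant system attached to a \(G_{K'}\)-representation \(V\) on a \(\Z_p\)-lattice. Since \(\mathbb L\) is de Rham, \(V\) is de Rham, say by restricting to a classical point of \(U'\).

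\emph{Step 2: compactify.} Extend \(q\) to a finite map \(\bar q:X'\to X_{K'}\). Take \(X'\) to be the normalization of \(X_{K'}\) in \(U'\); this should follow from Zariski–Nagata/normalization for finite étale covers of dense opens of rigid spaces, in the form of relative normalization of coherent algebras. Then \(X'\) is proper, normal and pure of dimension \(d\). Its singular locus has dimension at most \(d-2\). The constant system extends over \(W=(X')^{\mathrm{sm}}\), so Proposition~\ref{prop:normalized-coeff-derham} with \(\mathbb M=\underline V\), and \(b_\nu\le d-2\), shows that \(\IH^m(X'_C,\underline V)\) is de Rham for \(m\le 2-d\) or \(m\ge d-2\). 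For \(d\le2\) it is de Rham in every degree. Note \(\IH(X'_C,\underline V)=\IH(X'_C,\Q_p)\otimes V\), and tensor products of de Rham representations are de Rham, consistent with Corollary~\ref{cor:low-dimension-derham}.

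\emph{Step 3: descend.} Finite pushforward is perverse \(t\)-exact and preserves the strict boundary conditions, as used in the proof of Proposition~\ref{prop:normalized-coeff-derham}. So \(\bar q_*\IC_{X'}(q^*L)\simeq j_{!*}(q_*q^*L[d])\). Since \(|G|\) is invertible in \(\Q_p\), the trace gives a splitting of \(L\to q_*q^*L\) on \(U\). Intermediate extension is functorial, so \(P(L)\) is a direct summand of \(\bar q_*\IC_{X'}(q^*L)\). The splitting is defined over \(K'\), so \(\IH^m(X_C,L)\) is a \(G_{K'}\)-subrepresentation of \(\IH^m(X'_C,q^*L)\), and de Rham passes to subrepresentations.

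The main obstacle is Step 2. It requires making the finite compactification of \(q\) rigorous in the rigid setting, checking that \(X'\) is proper and normal, and checking that the integral lattice conventions for \(j_{!*}\) match under \(\bar q_*\). Note that only \(q^*\mathbb L\) needs to be extendable over \(W\); the normalization of \(X\) itself is not used.
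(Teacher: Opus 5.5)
Your strategy is the paper's: kill the geometric monodromy by a finite \'etale cover, extend it to a finite normal cover of \(X\), apply the constant-coefficient results there, and descend by the unit and normalized trace. Applying Proposition~\ref{prop:normalized-coeff-derham} with \(\mathbb M\) constant, rather than Corollary~\ref{cor:low-dimension-derham} tensored with \(V\), is equivalent. Checking that \(V\) is de Rham at a classical point is a valid substitute for the paper's appeal to Lemma~\ref{lem:open-derham}.

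The step you flag is a real gap, and the route you suggest for it does not work as stated. In rigid geometry there is no formal normalization of \(X_{K'}\) in \(U'\). The algebra \(j_*q_*\cO_{U'}\) is not coherent. The claim that the integral closure of \(\cO_X\) in it is a finite coherent \(\cO_X\)-algebra, whose restriction over \(U_{K'}\) recovers \(U'\), is exactly a nontrivial extension theorem. Zariski--Nagata purity is a statement about the branch locus of a cover that already exists, so it cannot produce the extension. The paper first reduces to normal \(X\), using that finite pushforward is perverse \(t\)-exact and commutes with intermediate extension. It then invokes Hansen's theorem \cite[Theorem 1.6]{HansenCover}, which extends a finite \'etale cover of a dense Zariski-open subspace of a normal rigid space to a finite cover by a normal space. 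So on this route the normalization of \(X\) does enter, as the base to which the extension theorem is applied.

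Step 1 also needs another sentence. An open subgroup of the geometric fundamental group directly gives a cover of \(U_C\), not of \(U_{K'}\). The paper avoids any descent question as follows. It chooses \(n\) such that reduction modulo \(p^n\) is injective on the finite group \(H=\rho(\pi_1(U_C))\), and takes the frame cover of \(\mathbb L/p^n\). That cover is defined over \(K\), and any geometric element acting trivially modulo \(p^n\) then acts trivially on \(\mathbb L\).
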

\begin{proof}
Compatibility of intermediate extension with finite pushforward reduces us to normal \(X\).
We may also work componentwise after a finite extension of
\(K\), since intermediate extension commutes with this base change and the de Rham property is
detected after a finite field extension. Put \(r=\operatorname{rank}\mathbb L\). Choose a geometric
base point, let \(\rho:\pi_1(U)\to\operatorname{GL}_r(\Z_p)\) be the monodromy representation, and
put \(H=\rho(\pi_1(U_C))\), which is finite by hypothesis. Choose \(n\) such that reduction modulo
\(p^n\) is injective on \(H\). The finite \'etale frame cover \(g:U'\to U\) associated with the
kernel of \(\rho\bmod p^n\) kills the geometric monodromy: a geometric element acting trivially
modulo \(p^n\) already acts trivially on \(\mathbb L\). Hansen's extension theorem \cite[Theorem
1.6]{HansenCover} extends \(g\) to a finite cover \(f:Y\to X\) with \(Y\) normal. The space \(Y\) is
proper and pure of dimension \(d\).

After a further finite extension of \(K\), the components \(Y_\alpha\) and their dense smooth opens
\(U'_\alpha\) are geometrically connected. The pullback \(L'_\alpha=g^*L|_{U'_\alpha}\) is
geometrically constant, so evaluation identifies it with the arithmetic constant local system
associated with \(V_\alpha=H^0(U'_{\alpha,C},L'_\alpha)\). This identification is
\(G_K\)-equivariant. Since \(L'_\alpha\) is de Rham, Lemma~\ref{lem:open-derham}, applied to
\(U'_\alpha\subset Y_\alpha\), shows that \(V_\alpha\) is de Rham.

The unit and the trace divided by the degree exhibit \(L\) as a rational direct summand of
\(g_*g^*L\). Intermediate extension preserves this split pair, finite pushforward commutes with
intermediate extension, and tensoring by \(V_\alpha\) also commutes with intermediate extension.
Hence \(P(L)\) is a direct summand of
\(f_*\bigoplus_\alpha(\IC_{Y_\alpha,\Q_p}\otimes_{\Q_p}V_\alpha)\). Taking geometric cohomology
realizes \(\IH^m(X_C,L)\) as a direct summand of
\[
 \bigoplus_\alpha
 \IH^m(Y_{\alpha,C},\Q_p)\otimes_{\Q_p}V_\alpha.
\]
Corollary~\ref{cor:low-dimension-derham} gives the required ranges for each factor with constant
coefficients. Stability of de Rham representations under tensor products, finite direct sums, and
direct summands, followed by descent along the finite field extension, proves the result.
\end{proof}

\begin{definition}\label{def:finite-divisorial}
Identify \(U\) with its inverse image in the componentwise normalization \(X^\nu\). Choose a closed
analytic subset \(T\subset X^\nu\setminus U\) of codimension at least two such that
\(X^\circ=X^\nu\setminus T\) is smooth and \(D=X^\circ\setminus U\) is a disjoint union of smooth
divisors. Such a choice is obtained by removing the singular locus of \(X^\nu\), the singular loci
and pairwise intersections of its divisorial boundary components, and the boundary components of
higher codimension. Equip \(X^\circ\) with the log structure defined by \(D\). For a geometric point
\(\xi\) of \(D\) and a log-geometric point \(\widetilde\xi\) above it, let \(X^\circ(\xi)\) be the
strict localization at \(\xi\), with the log structure pulled back from \(X^\circ\). Writing
\(\pi_1^{\mathrm{k\acute et}}\) for the Kummer \'etale fundamental group, the geometric Kummer
inertia group is
\[
 I_\xi=\pi_1^{\mathrm{k\acute et}}
       (X^\circ(\xi),\widetilde\xi)\simeq\widehat\Z(1)
\]
\cite[Proposition 4.4.9, Definition 6.3.7, and Example 6.3.8]{DLLZFound}. We say that \(\mathbb L\)
has finite divisorial inertia if, for one such choice of \(T\), every \(I_\xi\) has finite image on
the corresponding stalk of its Kummer \'etale extension.
\end{definition}

For a smooth logarithmic pair \((Y,D)\), write \(Y_{\mathrm{k\acute et}}\) and \(Y_{\et}\) for its
Kummer \'etale and ordinary \'etale sites. We use the following local descent criterion: a finite
locally constant sheaf on \(Y_{\mathrm{k\acute et}}\) comes from \(Y_{\et}\) if and only if every
geometric Kummer inertia group acts trivially. Indeed, represent it by a finite Kummer \'etale
cover. Over a strict localization, trivial inertia identifies this cover with a disjoint union of
copies of the base \cite[Proposition 4.4.9 and Corollary 4.4.22]{DLLZFound}, so its underlying
finite morphism is \'etale. Full faithfulness \cite[Remark 4.4.23]{DLLZFound} gives uniqueness and
preserves morphisms, module structures, and transition maps.

\begin{proposition}\label{prop:finite-divisorial-derham}
If \(\mathbb L\) has finite divisorial inertia, then \(\IH^m(X_C,L)\) is de Rham for \(m\le2-d\) or
\(m\ge d-2\) when \(d\ge2\), and in every degree when \(d\le2\).
\end{proposition}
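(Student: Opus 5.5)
Since every geometric Kummer inertia group \(I_\xi\simeq\widehat\Z(1)\) has finite image, the plan is to kill this inertia by a finite Kummer cover and then argue exactly as in Proposition~\ref{prop:finite-geometric-derham}. First, after the reductions used there, namely passing to \(X^\nu\) via finite pushforward (Proposition~\ref{prop:normalized-coeff-derham}), working componentwise, and allowing finite extensions of \(K\), we may assume \(X\) is normal. The inertia images are finite, \(D\) has finitely many components, and the Kummer \'etale extension of \(\mathbb L\) is locally constant along each component. Hence there is one integer \(e\ge1\) such that \(I_\xi^{\,e}\) acts trivially on every stalk. We fix \(n\) so that reduction modulo \(p^n\) is injective on all these finite images, and we work with \(\mathbb L_n=\mathbb L/p^n\).

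Next we construct a finite cover that is \'etale over \(U\) and tames the boundary. Locally along \(D\), adjoining an \(e\)-th root of a local equation of \(D\) gives a Kummer cover. I would produce such a cover of \(X^\circ\) that is finite étale over \(U\), ramified to order divisible by \(e\) along each component of \(D\). This can be done by a Kawamata-type covering or by choosing a finite étale cover of \(U\) with the correct local ramification, namely the frame cover of \(\mathbb L_n\) restricted to the inertia subgroups. The simplest choice is to take \(g:U'\to U\) to be the finite étale cover corresponding to the kernel of \(\rho\bmod p^n\) restricted to the subgroup generated by inertia, or just the full frame cover of \(\mathbb L_n\). Its pullback trivializes \(\mathbb L_n\), and hence kills the inertia action on \(\mathbb L\). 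Hansen's extension theorem extends \(g\) to a finite normal cover \(f:Y\to X\).

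On \(Y\), we use the local descent criterion stated before the proposition. Over \(Y^\circ=f^{-1}(X^\circ)\) minus a codimension-two closed set \(T'\), the pullback \(g^*\mathbb L\) has trivial Kummer inertia. Each finite level \(g^*\mathbb L_m\) then descends to \(Y_{\et}\), compatibly in \(m\), so \(g^*\mathbb L\) extends to an honest \(\Z_p\)-local system \(\mathbb M\) on \(Y\setminus T''\). Here \(T''\) has dimension at most \(d-2\) and contains the singular locus of the normal space \(Y\). The trace-over-degree splitting then exhibits \(P(L)\) as a direct summand of \(f_*\IC_Y(g^*L)\). Finite pushforward commutes with intermediate extension, so \(\IH^m(X_C,L)\) is a direct summand of \(\IH^m(Y_C,g^*L)\).

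Finally, Proposition~\ref{prop:normalized-coeff-derham}, or directly Theorem~\ref{thm:boundary-derham} with boundary \(T''\) of dimension \(b\le d-2\), gives the de Rham property in the range \(m\le2-d\) or \(m\ge d-2\). It gives every degree when \(d\le2\), since \(T''\) is then finite or empty. The de Rham property of \(\mathbb M\) on its full domain follows from Liu--Zhu rigidity, as before.

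The main obstacle is the descent step. We need to see that killing \(\mathbb L_n\)'s monodromy really trivializes the inertia on all of \(\mathbb L\), and that the descended sheaves glue across the removed locus. This uses the injectivity of reduction modulo \(p^n\) on the finite inertia images, together with the full faithfulness in \cite[Remark 4.4.23]{DLLZFound}.
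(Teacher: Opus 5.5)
Your proposal is correct and follows essentially the paper's route: take the frame cover of \(\mathbb L/p^n\), extend it by Hansen's theorem to a normal cover \(Y\), and observe that the inertia of \(Y\) maps into the finite inertia of \(X\), so it is killed. From there you use the local descent criterion, Liu--Zhu rigidity, a boundary of dimension at most \(d-2\), and the normalized trace splitting, exactly as the paper does.

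The differences are minor. The paper takes \(n=2\): the group \(1+p^2M_r(\Z_p)\) is torsion-free, so it meets every finite subgroup of \(\operatorname{GL}_r(\Z_p)\) trivially, which gives your uniform \(n\) without any argument that inertia images are constant along components of \(D\). The paper also uses Abhyankar's lemma and Hansen's purity to extend \(\mathbb M\) to all of \(Y^{\mathrm{sm}}\) before applying Proposition~\ref{prop:normalized-coeff-derham}; that branch of your final step would need purity too. Your other option, applying Theorem~\ref{thm:boundary-derham} directly with the codimension-two boundary \(T''\), avoids purity and works equally well.
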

\begin{proof}
Finite normalization reduces us to normal \(X\). Take the finite \'etale frame cover \(g:U'\to U\)
that trivializes \(\mathbb L/p^2\), and extend it to a finite normal cover \(f:Y\to X\) by
\cite[Theorem 1.6]{HansenCover}. On each connected component of \(U'\), the monodromy of \(\mathbb
L'=g^*\mathbb L\) takes values in \(1+p^2M_r(\Z_p)\), where \(r\) is the rank of \(\mathbb L'\) on
that component. This group is torsion-free for every prime \(p\), including \(p=2\): the matrix
logarithm converges and is injective there, and an element of finite order has zero logarithm.

Choose \(X^\circ,D,T\) as in Definition~\ref{def:finite-divisorial}. Rigid Abhyankar's lemma
\cite[Proposition 4.2.1]{DLLZFound} identifies \(Y^\circ=f^{-1}(X^\circ)\) as a finite Kummer
\'etale cover of \((X^\circ,D)\). Since \(D\) is a disjoint union of smooth divisors, its log rank
is at most one. Up to \'etale maps, the local Kummer charts have the form \(t=s^e\), so \(Y^\circ\)
is smooth and its boundary is again a disjoint union of smooth divisors. The geometric inertia
groups on \(Y^\circ\) map into those on \(X^\circ\). Their images under \(\mathbb L'\) are finite by
hypothesis and lie in the torsion-free congruence subgroup, so they are trivial. The local descent
criterion therefore extends each \(\mathbb L'/p^n\) uniquely as an ordinary \'etale local system on
\(Y^\circ\), compatibly with its module structure and the transition maps.

Put \(W=Y^{\mathrm{sm}}\). The closed subset \(W\setminus Y^\circ\) is contained in \(f^{-1}T\) and
has codimension at least two. Rigid analytic purity \cite[Corollary 2.15]{HansenCover} extends every
\(\mathbb L'/p^n\) uniquely to \(W\). Full faithfulness extends the module structures and transition
maps, so these extensions for varying \(n\) form a \(\Z_p\)-local system \(\mathbb M\) on \(W\).
Liu--Zhu rigidity \cite[Theorem 3.9(iv)]{LiuZhu} makes \(\mathbb M[1/p]\) de Rham on \(W\), since it
is de Rham on the dense open \(U'\). Apply Proposition~\ref{prop:normalized-coeff-derham} to each
normal proper component of \(Y\). For \(d\ge2\), its singular locus has dimension at most \(d-2\);
in dimensions at most two, the proposition gives every degree. Finally, the unit and normalized
trace over \(U\), followed by intermediate extension and finite pushforward, make \(P(L)\) a direct
summand of \(f_*\IC_Y(g^*L)\). Taking geometric cohomology gives the asserted ranges.
\end{proof}

\begin{corollary}\label{cor:rank-one-derham}
Every rank-one de Rham \(\Z_p\)-local system on \(U\) satisfies the conclusion of
Proposition~\ref{prop:finite-divisorial-derham}. In particular, all its intersection cohomology
groups are de Rham when \(\dim X\le2\), including when the local system is ramified along a
divisorial boundary.
\end{corollary}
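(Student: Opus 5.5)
The plan is to deduce the corollary from Proposition~\ref{prop:finite-divisorial-derham}. The only hypothesis to verify is that a rank-one de Rham \(\Z_p\)-local system \(\mathbb L\) on \(U\) has finite divisorial inertia in the sense of Definition~\ref{def:finite-divisorial}. Once this is done, the proposition gives the ranges \(m\le 2-d\) or \(m\ge d-2\) for \(d\ge2\), and every degree for \(d\le2\). The final sentence of the corollary is the case \(\dim X\le2\). Since \(X\) may be reduced to its pure-dimensional components, I would first note that nothing beyond the proposition's reductions is needed.

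To check finite divisorial inertia, fix \(X^\circ\), \(D\), \(T\) as in Definition~\ref{def:finite-divisorial}, a geometric point \(\xi\) of \(D\), and the Kummer \'etale extension of \(\mathbb L\) to \((X^\circ,D)\) supplied by \cite[Corollary 6.3.4]{DLLZFound} (this was already used in Lemma~\ref{lem:open-derham}). The inertia \(I_\xi\simeq\widehat\Z(1)\) acts on the rank-one stalk through a character \(\chi:\widehat\Z(1)\to\Z_p^\times\). I need to show that \(\chi\) has finite image. The key input is the de Rham hypothesis. By the logarithmic Riemann--Hilbert correspondence of Diao--Lan--Liu--Zhu, a de Rham local system has an associated filtered log connection on \((X^\circ,D)\). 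The residues of this connection along \(D\) have rational eigenvalues, and the local monodromy is quasi-unipotent; in the geometric setting, the action of \(I_\xi\) is unipotent up to a finite-order part \cite{DLLZLog}. In rank one, a unipotent automorphism of a line is trivial. Hence \(\chi\) factors through a finite quotient, and its image is finite.

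The main obstacle is making this quasi-unipotence statement precise and citable. One has to identify the inertia action with \(\exp(2\pi i\cdot\text{residue})\)-type data in the \(p\)-adic setting. Concretely, I would use the fact that the residue eigenvalues lie in \(\Q\cap\Z_p\) (in fact in \([0,1)\) after the canonical normalization), and that after a Kummer cover of degree equal to a common denominator \(e\), the pulled-back system has unipotent geometric monodromy. Tame rank-one characters then become trivial on \(e\widehat\Z(1)\).

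If that citation proves delicate, I would try a fallback that uses only rank one. The character \(\chi\) restricted to the pro-\(p\) part \(\Z_p(1)\) is either of finite order or has infinite image. In the latter case, a de Rham character of \(\Z_p(1)\)-type would force a non-nilpotent, irrational residue, contradicting the rationality of the Sen weights/residues of de Rham local systems. The prime-to-\(p\) part of \(\widehat\Z(1)\) maps to the torsion subgroup \(\mu_{p-1}\) (or \(\mu_2\)) of \(\Z_p^\times\), which is finite automatically. Either way, \(I_\xi\) has finite image, and Proposition~\ref{prop:finite-divisorial-derham} applies.
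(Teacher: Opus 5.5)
You have the reduction right: everything comes down to showing that \(\mathbb L\) has finite divisorial inertia. Your treatment of the prime-to-\(p\) part of \(I_\xi\), whose image lands in the finite group \(\mu(\Z_p)\), matches the paper.

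\textbf{The gap is the pro-\(p\) part.} Your main route asserts, citing \cite{DLLZLog}, that de Rham local systems have quasi-unipotent geometric monodromy along \(D\), via rationality of residues. You do not identify a statement that turns residue eigenvalues into a constraint on the inertia character. In the \(p\)-adic setting there is no \(\exp(2\pi i\cdot\mathrm{res})\) dictionary. Without one, rationality of residues says nothing about \(\chi|_{\Z_p(1)}\), and you yourself flag this as unresolved. The fallback is not an argument either. The claim that an infinite-image character ``would force a non-nilpotent, irrational residue, contradicting rationality of Sen weights'' is unsupported. Sen weights of the Galois representations at points do not directly control the geometric inertia action.

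\textbf{The missing idea is arithmetic, not Hodge-theoretic, and it does not use the de Rham hypothesis.} Take a classical point \(x\in D\) with residue field \(K'\) finite over \(K\). After an \'etale localization over \(K'\) and a choice of local equation for \(D\), the split log point has Kummer fundamental group \(I_x\rtimes G_{K'}\). Here \(G_{K'}\) acts on \(I_x\simeq\widehat\Z(1)\) through the cyclotomic character. The target \(\Z_p^\times\) is abelian, so the character \(\rho\) is conjugation invariant: \(\rho(t)^{\chi_p(\sigma)}=\rho(t)\) for \(t\) in the pro-\(p\) part of \(I_x\). Put \(w=|\mu(\Z_p)|\). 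Taking \(w\)-th powers and logarithms gives \((\chi_p(\sigma)-1)\log(\rho(t)^w)=0\). Since \(\chi_p\) has infinite image, \(\rho(t)^w=1\). This is Grothendieck's quasi-unipotence argument, and it is how the paper proceeds.

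\textbf{A second step is also needed, which your proposal does not address.} Definition~\ref{def:finite-divisorial} asks for finiteness at every geometric point \(\xi\) of \(D\), not only over classical points. The paper proceeds as follows:
\begin{enumerate}
\item Rigid Abhyankar's lemma extends the frame cover of \(\mathbb L^{\otimes w}/p^n\) to a finite Kummer \'etale cover of \((X^\circ,D)\).
\item Trivial inertia at classical points makes this cover \'etale there.
\item Its non-\'etale locus is closed analytic, so it would contain a classical point if nonempty; hence it is empty.
\end{enumerate}
Therefore \(\mathbb L^{\otimes w}\) has trivial geometric inertia everywhere, and \(\rho(I_\xi)\subseteq\mu(\Z_p)\) for every \(\xi\).
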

\begin{proof}
We show that \(\mathbb L\) has finite divisorial inertia. Choose \(X^\circ,D,T\) as in
Definition~\ref{def:finite-divisorial}, and let \(x\) be a classical point of \(D\) with residue
field \(K'/K\) finite. After passing to an \'etale neighborhood over \(K'\), choose a parameter
defining \(D\). The resulting split log point has Kummer fundamental group \(I_x\rtimes G_{K'}\),
where \(I_x\simeq\widehat\Z(1)\) and \(G_{K'}\) acts cyclotomically; see \cite[Examples
4.4.24--4.4.25]{DLLZFound}. The rank-one local system gives a character \(\rho:I_x\rtimes
G_{K'}\to\Z_p^\times\). Let \(\mu(\Z_p)\) be the group of roots of unity in \(\Z_p^\times\), and put
\(w=|\mu(\Z_p)|\). The character \(\rho^w\) takes values in \(1+p\Z_p\), and in \(1+4\Z_2\) when
\(p=2\), where the \(p\)-adic logarithm is injective. Let \(\chi_p:G_{K'}\to\Z_p^\times\) be the
cyclotomic character. Since the target of \(\rho\) is abelian, the relation in the semidirect
product gives \((\chi_p(\sigma)-1)\log(\rho(t)^w)=0\) for every \(\sigma\in G_{K'}\) and every \(t\)
in the pro-\(p\) subgroup of \(I_x\). The cyclotomic character has infinite image, so
\(\rho(t)^w=1\). The prime-to-\(p\) inertia has finite image in \(\Z_p^\times\), hence also has
image in \(\mu(\Z_p)\). Thus \(\rho(I_x)\subseteq\mu(\Z_p)\), and the inertia action on \(\mathbb
L^{\otimes w}\) is trivial at every classical point of the boundary.

For each \(n\), consider the finite \'etale frame cover of \(\mathbb L^{\otimes w}/p^n\) on \(U\).
Rigid Abhyankar's lemma \cite[Proposition 4.2.1]{DLLZFound} extends it to a finite Kummer \'etale
cover of \((X^\circ,D)\). The inertia is trivial at every classical point of the boundary, so the
local descent criterion makes the extension ordinary \'etale there. A nonempty non-\'etale locus is
a closed analytic subset and contains a classical point. Since the cover is already \'etale on
\(U\), this locus is empty. The extension is therefore finite \'etale everywhere on \(X^\circ\), and
\(\mathbb L^{\otimes w}/p^n\) has trivial geometric Kummer inertia at every log-geometric point of
the boundary. This holds for all \(n\), so \(\mathbb L^{\otimes w}\) has trivial geometric Kummer
inertia. The image of inertia acting on \(\mathbb L\) consequently has exponent dividing \(w\) and
is contained in the finite group \(\mu(\Z_p)\). Proposition~\ref{prop:finite-divisorial-derham} now
applies.
\end{proof}

\subsection{Consequences of projective decomposition}
\label{sec:decomposition-coefficients}

With coefficients in \(\Q_p\), the decomposition theorem of Section~\ref{sec:decomposition} realizes
the intersection complex as a derived direct summand of the direct image from a smooth proper
resolution. The retraction is defined over \(K\), so geometric cohomology gives \(G_K\)-equivariant
direct summands to which smooth proper de Rham comparison applies.

\begin{corollary}\label{cor:constant-derham}
For every proper rigid space \(X/K\) and every integer \(m\), the group \(\IH^m(X_C,\Q_p)\) is a de
Rham \(G_K\)-representation. Thus Theorem~\ref{thm:general} implies the assertion for proper spaces
in \cite[Conjecture 4.16(1)]{BH}.
\end{corollary}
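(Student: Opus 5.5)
The plan is to realize \(\IC_X\) as a derived direct summand of the direct image of the constant sheaf from a smooth projective resolution, exactly as in Step~2 of the proof of Theorem~\ref{thm:general}, and then to transport this retraction to geometric cohomology.

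First I will reduce to the case where \(X\) is reduced, irreducible and pure of dimension \(d\). Nilpotent thickenings do not change the étale objects, and \(\IC_X\) is by definition the direct sum of the pushforwards of the intersection complexes of the reduced irreducible components. Each component is again proper. Finite direct sums of de Rham representations are de Rham, so it suffices to treat one component. Next I apply Temkin's resolution to obtain \(\pi:\widetilde X\to X\), a composite of finitely many blow-ups. It is projective with a global relatively ample line bundle, \(\widetilde X\) is smooth of pure dimension \(d\), and \(\pi\) is an isomorphism over a smooth dense open \(U\). Since \(X\) is proper, \(\widetilde X\) is proper. The target is qcqs, so Theorem~\ref{thm:smoothsource}, with \(\ell=p\), applies to \(\pi\). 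Step~2 of the proof of Theorem~\ref{thm:general} then gives morphisms
\[
 \IC_X\xrightarrow{u}R\pi_*\Q_p[d]\xrightarrow{v}\IC_X,\qquad vu=\operatorname{id},
\]
in \(\Db(X,\Q_p)\), defined over \(K\).

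Then I base change to \(C\) and apply \(R\Gamma(X_C,-)\). This base change is compatible with proper pushforward (\cite[Proposition 3.24]{BH}), so \(R\Gamma(X_C,(R\pi_*\Q_p)_C)\simeq R\Gamma(\widetilde X_C,\Q_p)\). The maps \(u\) and \(v\) come from \(K\), so the induced maps on cohomology are \(G_K\)-equivariant. Hence \(\IH^m(X_C,\Q_p)\) is a \(G_K\)-equivariant direct summand of \(H^{m+d}(\widetilde X_C,\Q_p)\). Some care is needed because rational cohomology is defined through integral models: I will form \(u\) and \(v\) integrally up to a power of \(p\), as in the proof of Lemma~\ref{lem:thick}. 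Finite-level base change followed by the derived limit then gives the equivariant splitting after inverting \(p\), and this also checks continuity of the Galois action.

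Finally, \(\widetilde X\) is smooth and proper, so \(H^{m+d}(\widetilde X_C,\Q_p)\) is de Rham by \cite[Theorem 1.6]{ScholzeHT}. Subrepresentations, equivalently direct summands, of de Rham representations are de Rham, which gives the claim. I expect the main obstacle to be the integral and equivariant bookkeeping in the previous step. That is, I must make sure the rational retraction in the global category on the qcqs space \(X\) really induces a continuous \(G_K\)-equivariant retraction on \(p\)-adic cohomology over \(C\). I would handle this by clearing denominators, so that \(vu=p^c\operatorname{id}\) holds integrally, passing to the limit over \(n\) using proper finiteness, and then inverting \(p\).
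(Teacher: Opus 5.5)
Your proposal is correct and follows the paper's proof. The paper also reduces to reduced pure-dimensional components and uses the projective resolution from Section~\ref{sec:singular}, where the \(\Q_p\)-decomposition and boundary splitting give a retraction \(\IC_{X,\Q_p}\to R\pi_*\Q_p[d]\to\IC_{X,\Q_p}\) over \(K\). Passing to geometric cohomology exhibits \(\IH^m(X_C,\Q_p)\) as a \(G_K\)-equivariant direct summand of \(H^{m+d}(\widetilde X_C,\Q_p)\), which is de Rham by Scholze's smooth proper comparison. Your integral bookkeeping, clearing denominators so that \(vu=p^c\operatorname{id}\) before taking limits, is a harmless refinement that the paper leaves implicit.
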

\begin{proof}
First suppose that \(X\) is reduced and pure-dimensional of dimension \(d\). Choose a projective
resolution \(\pi:\widetilde X\to X\) over \(K\) as in Section~\ref{sec:singular}. Its source is
smooth and is proper over \(K\) because \(\pi\) and \(X\to\Spa(K)\) are proper. The morphism \(\pi\)
is an isomorphism over a smooth dense open subset. The decomposition with coefficients in \(\Q_p\)
and the splitting along the boundary used there give a derived retraction
\[
 \IC_{X,\Q_p}\xrightarrow{u}R\pi_*\Q_p[d]
 \xrightarrow{v}\IC_{X,\Q_p},\qquad vu=\operatorname{id},
\]
over \(K\). After base change to \(C\), proper base change identifies
\(R\Gamma(X_C,R\pi_{C*}\Q_p[d])\) with \(R\Gamma(\widetilde X_C,\Q_p)[d]\). Taking cohomology in
degree \(m\) gives a \(G_K\)-equivariant retraction
\begin{equation}\label{eq:derham-retract}
 \IH^m(X_C,\Q_p)\longrightarrow
 H^{m+d}(\widetilde X_C,\Q_p)\longrightarrow
 \IH^m(X_C,\Q_p).
\end{equation}
Scholze's smooth proper comparison theorem \cite[Corollary 1.8]{ScholzeHT} makes the middle
representation de Rham. Its direct summands are de Rham, proving the assertion in the
pure-dimensional case.

For general \(X\), nilpotent thickenings do not change the relevant \'etale objects. Apply the
preceding argument to each reduced irreducible component, with its own dimension shift, and use the
componentwise definition of the intersection complex. This also proves
Corollary~\ref{cor:intro-constant-derham}.
\end{proof}

If \(\mathbb L\) has finite geometric monodromy on each geometrically connected component of \(U\),
then \(\IH^m(X_C,L)\) is de Rham for every \(m\), in every dimension. The proof of
Proposition~\ref{prop:finite-geometric-derham} realizes this intersection cohomology, after a finite
extension \(K'/K\), as a \(G_{K'}\)-equivariant direct summand of finite sums of
\(\IH^m(Y_{\alpha,C},\Q_p)\otimes_{\Q_p}V_\alpha\), where each \(Y_\alpha\) is proper and
\(V_\alpha\) is de Rham. Corollary~\ref{cor:constant-derham} makes the first factor de Rham in every
degree. Tensor products and direct summands preserve this property, which then descends to the
original field.

\begin{proposition}\label{prop:extendible-derham}
Suppose that the de Rham \(\Z_p\)-local system \(\mathbb L\) on \(U\) extends to a
\(\Z_p\)-local system \(\mathbb M\) on \(X\). Then \(\IH^m(X_C,L)\) is de Rham for every integer
\(m\), where \(L=\mathbb L[1/p]\).
\end{proposition}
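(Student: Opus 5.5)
The plan is to realize \(P(L)\) as a \(G_K\)-equivariant derived direct summand of the direct image of a local system along a smooth projective resolution, and then to quote smooth proper de Rham comparison, exactly as in Corollary~\ref{cor:constant-derham}.

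We may assume \(X\) reduced and pure of dimension \(d\). Choose the projective resolution \(\pi:\widetilde X\to X\) of Section~\ref{sec:singular}; it is an isomorphism over a smooth dense open \(U_0\subset U\), and \(\widetilde X\) is smooth and proper. Shrinking \(U\) to \(U_0\) does not change \(P(L)\), since intermediate extension is unchanged when the smooth dense open is shrunk. The local system \(\pi^*\mathbb M\) is defined on all of \(\widetilde X\), and \(\widetilde M=\pi^*\mathbb M[1/p]\) is de Rham. To see this, restrict to \(U_0\), where it agrees with \(L\), and apply the Liu--Zhu rigidity theorem \cite[Theorem 3.9(iv)]{LiuZhu} on each connected component of \(\widetilde X\), each of which meets \(U_0\). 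Hence \(H^q(\widetilde X_C,\widetilde M)\) is de Rham for every \(q\) by \cite[Theorem 1.6]{ScholzeHT}.

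Next we produce the retraction. By the projection formula, \(R\pi_*(\widetilde M[d])\simeq R\pi_*\Q_p[d]\otimes M\), where \(M=\mathbb M[1/p]\) is a local system on all of \(X\). Theorem~\ref{thm:general} and its proof, through Theorem~\ref{thm:smoothsource} and Step 2 of the proof of Theorem~\ref{thm:general}, give a derived retraction \(\IC_X\to R\pi_*\Q_p[d]\to\IC_X\) over \(K\). Tensoring with \(M\) is exact for the ordinary and perverse \(t\)-structures, because \(M\) is locally constant and the perverse conditions are local on strata. Tensoring therefore preserves retractions and yields
\[
 \IC_X\otimes M\longrightarrow R\pi_*(\widetilde M[d])\longrightarrow\IC_X\otimes M .
\]
It remains to identify \(\IC_X\otimes M\) with \(P(L)=j_{!*}(L[d])\). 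We have \(j^*(\IC_X\otimes M)=L[d]\). Locally, \(M\) is étale-locally a direct sum of copies of the constant sheaf, so \(i^*(\IC_X\otimes M)\simeq i^*\IC_X\otimes i^*M\). Both strict support conditions in \eqref{eq:strict} are therefore inherited from \(\IC_X\). For the dual condition, use \(\mathbb D(\IC_X\otimes M)\simeq\mathbb D\IC_X\otimes M^\vee\), which holds because \(R\mathcal Hom\) against a dualizable local system commutes with tensoring. By \eqref{eq:strict}, \(\IC_X\otimes M\simeq j_{!*}(L[d])\).

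Finally, take geometric cohomology in degree \(m\). Proper base change realizes \(\IH^m(X_C,L)\) as a \(G_K\)-equivariant direct summand of \(H^{m+d}(\widetilde X_C,\widetilde M)\). This representation is de Rham, and direct summands of de Rham representations are de Rham.

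The main obstacle is the compatibility step: showing that tensoring with the rational local system \(M\) preserves the rational perverse \(t\)-structure and the strict boundary conditions, given that \(\Q_p\)-coefficients are defined through integral models as in \cite{BH}. The first thing to try is to work with integral models, \(\mathbb M\otimes^{\mathbf L}-\) applied to an integral model of \(\IC_X\), and check the \(p\)- and \(p^+\)-conditions modulo \(p^n\). Over finite coefficients these conditions are étale-local, and \(\mathbb M/p^n\) is étale-locally free, so the check reduces to the constant case.
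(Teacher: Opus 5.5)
Your proposal is correct and follows essentially the same route as the paper. Both arguments tensor the derived retraction \(\IC_X\to R\pi_*\Q_p[d]\to\IC_X\) with \(M\), identify \(M\otimes\IC_X\) with \(P(L)\) through the strict boundary conditions, and rewrite the middle term by the projection formula. Both then conclude with Liu--Zhu rigidity on \(\widetilde X\) and Scholze's comparison theorem. Your remarks on checking perversity and the strict conditions through integral models modulo \(p^n\) fill in detail that the paper only asserts.
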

\begin{proof}
Put \(M=\mathbb M[1/p]\), and choose the resolution \(\pi:\widetilde X\to X\) used in the proof of
Corollary~\ref{cor:constant-derham}. Tensoring with the finite-rank local system \(M\) preserves
ordinary and perverse bounds, as well as the strict support and cosupport conditions for
intermediate extension. Consequently, \(M\otimes_{\Q_p}\IC_{X,\Q_p}\simeq P(L)\). Tensor the derived
retraction in that corollary with \(M\). The projection formula identifies
\(R\pi_*\Q_p[d]\otimes_{\Q_p}M\) with \(R\pi_*(\pi^*M)[d]\), so \(P(L)\) is a derived direct summand
of \(R\pi_*(\pi^*M)[d]\) over \(K\).

Let \(V\subseteq X\) be a dense open over which \(\pi\) is an isomorphism. The local system
\(\pi^*M\) is de Rham on \(\pi^{-1}(U\cap V)\). Every connected component of \(\widetilde X\) meets
this open, so Liu--Zhu rigidity \cite[Theorem 3.9(iv)]{LiuZhu} makes \(\pi^*M\) de Rham throughout
\(\widetilde X\). Since \(\widetilde X\) is smooth and proper, comparison with de Rham local
coefficients \cite[Theorem 1.6]{ScholzeHT} shows that \(H^{m+d}(\widetilde X_C,\pi^*M)\) is de Rham.
Applying geometric cohomology to the derived retraction exhibits \(\IH^m(X_C,L)\) as a
\(G_K\)-equivariant direct summand of this representation.
\end{proof}

\begin{proposition}\label{prop:realized-derham}
Let \(g:W\to X\) be proper with a global relatively ample line bundle, and suppose that \(W\) is
smooth of pure dimension \(N\). Assume that \(h:W_U=g^{-1}(U)\to U\) is smooth of pure relative
dimension \(e=N-d\). If \(L\) is a direct summand over \(K\) of \(R^qh_*\Q_p(t)\) for some integers
\(q,t\), then \(\IH^m(X_C,L)\) is de Rham for every integer \(m\). More precisely, it is a
\(G_K\)-equivariant direct summand of
\begin{equation}\label{eq:realized-derham}
 H^{m+q+d}(W_C,\Q_p)(t).
\end{equation}
\end{proposition}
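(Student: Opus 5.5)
The plan is to realize \(P(L)=j_{!*}(L[d])\) as a derived direct summand, over \(K\), of a shift and twist of \(Rg_*\Q_p[N]\), and then take geometric cohomology.

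\emph{Step 1: decompose the direct image.} Put \(K_g=Rg_*\Q_p[N]\) on \(X\). Since \(W\) is smooth with a global relatively ample line bundle, Theorem~\ref{thm:smoothsource} gives \(K_g\simeq\bigoplus_a\pH^a(K_g)[-a]\). It also shows that each \(\pH^a(K_g)\) splits along the boundary \(Z=X\setminus U\):
\[
 \pH^a(K_g)\simeq j_{!*}j^*\pH^a(K_g)\oplus i_*B_a,
\]
by Lemma~\ref{lem:boundary}.

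\emph{Step 2: identify the restriction to \(U\).} Since \(h\) is smooth and proper of relative dimension \(e\), the sheaves \(R^qh_*\Q_p\) are local systems on \(U\) by proper smooth base change in the rigid setting. Moreover \(j^*K_g\simeq Rh_*\Q_p[N]\) has perverse cohomology
\[
 \pH^a(j^*K_g)=R^{a+e}h_*\Q_p[d],
\]
because shifted local systems on smooth \(U\) of dimension \(d\) are perverse. Perverse \(t\)-exactness of \(j^*\) then gives \(j^*\pH^{q-e}(K_g)\simeq R^qh_*\Q_p[d]\).

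\emph{Step 3: assemble the retraction.} Write \(L\) as the image of an idempotent \(\epsilon\) of \(R^qh_*\Q_p(t)\) over \(K\). By functoriality of \(j_{!*}\) (full faithfulness on extendable objects), \(\epsilon\) induces an idempotent on \(j_{!*}(R^qh_*\Q_p(t)[d])\) whose image is \(P(L)\). Chaining the retractions gives
\[
 P(L)\to j_{!*}(R^qh_*\Q_p[d])(t)\to\pH^{q-e}(K_g)(t)\to K_g[e-q](t)
\]
together with the reverse maps. Taking \(H^m(X_C,-)\) and using proper base change, \(H^m(X_C,K_g[e-q](t))=H^{m+N+e-q}\)... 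I need to recheck this index. Degree \(m\) of \(Rg_*\Q_p[N][e-q]\) is \(H^{m+N+e-q}(W_C)\), but the target should be \(m+q+d\). With the index corrected so that \(\pH^{q-e}\) sits in degree \(q-e\), the summand is \(\pH^{q-e}(K_g)[-(q-e)]\), so degree \(m\) corresponds to \(H^{m+N+q-e}(W_C)=H^{m+q+d+2e}\)? That is also wrong. The fix is to use \(K_g=Rg_*\Q_p[d]\), with the shift by base dimension rather than source dimension. Then \(j^*\pH^{q}(K_g)=R^qh_*\Q_p[d]\), and \(\pH^q(K_g)[-q]\) contributes to degree \(m\) via \(H^{m+q+d}(W_C,\Q_p)(t)\), which matches \eqref{eq:realized-derham}. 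This changes only normalizations in Theorem~\ref{thm:smoothsource}, which holds for any shift. All maps are over \(K\), so the summand is \(G_K\)-equivariant.

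\emph{Step 4: conclude.} \(W\) is smooth, and it is proper over \(K\) since \(X\) is proper. By \cite[Corollary 1.8]{ScholzeHT} the group \(H^{m+q+d}(W_C,\Q_p)(t)\) is de Rham, and direct summands of de Rham representations are de Rham.

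The main obstacle is Step 2: identifying \(j^*\pH^q(K_g)\) with the local system \(R^qh_*\Q_p[d]\) rationally at \(p\), and showing that \(\epsilon\), an idempotent of local systems over \(K\), lifts to an idempotent of the intermediate extension compatibly with the derived splitting. For the first point I would try relative Poincaré duality and proper smooth base change for \(\Z/p^n\) coefficients from \cite{LRZ}, followed by derived Nakayama. For the second point, full faithfulness of \(j_{!*}\) handles the lift. Lemma~\ref{lem:retract}-style idempotent arguments then handle the derived retraction.
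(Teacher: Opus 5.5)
Your argument is the paper's: the smooth-source decomposition and boundary splitting of Theorem~\ref{thm:smoothsource}, and the identification \(j^*\pH^{q-e}(Rg_*\Q_p[N])\simeq R^qh_*\Q_p[d]\) via smooth proper base change and perverse exactness of \(j^*\). It then uses preservation of the summand \(L[d]\) under \(j_{!*}\), composition of the three split pairs over \(K\) (which needs no further compatibility, so no Lemma~\ref{lem:retract}-type idempotent argument), and Scholze's comparison on the smooth proper \(W\).

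The switch to \(Rg_*\Q_p[d]\) is harmless but unnecessary. With \(a=q-e\), \(\pH^a(Rg_*\Q_p[N])\) is a summand of \(Rg_*\Q_p[N][a]\); your first chain reversed the sign of this shift. Your expression \(H^{m+N+q-e}(W_C)\) already equals \(H^{m+q+d}(W_C)\) because \(N=d+e\), so only the simplification to \(m+q+d+2e\) was wrong.
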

\begin{proof}
Put \(A=Rg_*\Q_p[N]\), \(a=q-e\), and \(P_a=\pH^a(A)\). Smooth proper base change makes the higher
direct images of \(h\) local systems on \(U\). Since \(U\) is smooth of dimension \(d\), a local system
shifted by \([d]\) is perverse. Perverse exactness of open restriction therefore gives
\(j^*P_a\simeq(R^qh_*\Q_p)[d]\). The given inclusion and retraction exhibit \(L[d]\) as a direct
summand of \(j^*P_a(t)\), and intermediate extension preserves this splitting. The boundary
splitting of Theorem~\ref{thm:smoothsource} makes \(j_{!*}j^*P_a(t)\) a perverse direct summand of
\(P_a(t)\), so \(P(L)\) is a perverse direct summand of \(P_a(t)\).

The global derived splitting in the same theorem makes \(P_a[-a]\) a derived direct summand of
\(A\). Combining these retractions gives
\[
 P(L)[-a]\longrightarrow A(t)\longrightarrow P(L)[-a]
\]
with composite the identity, all over \(K\). Apply geometric global sections and take cohomology in
degree \(m+a\). The outer terms give \(H^{m+a}(X_C,P(L)[-a])=\IH^m(X_C,L)\), while the middle term
gives \(H^{m+a+N}(W_C,\Q_p)(t)\). Since \(a=q-e\) and \(e=N-d\), we have \(m+a+N=m+q+d\), proving
that \(\IH^m(X_C,L)\) is a direct summand as stated in \eqref{eq:realized-derham}. The space \(W\)
is proper over \(K\), because both \(g\) and \(X\to\Spa(K)\) are proper. Smooth proper comparison
\cite[Corollary 1.8]{ScholzeHT} makes the middle representation de Rham, and hence so is its
\(G_K\)-equivariant direct summand.
\end{proof}

\subsection{Further questions}

The projective decomposition theorem gives the de Rham property in every degree for constant
coefficients. The results of Section~\ref{sec:decomposition-coefficients} also give this property
for de Rham local systems that extend across the boundary, have finite geometric monodromy, or occur
as direct summands of the cohomology of a smooth proper family admitting the stated projective
realization. For arbitrary ramified de Rham coefficients with nonempty boundary,
Theorem~\ref{thm:boundary-derham} leaves the middle range \(-b<m<b\), where \(b=\dim(X\setminus
U)\).

For coefficients with \(\ell\ne p\), Hansen--Zavyalov \cite[Theorems 1.2.1(1) and 3.2.8]{HZ}
establish the assertion about nearby cycles in \cite[Conjecture 4.15(1)]{BH} in its corrected
formulation, using the nearby-cycle functor with values in Deligne's topos. Their Remark~1.2.2
compares this functor with derived pushforward along specialization to the special fiber over a
non-algebraically closed field. Lu--Zheng \cite[Corollary 1.3 and Remark 1.4]{LuZheng} establish the
nonnegativity of Frobenius weights predicted by \cite[Conjecture 4.15(2)]{BH}.

The conjecture that singular support is an analytic Lagrangian subset is formulated in
\cite[Conjecture 9.6]{ZhouMicro}. Section~\ref{sec:microlocal} gives isotropic upper bounds for
singular support with finite coefficients of characteristic different from \(p\) and proves the
conjecture for complexes supported in dimension at most one.

\section{Examples and further remarks}\label{sec:examples}

\subsection{Smooth families and restriction}\label{sec:tests}

\begin{corollary}\label{thm:smoothfamily}
Let \(f:X\to Y\) be smooth projective of pure relative dimension \(d\), where \(Y\) is qcqs,
reduced, and pure-dimensional of dimension \(m\). For every coefficient prime \(\ell\), the sheaves
\(L_q=R^qf_*\Q_\ell\) are local systems, and there are derived decompositions
\[
\begin{aligned}
 Rf_*\Q_\ell&\simeq\bigoplus_{q=0}^{2d}L_q[-q],\\
 Rf_*\IC_X&\simeq
 \bigoplus_{q=0}^{2d}\IC_Y(L_q|_{Y^{\mathrm{sm}}})[d-q].
\end{aligned}
\]
\end{corollary}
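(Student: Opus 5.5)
The plan is to prove the first decomposition directly with Deligne's criterion, Lemma~\ref{lem:deligne}, using the \emph{ordinary} $t$-structure rather than the perverse one. The second decomposition will then follow from the first by identifying $\IC_X$ with a shifted smooth pullback of $\IC_Y$ and applying the projection formula.

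\emph{Step 1: local systems.} Since $f$ is smooth and proper, smooth proper base change for Zariski-constructible complexes \cite{BH} makes each $R^qf_*\Z/\ell^n$ locally constant with finite stalks. These are compatible in $n$, so after rationalization each $L_q$ is a local system. Here the base $Y$ is qcqs, so the rational formalism of Section~\ref{sec:formal} applies.

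\emph{Step 2: ordinary relative hard Lefschetz.} Locally on $Y$ choose a relatively ample $H$, raising it to a tensor power if necessary as in the remark after Proposition~\ref{lem:approx}. Put $A=Rf_*\Q_\ell[d]$, and let $\eta$ be cup product with $c_1(H)$. I would show that
\[
\eta^a:\cH^{-a}(A)=L_{d-a}\to L_{d+a}(a)=\cH^{a}(A)(a)
\]
is an isomorphism for every prime $\ell$, including $\ell=p$. A map of rational local systems is an isomorphism if it is one on geometric stalks; if necessary, reduce to finitely many integral stalk computations after clearing a uniform power of $\ell$. At a geometric point $\bar y$ over a complete algebraically closed field, the fiber $X_{\bar y}$ is smooth projective. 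By GAGA it is the analytification of a smooth projective variety in characteristic $0$, and Huber's comparison identifies its étale cohomology, with every finite coefficient ring, with the algebraic étale cohomology, compatibly with Chern classes. Classical hard Lefschetz for $\ell$-adic cohomology of smooth projective varieties in characteristic zero then gives the isomorphism. It holds for every prime, since $p$ is not the characteristic of the base field of the variety, and can be obtained via a Betti comparison as in Lemma~\ref{lem:betti}. The isomorphisms are canonical and defined by a global class, so they glue over $Y$. Lemma~\ref{lem:deligne}, applied with the ordinary $t$-structure, gives $Rf_*\Q_\ell[d]\simeq\bigoplus_a L_{d+a}[-a]$. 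Shifting by $[-d]$ gives the first formula.

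\emph{Step 3: the intersection complex.} Since $f$ is smooth, $X$ is reduced and pure of dimension $m+d$, and $f^{-1}(Y^{\mathrm{sm}})$ is a smooth dense Zariski-open subset of $X$. I claim that
\[
\IC_X\simeq f^*\IC_Y[d].
\]
For this I would check that $f^*[d]$ is perverse $t$-exact for smooth $f$ of relative dimension $d$. The upper bound holds because pullback raises support dimensions exactly by $d$, as in the support criterion of Lemma~\ref{lem:rational-support}. The lower bound follows by duality, using $f^!=f^*(d)[2d]$, which comes from smooth duality \cite[Corollary 7.1.2]{LRZ} integrally and then rationally. Then $f^*[d]$ carries the strict bounds \eqref{eq:strict} for $\IC_Y$ relative to $Y\setminus Y^{\mathrm{sm}}$ to the corresponding bounds on $X$, using $f^*i_*=i'_*f'^*$ and compatibility with $\mathbb D$ up to twist. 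This identifies $f^*\IC_Y[d]$ as the intermediate extension of $\Q_\ell[m+d]$.

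The projection formula then gives
\[
Rf_*\IC_X\simeq \IC_Y\otimes Rf_*\Q_\ell[d]\simeq\bigoplus_q(\IC_Y\otimes L_q)[d-q].
\]
Finally, tensoring with the local system $L_q$ preserves perversity and the strict support and cosupport conditions, as in Proposition~\ref{prop:extendible-derham}, so $\IC_Y\otimes L_q\simeq\IC_Y(L_q|_{Y^{\mathrm{sm}}})$.

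I expect the main obstacle to be Step 3's $t$-exactness and the preservation of intermediate extension under smooth pullback with rational, and especially $\ell=p$, coefficients. The cleanest route there is the integral $p$/$p^+$ bookkeeping of \cite[Construction 4.3]{BH} combined with smooth duality, and that is what I would try first. A secondary point is the stalkwise transfer of hard Lefschetz at $\ell=p$ through the algebraic–analytic comparison.
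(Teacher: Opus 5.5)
Your proposal is correct and follows the paper's proof essentially step for step. It proves fiberwise hard Lefschetz via GAGA and comparison, then applies Deligne's criterion for the ordinary $t$-structure. It identifies $\IC_X\simeq f^*\IC_Y[d]$ using $t$-exactness of $f^*[d]$, the duality identity $\mathbb D_X(f^*A[d])\simeq f^*(\mathbb D_YA)[d](d)$, and the strict boundary bounds. It finishes with the projection formula and the check that $\IC_Y\otimes L_q$ satisfies the strict support and cosupport conditions.

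Two things to tidy. Deligne's criterion needs a single global $\eta$, so take one global relatively ample $H$, which projectivity supplies, rather than local choices. For local constancy at $\ell=p$, the paper cites \cite[Corollary 6.4.8]{LRZ} rather than \cite{BH}.
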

\begin{proof}
Smooth proper base change makes each \(L_q\) a local system, including for \(\ell=p\) by \cite[Corollary
6.4.8]{LRZ} and passage from finite to rational coefficients. Let \(H\) be a relatively ample line
bundle. Hard Lefschetz on the smooth projective geometric fibers, together with rigid GAGA and
proper base change, gives isomorphisms \(c_1(H)^a:L_{d-a}\xrightarrow{\sim}L_{d+a}(a)\) for \(0\le
a\le d\). These are morphisms of local systems induced by the global Chern class, and we can check
that they are isomorphisms on the geometric fibers. Applying Lemma~\ref{lem:deligne} to \(Rf_*\Q_\ell[d]\)
gives the first decomposition.

For the second decomposition, put \(U=Y^{\mathrm{sm}}\) and \(U_X=f^{-1}(U)\), and write
\(j:U\hookrightarrow Y\) and \(j_X:U_X\hookrightarrow X\) for the inclusions. Since \(f\) is smooth
and open, \(U_X\) is smooth and dense in \(X\), of pure dimension \(m+d\). Thus
\(\IC_Y=j_{!*}(\Q_{\ell,U}[m])\) and
\(\IC_X=(j_X)_{!*}(\Q_{\ell,U_X}[m+d])\). Shifted smooth pullback \(f^*[d]\) is perverse
\(t\)-exact. For \(A\in\Db(Y,\Q_\ell)\), it also satisfies
\(\mathbb D_X(f^*A[d])\simeq f^*(\mathbb D_YA)[d](d)\), by smooth duality
\cite[Corollary 7.1.2]{LRZ}, regular base change for internal Hom, and
\cite[Propositions 3.7 and 3.16]{BH}. Consequently, \(P=f^*\IC_Y[d]\) is perverse and
\(j_X^*P\simeq\Q_{\ell,U_X}[m+d]\).

Let \(i:Z=Y\setminus U\hookrightarrow Y\) and \(i_X:Z_X=f^{-1}(Z)\hookrightarrow X\) be the
boundary inclusions, and let \(f_Z:Z_X\to Z\) be the restriction of \(f\). The intermediate
extension \(\IC_Y\) satisfies
\(i^*\IC_Y,i^*\mathbb D_Y\IC_Y\in{}^pD^{\le-1}(Z,\Q_\ell)\). Ordinary base change and the
duality identity above give
\[
\begin{aligned}
 i_X^*P
 &\simeq f_Z^*(i^*\IC_Y)[d],\\
 i_X^*\mathbb D_XP
 &\simeq f_Z^*(i^*\mathbb D_Y\IC_Y)[d](d).
\end{aligned}
\]
The morphism \(f_Z\) is smooth of pure relative dimension \(d\), so \(f_Z^*[d]\) preserves
perverse bounds; Tate twists preserve them as well. Therefore both complexes belong to
\({}^pD^{\le-1}(Z_X,\Q_\ell)\). These two conditions exclude nonzero quotients and subobjects of
\(P\) supported on \(Z_X\). So \(\IC_X\simeq f^*\IC_Y[d]\).

Applying the projection formula to this identification and substituting the first decomposition,
we obtain
\[
\begin{aligned}
 Rf_*\IC_X
 &\simeq Rf_*(f^*\IC_Y)[d]\\
 &\simeq
   \bigl(\IC_Y\otimes_{\Q_\ell}^{\mathbf L}Rf_*\Q_\ell\bigr)[d]\\
 &\simeq\bigoplus_{q=0}^{2d}(\IC_Y\otimes_{\Q_\ell}L_q)[d-q].
\end{aligned}
\]
For each \(q\), put \(B_q=\IC_Y\otimes_{\Q_\ell}L_q\). Since \(L_q\) is a finite-rank local
system on all of \(Y\), tensoring with it is perverse \(t\)-exact and commutes with restriction.
Hence \(B_q\) is perverse, \(j^*B_q\simeq(L_q|_U)[m]\), and
\(i^*B_q\simeq i^*\IC_Y\otimes_{\Q_\ell}i^*L_q\) belongs to \({}^pD^{\le-1}(Z,\Q_\ell)\).
Writing \(L_q^\vee=\mathcal Hom_{\Q_\ell}(L_q,\Q_\ell)\), Verdier duality gives
\(\mathbb D_YB_q\simeq\mathbb D_Y\IC_Y\otimes_{\Q_\ell}L_q^\vee\), so
\(i^*\mathbb D_YB_q\) satisfies the same strict upper bound. Thus
\(B_q\simeq j_{!*}((L_q|_U)[m])=\IC_Y(L_q|_{Y^{\mathrm{sm}}})\).
Substituting these identifications proves the second decomposition.
\end{proof}

Finite pushforward is perverse \(t\)-exact and commutes with intermediate extension in the
corresponding Cartesian diagrams. Let \(f:X\to Y\) be projective, with \(Y\) qcqs and \(X\) smooth
of pure dimension \(n\), and put \(Y_r=\{y\in Y:\dim f^{-1}(y)\ge r\}\). The morphism \(f\) is
semismall if \(\dim Y_r\le n-2r\) for every \(r\ge0\), and small if these inequalities are strict
for \(r>0\). In the semismall case, proper base change and the bound on the cohomological dimension
of the fibers give the perverse support inequalities for \(Rf_*\Q_\ell[n]\), and proper duality
gives the cosupport inequalities. Thus \(Rf_*\Q_\ell[n]=Rf_*\IC_X\) is perverse, and its support
decomposition follows from Theorem~\ref{thm:smoothsource}. For a small resolution \(f\), the strict
support and cosupport inequalities give \(Rf_*\Q_\ell[n]\simeq\IC_Y\).

\begin{example}\label{ex:nodecut}
Consider the algebraic threefold ordinary double point \(Z=\{xy-zw=0\}\subset\mathbb A^4_K\),
whose only singular point is the vertex \(0\). Let
\(T=\operatorname{Tot}(\cO_{\mathbf P^1}(-1)^{\oplus2})\). In homogeneous coordinates \([s:t]\)
on \(\mathbf P^1\) and fiber coordinates \(u,v\), define \(\pi:T\to Z\) by \(x=su\), \(z=sv\),
\(w=tu\), and \(y=tv\). These formulas identify \(T\) with the closed subspace of
\(Z\times\mathbf P^1\) defined by \(tx=sw\) and \(tz=sy\), so \(\pi\) is projective. It is an
isomorphism away from \(0\), and its exceptional fiber is \(E=\pi^{-1}(0)\simeq\mathbf P^1\).
Since \(T\) is smooth of dimension three and \(2\dim E=2<3=\operatorname{codim}_Z\{0\}\),
\(\pi\) is a small resolution.

Take \(h=x+y\), so \(h\circ\pi=su+tv\). On the chart \(s\ne0\), write \(\lambda=t/s\) and
trivialize the bundle to obtain \(h\circ\pi=u+\lambda v\); its derivative with respect to \(u\)
is \(1\). On the chart \(t\ne0\), the analogous expression is \(\mu u+v\), with \(\mu=s/t\).
Thus \(h\circ\pi\) is smooth. Put \(Z_0=h^{-1}(0)\) and \(T_0=T\times_Z Z_0\), and let
\(\pi_0:T_0\to Z_0\) be the induced map. Substituting \(y=-x\) gives
\(Z_0=\{x^2+zw=0\}\), an \(A_1\) surface singularity. The equation of \(T_0\) is \(su+tv=0\),
so it is the total space of the kernel in
\[
 0\longrightarrow\cO_{\mathbf P^1}(-2)
 \longrightarrow\cO_{\mathbf P^1}(-1)^{\oplus2}
 \xrightarrow{(s,t)}\cO_{\mathbf P^1}\longrightarrow0.
\]
The map \((s,t)\) is surjective, and its kernel is a line bundle of degree \(-2\). Hence
\(T_0\simeq\operatorname{Tot}(\cO_{\mathbf P^1}(-2))\). The exceptional curve \(E\) is its zero
section, with normal bundle \(N_{E/T_0}\simeq\cO_{\mathbf P^1}(-2)\) and self-intersection \(E^2=-2\).
The equality \(2\dim E=2=\operatorname{codim}_{Z_0}\{0\}\) shows that \(\pi_0\) is semismall
and is no longer small.

We now analytify and restrict to a qcqs neighborhood of the vertex and its inverse images, keeping
the same notation. Smallness gives \(\IC_Z\simeq R\pi_*\Q_\ell[3]\), while semismallness makes
\(A_0=R\pi_{0*}\Q_\ell[2]\) perverse. Proper base change and purity for the divisor
\(E\hookrightarrow T_0\) identify the boundary morphism \(c_{\{0\}}(A_0)\) on the geometric
vertex with
\[
 H^0(E_C,\Q_\ell)(-1)
 \xrightarrow{\ \cup c_1(N_{E/T_0})\ }
 H^2(E_C,\Q_\ell).
\]
Both sides are \(\Q_\ell(-1)\), and the map is multiplication by
\(\deg N_{E/T_0}=-2\). It is therefore invertible for every prime \(\ell\), including
\(\ell=2\). Away from the vertex, \(A_0\) is the constant sheaf shifted by \([2]\).
Lemma~\ref{lem:boundary} consequently splits \(A_0\) into \(\IC_{Z_0}\) and the summand
\(\Q_{\ell,\{0\}}(-1)\), where \(\Q_{\ell,\{0\}}\) denotes the constant sheaf on the vertex
extended by zero to \(Z_0\). For \(i:Z_0\hookrightarrow Z\), proper base change gives
\[
\begin{aligned}
 i^*\IC_Z[-1]
 &\simeq i^*R\pi_*\Q_\ell[3][-1]\\
 &\simeq R\pi_{0*}\Q_\ell[2]\\
 &\simeq\IC_{Z_0}\oplus\Q_{\ell,\{0\}}(-1).
\end{aligned}
\]
Thus \(i^*\IC_Z[-1]\) is perverse, but its nonzero summand supported at the vertex prevents it
from being the intermediate extension of its restriction to \(Z_0\setminus\{0\}\).

To compare the two vanishing conditions in Proposition~\ref{prop:boundaryvanish}, pass to \(C\)
and again work on qcqs analytic neighborhoods, using the same notation. Fix \(\ell\ne p\) and
\(n\ge1\), and put \(\Lambda_n=\Z/\ell^n\) and \(K_n=R\pi_*\Lambda_{n,T}[3]\).
Smooth local acyclicity for \(h\circ\pi\), followed by compatibility with proper pushforward, gives
\[
 \phi_h(K_n)
 \simeq R\pi_{0*}\phi_{h\circ\pi}(\Lambda_{n,T}[3])
 =0.
\]
For the vertex inclusion \(v:\{0\}\hookrightarrow Z\), put \(B_n=v_*v^*K_n\). Proper base change
identifies \(B_n\) with \(v_*R\Gamma(E_C,\Lambda_n)[3]\), which is nonzero since
\(H^0(E_C,\Lambda_n)=\Lambda_n\). Let \(j:Z\setminus Z_0\hookrightarrow Z\) be the open
complement. The complex \(B_n\) is supported at the vertex, so \(j^*B_n=0\) and
\(\psi_h(j^*B_n)=0\). The defining triangle
\[
 i^*B_n\longrightarrow\psi_h(j^*B_n)
 \longrightarrow\phi_h(B_n)\longrightarrow i^*B_n[1]
\]
therefore gives \(\phi_h(B_n)\simeq i^*B_n[1]\ne0\). The first vanishing condition in
\eqref{eq:twovanishing} holds for \(K_n\), while the second fails for the boundary \(\{0\}\).
This calculation shows why control of the total direct image alone does not ensure preservation
of intermediate extension under shifted restriction.
\end{example}

\subsection{A projective family with nonsemisimple monodromy}\label{sec:tate}

\begin{theorem}\label{thm:tate}
Let \(\pi\) be a uniformizer of \(K\), and consider the closed annulus \(A=\Spa(K\langle
T/\pi,\pi^2/T\rangle)\), where \(|\pi|^2\le|T|\le|\pi|\). There exists a smooth projective
family of elliptic curves \(f:\cE\to A\) such that \(R^1f_{C*}\Q_\ell\) is nonsemisimple on
\(A_C\) for every prime \(\ell\). More precisely, its rational Tate module \(V_\ell(\cE_C)\)
fits into a nonsplit exact
sequence
\begin{equation}\label{eq:tate}
 0\longrightarrow\Q_\ell(1)\longrightarrow V_\ell(\cE_C)
 \longrightarrow\Q_\ell\longrightarrow0,
\end{equation}
whose extension class is the Kummer class of \(T\). The complex \(Rf_*\Q_\ell\) has the
decomposition of Corollary~\ref{thm:smoothfamily}.
\end{theorem}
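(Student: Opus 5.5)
We construct \(\cE\) as the relative Tate curve \(\mathbb G_m^{\an}/q^{\Z}\) with parameter \(q=T\) over \(A\); the Kummer class of \(T\) will then be forced to appear as the extension class.

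\emph{Construction.} On \(A\) we have \(0<|T|<1\) everywhere, so the Tate series \(a_4(q),a_6(q)\in\Z[[q]]\) converge on \(A\). Set \(q=T\) and define \(\cE\subset\mathbf P^2_A\) by the Tate equation
\[
y^2+xy=x^3+a_4(T)x+a_6(T).
\]
Its discriminant is \(\Delta(T)=T\prod_{n\ge1}(1-T^n)^{24}\). This is a unit on \(A\), since \(T\) is invertible there and the product is a principal unit. Hence \(\cE\to A\) is a smooth projective family of elliptic curves, and projectivity is immediate from the Weierstrass embedding. The relative Tate uniformization \(\mathbb G_{m,A}^{\an}\to\cE\), given by Tate's \(q\)-series, is an étale surjection with kernel \(T^{\Z}\). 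This can be checked fibrewise via Tate's theory over each classical point, together with the fact that the series converge uniformly over the affinoid.

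\emph{Tate module.} For each \(n\), the \(\ell^n\)-torsion of \(\cE\) sits in the exact sequence of finite étale group schemes over \(A\)
\[
0\to\mu_{\ell^n}\to\cE[\ell^n]\to\Z/\ell^n\to0,
\]
where a point of \(\cE[\ell^n]\) lifts to \(z\) with \(z^{\ell^n}\in T^{\Z}\), and the map records the exponent. The fibre over \(1\in\Z/\ell^n\) is the \(\mu_{\ell^n}\)-torsor of \(\ell^n\)-th roots of \(T\), so the extension class is the Kummer class \(\kappa_n(T)\in H^1(A_C,\mu_{\ell^n})\). Taking limits and inverting \(\ell\) gives \eqref{eq:tate} with class \(\kappa(T)\in H^1(A_C,\Q_\ell(1))\). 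This holds for every \(\ell\), including \(\ell=p\), since these are finite étale covers. Finally, \(R^1f_{C*}\Q_\ell\) is the dual of \(V_\ell(\cE_C)\) by Poincaré duality, equivalently \(V_\ell\) twisted by \((-1)\), so nonsemisimplicity transfers.

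\emph{Main obstacle: nonsplitting after extension to \(C\).} We must show \(\kappa(T)\ne0\) in \(H^1(A_C,\Q_\ell(1))\), that is, that \(T\) is not an \(\ell^n\)-th power up to bounded torsion in \(\mathcal O(A_C)^\times\). I would take an algebraic-valuation route. Every unit on the annulus over \(C\) has the form \(u=cT^k(1+w)\) with \(|w|<1\), where \(k\) is the degree from the Laurent expansion of the dominant term. This shows \(\mathcal O(A_C)^\times/C^\times(1+\mathfrak m)\simeq\Z\) via \(k\). Since \(C^\times\) and the principal units are \(\ell\)-divisible (the latter for \(\ell\ne p\) by Hensel, and for \(\ell=p\) only up to bounded torsion), the Kummer sequence yields an injection \(\Z_\ell\hookrightarrow\varprojlim H^1(A_C,\mu_{\ell^n})\) sending \(T\) to \(1\). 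For \(\ell=p\) this needs care, since \(1+\mathfrak m\) is not \(p\)-divisible. As a more robust alternative, I would detect \(\kappa(T)\) by its monodromy around the annulus, via the pullback to the boundary circle \(|T|=|\pi|\) and the \(\Z_\ell(1)\)-torsor \(\{T^{1/\ell^n}\}\), whose covering group is a nontrivial quotient. If the extension split rationally, then some \(\ell^{N}\)-multiple of \(\kappa_n(T)\) would vanish for all \(n\), so \(T^{\ell^N}\) would be an \(\ell^n\)-th power for every \(n\), contradicting \(k(T^{\ell^N})=\ell^N\). Nonsplit plus a rank-one sub then gives nonsemisimple.

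\emph{Decomposition.} The last claim is Corollary~\ref{thm:smoothfamily} applied with \(Y=A\), which is qcqs, smooth, and of dimension one, and with \(d=1\).
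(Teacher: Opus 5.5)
Your proposal is correct and follows essentially the same route as the paper. You use the Tate Weierstrass family with \(q=T\), the extension \(0\to\mu_{\ell^n}\to\cE[\ell^n]\to\Z/\ell^n\to0\) whose class is the Kummer class of \(T\), and nonsplitting by applying the winding-number homomorphism to \(T^{\ell^N}=u_n^{\ell^n}\), which works uniformly for every \(\ell\) including \(p\). The only differences are that you read off the torsion from the relative uniformization \(\mathbb G_m^{\an}/T^{\Z}\) rather than the explicit Katz--Mazur presentation, and that your preliminary remarks on \(\ell\)-divisibility of principal units are not needed; the \(\ell=p\) claim ``up to bounded torsion'' is doubtful, so drop it in favor of your final argument.
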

\begin{proof}
\smallskip
\noindent\emph{Step 1. Construct the elliptic family.}
Put \(R=K\langle T/\pi,\pi^2/T\rangle\). The inequalities
\(|\pi|^2\le|T|\le|\pi|<1\) ensure that \(T\) is a unit of \(R\) and is topologically nilpotent.
The universal Tate Weierstrass equation is
\(y^2+xy=x^3+a_4(q)x+a_6(q)\), where \(a_4(q),a_6(q)\in q\Z[[q]]\), and its discriminant is
\(\Delta(q)=q\prod_{n\ge1}(1-q^n)^{24}\). Substituting \(q=T\) gives convergent series in \(R\).
Each factor \(1-T^n\) is a unit because \(\|T^n\|<1\), and the infinite product converges to a
unit since \(T^n\to0\) uniformly. Hence \(\Delta(T)\in R^\times\). Homogenizing the equation
defines a smooth cubic \(\cE_R\subset\mathbf P^2_R\), with the point at infinity as a section.
Its relative analytification over \(A=\Spa(R)\) is a smooth projective family of elliptic curves
\(f:\cE\to A\), whose fibers are Tate curves with parameter given by \(T\).

\smallskip
\noindent\emph{Step 2. Identify the torsion extension.}
Fix an integer \(N\ge1\). In the multiplicative uniformization of a Tate curve, a point represented
by \(z\) is killed by \(N\) precisely when \(z^N=q^a\) for some integer \(a\).
Replacing \(z\) by \(zq^b\), with \(b\in\Z\), changes \(a\) by \(Nb\), so one can take
\(0\le a<N\). With \(q=T\), this gives the presentation
\cite[\S2]{Huan}, following \cite[\S8.7]{KM},
\begin{equation}\label{eq:torsion}
\begin{gathered}
 \cE[N]=\coprod_{a=0}^{N-1}
 \bigl(\Spec R[z]/(z^N-T^a)\bigr)^{\an},\\
 0\longrightarrow\mu_N\longrightarrow\cE[N]
 \longrightarrow\Z/N\longrightarrow0.
\end{gathered}
\end{equation}
Here analytification is relative over \(A\). The group law comes from
multiplication of representatives modulo powers of \(T\); the index \(a\bmod N\) is therefore
additive and defines the quotient map. Its kernel is the component \(z^N=1\), namely \(\mu_N\).
Each component is finite \'{e}tale over \(A\): both \(N\) and \(z\) are units, so the derivative
\(Nz^{N-1}\) is invertible. This applies also when \(N\) is a power of \(p\).

The inverse image of \(1\in\Z/N\) is the \(\mu_N\)-torsor \(z^N=T\). Since
\eqref{eq:torsion} is an extension of \(\Z/N\)-module sheaves, a local lift of \(1\) determines
a local splitting. On overlaps, two such lifts differ by an \(N\)-th root of unity. The resulting
cocycle is exactly the Kummer cocycle of \(T\), so the extension class is its image under the
Kummer map \(\cO(A)^\times\to H^1(A,\mu_N)\). Now take \(N=\ell^n\). The transition maps are
multiplication by \(\ell\) on the torsion groups, the \(\ell\)-th power map on roots of unity,
and reduction on the quotients. Since the maps on roots of unity are surjective on geometric
fibers, taking inverse limits gives
\[
 0\longrightarrow\Z_\ell(1)\longrightarrow T_\ell(\cE)
 \xrightarrow{\beta}\Z_\ell\longrightarrow0,
\]
where \(T_\ell(\cE)=\varprojlim_n\cE[\ell^n]\). Its extension class is the compatible integral
Kummer class of \(T\). Tensoring with \(\Q_\ell\) defines
\(V_\ell(\cE)=T_\ell(\cE)\otimes_{\Z_\ell}\Q_\ell\); after base change to \(C\), the sequence
becomes \eqref{eq:tate}.

\smallskip
\noindent\emph{Step 3. Prove nonsplitting over \(C\).}
Every analytic unit on \(A_C\) can be written as \(u=aT^k(1+h)\), where \(a\in C^\times\),
\(k\in\Z\), and \(h\in\cO(A_C)\) satisfies \(\|h\|<1\); see
\cite[p.~213, (2.4.6)]{Ramero}. The integer \(k\) is the dominant Laurent exponent of \(u\)
throughout the annulus. It is uniquely determined and is called its winding number \(w(u)\).
Multiplication adds these exponents, giving a homomorphism
\(w:\cO(A_C)^\times\to\Z\) with \(w(uv)=w(u)+w(v)\) and \(w(T)=1\).

Let \(c_n\in H^1(A_C,\mu_{\ell^n})\) be the Kummer class of \(T\). These classes are compatible
under the transition maps. Since \(A_C\) is connected and \(C\) contains all roots of unity,
\(H^0(A_C,\mu_{\ell^n})=\mu_{\ell^n}(C)\), and the transition maps on these groups are
surjective. Thus the term \(\varprojlim^1_nH^0(A_C,\mu_{\ell^n})\) vanishes, and the compatible
classes define \(c\in H^1(A_C,\Z_\ell(1))\). This is the class of the integral Tate module
sequence after base change to \(A_C\). Suppose that \(\ell^m c=0\) for some \(m\ge0\).
Reduction modulo \(\ell^n\) gives \(\ell^m c_n=0\), which is the Kummer class of
\(T^{\ell^m}\). Exactness of the Kummer sequence therefore gives a unit
\(u_n\in\cO(A_C)^\times\) such that \(T^{\ell^m}=u_n^{\ell^n}\). Applying \(w\) gives
\(\ell^m=\ell^n w(u_n)\), contradicting \(w(u_n)\in\Z\) when \(n>m\). Hence no power of
\(\ell\) annihilates \(c\).

On the qcqs base \(A_C\), rationalization localizes morphisms by inverting \(\ell\).
If \eqref{eq:tate} split, clearing denominators and multiplying by a further power of \(\ell\)
would give an integral map \(s:\Z_\ell\to T_\ell(\cE_C)\) with
\(\beta\circ s=\ell^m\operatorname{id}_{\Z_\ell}\) for some \(m\ge0\). The exact Hom sequence
of the integral extension would then give \(\ell^m c=0\), a contradiction.
Thus \eqref{eq:tate} is nonsplit. This argument applies to every prime \(\ell\), including \(p\).

Both rank-one terms of \eqref{eq:tate} are constant on \(A_C\). If its middle term were
semisimple, the subobject \(\Q_\ell(1)\) would have an invariant complement, splitting the
sequence. Thus \(V_\ell(\cE_C)\) is nonsemisimple. The identification
\(R^1f_{C*}\Q_\ell\simeq V_\ell(\cE_C)^\vee\) gives the nonsplit dual sequence
\[
 0\longrightarrow\Q_\ell\longrightarrow R^1f_{C*}\Q_\ell
 \longrightarrow\Q_\ell(-1)\longrightarrow0,
\]
so \(R^1f_{C*}\Q_\ell\) is also nonsemisimple. Finally, the geometric fibers of \(f\) are
connected curves, giving \(R^0f_*\Q_\ell\simeq\Q_\ell\), while the trace map gives
\(R^2f_*\Q_\ell\simeq\Q_\ell(-1)\). Corollary~\ref{thm:smoothfamily} therefore yields
\[
 Rf_*\Q_\ell\simeq
 \Q_\ell\oplus R^1f_*\Q_\ell[-1]\oplus\Q_\ell(-1)[-2].
\]
\end{proof}

\begin{remark}
For a single Tate elliptic curve \(E_q/K\) with parameter \(q\in K\), \(0<|q|<1\), the same
construction gives a nonsplit \(G_K\)-extension \(0\to\Q_\ell(1)\to V_\ell(E_q)\to\Q_\ell\to0\). The
nonzero valuation of \(q\) detects its rational Kummer class. In the annular family,
nonsemisimplicity persists after base change to \(C\).
\end{remark}

\subsection{An isotropic upper bound for singular support}
\label{sec:microlocal}

Let \(k\) be a complete algebraically closed, non-trivially valued field of characteristic zero and
residue characteristic \(p\), and let \(\Lambda=\mathbf F_{\ell^r}\), where \(\ell\ne p\) is prime
and \(r\ge1\). For a smooth rigid space \(M/k\) and \(F\in\Db(M,\Lambda)\), write \(SS(F)\subset
T^*M\) for its singular support, defined by local test functions in \cite[\S8]{ZhouMicro}. We first
construct an analytic isotropic upper bound, then determine the singular support of complexes
supported in dimension at most one.

\begin{proposition}\label{prop:isotropic-container}
Let \(M/k\) be smooth affinoid of pure dimension \(n\), and let \(F\in\Db(M,\Lambda)\). There is a
closed conic analytic subset \(C_F\subset T^*M\) containing \(SS(F)\) such that every irreducible
component of \(C_F\) is isotropic on its smooth locus. In particular, every component has dimension
at most \(n\).
\end{proposition}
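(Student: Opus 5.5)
We use the stratified conormal bound. Choose a finite Whitney-type stratification of \(M\) adapted to \(F\), and take \(C_F\) to be the union of the closures of the conormal bundles of its strata. The plan has three steps.

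\emph{Step 1 (stratification).} Since \(M\) is affinoid and \(F\) is Zariski-constructible, there is a finite stratification \(M=\coprod_\alpha S_\alpha\) by smooth, connected, Zariski-locally closed subspaces. We choose it so that every \(\cH^q(F)|_{S_\alpha}\) is locally constant. We pass to \(\mathcal V=\Spec\cO(M)\) as in Lemma~\ref{lem:rational-support}, stratify there, and use excellence to refine to regular strata. We then refine further so that the stratification satisfies a Whitney-type condition (b) for pairs of strata. This uses an analytic or algebraic Whitney stratification theorem on the excellent scheme \(\mathcal V\), transported to the analytification. The refinement must keep the strata Zariski-locally closed and keep \(F\) locally constant on them.

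\emph{Step 2 (containment).} Put \(C_F=\bigcup_\alpha\overline{T^*_{S_\alpha}M}\). It is conic and closed, because closures are taken in the analytification of the algebraic cotangent bundle of \(\mathcal V\). The main point is \(SS(F)\subset C_F\). By the local test-function definition of \cite[\S8]{ZhouMicro}, it suffices to prove the following. Let \(h\) be a test function near a point \(x\) whose differential \(dh_x\) avoids \(C_F\). Then \(h\) is locally acyclic for \(F\), meaning \(\phi_h(F)=0\) near \(x\).

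For a single stratum we reduce to smooth local acyclicity \cite[Example 2.14(1)]{ZhouVC}. The condition \(dh\notin\overline{T^*_{S_\alpha}M}\) makes \(h|_{S_\alpha}\) smooth near \(x\). We then argue by dévissage along the localization triangles \(j_!j^*\to\mathrm{id}\to i_*i^*\), together with exactness of vanishing cycles, to reduce to \(F=(j_\alpha)_!\mathcal L\) for a locally constant \(\mathcal L\) on a stratum \(S_\alpha\). For these extensions by zero we still need local acyclicity at the points of the closure. The Whitney condition gives this: near \(x\), the function \(h\) stays transverse to all nearby strata. We would try to reduce this case to a resolution of \(\overline{S_\alpha}\). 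By Lemma~\ref{lem:tree} and the abstract blow-up triangles, \((j_\alpha)_!\Lambda\) is built from constant sheaves on smooth proper sources over \(M\). On a normal crossings compactification, transversality becomes smoothness of \(h\) on every stratum of the boundary divisor. Compatibility with proper pushforward \cite[Lemma 2.12(2)]{ZhouVC} then finishes the argument. This step is the main obstacle. Every conormal closure of an exceptional stratum must stay inside \(C_F\), so we may have to enlarge \(C_F\) by adding the images of conormal varieties from the resolution. This is harmless, since the images of the conormal bundles of strata in the resolution lie in finitely many conormal closures of smooth locally closed subspaces of \(M\). After refining the stratification once more, we simply include them.

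\emph{Step 3 (isotropy).} Each \(T^*_{S}M\) is a smooth Lagrangian of dimension \(n\), so the symplectic form vanishes on it. Its Zariski closure is irreducible of dimension \(n\), and isotropy is a closed condition on the smooth locus, since it is the vanishing of the restriction of the canonical \(2\)-form on a dense open subset. Hence every irreducible component of \(C_F\) is isotropic on its smooth locus and has dimension at most \(n\), as claimed.
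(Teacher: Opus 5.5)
There is a genuine gap in Step~2, and Step~2 carries the whole content of the proposition. Your containment $SS(F)\subset\bigcup_\alpha\overline{T^*_{S_\alpha}M}$ rests on two inputs that are not available here.

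The first is the existence of a Whitney-type stratification. You invoke one on $\Spec\cO(M)$, but this scheme is not of finite type over $k$, and its K\"ahler differentials are not the continuous differentials that define $T^*M$. Even the claim that $\overline{T^*_{S_\alpha}M}$ is a closed \emph{analytic} subset needs an argument.

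The second, more serious, input is the implication from Whitney~(b) to $\phi_h((j_\alpha)_!\mathcal L)=0$ at points of $\overline{S_\alpha}\setminus S_\alpha$. Over $\mathbf C$ this comes from Thom--Mather local topological triviality. No such isotopy result exists for \'etale sheaves on rigid spaces, and the Whitney condition, which concerns only the strata, says nothing about how $\mathcal L$ behaves along the boundary.

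Your fallback does not close the gap. Lemma~\ref{lem:tree} builds $(j_\alpha)_!\Lambda$ only for \emph{constant} coefficients, so nonconstant $\mathcal L$ is left untreated. You also assert without proof that the images of conormal varieties from the resolution lie in finitely many conormal closures of strata of $M$. That assertion is essentially the isotropy of those images. Once you enlarge $C_F$ by them, your Step~3, which only treats honest conormal closures, no longer proves isotropy.

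The missing idea is to drop stratifications and bound $SS(F)$ directly by proper images from smooth sources, as the paper does.
\begin{enumerate}
\item Reduce to smooth sources. By \cite[Proposition 3.6]{BH}, resolution, and abstract blow-up triangles, $F$ lies in the thick subcategory generated by finitely many $Rg_{a*}\Lambda_{V_a}$, with $g_a:V_a\to M$ proper and $V_a$ smooth. This also absorbs nonconstant local systems.
\item Bound each piece. For each such $g$, the proper pushforward inclusion \cite[Lemma 8.8(2)]{ZhouMicro} gives $SS(Rg_*\Lambda_V)\subseteq C_g$. Here $C_g$ is the image of $Z_g=\{(v,\xi):dg_v^*\xi=0\}$ in $T^*M$, which is closed, conic, and analytic by the proper mapping theorem.
\item Prove isotropy. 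The tautological form pulls back to $\sum_i\xi_i\,dg_i=0$ on $Z_g$. Resolve a component of $Z_g$ dominating a component $C$ of $C_g$ and apply generic smoothness; this gives $\lambda_M|_{C^{\mathrm{sm}}}=0$.
\item Assemble. Take $C_F=\bigcup_aC_{g_a}$. Micro-support in $C_F$ is stable under shifts, cones, and summands, so $SS(F)\subseteq C_F$, with no Whitney condition needed.
\end{enumerate}
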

\begin{proof}
\smallskip
\noindent\emph{Step 1. Reduce to proper direct images from smooth spaces.}
By \cite[Proposition 3.6]{BH}, the constructible category on \(M\) is thickly generated by finite
pushforwards of constant sheaves with finite coefficients. Let \(f:Y\to M\) be such a finite
morphism. Passing to the reduction and using the Mayer--Vietoris triangles for closed covers, we
reduce to an irreducible \(Y\). Choose a resolution \(\pi:\widetilde Y\to Y\) that is an isomorphism
outside a closed subset \(Z\), and put \(E=\pi^{-1}Z\). For the inclusion \(i:Z\hookrightarrow Y\),
proper base change gives the abstract blow-up triangle
\[
 \Lambda_Y\longrightarrow
 R\pi_*\Lambda_{\widetilde Y}\oplus i_*\Lambda_Z
 \longrightarrow R(\pi|_E)_*\Lambda_E\longrightarrow\Lambda_Y[1].
\]
The dimensions of \(Z\) and \(E\) are smaller than that of \(Y\). Repeating the construction
terminates, so \(F\) lies in the thick subcategory generated by finitely many
\(Rg_{a*}\Lambda_{V_a}\), with \(g_a:V_a\to M\) proper and \(V_a\) smooth.

\smallskip
\noindent\emph{Step 2. Construct the critical incidence set.}
For a proper map \(g:V\to M\) with \(V\) smooth, let \(dg^*:g^*T^*M\to T^*V\) be the cotangent map
and set
\[
 Z_g=\{(v,\xi)\in V\times_M T^*M:dg_v^*(\xi)=0\},
 \qquad C_g=\operatorname{im}(Z_g\to T^*M).
\]
The equations define the zero locus of a section of the pullback of \(T^*V\), so \(Z_g\) is a closed
analytic subspace. The projection \(V\times_M T^*M\to T^*M\) is proper, and its restriction to
\(Z_g\) is proper. The proper mapping theorem gives \(C_g\) a reduced closed analytic structure. It
is conic because the equations are homogeneous in \(\xi\). A constant sheaf on a smooth space has
singular support equal to the zero section. The inclusion under proper pushforward \cite[Lemma
8.8(2)]{ZhouMicro} therefore gives \(SS(Rg_*\Lambda_V)\subseteq C_g\).

\smallskip
\noindent\emph{Step 3. Prove isotropy.}
Let \(\lambda_M\) be the tautological one-form on \(T^*M\). Choose local coordinates \(x_i\) on
\(M\), write \(g_i=x_i\circ g\), and let \(\xi_i\) be the corresponding cotangent coordinates. The
pullback of \(\lambda_M\) to \(Z_g\) is \(\sum_i\xi_i\,dg_i\), which vanishes by the defining
equations of \(Z_g\). Work over an affinoid open in \(T^*M\), and let \(C\) be an irreducible
component of \(C_g\) there. Choose an irreducible component of \((Z_g)_{\mathrm{red}}\) whose image
is \(C\). Such a component exists because there are finitely many irreducible components of the
source over the chosen affinoid, their proper images are closed, and their union covers \(C\).
Resolving this component gives a proper surjection \(h:\widetilde Z\to C\), with \(\widetilde Z\)
smooth and \(h^*\lambda_M=0\). Generic smoothness for proper morphisms \cite[Theorem 2.29]{BH} gives
a dense smooth open \(C^\circ\subset C^{\mathrm{sm}}\) over which \(h\) is smooth and surjective.
The restriction \(h^\circ:h^{-1}(C^\circ)\to C^\circ\) is faithfully flat, and the cotangent
sequence gives an injection
\((h^\circ)^*\Omega^1_{C^\circ/k}\hookrightarrow\Omega^1_{h^{-1}(C^\circ)/k}\). Thus
\(h^*\lambda_M=0\) implies \((h^\circ)^*(\lambda_M|_{C^\circ})=0\), and faithful flatness gives
\(\lambda_M|_{C^\circ}=0\). By density, this form and its exterior derivative vanish on
\(C^{\mathrm{sm}}\). Hence \(C\) is isotropic and has dimension at most \(n\). The argument is local
on \(T^*M\).

\smallskip
\noindent\emph{Step 4. Assemble the upper bound.}
For the finitely many maps chosen in Step 1, put \(C_F=\bigcup_a C_{g_a}\). This is a closed conic
analytic subset whose irreducible components are isotropic. The class of complexes whose singular
support is contained in \(C_F\) is closed under shifts, cones, and direct summands. Therefore
\(SS(F)\subseteq C_F\).
\end{proof}

\begin{lemma}\label{lem:disc-vanishing}
Let \(D\) be a smooth disc over \(k\) with coordinate \(t\), let
\(j:D^\times=D\setminus\{0\}\hookrightarrow D\), and let \(L\) be a finite-rank \(\Lambda\)-local
system on \(D^\times\), trivialized by a finite Kummer cover \(t=s^N\) for some \(N\ge1\). Let \(V\)
be a geometric fiber of \(L\), and let \(T\in\operatorname{GL}_\Lambda(V)\) be the action of a
generator of its Kummer monodromy group. Put \(V^T=\ker(T-\operatorname{id}_V)\) and
\(Q=j_{!*}(L[1])\). If \(h\) is an analytic function on \(D\) with \(h(0)=0\) and order \(m\ge1\) at
\(0\), then, after identifying the nearby fibers with \(V\), there is an exact sequence of
\(\Lambda\)-vector spaces
\[
 0\longrightarrow V^T\longrightarrow V^{\oplus m}
 \longrightarrow \cH^0\bigl(\phi_h(Q)[-1]\bigr)_0
 \longrightarrow0.
\]
In particular, \(\dim_\Lambda \cH^0(\phi_h(Q)[-1])_0
=m\dim_\Lambda V-\dim_\Lambda V^T\).
\end{lemma}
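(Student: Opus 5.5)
The plan is to compute everything explicitly on the disc, using the curve structure. First I will describe \(Q\) concretely. On a smooth curve, \(j_{!*}(L[1])=(j_*L)[1]\), where \(j_*\) is the underived pushforward. This follows from the strict support and cosupport conditions \eqref{eq:strict}: the \(\mathcal H^{-1}\) stalk at \(0\) is forced to be \((j_*L)_0\), and \(\mathcal H^0\) vanishes. Since \(L\) is trivialized by the Kummer cover \(t=s^N\), the stalk \((j_*L)_0\) is identified with the invariants \(V^T\). So \(k^*Q[-1]=V^T\), placed in degree \(0\) at the origin.

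Next I will compute the nearby cycles of \(Q\) along \(h\). After shrinking \(D\), write \(h=u\,t^m\) with \(u\) a unit. Since \(k\) is algebraically closed and \(\ell\ne p\), after a further shrinking \(u\) has an \(m\)-th root, so \(h=\tau^m\) for a new coordinate \(\tau\). The punctured fiber \(h^{-1}(\epsilon)\) near \(0\) then consists of \(m\) points \(\tau=\zeta\epsilon^{1/m}\). The stalk of \(L\) at each of these points is identified with \(V\). Because \(L\) becomes trivial on the Kummer cover \(s^N=t\), the nearby-cycle colimit over \(e\) in the definition stabilizes once \(mN\mid e\), and it yields \(\mathcal H^0(\psi_h(Q)[-1])_0\simeq V^{\oplus m}\), with no higher cohomology. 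I would justify this by the standard computation of \(k^*Rj_*Rq_{e*}q_e^*\) for a Kummer-tame local system on a punctured disc, using \cite{ZhouVC}. The monodromy of \(h\) acts by permuting the \(m\) copies cyclically, composed with \(T\).

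Then I will apply the defining triangle of vanishing cycles, shifted by \([-1]\):
\[
 k^*Q[-1]\xrightarrow{\mathrm{sp}}\psi_h(Q)[-1]\to\phi_h(Q)[-1]\to k^*Q.
\]
Everything lives in degree \(0\), so the long exact sequence reduces to
\[
 0\to\ker\mathrm{sp}\to V^T\to V^{\oplus m}\to\mathcal H^0(\phi_h(Q)[-1])_0\to\mathcal H^1(k^*Q[-1])=0.
\]
It remains to show that \(\mathrm{sp}\) is injective. Under the identifications above, \(\mathrm{sp}\) sends an invariant vector \(v\in V^T\) to its diagonal image \((v,\dots,v)\), since a section of \(j_*L\) near \(0\) restricts to the same monodromy-invariant value at each of the \(m\) nearby points. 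The diagonal map is injective, which gives the exact sequence and the dimension formula.

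The main obstacle is the identification of \(\psi_h\) and of \(\mathrm{sp}\) in the rigid setting. This needs the stabilization of the Kummer colimit, the absence of higher cohomology, and the fact that the identifications of the \(m\) stalks with \(V\) are compatible with the specialization map. To handle it, I would first reduce to \(h=\tau^m\) on a small disc and use the finite Kummer cover \(\tau\mapsto\tau^{mN}\). On that cover everything is constant, and \(Rq_*\) of a constant sheaf on a punctured disc is computed by Zhou's results on tame nearby cycles \cite{ZhouVC}.
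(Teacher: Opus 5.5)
Your proposal is correct and follows the paper's proof essentially step by step: \(Q_0\simeq V^T[1]\), a coordinate change making \(h=\tau^m\), the \(m\)-point Milnor fiber giving \(\psi_h(j^*Q)_0\simeq V^{\oplus m}[1]\), injectivity of \(\mathrm{sp}\) via the coordinate projections, and the long exact sequence. The paper obtains \(\psi_h\) by citing the Milnor-fiber interpretation \cite[Proposition 2.7]{ZhouMicro} rather than unwinding the Kummer colimit. In your version the colimit does not literally stabilize once \(mN\mid e\): each stage still carries \(H^1\simeq V(-1)^{\oplus m}\), which dies only in the colimit because the transition maps multiply it by the degree and \(\ell\Lambda=0\).

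One correction: after using algebraic closedness for \(u(0)^{1/m}\), the \(m\)-th root of \(u/u(0)\) exists after shrinking because \(\operatorname{char}k=0\). This lets the implicit function theorem apply to \(z^m-u(t)/u(0)\) at \((0,1)\) even when \(p\mid m\). The hypothesis \(\ell\ne p\) plays no role in that step.
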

\begin{proof}
On a smooth curve, the intermediate extension is
\(Q\simeq(\cH^0(Rj_*L))[1]\), where \(\cH^0(Rj_*L)\) is the ordinary direct image of \(L\).
Indeed, the construction of intermediate extension across a point gives
\(Q\simeq\tau^{\le-1}Rj_*(L[1])\), and \(Rj_*L\) has no negative ordinary cohomology. A section
of \(L\) near the puncture descends from the Kummer cover precisely when its value is fixed by
the cyclic monodromy group. Hence \((\cH^0(Rj_*L))_0=V^T\), and therefore \(Q_0=V^T[1]\).

Write \(h(t)=a t^m u(t)\), where \(a\in k^\times\) and \(u(0)=1\). Since \(k\) has characteristic
zero, the derivative with respect to \(z\) of \(z^m-u(t)\) at \((t,z)=(0,1)\) is \(m\ne0\).
The analytic implicit function theorem therefore gives an analytic function \(z(t)\) satisfying
\(z(t)^m=u(t)\) and \(z(0)=1\) after shrinking \(D\). Choose an \(m\)-th root of \(a\) in \(k\)
and put \(t'=a^{1/m}tz(t)\). Since \(dt'/dt(0)=a^{1/m}\ne0\), the analytic inverse function
theorem makes \(t'\) a coordinate on a smaller disc, and \(h=(t')^m\). This applies to every
\(m\ge1\), including when \(p\mid m\).

For a sufficiently small nonzero value \(\varepsilon\), the equation \((t')^m=\varepsilon\) has
\(m\) distinct solutions, all lying in the chosen disc. The geometric Milnor fiber thus consists
of \(m\) points. Its cohomology with coefficients in \(L\) is concentrated in degree zero and,
after choosing fiber identifications, equals \(V^{\oplus m}\). The interpretation of nearby
cycles in \cite[Proposition 2.7]{ZhouMicro} gives
\(\psi_h(j^*Q)_0\simeq(V^{\oplus m})[1]\). Choose the fiber identifications using a Kummer cover
trivializing \(L\). The specialization map sends an invariant section \(v\in V^T\) to its
transported values at these \(m\) points. Its composition with any coordinate projection is the
inclusion \(V^T\hookrightarrow V\), since the monodromy fixes \(v\). It is therefore injective.
The triangle defining vanishing cycles becomes
\[
 V^T[1]\xrightarrow{\mathrm{sp}}V^{\oplus m}[1]
 \longrightarrow\phi_h(Q)_0\longrightarrow V^T[2].
\]
Its ordinary cohomology sequence is
\[
 0\longrightarrow\cH^{-2}(\phi_h(Q))_0
 \longrightarrow V^T\xrightarrow{\mathrm{sp}}V^{\oplus m}
 \longrightarrow\cH^{-1}(\phi_h(Q))_0\longrightarrow0,
\]
and all other cohomology groups of \(\phi_h(Q)_0\) vanish. Injectivity of \(\mathrm{sp}\) gives
\(\cH^{-2}(\phi_h(Q))_0=0\). Since
\(\cH^{-1}(\phi_h(Q))_0=\cH^0(\phi_h(Q)[-1])_0\), this is the stated short exact sequence.
Taking dimensions gives the formula.
\end{proof}

For a smooth locally closed subspace \(S\subset M\), let \(\overline{T_S^*M}\) be the closure of its
conormal bundle in \(T^*M\).

\begin{proposition}\label{prop:curve-supported-ss}
Let \(M/k\) be smooth, quasi-compact, and pure-dimensional of dimension \(n\). If
\(F\in\Db(M,\Lambda)\) has support of dimension at most one, then \(SS(F)\) is an analytic
Lagrangian subset of \(T^*M\). More precisely, it is a finite union of conormal closures of smooth
curves and cotangent fibers at classical points.
\end{proposition}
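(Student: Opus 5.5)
Since the assertion is local on \(M\) and \(M\) is quasi-compact, we work on finitely many smooth affinoid opens. The support \(S=\overline{\supp F}\) is a closed analytic subset of dimension at most one. We stratify it into a finite set \(\Sigma\) of classical points together with a smooth curve \(S^\circ=S\setminus\Sigma\). We choose \(\Sigma\) so that, on each connected component \(S_\beta\) of \(S^\circ\), every \(\cH^q(F)|_{S_\beta}\) is a local system; Zariski-constructibility permits this. Away from \(\Sigma\), the pair \(S_\beta\subset M\) is locally a smooth curve in a smooth space, so \(F\) is locally the pushforward of a complex with locally constant cohomology on a smooth closed curve. On such an open set, \(SS(F)\) equals \(T^*_{S_\beta}M\) over the points of \(S_\beta\) where \(F\ne0\). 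This follows from the behaviour of \(SS\) under closed immersions \cite[\S8]{ZhouMicro} and from the zero-section computation for locally constant sheaves on a smooth space. It remains to show that over each point \(x\in\Sigma\) the singular support is either the whole fiber \(T^*_xM\) or is contained in the closures of the conormals of the adjacent branches. Then \(SS(F)\) is a finite union of \(\overline{T^*_{S_\beta}M}\) and certain fibers \(T^*_xM\). Each of these is closed, conic, analytic and of pure dimension \(n\). Each is Lagrangian: a conormal of a smooth curve is Lagrangian, and so is a cotangent fiber. This gives the conclusion, with Proposition~\ref{prop:isotropic-container} supplying closedness and isotropy as a consistency check.

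For the local analysis at \(x\in\Sigma\), we use the upper bound first. By Proposition~\ref{prop:isotropic-container}, \(SS(F)\cap T^*_xM\) lies in a conic analytic isotropic set of dimension at most \(n\). Over a neighbourhood of \(x\), every irreducible component of \(SS(F)\) meeting \(T^*_xM\) therefore lies over \(\{x\}\) or over a branch of \(S\). A component over \(\{x\}\) is conic, isotropic and contained in the \(n\)-dimensional fiber \(T^*_xM\). For the dichotomy we need \(SS(F)\cap T^*_xM\) to be either all of \(T^*_xM\) or contained in \(\bigcup_\beta\overline{T^*_{S_\beta}M}\cap T^*_xM\). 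Our plan is to test with linear functions. Choose local coordinates, and let \(h\) be a function with \(h(x)=0\) and \(dh_x=\xi\) generic. Here generic means \(\xi\) avoids the finitely many limiting conormal lines, so \(h\) restricts to each normalized branch with a minimal order \(m_\beta\) at \(x\). We compute \(\phi_h(F)_x\) via a dévissage. The first piece is the part of \(F\) supported at \(x\), which contributes \(F_x\)-type terms. The second is the intermediate-extension pieces \(\IC(L)\) along the branches. After pulling back to the normalization of each branch, a disc, Lemma~\ref{lem:disc-vanishing} computes these contributions as \(m_\beta\dim V-\dim V^T\). By the Milnor-type characterization of \(SS\) in \cite[\S8]{ZhouMicro}, the total is independent of generic \(\xi\).

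If this generic total is nonzero, a nonempty open subset of \(T^*_xM\) lies in \(SS(F)\). Closedness then gives the whole fiber. If it is zero, then no generic covector lies in \(SS(F)\). The isotropic bound then shows that the fiber part of \(SS(F)\) over \(x\) is a proper closed conic subset of \(T^*_xM\). The remaining step is to show that such a proper subset is contained in the limits of the branch conormals. For this we use the Lemma~\ref{lem:disc-vanishing} computation again, now for nongeneric \(\xi\) off those limits, where the orders \(m_\beta\) are still minimal. The main obstacle is precisely this step: showing that \(SS(F)\cap T^*_xM\) cannot contain a lower-dimensional conic piece not accounted for by branch conormals. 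That requires the vanishing-cycle count to be locally constant in \(\xi\) on the complement of the limiting conormal lines. It also requires handling the mixture of the point-supported part with the branch parts through the exactness of \(\phi_h\).
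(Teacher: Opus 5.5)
There is a genuine gap, and it is the step you flag yourself: the upper bound over the points of \(\Sigma\). More vanishing-cycle counts at \(x\) cannot close it, because such counts only give lower bounds. If \(\phi_h(F)_x\ne0\) for a test function with \(dh_x=\xi\ne0\), then \((x,\xi)\in SS(F)\). But \(\phi_h(F)_x=0\) for some, or even many, test functions with \(dh_x=\xi\) does not remove \((x,\xi)\) from \(SS(F)\). To exclude a covector you must exhibit a closed conic set avoiding it on which \(F\) is micro-supported, which means local acyclicity for every transversal test pair on a neighbourhood of \(x\).

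So the sentence ``if it is zero, then no generic covector lies in \(SS(F)\)'' is already unjustified, and local constancy of the count in \(\xi\) would not repair it. Proposition~\ref{prop:isotropic-container} does not help either. Its container comes from a blow-up d\'evissage and in general contains entire fibres \(T^*_xM\) over the blow-up centres (the \(i_*\Lambda_Z\) terms), so it cannot separate the two cases. Two smaller points: the limiting conormals of a branch are hyperplanes, not lines, when \(n\ge3\); and the ``Milnor-type'' independence of the count for generic \(\xi\) is unsupported and in any case unnecessary.

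The missing ingredient is a sharp micro-support estimate coming from the normalization.

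\begin{enumerate}
\item First reduce to a simple perverse constituent \(P\). By \cite[Proposition 8.11(2)]{ZhouMicro}, \(SS(F)\) is the union of the singular supports of these constituents, which also disposes of your ``mixture'' problem.
\item For curve-supported \(P\), write \(P=g_*Q\), where \(g:\widetilde C\to M\) is the normalization and \(Q=\widetilde j_{!*}(L[1])\). Then \(SS(Q)\) is contained in the zero section together with the fibres \(T^*_y\widetilde C\) at the finite set \(S\) where \(Q[-1]\) is not locally constant.
\item Put \(R=\{dg=0\}\). The proper pushforward inclusion \cite[Lemma 8.8(2)]{ZhouMicro} bounds \(SS(P)\) over \(x\) by \(\bigcup_{y\in g^{-1}(x)}\ker(dg_y^*)\), unless \(g^{-1}(x)\) meets \(S\cup R\). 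For \(y\notin R\) this kernel is the limiting conormal hyperplane, which lies in \(\overline{T^*_{C^{\mathrm{sm}}}M}\).
\item In the remaining case, fix any \(\xi\) and a test function \(f\) with \(df_x=\xi\) that is nonconstant on all branches; such an \(f\) can always be arranged. Lemma~\ref{lem:disc-vanishing} gives a branch contribution \(m_yr_y-\dim V_y^{T_y}>0\), because \(m_y\ge2\) if \(y\in R\) and \(T_y\ne\operatorname{id}\) if \(y\in S\). All other branch contributions are \(\ge0\), so \(\phi_f(P)_x\ne0\) and the whole fibre \(T^*_xM\) lies in \(SS(P)\).
\end{enumerate}

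This gives the dichotomy directly. The full fibre occurs exactly over \(g(S\cup R)\) and over point-supported constituents, and no genericity or constancy argument in \(\xi\) is needed.
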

\begin{proof}
\smallskip
\noindent\emph{Step 1. Reduce to a simple perverse sheaf.}
The perverse cohomology sheaves of \(F\) have finitely many simple constituents, all supported in
dimension at most one. By \cite[Proposition 8.11(2)]{ZhouMicro}, \(SS(F)\) is the union of their
singular supports, so it suffices to treat one such constituent \(P\). If
\(P\) is supported at a point \(x\), then \(SS(P)=T_x^*M\) by \cite[Lemma 8.8(2)]{ZhouMicro}.
Otherwise, let \(C\) be its support, viewed as a reduced irreducible curve, and let \(g:\widetilde
C\to C\hookrightarrow M\) be its finite normalization. The curve \(\widetilde C\) is smooth because
\(k\) has characteristic zero. Choose a dense smooth open \(C^\circ\subset C\) on which \(P\)
is a nonzero local system shifted by \([1]\). Simplicity identifies \(P\) with the intermediate
extension of this restriction. The normalization is an isomorphism over \(C^\circ\), so the
local system pulls back to a local system \(L\) on \(W=g^{-1}(C^\circ)\). Write
\(\widetilde j:W\hookrightarrow\widetilde C\) and put \(Q=\widetilde j_{!*}(L[1])\). Finite
pushforward is perverse \(t\)-exact and commutes with intermediate extension
\cite[Theorem 4.2(5)--(7)]{BH}, giving \(P=g_*Q\). Let \(S\) be the finite set where \(Q[-1]\)
is not locally constant. These points describe the failure of the local system to extend on the
normalization. Put \(R=\{y\in\widetilde C:dg_y=0\}\), the set where the normalization map has
zero differential. It is finite because \(g\) is an immersion over \(C^{\mathrm{sm}}\). We will prove
\begin{equation}\label{eq:curve-supported-ss}
 SS(P)=\overline{T^*_{C^{\mathrm{sm}}}M}
 \,\cup\,\bigcup_{x\in g(S\cup R)}T_x^*M.
\end{equation}

\smallskip
\noindent\emph{Step 2. Bound the singular support.}
On \(\widetilde C\setminus S\), the singular support of \(Q\) is the zero section. The inclusion
for singular support under proper pushforward \cite[Lemma 8.8(2) and Example 8.14(1)]{ZhouMicro}
therefore bounds the possible covectors over \(g(y)\), for \(y\notin S\), by
\(\ker(dg_y^*:T^*_{g(y)}M\to T_y^*\widetilde C)\). Away from \(R\), these kernels form a
vector bundle of rank \(n-1\). Over \(g^{-1}(C^{\mathrm{sm}})\) it is the conormal bundle, and
density shows that all its fibers lie in the conormal closure. At a point of \(R\), the map
\(dg_y^*\) is zero, so the bound permits the whole fiber \(T^*_{g(y)}M\). The points of \(S\)
can contribute only covectors over \(g(S)\). This proves that \(SS(P)\) is contained in the
right-hand side of \eqref{eq:curve-supported-ss}. Conversely, on the dense open \(C^\circ\), the
sheaf \(P\) is the direct image of a nonzero local system shifted by \([1]\) under a closed
immersion into an open subset of \(M\). The formula for a closed immersion gives its conormal
bundle inside \(SS(P)\). Taking the closure gives
\(\overline{T^*_{C^{\mathrm{sm}}}M}\subseteq SS(P)\). It remains to detect the indicated cotangent
fibers.

\smallskip
\noindent\emph{Step 3. Compute the local vanishing cycles.}
Fix a classical point \(x\in C(k)\). The finitely many points of \(g^{-1}(x)\) correspond to the
branches of \(C\) at \(x\). Let \(f\) be a local function on \(M\) with \(f(x)=0\), and suppose
that \(h_y=f\circ g\) is nonconstant at each \(y\in g^{-1}(x)\). Put
\(m_y=\operatorname{ord}_y(h_y)\ge1\). Since
\(\Lambda\) is finite, the restriction of \(L\) to a punctured disc around \(y\) is trivialized by a
finite \'etale frame cover. After shrinking the disc, this cover becomes a disjoint union of Kummer
covers by \cite[proof of Proposition 8.12 and footnote 11]{ZhouMicro}. Thus \(L\) satisfies the
hypothesis on the Kummer cover in Lemma~\ref{lem:disc-vanishing}. Write \(V_y\) for its fiber,
\(r_y=\dim_\Lambda V_y\), and \(T_y\) for its local monodromy. Lemma~\ref{lem:disc-vanishing}
gives \(\dim_\Lambda\cH^0(\phi_{h_y}(Q)[-1])_y=m_yr_y-\dim_\Lambda V_y^{T_y}\).
Vanishing cycles commute with proper pushforward. Since \(g\) is finite, taking the stalk at
\(x\) gives the direct sum over its finite fiber:
\[
 \phi_f(P)_x\simeq
 \bigoplus_{y\in g^{-1}(x)}\phi_{h_y}(Q)_y.
\]
Taking cohomology after the shift \([-1]\) and adding the dimensions yields
\begin{equation}\label{eq:curve-vanishing-rank}
 \dim_\Lambda \cH^0\bigl(\phi_f(P)[-1]\bigr)_x
 =\sum_{y\in g^{-1}(x)}
 \bigl(m_yr_y-\dim_\Lambda V_y^{T_y}\bigr).
\end{equation}
Each summand is at least \((m_y-1)r_y\ge0\), since \(\dim_\Lambda V_y^{T_y}\le r_y\).
Thus a positive contribution from one branch makes \(\phi_f(P)_x\) nonzero.

\smallskip
\noindent\emph{Step 4. Detect the cotangent fibers.}
Suppose \(x\in g(S\cup R)\), and fix \(\xi\in T_x^*M(k)\). Smoothness of \(M\) gives a local
function \(f_0\) with \(f_0(x)=0\) and \(df_0(x)=\xi\). Choose local coordinates centered at
\(x\). For each branch above \(x\), the linear combinations of these coordinates that vanish
identically on that branch form a proper linear subspace of the space of coefficients. Since
there are finitely many branches and \(k\) is infinite, we can choose a linear combination \(z\)
that is nonconstant on every branch. For a fixed branch, at most one scalar \(a\in k\) makes
\(f_0\circ g+a(z\circ g)^2\) identically zero. Choose \(a\) outside these finitely many values
and put \(f=f_0+az^2\). Then \(f\circ g\) is nonconstant on every branch, while \(z(x)=0\)
gives \(f(x)=0\) and \(df(x)=\xi\).

If \(y\in R\), the chain rule gives \(d(f\circ g)_y=df_x\circ dg_y=0\). Since \(f\circ g\)
is nonconstant and vanishes at \(y\), its order satisfies \(m_y\ge2\). The corresponding
summand in \eqref{eq:curve-vanishing-rank} is therefore at least \(r_y>0\). If \(y\in S\),
the Kummer description in Step 3 shows that \(T_y\ne\operatorname{id}\): trivial monodromy
would make \(L\) constant on a sufficiently small punctured disc, with constant intermediate
extension on the whole disc. Hence \(V_y^{T_y}\) is a proper subspace of \(V_y\), and the
corresponding summand is at least \(r_y-\dim_\Lambda V_y^{T_y}>0\). At least one point of
\(g^{-1}(x)\) belongs to \(S\cup R\), so \eqref{eq:curve-vanishing-rank} gives
\(\phi_f(P)_x\ne0\). The definition using test functions gives \((x,\xi)\in SS(P)\).
This holds for every classical covector \(\xi\) over \(x\). Their density in \(T_x^*M\) and
closedness of \(SS(P)\) give \(T_x^*M\subseteq SS(P)\). Together with Step 2, this proves
\eqref{eq:curve-supported-ss}.

\smallskip
\noindent\emph{Step 5. Prove analyticity and the Lagrangian property.}
Put \(J=\operatorname{im}(g^*\Omega_M^1\to\Omega_{\widetilde C}^1)\) and
\(E=\ker(g^*\Omega_M^1\to J)\). Since \(g\) is generically an immersion, \(J\) is a coherent
subsheaf of the line bundle \(\Omega_{\widetilde C}^1\) with rank one. It is torsion-free and
therefore locally free on the smooth curve \(\widetilde C\). Thus the surjection
\(g^*\Omega_M^1\to J\) locally splits, and its kernel \(E\) is a vector bundle of rank \(n-1\).
Over \(\widetilde C\setminus R\), its fibers are the kernels of \(dg_y^*\) used in Step 2;
the bundle \(E\) extends these conormal directions across \(R\).

Let \(H=\overline{T^*_{C^{\mathrm{sm}}}M}\). The total space \(\operatorname{Tot}(E)\) is a
closed subspace of \(\widetilde C\times_M T^*M\). The projection of this fiber product to
\(T^*M\) is finite because it is a base change of \(g\), so the induced map
\(\rho:\operatorname{Tot}(E)\to T^*M\) is finite. Its image is therefore closed analytic.
Over \(g^{-1}(C^{\mathrm{sm}})\), the map \(\rho\) identifies the total space of \(E\) with
\(T^*_{C^{\mathrm{sm}}}M\). This open subset is dense in \(\operatorname{Tot}(E)\), so the
image of \(\rho\), with its reduced structure, is exactly \(H\). The induced map
\(\rho:\operatorname{Tot}(E)\to H\) is finite and birational. Since \(\operatorname{Tot}(E)\)
is smooth and normal, it is the normalization of \(H\). In particular, \(H\) is closed analytic
and has pure dimension \(1+(n-1)=n\).

Let \(\lambda_M\) be the tautological one-form on \(T^*M\), and put
\(\omega_M=d\lambda_M\). In local coordinates \(x_1,\ldots,x_n\) on \(M\), with corresponding
fiber coordinates \(\xi_1,\ldots,\xi_n\), we have \(\lambda_M=\sum_i\xi_i\,dx_i\). The
definition of \(E\) makes the composite \(E\to g^*\Omega_M^1\to\Omega_{\widetilde C}^1\)
zero. Consequently, on \(\operatorname{Tot}(E)\),
\[
 \rho^*\lambda_M=\sum_i\xi_i\,d(g^*x_i)=0,
 \qquad \rho^*\omega_M=d(\rho^*\lambda_M)=0.
\]
The normalization map \(\rho\) is an isomorphism over the smooth locus of \(H\), where the
local rings are normal. Hence \(\omega_M\) vanishes on that locus. Together with
\(\dim H=n\), this proves that \(H\) is Lagrangian. Each cotangent fiber \(T_x^*M\) also has
dimension \(n\), and \(\lambda_M\) restricts to zero there because the base point is fixed, so
it is Lagrangian as well. Formula~\eqref{eq:curve-supported-ss} expresses \(SS(P)\) as a finite
union of analytic Lagrangian components. Taking the finite union over the perverse constituents
of \(F\) proves the proposition.
\end{proof}

\medskip
\noindent\textbf{Use of AI.}
The content of this article arose from discussions between the author, ChatGPT, and Codex. The
author has rewritten the article and takes responsibility for all of its mathematical content.


\begin{thebibliography}{DLLZb}
\bibitem[BBD82]{BBD}
A.~A.~Beilinson, J.~Bernstein, and P.~Deligne,
\emph{Faisceaux pervers}, Ast\'erisque \textbf{100} (1982),
especially \S6.1.2 and Theorems 6.2.5 and 6.2.10.
\url{https://www.math.tau.ac.il/~bernstei/Publication_list/publication_texts/faisceaux_pervers.pdf}.

\bibitem[Ber93]{BerkovichEtale}
V.~G.~Berkovich, \emph{\'Etale cohomology for non-Archimedean analytic spaces},
Publ. Math. Inst. Hautes \'Etudes Sci. \textbf{78} (1993), 5--161.
\url{https://www.numdam.org/item/PMIHES_1993__78__5_0.pdf}.

\bibitem[BH22]{BH}
B.~Bhatt and D.~Hansen,
\emph{The six functors for Zariski-constructible sheaves in rigid geometry},
Compositio Math. \textbf{158} (2022), 437--482.
\href{https://arxiv.org/abs/2101.09759v2}{arXiv:2101.09759v2},
22 September 2021.
\href{https://doi.org/10.1112/S0010437X22007291}{doi:10.1112/S0010437X22007291}.

\bibitem[BS14]{BS}
B.~Bhatt and P.~Scholze,
\emph{The pro-\'etale topology for schemes}, preprint,
version 2, 17 December 2014.
\href{https://arxiv.org/abs/1309.1198v2}{arXiv:1309.1198v2},
Lemma 6.6.8 and Proposition 6.6.11.

\bibitem[Con99]{ConradIrred}
B.~Conrad, \emph{Irreducible components of rigid spaces},
Ann. Inst. Fourier \textbf{49} (1999), 473--541.
\href{https://doi.org/10.5802/aif.1681}{doi:10.5802/aif.1681}.

\bibitem[Con06]{Conrad}
B.~Conrad, \emph{Relative ampleness in rigid geometry},
Ann. Inst. Fourier \textbf{56} (2006), 1049--1126.
\url{https://math.stanford.edu/~conrad/papers/amplepaperfinal.pdf}.

\bibitem[Del68]{Deligne68}
P.~Deligne, \emph{Th\'eor\`eme de Lefschetz et crit\`eres de
d\'eg\'en\'erescence de suites spectrales},
Publ. Math. IH\'ES \textbf{35} (1968), 107--126.
\href{https://doi.org/10.1007/BF02698925}{doi:10.1007/BF02698925}.

\bibitem[Del94]{Deligne94}
P.~Deligne, \emph{D\'ecompositions dans la cat\'egorie d\'eriv\'ee},
in \emph{Motives}, Proc. Sympos. Pure Math. \textbf{55}, Part 1,
American Mathematical Society, 1994, 115--128.

\bibitem[Hua17]{Huan}
Z.~Huan, \emph{Universal finite subgroup of the Tate curve}, preprint.
\href{https://arxiv.org/abs/1708.08637v3}{arXiv:1708.08637v3}, \S2.

\bibitem[Han20]{HansenCover}
D.~Hansen, \emph{Vanishing and comparison theorems in rigid analytic
geometry}, Compos. Math. \textbf{156} (2020), 299--324,
Theorem 1.6 and Corollary 2.15.
\href{https://doi.org/10.1112/S0010437X19007371}{doi:10.1112/S0010437X19007371}.

\bibitem[Han17]{HansenArtin}
D.~Hansen, \emph{Artin vanishing in rigid analytic geometry},
version 1, 24 August 2017, Introduction, p.~2.
\href{https://arxiv.org/abs/1708.07276v1}{arXiv:1708.07276v1}.

\bibitem[DLLZa]{DLLZFound}
H.~Diao, K.-W.~Lan, R.~Liu, and X.~Zhu,
\emph{Logarithmic adic spaces: some foundational results},
version 2, 30 October 2022.
\href{https://arxiv.org/abs/1912.09836v2}{arXiv:1912.09836v2},
Corollary 6.3.4.

\bibitem[DLLZb]{DLLZLog}
H.~Diao, K.-W.~Lan, R.~Liu, and X.~Zhu,
\emph{Logarithmic Riemann--Hilbert correspondences for rigid varieties},
version 4, 30 October 2022.
\href{https://arxiv.org/abs/1803.05786v4}{arXiv:1803.05786v4},
Corollary 3.2.10.

\bibitem[HZ25]{HZ}
D.~Hansen and B.~Zavyalov,
\emph{Arithmetic properties of \(\ell\)-adic \'etale cohomology
and nearby cycles of rigid-analytic spaces},
version 2, 17 July 2025.
\href{https://arxiv.org/abs/2301.01800v2}{arXiv:2301.01800v2}.

\bibitem[KM85]{KM}
N.~M.~Katz and B.~Mazur, \emph{Arithmetic Moduli of Elliptic Curves},
Annals of Mathematics Studies \textbf{108}, Princeton University Press,
1985, \S8.7.

\bibitem[LRZ24]{LRZ}
S.~Li, E.~Reinecke, and B.~Zavyalov,
\emph{Relative Poincar\'e duality in nonarchimedean geometry}, preprint,
version 1, 10 October 2024.
\href{https://arxiv.org/abs/2410.08200v1}{arXiv:2410.08200v1}.

\bibitem[LZ16]{LiuZhu}
R.~Liu and X.~Zhu,
\emph{Rigidity and a Riemann--Hilbert correspondence for \(p\)-adic
local systems}, version 3, 20 July 2016.
\href{https://arxiv.org/abs/1602.06282v3}{arXiv:1602.06282v3},
Theorem 3.9.

\bibitem[LLZ23]{LLZ}
K.-W.~Lan, R.~Liu, and X.~Zhu,
\emph{De Rham comparison and Poincar\'e duality for rigid varieties},
Peking Math. J. \textbf{6} (2023), 143--216;
preprint version 2, 30 October 2022.
\href{https://arxiv.org/abs/1912.13030v2}{arXiv:1912.13030v2},
\S2.4 and Theorems 3.1.10 and 4.4.1.

\bibitem[LZ26]{LuZheng}
Q.~Lu and W.~Zheng,
\emph{Slopes and weights of \(\ell\)-adic cohomology of rigid spaces},
Adv. Math. \textbf{502} (2026), Part B, 111149.
\href{https://arxiv.org/abs/2502.05421v3}{arXiv:2502.05421v3},
version 3, 9 July 2026.
\href{https://doi.org/10.1016/j.aim.2026.111149}{doi:10.1016/j.aim.2026.111149}.

\bibitem[Ram05]{Ramero}
L.~Ramero, \emph{Local monodromy in non-archimedean analytic geometry},
Publ. Math. IH\'ES \textbf{102} (2005), 167--280.
\url{https://www.numdam.org/item/PMIHES_2005__102__167_0/}.

\bibitem[Sch13]{ScholzeHT}
P.~Scholze, \emph{\(p\)-adic Hodge theory for rigid-analytic varieties},
Forum Math. Pi \textbf{1} (2013), e1.
\href{https://arxiv.org/abs/1205.3463v2}{arXiv:1205.3463v2},
Theorem 1.6 and Corollary 1.8.

\bibitem[Stacks]{Stacks}
The Stacks Project Authors, \emph{The Stacks Project}.
\url{https://stacks.math.columbia.edu/}.

\bibitem[Tem12]{Temkin}
M.~Temkin, \emph{Functorial desingularization of quasi-excellent schemes
in characteristic zero: the nonembedded case},
Duke Math. J. \textbf{161} (2012), 2207--2254.
\href{https://arxiv.org/abs/0904.1592v2}{arXiv:0904.1592v2},
Theorems 1.2.1 and 5.2.2.

\bibitem[Tem17]{TemkinEmbedded}
M.~Temkin, \emph{Functorial desingularization over \(\Q\): boundaries
and the embedded case}, version 3, 21 February 2017.
\href{https://arxiv.org/abs/0912.2570v3}{arXiv:0912.2570v3},
Theorems 1.1.11 and 1.1.13(i).

\bibitem[Zho25a]{ZhouVC}
T.~Zhou, \emph{Vanishing cycles for Zariski-constructible sheaves on
rigid analytic varieties}, preprint, 2025.
\href{https://arxiv.org/abs/2504.16365v1}{arXiv:2504.16365v1}.

\bibitem[Zho25b]{ZhouMicro}
T.~Zhou, \emph{A microlocal theory for Zariski-constructible sheaves on
rigid analytic varieties}, preprint, 2025.
\href{https://arxiv.org/abs/2507.18604v1}{arXiv:2507.18604v1}.

\end{thebibliography}
\end{document}